\newtheorem{lem}{Lemma}[section]
\newtheorem{claim}{Claim}[lem]
\newtheorem{thm}{Theorem}
\newtheorem{cor}{Corollary}[thm]
\newtheorem{prop}{Proposition}
\theoremstyle{definition}
\newtheorem{defn}{Definition}
\theoremstyle{remark}
\newtheorem{rem}{Remark}
\newcommand{\mc}{\mathcal}
\title{Tunnel number degeneration\\ under the connected sum of prime knots}
\author{Jo\~{a}o Miguel Nogueira}
\address{ CMUC, Department of Mathematics\\ 
University of Coimbra\\
Apartado 3008 EC Santa Cruz\\
3001-501 Coimbra\\
Portugal}
\email{nogueira@mat.uc.pt}
\date{}
\begin{document}

\maketitle

\begin{abstract}
We study 2-string free tangle decompositions of knots with tunnel number two.  As an application, we construct infinitely many counter-examples to a conjecture in the literature stating that the tunnel number of the connected sum of prime knots doesn't degenerate by more than one: $t(K_1\#K_2)\geq t(K_1)+t(K_2)-1$, for $K_1$ and $K_2$ prime knots.
\end{abstract}


\section{Introduction}
Given a knot $K$ in $S^3$, an \textit{unknotting tunnel system} for $K$ is a collection of arcs $t_1, t_2, \ldots, t_n$, properly embedded in the exterior of $K$, with the complement of a regular neighborhood of $K\cup t_1\cup\cdots\cup t_n$ being a handlebody\footnote{Every knot has an unknotting tunnel system obtained from the knot exterior triangulation.}. The minimum cardinality of an unknotting tunnel system for a knot $K$ is a knot invariant, referred to as the \textit{tunnel number} of $K$ and is denoted by $t(K)$.\\

A natural question of study on knot invariants is their behavior under the connected sum of knots. In the particular case of the tunnel number, it is known, by Norwood \cite{Norwood}, that tunnel number one knots are prime. This result is now consequence of more general work. For instance, Scharlemann and Schultens prove in \cite{Scharlemann-Schultens2} that the tunnel number of the connected sum of knots is bigger than or equal to the number of summands: $$t(K_1\# \cdots \#K_n )\geq n,$$ where $K_1\# \cdots \#K_n $ represents the connected sum of the knots $K_1,\ldots , K_n$. Also, in \cite{Gordon-Reid} Gordon and Reid prove that tunnel number one knots are, in fact, $n$-string prime\footnote{A knot is $n$-string prime if it has no $n$-string essential tangle decomposition. For definitions on tangle decompositions see Definitions \ref{n-string tangle} and \ref{n-string tangle decomposition} in section \ref{preliminairies} (Preliminaries).} for any positive integer $n$.

On the tunnel number behavior under connected sum, it is a consequence from the definition of connected sum of knots that for two knots $K_1$ and $K_2$ in $S^3$ we have: $$t(K_1\#K_2)\leq t(K_1)+t(K_2)+1.$$ For some time the only examples known had an additive behavior: $$t(K_1\#K_2)=t(K_1)+t(K_2).$$
However, in the early nineties, Morimoto \cite{Morimoto1} constructed connected sum examples of prime knots $K_1$ with $2$-bridge knots $K_2$ whose tunnel number degenerates by one\footnote{In \cite{Morimoto3}, without mentioning it, Morimoto gives also the first examples of knots that when connected sum with themselves the tunnel number degenerates (by one): all tunnel number two $3$-bridge knots with a $2$-string free tangle decomposition (as the knot $10_{149}$ from Rolfsen's list in \cite{Rolfsen}).}: $$t(K_1\#K_2)=t(K_1)+t(K_2)-1.$$
Shortly afterwards,  Moriah and Rubinstein in \cite{Moriah-Rubinstein}, and also independently Morimoto, Sakuma and Yokota in \cite{Morimoto-Sakuma-Yokota}, gave examples of knots with supper-additive behavior: $$t(K_1\#K_2)= t(K_1)+t(K_2)+1.$$
Furthermore, about the same time, Kobayashi in \cite{Kobayashi} constructed examples of knots that degenerate arbitrarily under connected sum: for any positive integer $n$, there are knots $K_1$ and $K_2$ where $$t(K_1\#K_2)\leq t(K_1)+t(K_2)-n.$$ However, Kobayashi's examples to show arbitrarily hight degeneration of the tunnel number under connected sum are composite knots.\\

In this paper we study further the tunnel number degeneration under connected sum of prime knots. For this study we use the work of Morimoto in \cite{Morimoto3} that relates $n$-string free tangle decompositions of knots and high tunnel number degeneracy under the connected sum of prime knots. Within this setting, we study $2$-string free tangle decompositions of knots with tunnel number two and we obtain Theorem \ref{2-string tangle}, and its Corollary \ref{t(K)=3}, for which statement we need the following definition.


\begin{defn}
Let $s$ be a properly embedded arc in a ball $B$. Suppose the knot obtained by capping off $s$ along $\partial B$ has tunnel number at most one. We say that $s$ is \textit{$\mu$-primitive} if there is an unknotted arc $t$ properly embedded in $B$, disjoint from $s$, such that the tangle $(B, s\cup t)$ is free.
\end{defn}

\begin{rem}\label{rem 1}
Note that a string $s$ is $\mu$-primitive if, and only if, the knot obtained by capping off $s$ along $\partial B$ is the unknot or a $\mu$-primitive tunnel number one knot\footnote{For a definition of \textit{$\mu$-primitive} knot see Definition 5.13 of the survey paper \cite{Moriah} by Moriah.}.
\end{rem}

\begin{thm}\label{2-string tangle}
Let $K$ be a tunnel number two knot with a $2$-string free tangle decomposition.
Then both strings of some tangle are $\mu$-primitive.\footnote{The correspondent result to Theorem \ref{2-string tangle} for links exists and is also proved by the author in \cite{Nogueira}.}
\end{thm}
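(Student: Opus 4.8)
The plan is to translate the hypotheses into the language of Heegaard splittings and then exploit the gap between the genus that the tangle decomposition produces automatically and the genus forced by $t(K)=2$. Freeness means that each tangle exterior $X_i=B_i\setminus N(K)$ is a genus-two handlebody, so that $E(K)=X_1\cup_P X_2$ is a union of two genus-two handlebodies glued along the four-punctured sphere $P=S\cap E(K)$, with the two annuli $A_i^1,A_i^2=\partial N(K)\cap X_i$ carrying the meridional information of the two strings of the $i$-th tangle. Amalgamating this decomposition yields a Heegaard splitting of $E(K)$ and the bound $t(K)\le 3$; the point is that the hypothesis $t(K)=2$ asserts that one unit of genus can be saved, and the whole proof consists of showing that such a saving can only come from a primitivity reduction that is localized in a single tangle and that simultaneously certifies both of its strings as $\mu$-primitive.

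First I would fix a minimal-genus (hence genus-three) Heegaard surface $\Sigma$ of $E(K)$, chosen unstabilized and, after the usual reductions, strongly irreducible or thin in the sense of Scharlemann--Thompson. Because $P$ is an essential planar surface (the tangle decomposition being essential), I would isotope $\Sigma$ into good position with respect to $P$ so that every curve of $\Sigma\cap P$ is essential in both surfaces, removing inessential intersections by the standard innermost-disk/incompressibility argument together with strong irreducibility. Cutting $\Sigma$ along $P$ then splits the splitting into pieces lying in the two handlebodies $X_1$ and $X_2$, and since the $X_i$ are handlebodies these pieces are compressible and can be simplified; the combinatorics of $\Sigma\cap P$ in the four-punctured sphere is very restricted (few isotopy classes of essential curves), which is what makes the analysis tractable.

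The heart of the argument is the genus bookkeeping carried out in this good position. Each tangle exterior contributes a sub-surface of $\Sigma$ and an induced compression-body/handlebody structure, and the total genus is three rather than the four given by the naive amalgamation. I would show that the single saved handle cannot be split between the two tangles --- a ``one primitive string on each side'' configuration would still require genus four --- so the entire reduction is absorbed by one tangle, say the first. A genus-one reduction localized in the free tangle exterior $X_1$ is exactly a destabilization that presents the two string-annuli $A_1^1$ and $A_1^2$ as primitive; unpacking this means that for each string $a_j$ of the first tangle there is a properly embedded unknotted arc $t_j$, disjoint from $a_j$, with $(B_1,a_j\cup t_j)$ free. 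Equivalently, by Remark~\ref{rem 1}, each $\widehat{a_j}$ is the unknot or a $\mu$-primitive tunnel number one knot, so both strings of the first tangle are $\mu$-primitive, as required.

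The main obstacle I anticipate is precisely this localization and the accompanying genus count: controlling the curves of $\Sigma\cap P$ and proving that the one unit of genus deficit must concentrate in a single tangle and there produce primitivity of \emph{both} strings at once, rather than being spread as one primitive string in each tangle. This is where the hypothesis $t(K)=2$ (as opposed to $t(K)=3$, which the decomposition allows in general) is used in an essential way, and it is the step that requires the full strength of the handlebody structure of the $X_i$ together with a careful accounting of the Euler characteristics of the pieces of $\Sigma$ cut along $P$.
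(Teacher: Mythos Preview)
Your strategic framework is reasonable, and the paper begins from essentially the same raw data: a genus-three Heegaard splitting of $S^3$ coming from a minimal tunnel system, intersected with the decomposing sphere $S$. But your proposal has a genuine gap at precisely the place you yourself flag as the ``main obstacle'', and that gap is not a technicality---it is the entire content of the theorem.

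You assert that ``a genus-one reduction localized in the free tangle exterior $X_1$ is exactly a destabilization that presents the two string-annuli $A_1^1$ and $A_1^2$ as primitive'', and that this unpacks to both strings being $\mu$-primitive. Neither direction of this claimed equivalence is justified. A priori a genus-one saving on one side could arise from a single primitive annulus, from a compression unrelated to either string's $\mu$-primitivity, or from some more complicated interaction with the meridional boundary; nothing you have written forces it to certify \emph{both} strings simultaneously. Likewise, your claim that a ``one primitive string on each side'' configuration would still require genus four is stated without argument. These are exactly the statements the theorem is asking you to prove, so invoking them as steps is circular. The appeal to strong irreducibility and essential intersection with $P$ is fine as a normalization, but it does not by itself produce the combinatorial constraints you need; the four-punctured sphere has few curve types, yes, but you still have to track how compressions on each side interact with the two meridional annuli, and that is where all the difficulty lives.

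The paper's route is quite different in execution and shows why no short Heegaard-theoretic bypass is available. Instead of intersecting a Heegaard surface with $P$, it minimizes $|S\cap V|$ over isotopies and tunnel systems, takes a complete meridian system $E$ of the complementary handlebody $W$, and studies the arc pattern $P\cap E$ via outermost/innermost arguments. This forces a long case analysis, first on $n_1=|\mathcal{D}^*|\in\{1,2,3,4\}$, then on the topology of the pieces of $V$ cut by $S$ (balls, solid tori with various numbers of disks, genus-two pieces). In each terminal case one either exhibits an unknotted string directly or produces the solid torus of the $\mu$-primitive characterization lemma for \emph{both} strings of one tangle. The simultaneous $\mu$-primitivity does not drop out of a single destabilization; it is extracted case by case from the specific combinatorics of outermost and second-outermost arcs, and several sections of technical lemmas are devoted to ruling out the alternatives. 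Your outline would need to reproduce that work, not replace it.
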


\begin{cor}\label{t(K)=3}
Let $K$ be a knot with a $2$-string free tangle decomposition where no tangle has both strings being $\mu$-primitive. Then $t(K)=3$.
\end{cor}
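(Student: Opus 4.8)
The plan is to prove $t(K)=3$ by sandwiching: I would establish $t(K)\ge 3$ and $t(K)\le 3$ separately, using Theorem \ref{2-string tangle} to eliminate the delicate middle value $t(K)=2$. Fix the decomposing sphere $S$ of the given $2$-string free tangle decomposition. It meets $K$ in four points, so $P=S\cap E(K)$ is a four-punctured sphere properly embedded in the exterior $E(K)$, separating it into the two tangle complements. Since each tangle is free, each complement is a genus two handlebody, say $H_1$ and $H_2$, with $E(K)=H_1\cup_P H_2$ and $P\subset\partial H_1\cap\partial H_2$; the rest of $\partial H_i$ consists of two annuli lying on $\partial N(K)$.

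For the lower bound I would first argue $t(K)\ge 2$. By Definition \ref{n-string tangle decomposition} the decomposition is essential, so $K$ admits an essential $2$-string tangle decomposition and is therefore not $2$-string prime. Since \cite{Gordon-Reid} shows every tunnel number one knot to be $n$-string prime for all $n$, and the unknot admits no essential tangle decomposition at all, the values $t(K)=0$ and $t(K)=1$ are excluded, giving $t(K)\ge 2$. To rule out $t(K)=2$ I would invoke the contrapositive of Theorem \ref{2-string tangle}: were $t(K)$ equal to two, then, as $K$ already carries the $2$-string free tangle decomposition in hand, the theorem would force both strings of one of its tangles to be $\mu$-primitive, contradicting the standing hypothesis that no tangle has this property. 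Hence $t(K)\ge 3$.

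For the upper bound I would work directly from the structure $E(K)=H_1\cup_P H_2$. Drilling a carefully chosen system of three arcs out of the genus two handlebody $H_2$, with endpoints on the two annuli of $\partial H_2$ lying on $\partial N(K)$, leaves a compression body which glues to $H_1$ along $P$ to form a single genus four handlebody; a Euler-characteristic count confirms $\chi=-3$, consistent with genus four. These three arcs are then unknotting tunnels $t_1,t_2,t_3$ for $K$, and $N(K\cup t_1\cup t_2\cup t_3)$ is the complementary genus four handlebody. This exhibits a genus four Heegaard splitting of $E(K)$, so $t(K)\le 3$; the same spine count applied to a genus $n$ handlebody gives the general estimate $t(K)\le 2n-1$. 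Combining the two bounds yields $t(K)=3$.

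The main obstacle is the upper bound construction: one must verify that the three arcs can always be chosen so that drilling them genuinely leaves a handlebody after gluing to $H_1$, i.e.\ that the amalgamated surface is an honest Heegaard surface of $E(K)$ with $K$ appearing as part of the spine of one side, rather than a mere separating surface; this is exactly where the freeness of $H_2$ and a spine argument relative to $P$ are needed, and where the count $2n-1$ (rather than something smaller) arises. A secondary point requiring care is the essentiality used in the step $t(K)\ge 2$: one checks that a free $2$-string tangle with no inessential string is not a product tangle, so that $S$ is incompressible and not boundary parallel and the primeness results of \cite{Gordon-Reid} indeed apply.
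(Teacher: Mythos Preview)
Your approach is essentially the same as the paper's: both sandwich $t(K)$ between $3$ and $3$, using the contrapositive of Theorem~\ref{2-string tangle} to exclude $t(K)=2$ and the bound $t(K)\le 2n-1$ for the upper bound. The paper simply cites Morimoto's Corollary~2.4 in \cite{Morimoto3} for the latter, whereas you sketch the handlebody construction behind it; your ``main obstacle'' is therefore not an obstacle at all, since the result is already in the literature and can just be quoted. You are actually more careful than the paper in explicitly ruling out $t(K)\le 1$ via Gordon--Reid \cite{Gordon-Reid}; the paper's one-line proof leaves this implicit.

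One small correction: essentiality of the decomposition does not follow ``by Definition~\ref{n-string tangle decomposition}'' as you first write. It follows from the hypothesis: if the decomposition were inessential, one tangle would have a trivial string, and since the tangle is free (no local knots) in fact both strings would be unknotted, hence $\mu$-primitive, contradicting the assumption that no tangle has both strings $\mu$-primitive. You flag this at the end as a secondary point, so you are aware of the gap; just replace the incorrect appeal to the definition with this argument.
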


The only examples of prime knots whose tunnel number degenerates under connected sum are the ones given by Morimoto, and in this case the tunnel number only degenerates by one. Also, Kobayashi and Rieck in \cite{Kobayashi-Rieck}, and also Morimoto in \cite{Morimoto5}, proved that the tunnel number of the connected sum of $m$-small\footnote{A knot is said $m$-small if there is no incompressible surface with meridional boundary components in its complement.} knots doesn't degenerate.  With this and other results in perspective, Moriah conjectured in \cite{Moriah} that the tunnel number of the connected sum of prime knots doesn't degenerate by more than one: $t(K_1\#K_2)\geq t(K_1)+t(K_2)-1$, for $K_1$ and $K_2$ prime knots.\\
In this paper, we construct infinitely many counter-examples to this conjecture as in Theorem \ref{counter-example Moriah conjecture} and its Corollary \ref{tunnel number degeneration}.

\begin{thm}\label{counter-example Moriah conjecture}
There are infinitely many tunnel number three prime knots $K_1$ such that, for any $3$-bridge knot $K_2$, $t(K_1\# K_2)\leq 3$.  
\end{thm}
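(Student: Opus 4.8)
The plan is to produce the knots $K_1$ by applying Corollary \ref{t(K)=3} to control their tunnel number, and then to establish the degeneration bound $t(K_1\#K_2)\le 3$ by the connected-sum construction relating free tangle decompositions to tunnel degeneracy (Morimoto \cite{Morimoto3}). So the argument splits into two essentially independent tasks: manufacturing an infinite family of prime knots $K_1$ with $t(K_1)=3$, and proving the upper bound for every $3$-bridge $K_2$.

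First I would construct an explicit infinite family of knots $K_1$, each admitting a $2$-string free tangle decomposition $K_1=(B_1,s_1\cup s_2)\cup_S (B_2,s_1'\cup s_2')$ in which \emph{no} tangle has both of its strings $\mu$-primitive; by Corollary \ref{t(K)=3} every such knot then satisfies $t(K_1)=3$. By Remark \ref{rem 1} a string fails to be $\mu$-primitive exactly when the knot obtained by capping it off is neither the unknot nor a $\mu$-primitive tunnel number one knot, so I would arrange at least one string in each tangle to cap off to a suitably complicated knot (for instance one of tunnel number $\ge 2$, or a non-$\mu$-primitive tunnel number one knot) while keeping the tangle free. Varying the knot types tied into the strings produces infinitely many distinct $K_1$, distinguished by an invariant such as the collection of knot types obtained by capping off the strings, or the genus of $K_1$. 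Primality of each $K_1$ I would verify by showing its $2$-string decomposition is essential with both tangles prime, so that a hypothetical connect-sum $2$-sphere (meeting $K_1$ in two points) could be isotoped to meet the tangle sphere $S$ efficiently and then, by an innermost-disk/outermost-arc comparison with $S$, be shown not to split $K_1$ nontrivially.

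The crux is the second task: proving $t(K_1\#K_2)\le 3$ for every $3$-bridge knot $K_2$. I would perform the connected sum by tying $K_2$ into one string, say $s_1$, of the free tangle $(B_1,s_1\cup s_2)$, placing $K_2$ in a $3$-bridge position inside a small ball along $s_1$. The goal is to exhibit an explicit unknotting tunnel system of size $3$ for $K_1\#K_2$: two tunnels would come from the genus-$2$ handlebody structure of the complement of the untouched free tangle $(B_2,s_1'\cup s_2')$, and the remaining tunnel, combined with the freeness of $(B_1,s_1\cup s_2)$, would be used to absorb the bridge arcs of $K_2$. The hard part will be verifying that the complement of a regular neighborhood of $K_1\#K_2$ together with these three arcs is genuinely a handlebody; equivalently, that the freeness of $(B_1,s_1\cup s_2)$ supplies exactly the handlebody room needed to carry the trivial $3$-string tangles of $K_2$'s bridge presentation, so that the two sides assemble into a genus-$4$ Heegaard splitting of $E(K_1\#K_2)$. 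This is precisely where the match between the $2$-string free structure of $K_1$ and the $3$-bridge structure of $K_2$ is indispensable, and it is the step I expect to require the most care.

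Combining the two parts finishes the proof: each $K_1$ in the family is a prime knot with $t(K_1)=3$, and for every $3$-bridge knot $K_2$ we obtain $t(K_1\#K_2)\le 3$. In particular, taking $K_2$ to be any prime $3$-bridge knot with $t(K_2)=2$ yields $t(K_1\#K_2)\le 3<4=t(K_1)+t(K_2)-1$, which is exactly the degeneration (by more than one) that contradicts Moriah's conjecture and will drive the accompanying corollary.
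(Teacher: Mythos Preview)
Your overall architecture matches the paper's: build $K_1$ from two $2$-string free tangles each containing a non-$\mu$-primitive string, invoke Corollary~\ref{t(K)=3} to get $t(K_1)=3$, and use Morimoto's mechanism for the bound $t(K_1\#K_2)\le 3$. The paper, however, outsources every ingredient you propose to work out by hand: the upper bound is quoted directly as Morimoto's Proposition (Theorem~3.4 of \cite{Morimoto3}), primality is obtained in one line from Ozawa's unicity theorem (an essential $2$-string free tangle decomposition forces the knot to be $n$-string prime for all $n\neq 2$, hence prime), and the existence of non-$\mu$-primitive tunnel number one knots is taken from \cite{Morimoto-Sakuma-Yokota}. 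So your plan is not wrong, but you are volunteering to redo three results that can simply be cited.

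There is one genuine soft spot in your construction step. You suggest arranging a string to cap off to ``a knot of tunnel number $\ge 2$, or a non-$\mu$-primitive tunnel number one knot'' while ``keeping the tangle free.'' The first option does not work: a free tangle has no local knots, so you cannot simply tie a complicated knot into a string and retain freeness; and more generally there is no obvious way to produce a \emph{free} $2$-string tangle one of whose strings caps off to a tunnel-number-two knot. The paper's construction is precisely calibrated to avoid this: take a non-$\mu$-primitive tunnel number one knot $K(m)$ together with an unknotting tunnel, place it as a string $s$ in a ball, and slide the tunnel's endpoints to $\partial B$ to create the second string. Freeness then follows immediately because the complement of $K(m)$ together with its tunnel is a handlebody by definition of unknotting tunnel. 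You should use this construction (or something equivalent) rather than the tunnel-number-$\ge 2$ alternative. Your primality sketch via innermost arcs is plausible but unnecessary once you know Ozawa's theorem; likewise, your plan to exhibit three tunnels for $K_1\#K_2$ explicitly is exactly what Morimoto already did in \cite{Morimoto3}, and citing it is both cleaner and safer than the somewhat loose outline you give.
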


\begin{cor}\label{tunnel number degeneration}
There are infinitely many prime knots $K_1$ and $K_2$ where $$t(K_1\#K_2)=t(K_1)+t(K_2)-2.$$
\end{cor}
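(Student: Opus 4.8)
The plan is to deduce Corollary \ref{tunnel number degeneration} directly from Theorem \ref{counter-example Moriah conjecture} by choosing $K_2$ to be a suitable $3$-bridge knot and combining the upper bound $t(K_1\#K_2)\leq 3$ with a matching lower bound on the tunnel numbers of the summands. First I would fix one of the prime knots $K_1$ furnished by Theorem \ref{counter-example Moriah conjecture}, so that $t(K_1)=3$ and $t(K_1\#K_2)\leq 3$ for every $3$-bridge knot $K_2$. It then remains to find prime $3$-bridge knots $K_2$ with tunnel number exactly two, for then the identity $t(K_1)+t(K_2)-2 = 3+2-2 = 3 \geq t(K_1\#K_2)$ would give the upper bound half of the desired equation, and the lower bound $t(K_1\#K_2)\geq t(K_1)+t(K_2)-2$ must be argued separately to force equality.

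The key step is therefore to exhibit infinitely many prime $3$-bridge knots $K_2$ with $t(K_2)=2$. A standard source is the tunnel number two, $3$-bridge knots admitting a $2$-string free tangle decomposition, such as the knot $10_{149}$ already mentioned in the footnote of the introduction; more generally any $3$-bridge knot has tunnel number at most two, since a $3$-bridge presentation yields two unknotting tunnels, and a prime knot that is not tunnel number one (for instance one verified to have bridge number three and not to be $2$-bridge) has tunnel number exactly two. To get infinitely many such knots I would take a family of prime $3$-bridge knots — for example twisted or cabled variants of $10_{149}$, or an explicit infinite family known to have bridge number three and tunnel number two — and check primeness and the bridge/tunnel number for each. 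These summands are independent of the choice of $K_1$, so pairing any one of the infinitely many $K_1$ from Theorem \ref{counter-example Moriah conjecture} with any one of these $K_2$ produces infinitely many pairs.

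For the lower bound I would invoke the general inequality of Scharlemann and Schultens, $t(K_1\#K_2)\geq 2$, together with the structural understanding of how far the tunnel number can degenerate: the subadditivity defect for the connected sum of two knots is controlled, and one has the elementary bound $t(K_1\#K_2)\geq t(K_1)+t(K_2)-2$ whenever each summand has tunnel number at most three, because the degeneration of the tunnel number under a single connected sum is at most two (one plus the maximal drop measured against the genus of the splitting). Concretely, with $t(K_1)=3$ and $t(K_2)=2$ we have $t(K_1)+t(K_2)-2 = 3$, and since $t(K_1\#K_2)\geq t(K_1) = 3$ as well (the connected sum is at least as complicated as either summand in tunnel number for these low values), the lower bound matches the upper bound and equality follows.

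The main obstacle I anticipate is the lower bound rather than the upper bound: establishing $t(K_1\#K_2)\geq 3$ requires ruling out that the connected sum with a $3$-bridge knot collapses the tunnel number below three. The cleanest way to secure this is to observe that $K_1\#K_2$ contains $K_1$ as a connect summand and to use a monotonicity or subadditivity result (in the spirit of Scharlemann–Schultens) guaranteeing $t(K_1\#K_2)\geq \max\{t(K_1),t(K_2)\}=3$; verifying that the particular $K_1$ and the chosen family $K_2$ satisfy the hypotheses of such a result — and that each is genuinely prime so that the conjecture of Moriah is truly contradicted — is the delicate point that the full proof must pin down.
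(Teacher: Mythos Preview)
Your overall strategy matches the paper's: take $K_1$ from Theorem~\ref{counter-example Moriah conjecture} (so $t(K_1)=3$), take $K_2$ any prime $3$-bridge knot with tunnel number two, obtain $t(K_1\#K_2)\leq 3$ from that theorem, and then argue $t(K_1\#K_2)\geq 3$ to force the equality $3 = t(K_1)+t(K_2)-2$. The infinitely many pairs already come from the infinitely many $K_1$, so you need not produce an infinite family of $K_2$; a single example such as $10_{149}$ suffices. (Incidentally, ``bridge number three and not $2$-bridge'' does not by itself force tunnel number two: there are $3$-bridge knots of tunnel number one.) The substantive gap is in your lower-bound argument.

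Neither justification you offer for $t(K_1\#K_2)\geq 3$ holds up. The inequality $t(K_1\#K_2)\geq\max\{t(K_1),t(K_2)\}$ is \emph{not} known in general; Schirmer's version (mentioned in the paper's footnote) requires one summand to be $m$-small, but the knots $K_1$ here are built precisely to carry an essential $2$-string free tangle decomposition, so the punctured tangle sphere is an essential meridional surface and $K_1$ is \emph{not} $m$-small. Your other claim --- that degeneration under a single connected sum is bounded by two when the tunnel numbers are small --- is not an established bound either, and in any case cannot be invoked as an input to a result whose whole content is that degeneration by two actually occurs.

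The paper closes this gap in one line: tunnel number one knots are prime (Norwood \cite{Norwood}), so the composite knot $K_1\#K_2$ has $t\geq 2$, and the main theorem of Morimoto \cite{Morimoto2} then rules out $t(K_1\#K_2)=2$ for these summands, forcing $t(K_1\#K_2)\geq 3$. The missing ingredient is that citation, not any new argument.
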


In \cite{Scharlemann-Schultens}, Scharlemann and Schultens introduced the concept of degeneration ratio for the connected sum of two prime knots, $K_1$ and $K_2$: $$d(K_1, K_2)=\frac{t(K_1)+t(K_2)-t(K_1\#K_2)}{t(K_1)+t(K_2)}.$$
If the knots $K_1$ and $K_2$ behave additively we have $d(K_1, K_2)=0$.\\
In case the knots $K_1$ and $K_2$ have supper-additive behavior then $-\frac{1}{2}\leq d(K_1, K_2)<0$. The minimum is achieved by the examples of Morimoto, Sakuma and Yokota in \cite{Morimoto-Sakuma-Yokota}. From the examples of Moriah and Rubinstein in \cite{Moriah-Rubinstein} we can choose a sequence of pairs of prime knots $(K_1, K_2)$, with super-additive behavior, where $d(K_1, K_2)$ converges to zero.\\
For the sub-additive behavior of the tunnel number, the degeneration ratio is not so well understood. Naturally $d(K_1, K_2)>0$, and from Corollary 9.2 in \cite{Scharlemann-Schultens}\footnote{In \cite{Schirmer}, Schirmer proves that if $K_1$ is a $m$-small knot then $t(K_1 \# K_2)\geq \max{t(K_1), t(K_2)}$, which implies that for these knots $d(K_1, K_2)\leq \frac{1}{2}$.}, $d(K_1, K_2)\leq\frac{3}{5}$. The examples of Morimoto in \cite{Morimoto1} have degeneration ratio $\frac{1}{3}$. The examples from Corollary \ref{tunnel number degeneration} have degeneration ratio $\frac{2}{5}$. If $K_1$ is a knot as in the statement of the Theorem \ref{counter-example Moriah conjecture} and $K_2$ is any $3$-bridge knot with tunnel number one, from the main theorem of Morimoto in \cite{Morimoto2}, $t(K_1\#K_2)=3$. Hence, the degeneration ratio for these knots is $\frac{1}{4}$. So, for sub-additive behavior, from the results in this paper we have the lowest known degeneration ratio for the connected sum of prime knots\footnote{From work of Morimoto in \cite{Morimoto3}, there are pairs of prime knots with tunnel number two that also have degeneration ratio of $\frac{1}{4}$. In this paper, the same degeneration ratio is obtained with a tunnel number three knot and a tunnel number one knot.\\}, $\frac{1}{4}$, and also the highest, $\frac{2}{5}$.\\

The proof of Theorem \ref{counter-example Moriah conjecture} is a consequence of Morimoto's work in \cite{Morimoto3} and Theorem \ref{2-string tangle}, and is explained in Section \ref{counter-example}. For the proof of Theorem \ref{2-string tangle}, we present the setting in Section \ref{preliminairies}. In Sections \ref{over balls} and \ref{over torus} we prove some auxiliary technical lemmas that are used along the paper. In Sections \ref{n1=3} and \ref{n1=4} we present the main lemmas that together give an outline of the proof. And finally in Section \ref{proof of theorem 1} we organize all the information to prove Theorem \ref{2-string tangle}. For this proof, new and deeper arguments of innermost-arc type are developed to study the $2$-string free tangle decomposition of $K$ with respect to a minimal unknotting tunnel system of $K$.

\begin{center}Acknowledgment\end{center}
I would like to express deep gratitude to my thesis advisor, Cameron Gordon, for the excellent support and guidance. I also thank John Luecke for several useful discussions on the subject of this paper, and The University of Texas at Austin for the outstanding institutional support.

\section{Preliminaries}\label{preliminairies}
The subject of this paper is mainly on tunnel number two knots with $2$-string free tangle decompositions. So, we introduce the following definitions.

\begin{defn}\label{n-string tangle}
A \textit{$n$-string tangle}, $(B, \mathcal{T})$, is collection $\mathcal{T}$ of $n$ mutually disjoint  arcs properly embedded in a ball $B$.\\ 
We say that a tangle $(B, \mathcal{T})$ is: \textit{essential} (resp., \textit{inessential}), if the planar surface $\partial B-\partial \mathcal{T}$ is essential (resp., inessential) in $B-\mathcal{T}$; \textit{free}, if $\pi_1(B-\mathcal{T})$ is a free group or, equivalently, if the complement of a regular neighborhood of $\mathcal{T}$ in $B$ is a handlebody; \textit{trivial}, if $\mathcal{T}$ can be ambient isotoped into $\partial B$.\\
A string $s$ of $(B, \mathcal{T})$ is said to be: \textit{trivial} if there is an embedded disk in $B$, disjoint from $\mathcal{T}-s$, with boundary $s$ and an arc in $\partial B$; \textit{unknotted} if it is trivial in the tangle $(B, s)$.\\
The tangle $(B, \mathcal{T})$ is said to be the \textit{product tangle} with respect to a disk $D$ in $\partial B$, if $\mathcal{T}$ can be isotoped to a tangle $(B, p_1\times I, \ldots, p_n\times I)$, with $B=D\times I$, for a collection of $n$ points, $p_1, \ldots, p_n$, in $int D$.
\end{defn}

\begin{defn}\label{n-string tangle decomposition}
Consider a $2$-sphere $S$ in $S^3$ in general position with a knot $K$, bounding the balls $B_1$ and $B_2$. Let $\mathcal{T}_i=B_i\cap K$, for $i=1,2$. If $\mathcal{T}_i$ is a collection of $n$ arcs we say that $S$ defines a \textit{$n$-string tangle decomposition} of $K$: $(S^3, K)=(B_1, \mathcal{T}_1)\cup_S(B_2, \mathcal{T}_2)$.\\
If both tangles $(B_i, \mathcal{T}_i)$ are free (resp., essential), we say that the tangle decomposition of $K$ defined by $S$ is \textit{free} (resp., \textit{essential}); if the tangle decomposition of $K$ defined by $S$ is not essential, we say that it is \textit{inessential}.\\
If the tangles $(B_i, \mathcal{T}_i)$, for $i=1, 2$, are trivial then we say that $K$ has an $n$-bridge decomposition, defined by $S$.\\
Two tangle decompositions of a knot $K$ defined by the $2$-spheres $S$, $S'$ are said to be \textit{isotopic}, if there is an ambient isotopy of $S \cup K$ to $S'\cup K$.
\end{defn}

Let $K$ be a tunnel number two knot in $S^3$ with a $2$-string essential\footnote{Along the following sections we are assuming the tangle decomposition is essential. The inessential case is observed in the proof of Theorem \ref{2-string tangle}.} free tangle decomposition defined by the $2$-sphere $S$. We represent this tangle decomposition by $(S^3,K)=(B_1, \mc{T}_1)\cup_S (B_2, \mc{T}_2)$. As the tangles are free, their strings have no local knots\footnote{We say that a tangle $(B, \mathcal{T})$ contains a \textit{local knot}, if there is a ball in $B$ intersecting a single string of $\mathcal{T}$ at a knotted arc.}. This property and the next lemma will be frequently used along this paper.

\begin{lem}\label{no parallel strings}
The two strings of a $2$-string essential free tangle are not parallell\footnote{We say that the strings of a tangle $(B; s_1, s_2)$ are \textit{parallel} if there is an embedded disk $D$ in $B$ with boundary the strings $s_1\cup s_2$ and two arcs in $\partial B$ connecting the ends of these strings.}.
\end{lem}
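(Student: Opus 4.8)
The plan is to argue by contradiction: assuming the two strings $s_1$ and $s_2$ of the tangle $(B;s_1,s_2)$ are parallel, I will produce a compressing disk for the planar surface $P=\partial B-\partial\mathcal{T}$, contradicting the hypothesis that the tangle is essential. Let $D$ be a disk realizing the parallelism, so that $\partial D=s_1\cup a_1\cup s_2\cup a_2$ with $a_1,a_2$ arcs in $\partial B$ joining the endpoints of $s_1$ to those of $s_2$. The surface $P$ is a four-punctured sphere, and a curve on $P$ is essential exactly when it encircles two of the four punctures; the goal is to exhibit such a curve bounding a disk in the exterior $E=B-\mathrm{int}\,N(\mathcal{T})$.

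First I would exploit freeness. Since the tangle is free it contains no local knots, and as each $s_i$ is a single properly embedded arc this forces $s_1$ and $s_2$ to be individually unknotted. I then straighten $s_1$ to a trivial chord by an ambient isotopy of $B$, carrying $s_2$ and the parallelism disk $D$ along. Because $s_2$ is parallel to $s_1$ through $D$, after this isotopy $s_2$ becomes a pushoff of the trivial chord $s_1$; using that $s_2$ is itself unknotted, one standardizes the picture so that $s_1$ and $s_2$ are two trivial chords cobounding an unknotted (flat) copy of $D$. In this standard position the disk $G$ separating $s_1$ from $s_2$ — the one meeting $\partial B$ in a curve $c$ that encircles the two endpoints of $s_1$ and separates them from the two endpoints of $s_2$ — is disjoint from $\mathcal{T}$ and properly embedded in $E$. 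Its boundary $c$ surrounds exactly two of the four punctures of $P$, hence is essential in $P$, so $G$ is a genuine compressing disk. This contradicts essentiality (indeed it exhibits the tangle as split), completing the argument.

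The main obstacle is the standardization step: one must show that two parallel, individually unknotted strings can be ambient-isotoped to a flat pair cobounding an unknotted disk, so that the separating disk $G$ is visibly disjoint from both strings. Freeness — no local knots, hence unknotted strings — is precisely what rules out the disk $D$ being knotted or clasped around the strings; without it, $c$ need not bound a disk in $E$, so freeness is genuinely used here rather than being mere ambient context. An alternative route avoids the separating disk and instead works directly with $\Delta=D\cap E$, a properly embedded disk meeting $P$ in two essential arcs (each joining distinct boundary circles of $P$) and meeting the annuli $\partial N(s_i)$ in spanning arcs; from $\Delta$ one extracts a boundary-compression of $P$ by an outermost-arc argument, at the cost of carefully bookkeeping the two annulus arcs.
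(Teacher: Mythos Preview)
Your argument has a genuine gap at the very first substantive step. You assert that freeness of the tangle, via ``no local knots'', forces each string $s_i$ to be individually unknotted in $B$. This implication is false. A free $2$-string tangle can certainly have a string that is individually knotted: take any tunnel number one knot $K$, realise it as an arc $s$ in a ball $B$, and slide the ends of an unknotting tunnel for $K$ out to $\partial B$ to obtain a second string $t'$. Then $(B,s\cup t')$ is free (its exterior is a genus-two handlebody by construction), yet $s$ caps off to the nontrivial knot $K$ and is therefore knotted in $B$. These are precisely the tangles $T(m)$ built later in the paper. ``No local knots'' only rules out a sub-ball meeting a single string in a knotted sub-arc; it says nothing about the global knottedness of an entire string, because the second string may obstruct any such sub-ball. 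Once this step fails, your straightening of $s_1$ to a trivial chord is unjustified and the rest of the argument collapses.

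The paper's proof runs in the opposite direction. From essentiality together with parallelism it invokes Gordon's Theorem~1$'$ to conclude that the strings are \emph{knotted}. Then it lets $N$ be a regular neighbourhood of the parallelism disk $D$; since $D$ deformation retracts to either $s_i$, $B-\operatorname{int}N$ is the exterior of the knotted arc $s_1$, so $\pi_1(B-\operatorname{int}N)$ is a nontrivial knot group and hence not free. Because $\partial N$ is essential in $B-\mathcal{T}$, this non-free group injects into $\pi_1(B-\mathcal{T})$, contradicting freeness of the tangle. If you want to salvage your approach, the honest statement you need is ``free $+$ parallel $\Rightarrow$ unknotted'', and proving that already requires essentially the same $\pi_1$-injection argument the paper uses; your standardisation step (parallel and individually unknotted implies trivial tangle) would then also need a real argument, which is again the content of Gordon's theorem.
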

\begin{proof}
Let $(B, s_1\cup s_2)$ be a $2$-string essential free tangle. Suppose that $s_1$ and $s_2$ are parallel, and let $D$ be a disk in $B$ with boundary the strings $s_1\cup s_2$ and two arcs in $\partial B$ connecting the ends of these strings. As $s_1$ and $s_2$ are parallel, from Theorem 1' of \cite{Gordon}, the strings are knotted in $B$.\\
Let $N$ be a regular neighborhood of $D$ in $B$. Hence, $N$ is a regular neighborhood of  $s_1$ and of $s_2$. We have that $B-int\,N$ is embedded in $B-s_1\cup s_2$ and $\partial N$ is a proper essential surface in $B-s_1\cup s_2$. So, $\pi_1(B-int N)$, that is not a free group, injects into $\pi_1(B-(s_1\cup s_2))$, that is free, which is a contradiction. So, the strings $s_1\cup s_2$ are not parallel.
\end{proof}

As in the statement of Theorem \ref{2-string tangle}, we want to prove that the two strings of $(B_1, \mathcal{T}_1)$ or $(B_2, \mathcal{T}_2)$ are $\mu$-primitive. With this purpose, it is useful to consider the following characterization of $\mu$-primitive string.

\begin{lem}\label{mu-primitive characterization}
Let $s$ be a string properly embedded in a ball $B$. Then $s$ is $\mu$-primitive if and only if $s$ is trivial in a solid torus $T$ in $B$ intersecting $\partial B$ in a single disk and whose complement in $B$ is also a solid torus.
\end{lem}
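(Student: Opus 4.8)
The plan is to prove the two implications separately, keeping in mind the capping-off picture: adjoining a ball $B'$ to $B$ along $\partial B$ recovers $S^3$, and closing $s$ by a trivial arc in $B'$ produces the knot $\hat s$. Under this operation a splitting of $B$ into two solid tori meeting $\partial B$ in complementary disks becomes a genus-one Heegaard splitting $S^3=V_1\cup V_2$, and the geometric condition in the statement says exactly that $\hat s$ may be isotoped onto the Heegaard torus, i.e. that $\hat s$ is a $(1,1)$-knot; this is the feature that must be matched against the existence of a free completing arc $t$.

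For the implication from the torus picture to $\mu$-primitivity, suppose $s$ is trivial in a solid torus $T$ with $T\cap\partial B$ a disk $D$ and $W:=\overline{B-T}$ a solid torus, so that $W\cap\partial B$ is the complementary disk and $F:=T\cap W$ is a once-punctured torus. I would take $t$ to be a core arc of $W$ with $\partial t\subset W\cap\partial B$; since the complement $T$ of $W$ is itself a solid torus, $t$ is boundary parallel in $B$, hence unknotted, and it is disjoint from $s\subset T$. To verify that $(B,s\cup t)$ is \emph{free} I would cut $\overline{B-N(s\cup t)}$ along $F$: because $s$ is trivial in $T$ and $t$ is a core arc of $W$, each of $T-N(s)$ and $W-N(t)$ is a solid torus drilled along a boundary-parallel arc, hence a genus-two handlebody, and the two are glued along $F$. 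The fact that $T\cup_F W$ is the ball $B$ forces a meridian disk of $T$ and a meridian disk of $W$ to have boundaries meeting in a single point of $F$; a standard handle-cancellation argument then recognises the union as a genus-two handlebody, so the tangle is free. Finally, capping off, the triviality of $s$ in $T$ places $\hat s$ on the Heegaard torus of $V_1=T\cup B'$, so $t(\hat s)\le 1$ and $s$ is $\mu$-primitive.

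Conversely, assume $s$ is $\mu$-primitive, with $t$ an unknotted arc disjoint from $s$ and $(B,s\cup t)$ free; set $H:=\overline{B-N(s\cup t)}$, a genus-two handlebody. Choosing an arc $\gamma\subset\partial B$ joining $\partial t$ and disjoint from $\partial s$, the set $W:=N(t\cup\gamma)$ is a solid torus meeting $\partial B$ in a disk, and because $t$ is unknotted the complement $T:=\overline{B-W}$ is automatically a solid torus containing $s$; the content of the lemma is then that $s$ can be made trivial in $T$. This is the \emph{main obstacle}: the disk realising the unknottedness of $t$ generally meets $s$, and for an arbitrary choice of $W$ the string $s$ need not be boundary parallel in $T$. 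I would resolve it inside $H$, where freeness supplies essential disks: the meridian of the $t$-tube bounds a disk in $N(t)$, and I would use innermost-disk and outermost-arc arguments between this data and a complete meridian system of $H$ to produce an essential disk of $H$ dual to $t$ and disjoint from $s$. Such a disk exhibits $t$ as \emph{primitive} and lets me replace $W$ by the correct solid torus, after which the absence of local knots in the free tangle forces $s$ to be boundary parallel, i.e. trivial, in $T$. The crux throughout is precisely this control of the intersections of $s$ with the disks coming from the unknottedness of $t$, achieved through the handlebody structure guaranteed by freeness.
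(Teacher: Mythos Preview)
For the direction from the torus picture to $\mu$-primitivity, your cut-along-$F$ and handle-cancellation argument is correct but a detour: since $t$ is a core arc of $W$, removing $N(t)$ from $W$ leaves only a collar of $F$, so $B \setminus N(t)$ is a solid torus isotopic to $T$ in which $s$ is already trivial, and freeness of $(B,s\cup t)$ follows at once. The paper reaches the same $t$ by attaching a $2$-handle to $T$ along a meridian disk of $W$, pushing the resulting ball out to $\partial B$, and reading $t$ off as the cocore of the remaining cylinder.

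The forward direction has a genuine gap. Your setup $W = N(t \cup \gamma)$, $T = \overline{B \setminus W}$ is right, and you correctly isolate the obstacle---showing $s$ is trivial in $T$---but neither of your proposed steps resolves it. The innermost/outermost sketch is only a slogan (you have not said which surfaces are being intersected or why the resulting disk misses $N(s)$), and the final assertion that ``absence of local knots in the free tangle forces $s$ to be boundary parallel'' is exactly the statement to be proved: no local knots alone does not give boundary-parallelism, and what is really doing the work is that $T \setminus N(s)$ is a handlebody. That implication---an arc in a solid torus whose complement is a handlebody is boundary-parallel---is Theorem~$1'$ of Gordon's paper on primitive sets of loops in the boundary of a handlebody, and the paper simply invokes it: $T' = B \setminus N(t)$ is a solid torus since $t$ is unknotted, $T' \setminus N(s)$ is a handlebody since the tangle is free, hence $s$ is trivial in $T'$. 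The passage from $T'$ (meeting $\partial B$ in an annulus) to a solid torus meeting $\partial B$ in a single disk is then an elementary push across a collar of a spanning arc of that annulus. Cite Gordon's theorem in place of your two-step sketch and the argument closes.
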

\begin{proof}
Assume $s$ is $\mu$-primitive in $B$. Then there is a trivial string $t$ in $B$, disjoint from $s$ and where $(B, s\cup t)$ is a free tangle. Let $T'=B-int N(t)$\footnote{For a manifold $X$ smoothly embedded in the manifold $Y$, we denote by $N(X)$ the regular neighborhood of $X$ in $Y$.}. As $t$ is trivial in $B$ we have that $T'$ is a solid torus and, from Theorem 1' in \cite{Gordon}, $s$ is trivial in $T'$. Consider the annulus $A=\partial B\cap \partial T'$. Let $D'$ be a disk in $A$ that is a regular neighborhood of an arc in $A$ connecting the two boundary components of $A$. We have that $A-int D'$ is also a disk $D$. Consider a regular neighborhood of $D'$ in $T'$ and isotope $\partial T'$, along the neighborhood of $D'$, away from $D'$. We are left with a solid torus $T$ in $B$, intersecting $\partial B$ at the disk $D$. Furthermore, the complement of $T$ in $B$ is also a solid torus, it is a $1$-handle attached to a ball, and $s$ is trivial in $T$.\\
Assume now that $s$ is a trivial string in a solid torus $T$ in $B$ intersecting $\partial B$ in a single disk and whose complement in $B$ is also a solid torus. Take a meridian disk $L$ of the complement of $T$ in $B$ not intersecting $S$. Add the $2$-handle with core $L$ to $T$. We have that $R=N(L)\cup T$ is a ball intersecting $\partial B$ in a single disk. So, the complement of $R$ in $B$ is a ball. We isotope $\partial R$ to $\partial B$ along this ball, and from $T$ we obtain the solid torus $T'$, and from the disk $L$ we obtain the disk $L'$. We have that $\partial T'\cap \partial B$ is an annulus and the complement of $T'$ in $B$ is the cylinder $N(L')$, where $N(L')$ intersects $\partial B$ in two disks. Let $t$ be the co-core arc of $N(L')$. Hence, as $T'$ is a solid torus, $t$ is a trivial string in $B$. Also, $N(t)=N(L')$ and $s$ is trivial in the complement of $N(t)$. Therefore, $(B, t\cup s)$ is a free tangle, and $s$ is $\mu$-primitive.
\end{proof}

Consider an unknotting tunnel system of $K$, $\{t_1,t_2\}$, and the respective union of regular neighborhoods to be $V=N(K)\cup(N( t_1)\cup N( t_2))$. So, $W=S^3-int V$ is a handlebody and $S^3=V\cup W$ is a genus three Heegaard decomposition of $S^3$. Taking $K\cup t_1\cup t_2$ in general position with respect to $S$, we can assume that $S\cap V$ is a collection of essential disks: $S\cap V=D_1^*\cup \cdots \cup D_{n_1}^*\cup D_1\cup\cdots\cup D_{n_2}$, where $D_i^*$ , $i=1,\ldots,n_1$, are the disks of $S\cap V$ intersecting $K$. Let $\mathcal{D}^*=D_1^*\cup \cdots\cup D_{n_1}^*$ and $\mathcal{D}=D_1\cup \cdots\cup D_{n_2}$.

\begin{lem}\label{strings parallel to boundary}
\begin{itemize}
\item[]
\item[(a)] There is no $2$-sphere in $V$ defining a tangle decomposition of $K$ isotopic to the one defined by $S$.
\item[(b)] Let $C$ be a component of $V-V\cap S$ that intersects $K$. Then $C\cap K$ is parallel to the boundary of $C$.
\end{itemize}
\end{lem}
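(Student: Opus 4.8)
The plan is to exploit two structural facts about the genus three handlebody $V=N(K\cup t_1\cup t_2)$: that it is irreducible, and that $K$ is the core of its solid-torus summand $N(K)$. Throughout I keep $K\cup t_1\cup t_2$ fixed and argue by contradiction. For part (a), suppose some $2$-sphere $S'$ contained in $V$ defines a tangle decomposition isotopic to the one defined by $S$; then both resulting tangles are free and essential. Since $V$ is a handlebody it is irreducible, so $S'$ bounds a ball $B_0\subset V$, and one of the two tangles is $(B_0,\mathcal{T}_0)$ with $\mathcal{T}_0=B_0\cap K$ consisting of two arcs of the core $K$. I would then put $S'$ in general position with respect to $\partial N(K)$ and remove the intersection circles by innermost-disk and outermost-arc arguments: a trivial circle of $S'\cap\partial N(K)$, together with the disk it cuts off on $S'$, bounds a ball in $V$ by irreducibility, which can be used to reduce $|S'\cap\partial N(K)|$, while an essential circle must be a meridian of $N(K)$ bounding an honest meridian disk of $N(K)$ (a longitude, or a meridian bounding a disk in $\overline{V-N(K)}$, is impossible, since such a curve is homotopically non-trivial in $\overline{V-N(K)}$ as it links the core $K$). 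Once $S'$ meets $N(K)$ only in meridian disks, $B_0\cap N(K)$ is a union of sub-cylinders, and the two strings of $\mathcal{T}_0$ are cores of two of them; from here the two strings are parallel in $B_0$, contradicting Lemma \ref{no parallel strings}, since $(B_0,\mathcal{T}_0)$ is essential and free.

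For part (b), recall $V\cap S=\mathcal{D}^*\cup\mathcal{D}$ is a system of essential disks, with $\mathcal{D}^*$ the disks meeting $K$. Since $K$ is the core of $N(K)$ and meets $S$ transversally, after the general-position isotopy each disk of $\mathcal{D}^*$ may be taken to be a meridian disk of $N(K)$ meeting $K$ in a single point. A component $C$ of $V-V\cap S$ that meets $K$ therefore contains a sub-cylinder of $N(K)$ obtained by cutting the solid torus along these meridian disks, and each arc of $C\cap K$ is the core of such a sub-cylinder. The core of a cylinder $D^*\times I$ is parallel to an arc on its lateral boundary $\partial D^*\times I\subset\partial N(K)$, and this lateral boundary lies in $\partial C$; since the feet of the tunnels $N(t_1),N(t_2)$ meet $\partial N(K)$ in a bounded collection of disks, the radial direction of the parallelism can be chosen to avoid them. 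Hence each arc of $C\cap K$ is parallel to an arc of $\partial C$, as claimed.

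The main obstacle is the intersection analysis in part (a): controlling $S'\cap\partial N(K)$ so as to force the two strings to become parallel. The delicate points are ruling out essential non-meridional circles and handling the planar pieces of $S'$ that run through the tunnel handles $N(t_1)$ and $N(t_2)$, which a priori could arrange the two sub-cylinders so that their cores are not visibly parallel. It is exactly here that irreducibility of $V$, together with the fact that a meridian of $N(K)$ remains essential in the complement of the core, must be used carefully to either absorb those handle pieces or produce a compression of the planar surface $\partial B_0-\mathcal{T}_0$; an unfavorable configuration would obstruct the reduction if not treated with care, whereas part (b) is comparatively routine once the disks of $\mathcal{D}^*$ are normalized to meridian disks.
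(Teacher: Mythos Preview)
Your plan diverges from the paper's, and both parts have genuine gaps.

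For part (b), the assertion that ``each disk of $\mathcal{D}^*$ may be taken to be a meridian disk of $N(K)$ meeting $K$ in a single point'' is false in this setting. The disks of $\mathcal{D}^*$ are whatever essential disks $S$ happens to cut in $V$; they can meet $K$ any number of times, and indeed the paper later analyzes at length the cases $n_1=1,2,3$, in which some disk of $\mathcal{D}^*$ meets $K$ in two or four points. Since the statement of (b) concerns the \emph{given} components $C$ of $V-V\cap S$, you are not free to isotope $S$ so as to change those disks. Once the meridian-disk hypothesis is removed, the sub-cylinder picture is no longer available and the argument collapses.

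For part (a), even granting the reduction of $S'\cap N(K)$ to four meridian disks, the conclusion ``the two strings are parallel in $B_0$'' does not follow. Each string $s_i$ is the core of a sub-cylinder $C_i\subset N(K)\cap B_0$ and is therefore parallel to an arc on the lateral boundary $\partial N(K)\cap C_i$; but that lateral boundary lies in the \emph{interior} of $B_0$, since the tunnel handles are glued along disks on $\partial N(K)$. So you have shown neither that $s_i$ is trivial in the $2$-string tangle $(B_0,\mathcal{T}_0)$ nor that $s_1$ and $s_2$ are parallel to each other: the tunnel pieces of $B_0$ can join $C_1$ to $C_2$ in an arbitrary pattern. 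You correctly flag this as the main obstacle, and it is a real one; I do not see how to close it along these lines.

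The paper handles both parts with a single device that your outline circles around but never invokes: since $K$ is the core of one handle of $V$, there is an embedded annulus $A\subset V$ with $\partial A=K\cup b$ for some simple closed curve $b\subset\partial V$. One minimizes $|A\cap S|$ and analyzes the arcs and circles of $A\cap S$. For (a), with $S\subset V$, the analysis forces each string of the tangle in the ball $B\subset V$ bounded by $S$ to cobound a disk of $A\cap B$ with a single arc on $S$, so both strings are trivial in $(B,B\cap K)$, contradicting essentiality directly (Lemma~\ref{no parallel strings} is not used here). For (b), once closed curves and arcs with both ends on $K$ are ruled out, each arc of $C\cap K$ lies on the boundary of a disk component of $A-A\cap S$ properly embedded in $C$, which is exactly the parallelism to $\partial C$. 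The annulus $A$ encodes the global parallelism of $K$ to $\partial V$; working instead with $\partial N(K)$ and meridian disks discards precisely this information.
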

\begin{proof}
As $V=N(K)\cup N(t_1)\cup N(t_2)$ there is an annulus $A$ in $V$ with $\partial A=K\cup b$, where $b$ is a simple closed curve in $\partial V$ in general position with $S\cap V$.
As $K\cap \mathcal{D}^*$ is non-empty, $A\cap S$ is also non-empty. Assume that $|A\cap S|$ is minimal.\\ 
First assume that some arc $\gamma$ of $A\cap S$ has both ends in a string $s$ from the tangle decomposition, and also that $\gamma$ co-bounds a disk in $A$ together with the string $s$, that intersects $S\cap V$ only at $\gamma$. As $\gamma$ is in $S$, we have that $s$ is trivial in the respective tangle decomposition, which contradicts the tangle decomposition being essential.\\ 
 Suppose $A\cap S$ contains a simple closed curve $c$ essential in $A$. Then $K$ is isotopic to $c$. As $c$ is a simple closed curve in $S$ it bounds a disk in $S$. Therefore, in this case, $K$ would be unknotted, which is a contradiction. Therefore, if $c$ is a simple closed curve of $A\cap S$ then $c$ bounds a disk in $A$.\\

(a) Suppose there is a $2$-sphere in $V$ defining a tangle decomposition isotopic to the one defined by $S$, and, abusing notation, denote it also by $S$. Hence, $S\subset V$. So, there cannot be arcs of $A\cap S$ between $K$ and $b$. Let $B$ be the ball bounded by $S$ in $V$. Suppose $A\cap S$ contains some simple closed curve $c$. As observed before, $c$ bounds a disk $D$ in $A$. Suppose that $c$ is an innermost simple closed curve of $A\cap S$ in $A$. Then, $D$ intersects $S$ only at $c$. As $S\subset V$, if $c$ bounds a disk $S$ disjoint from $K$ we can reduce $|A\cap S|$, which is a contradiction to the minimality of $|A\cap S|$. Otherwise, both disks bounded by $c$ in $S$ intersect $K$, which contradicts the surface $S-int N(K)$ being essential in $S^3-N(K)$. Then, $A\cap S$ contains no simple closed curves. Then, from the previous observations, the components of $A\cap B$ are two disks co-bounded by the strings of the tangle in $B$ and two arcs of $A\cap S$. As each disk of $A\cap B$ intersects $S$ only at a single arc in its boundary, we have that both strings of the tangle $(B, B\cap K)$ are trivial, which is a contradiction to the tangle decomposition defined by $S$ being essential.\\
\begin{figure}[htbp]
\centering
\includegraphics[width=0.24 \textwidth]{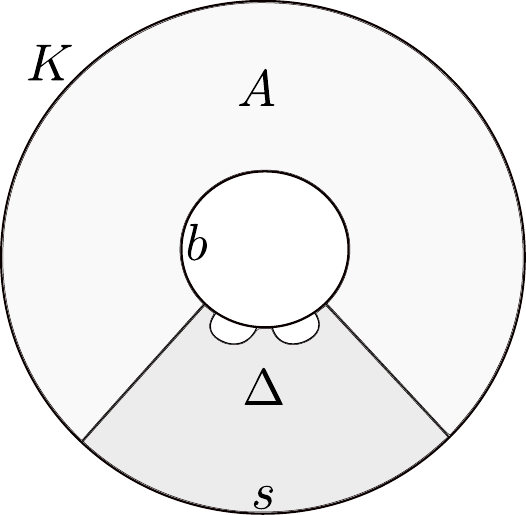}
\caption{: An annulus $A$ in $V$ with boundary being $K$ and a curve $b$ in the boundary of $V$. The disk $\Delta$ is a disk in the component $C$ of $V-V\cap S$, with boundary being the string $s$ and a curve in the boundary of $C$.}
\label{paralleltoboundary.pdf}
\end{figure}

(b) To prove part (b) of this lemma we just need to prove that $A\cap S$ contains no simple closed curves, and that no arc of $A\cap S$ has both ends in ends of strings from the tangle decomposition.\\
Assume now $A\cap S$ contains a simple closed curve, $c$. As observed before, $c$ bounds a disk $D$ in $A$; suppose that it is an innermost curve with this property. Let $L$ be the disk bounded by $c$ in $S\cap V$. If $L$  doesn't intersect $K$ then we can reduce $|A\cap S|$, which contradicts the minimality of $|A\cap S|$. If $L$ intersects $K$ in less than four points then $D$ contradicts the tangle decomposition defined by $S$ being essential. If $L$ intersects $K$ in four points then the tangles decomposition defined by $D\cup L\subset V$ and $S$ are isotopic, which is a contradiction to (a). Then $A\cap (S\cap V)$ contains no simple closed curve.\\
From the previous arguments all arcs of $A\cap S$ either have both ends in $b$ or one end in $b$ and the other at an end of a string in $C$. Also, as $A$ is in general position with $S\cap V$, each string end is attached to a single arc of $A\cap S$. Let $C$ be a component of $V-V\cap S$ that intersects $K$. Then each string $s$ of $C$ belongs to the boundary of a properly embedded disk component of $A-A\cap S$ in $C$, disjoint from the other string components in $C$, as in Figure \ref{paralleltoboundary.pdf}. Therefore, all components of $C\cap K$ are independently parallel to the boundary of $C$, which gives us the statement (b) of the lemma.
\end{proof}

Considering the previous lemma and that all $2$-spheres in $S^3$ intersect $K$ an even number of times, no disk of $\mathcal{D}$ is parallel to a disk of $\mathcal{D}^*$ in $V$.\\

From the work of Ozawa \cite{Oz}, we know that if a knot has an essential $2$-string free tangle decomposition then this decomposition is unique up to \textit{isotopy}, and, furthermore, $K$ is $n$-string prime for $n\neq 2$. (In particular, $K$ is prime.) This is a result frequently throghout this paper, and we refer to it as Ozawa's unicity theorem.\\
We assume the unknotting tunnel system and the tangle decomposition defined by $S$ up to isotopy are such that $S\cap V$ is a collection of disks with minimum cardinality $|S\cap V|$\footnote{For a topological space $X$, $|X|$ denotes the number of connected components of $X$.}. From Lemma \ref{strings parallel to boundary} and the minimality of $|S\cap V|$, we can assume that all disks $S\cap V$ are essential in $V$. As $S$ decomposes $K$ in two $2$-string tangles we have $n_1\leq 4$. If $n_1\geq 3$, we denote the string with one end in $D_i^*$ and the other end in $D_j^*$ by $s_{ij}$.\\

Let $P$ denote the planar surface $S\cap W$. By the minimality of $|S\cap V|$ and the incompressibility of $S-int(N(K))$ in $S^3-int(N(K))$, we have that $P$ is essential in $W$.\\
For a complete system of meridian disks\footnote{A \textit{complete system of meridian disks} of a handlebody $H$ is a collection of disks in $H$ whose complement is a ball.} of $W$, $\{E_1, E_2, E_3\}$, we write $E=E_1\cup E_2\cup E_3$. Considering $E$ and $P$ in general position, we choose $E$ such that $|P\cap E|$ is minimal between the complete systems of meridian disks of $W$.\\
By the incompressibility of $P$ and the minimality of $|P\cap E|$, no component of $P\cap E$ is a simple closed curve. Also, if an arc component of $P\cap E$ is a loop co-bounding a disk in $P$ disjoint from $P\cap E$, using this disk, we can change the complete system of meridian disks of $W$ to $E'$ with $|P\cap E|> |P\cap E'|$. This is a contradiction, and therefore $P\cap E$ is a collection of  essential arcs\footnote{An arc $\alpha$ in $P$ is \textit{essential} if the components closure of $P-\alpha$ doesn't contain any disk component.} in $P$.\\
With the arcs of $P\cap E$ we define a graph in $S$ that we denote by $G_P$: the vertices are the disks from $S\cap V$, each of which corresponds to a boundary component of $P$, and the edges are the arcs $P\cap E$. 
The graph $G_P$ is connected: in fact, if the graph $G_P$ is not connected then by cutting along a complete system of meridian disks of $W$ we can find a compressing disk for $P$ in $W$, which is a contradiction as $P$ is essential.
As $G_P$ is a connected graph in a $2$-sphere, from the arcs $P\cap E$ in $E$ we can create a sequence of isotopies of type A\footnote{See chapter 2 of \cite{Jaco} by Jaco for a definition of an \textit{isotopy of type A}, and section 2 of \cite{O} by Ochiai for a definition of the latter and also of an \textit{inverse isotopy of Type A}. \label{Ochiai}} over a sequence of arcs $\alpha_1, \alpha_2, \ldots, \alpha_m$ of $P\cap E$ such that the closure of the components of $P-\alpha_1\cup \cdots\cup\alpha_m$ is a collection of disks.\\ 

\begin{figure}[htbp]
\centering
\includegraphics[width=0.26\textwidth]{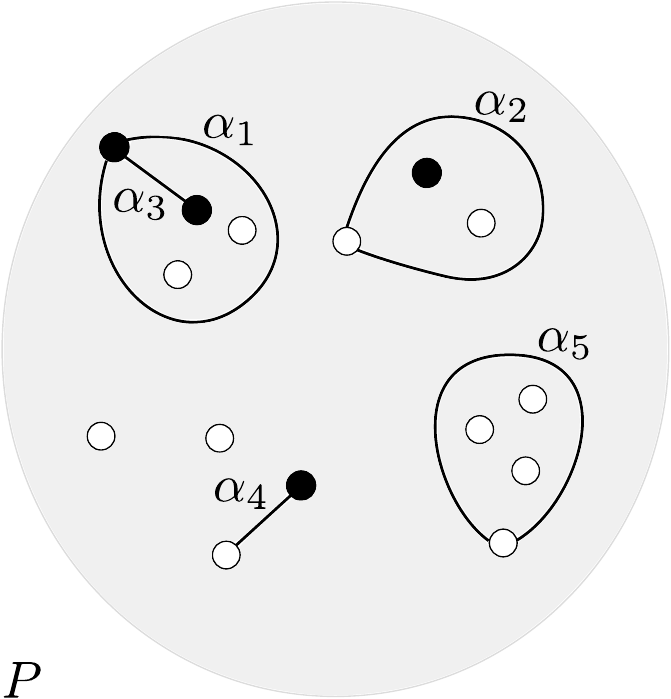}
\caption{: An illustration of some arc components of $E\cap P$ in $P$. The arc $\alpha_1$ (resp., $\alpha_2$) is a sk-arc (resp., st-arc). The arc $\alpha_3$ is a $\text{d}^*$-arc that is also a k-arc, and $\alpha_4$ is both a d-arc and a $\text{d}^*$-arc. Note also that the arc $\alpha_5$ is an example of a type I d-arc that is not a st-arc.}                
\label{typeofarcs.pdf}
\end{figure}

The vertices of $G_P$ associated to $\mathcal{D}$, resp. $\mathcal{D}^*$, are referred to as \textit{d-vertices}, resp. \textit{$\text{d}^*$-vertices}, and are illustrated as white disks, resp. dark disks.
Between the edges of $G_P$ it is useful to define some types of arcs as follows. (See Figure \ref{typeofarcs.pdf}.)
\begin{itemize}
\item[\textit{Type I}:] is an edge connected to a single vertex.
\item[\textit{Type II}:] is an edge connected to two distinct vertices.
\item[\textit{d-arc}:] is an edge with at least one end attached to a d-vertex.
\item[\textit{$\text{d}^*$-arc}:] is an edge with at least one end attached to a $\text{d}^*$-vertex.
\item[\textit{t-arc}:] is an edge of type II, $\alpha_i$, in a sequence of isotopies of type $A$ as above, connected to some d-vertex $D$ and where $\alpha_j$, $j<i$, is disjoint from $D$. (See Remark \ref{t-arc} and also Lemma 1 of \cite{O} by Ochiai.)
\item[\textit{k-arc}:] is a type II arc connecting two $\text{d}^*$-vertices.
\item[\textit{st-arc}:] is a type I d-arc separating $P$ into two planar surfaces, each with some boundary component of $\mathcal{D}^*$.
\item[\textit{sk-arc}:] is a type I $\text{d}^*$-arc separating $P$ into two planar surfaces, each with some boundary component of $\mathcal{D}^*$.
\end{itemize}

\begin{rem}\label{t-arc}
Suppose $\alpha$ is a type II d-arc with one end in the d-vertex associated with $D$. If one of the disks separated by $\alpha$ from $E$ intersects the disk $D$ only at the end of $\alpha$ in $D$, then all arcs of $E\cap P$ in this disk have no end in $D$. This implies that $\alpha$ is a t-arc. (See Figure \ref{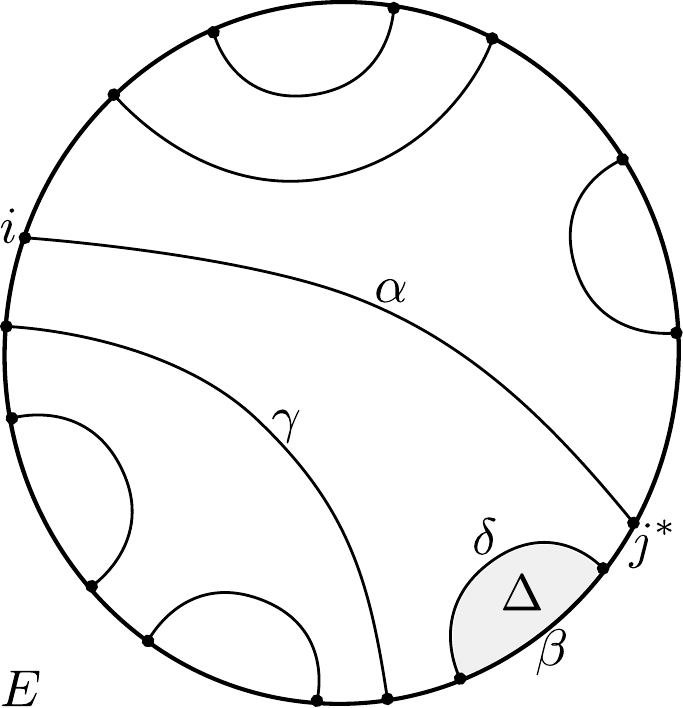}.) In Lemma \ref{property} we prove that such arcs cannot exist.
\end{rem}

\begin{figure}[htbp]
\centering
\includegraphics[width=0.26\textwidth]{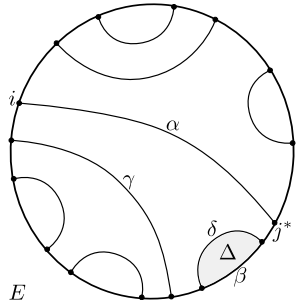}
\caption{: An illustration of arc components of $E\cap P$ in some component of $E$. If an arc of $E\cap P$ in $E$ has one end in the disk $D_i$, resp. $D_j^*$, then we label the end of the arc in $E$ by $i$, resp. by $j^*$. The ends of the arc $\alpha$ in the figure exemplify this notation. If all arcs in one of the disks separated by $\alpha$ from $E$ have no ends being $i$, then $\alpha$ is a t-arc.}
\label{typeofarcsinE.pdf}
\end{figure}

We say that an arc $\delta$ of $P\cap E$ is an \emph{outermost arc}, if $\delta$ separates a disk component $\Delta$ of $E-E\cap S$ from $E$. We have $\Delta \cap P=\delta$ and $\partial\Delta=\delta\cup\beta$ with $\beta\subset\partial E$. The disk $\Delta$ is said to be an \textit{outermost disk}. (See Figure \ref{typeofarcsinE.pdf}.) An outermost disk is said to be \textit{over} a component of $V-V\cap S$ if the correspondent arc $\beta$ is in the (boundary) of the component.\\

In the next lemma, we study the arcs $P\cap E$ in $P$ and in $E$ and obtain properties that give the base setting for these arcs along the work in this paper.

\begin{lem}\label{property}
\begin{itemize}
\item[]
\item[(a)] All outermost arcs are of type I.
\item[(b)] If $n$ is the number of vertices of $G_P$ then either $n=1$ and the graph $G_P$ has no edges, or $n\geq 3$ and at least two vertices of $G_P$ are not adjacent to edges of type I.
\item[(c)] No arc of $E\cap P$ is a t-arc.
\item[(d)] The outermost d-arcs of $E\cap P$ in $E$ are of type I.
\item[(e)] Each type I arc of $E\cap P$ is a st-arc or sk-arc.
\item[(f)] All d-vertices are adjacent to a st-arc.
\item[(g)] Each outermost arc of $E\cap P$ in $E$ is a st-arc or a sk-arc.
\end{itemize}
\end{lem}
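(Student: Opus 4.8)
The plan is to run everything off the two minimality hypotheses already in force — $|S\cap V|$ minimal among isotopic decompositions and $|P\cap E|$ minimal among complete meridian systems of $W$ — together with the fact that every arc of $P\cap E$ is essential in the planar surface $P$. I would prove (a) first and bootstrap most of the rest from it. Let $\delta$ be an outermost arc with outermost disk $\Delta\subset E$, so that $\partial\Delta=\delta\cup\beta$ with $\delta\subset P$, $\beta\subset\partial E\subset\partial V$, and $\operatorname{int}\Delta\cap S=\emptyset$ (because $\Delta\subset W$ and $S\cap W=P$). Since $\beta$ meets $\partial P$ only at its endpoints, it lies in a single face of $\partial V-\partial P$, and its endpoints lie on the two disks $D_a,D_b$ of $S\cap V$ containing the endpoints of $\delta$. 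If $\delta$ were of type II then $D_a\neq D_b$, and the band $N(\Delta)\cup N(\beta)$ would realize an isotopy of $S$ pushing the band of $P$ around $\delta$ across $\Delta$ into $V$; this band-sums $D_a$ and $D_b$ into a single disk and strictly lowers $|S\cap V|$, contradicting minimality. Hence every outermost arc is of type I, which is (a) and immediately gives (d).

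For the dichotomy in (b), note first that $n\geq1$ since $S$ meets $K$. If $n=1$ then $P$ is a disk, so it has no essential arcs and $G_P$ has no edges. If $n=2$ then $P$ is an annulus; connectivity of $G_P$ forces an edge, so some $E_i$ meets $P$ and carries an outermost disk whose arc is of type I by (a) — but a type I arc in an annulus cuts off a disk and is inessential, a contradiction, so $n\neq2$. For $n\geq3$ I would regard the type I edges as a disjoint (laminar) family of simple closed curves on the sphere $S$. Each such loop, being essential in $P$, has on each side at least one vertex other than its base, so every leaf region of the laminar family contains a vertex that carries no loop; since a nonempty laminar family on $S^2$ has at least two leaf regions — and if there are no loops at all then all $n\geq3$ vertices qualify — at least two vertices are not adjacent to edges of type I.

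The classification in (e) is where the knot re-enters. Given a type I loop at a vertex $D_v$ splitting $P$ into sides $R,R'$, I would suppose for contradiction that one side, say $R$, contains no $\text{d}^*$-vertex, so that $R\cap K=\emptyset$. Choosing such a loop with $R$ innermost and capping the sub-disk of the relevant $E_i$ it cuts off by $R$, one obtains a disk or sphere in $S^3$ disjoint from $K$ and meeting $\partial V$ in a controlled way; irreducibility together with the minimality of $|S\cap V|$ then forces a reduction of $|S\cap V|$, a contradiction. Thus both sides carry a $\text{d}^*$-boundary, so a type I d-arc is an st-arc and a type I $\text{d}^*$-arc is an sk-arc, giving (e); part (g) is then (a) combined with (e). For (f) I would use (e) to reduce the claim to showing that each d-vertex carries at least one loop: a loop based at a pure d-vertex is a d-arc but not a $\text{d}^*$-arc, so (e) makes it automatically an st-arc. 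Such a loop I would produce by analyzing the component $C$ of $V-V\cap S$ adjacent to the d-vertex $D$, invoking Lemma \ref{strings parallel to boundary} and the established fact that no disk of $\mathcal D$ is parallel to a disk of $\mathcal D^*$; if $D$ had only type II edges, the outermost disk over $C$ (type I by (a)) would force such a parallelism or a further reduction of $|S\cap V|$.

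The hardest step is (c), the non-existence of t-arcs, and it is the one part that genuinely requires the isotopy-of-type-A machinery of Jaco and Ochiai rather than the outermost-disk bookkeeping above. I would assume a t-arc exists, invoke Remark \ref{t-arc} to extract a sub-disk of $E$ meeting the d-vertex $D$ only at the offending endpoint, and then perform an (inverse) type A isotopy in the spirit of Lemma 1 of \cite{O} that slides the meridian disk past this configuration and strictly reduces $|P\cap E|$, contradicting its minimality. The delicate point on which the whole lemma turns is to arrange this isotopy so that it lowers $|P\cap E|$ while preserving completeness of the meridian system and without reintroducing inessential arcs.
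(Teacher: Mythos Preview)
Your treatment of (a), (b) and (g) is essentially the paper's, but you have the logical architecture of the lemma inverted: in the paper, (c) is the engine that drives (d), (e) and (f), whereas you try to get (d), (e), (f) by independent arguments and treat (c) as an afterthought. This causes concrete gaps.

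First, (d) does \emph{not} follow from (a). An ``outermost d-arc'' means an arc that is outermost in $E$ \emph{among the d-arcs}, not an outermost arc that happens to be a d-arc; there may well be $\text{d}^*$-arcs further out. The paper's one-line proof is: if such an arc were of type II, one of the two sub-disks it cuts from $E$ meets the d-vertex $D$ only at that single endpoint, so by Remark~\ref{t-arc} it is a t-arc, contradicting (c). Your claim that (a) ``immediately gives (d)'' is simply false under the intended reading.

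Second, your proofs of (e) and (f) are not arguments but sketches of wishes. For (e) the paper's proof is again one line from (c): if one side $F$ of a type I loop contained only d-vertices, any arc in $F$ with an end in one of those d-vertices that is outermost in $E$ would be a t-arc. Your proposed ``cap off with the sub-disk of $E_i$ and invoke irreducibility'' does not produce a sphere or a compressing disk in any evident way --- the sub-disk of $E_i$ on that side of the loop has many arcs of $P\cap E$ on its boundary, not just the one loop. Similarly for (f): if a d-vertex $D$ had only type II edges, the outermost one in $E$ among those edges is a t-arc, contradicting (c); your appeal to Lemma~\ref{strings parallel to boundary} and non-parallelism of $\mathcal D$ with $\mathcal D^*$ does not obviously yield anything.

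Finally, for (c) itself you aim at the wrong minimality. Type A isotopies (and their inverses) in the sense of Jaco--Ochiai are isotopies of the \emph{surface} $S$, not of the meridian system $E$; Ochiai's Lemma~1 produces an isotopy of $S$ in $S^3-K$ to a sphere $S'$ with $|S'\cap V|<|S\cap V|$, contradicting the minimality of $|S\cap V|$. Your plan to ``slide the meridian disk'' to lower $|P\cap E|$ is not what that machinery does, and since the minimality of $|P\cap E|$ is only over choices of $E$ with $S$ fixed, moving $S$ cannot contradict it. Once you accept the correct target for (c), parts (d), (e), (f) become immediate corollaries as in the paper, and your separate attempts at them become unnecessary.
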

\begin{proof}\text{}

(a) Suppose some outermost arc is of type II. Then, proceeding with an isotopy of type A along the respective outermost disk we can reduce $|S\cap V|$, which is a contradiction to the minimality of $|S\cap V|$.

(b) If $n=1, 2$ and there is some loop in $G_P$, the outermost loop co-bounds a disk in $P$. Furthermore, if $G_P$ has no loops and $n=2$ then the outermost arcs of $E\cap P$ in $E$ are of type II, which is a contradiction to (a).
If $n\geq 3$ and at most one vertex is not adjacent to a loop, then one outermost loop co-bounds a disk in $P$.
In both cases we contradict the fact that all edges of $G_P$ are essential in $P$.

(c) If there is a t-arc, then by a sequence of isotopies of type A followed by a sequence of inverse isotopies of type A, as in Lemma 1 of \cite{O} by Ochiai, we can ambient isotope $S$, in the exterior of $K$, to some $2$-sphere $S'$ where $|S\cap V|>|S'\cap V|$. This is a contradiction to the minimality of $|S\cap V|$.

(d) If an outermost d-arc of $E\cap P$ in $E$ is of type II then it is a t-arc, which is a contradiction to (c). Therefore, the outermost d-arcs of $E\cap P$ in $E$ are of type I.

(e) Let $\alpha$ be a type I arc of $E\cap P$. As $\alpha$ is essential in $P$ it separates $P$ into two components that are not disks. If one of these components, say $F$, only contains boundary components corresponding to d-vertices  there is some arc of $E\cap P$ in $F$ that is a t-arc, which contradicts (c).

(f) Assume there is a d-vertex $D$ that is only adjacent to edges of type II. Then there is a t-arc with respect to $D$ (choose an outermost arc, in $E$, between the edges of type II attached to $D$), which is a contradiction to (c). Hence, there is at least one edge of type I with ends in $D$, and from (e) it is a st-arc.

(g) From (a) the outermost arcs are of type I, and from (e) the type I arcs are st or sk-arcs.
\end{proof}

\section{Outermost disks over ball components of $V-V\cap S$}\label{over balls}
In this section we study the case when there is an outermost disk over some ball component of $V-V\cap S$, as in Lemma \ref{no beta in ball}. We also have presented other crucial lemmas relating ball components of $V-V\cap S$ and certain disks of $E-E\cap P$, together with the next lemma where we show several properties of tangles obtained from balls in $B_1$ or $B_2$. 

\begin{lem}\label{inner ball}
Suppose there is a ball $Q$ in one of the tangles defined by $S$ that intersects each string of the tangle in a single arc.
\begin{itemize}
\item[(a)] Let $Q^c$ denote the complement of $Q$ in $S^3$. The tangle $(Q^c, Q^c\cap K)$ is essential.
\item[(b)] If one of the strings of $Q\cap K$ is unknotted in $Q$ then either the tangle $(Q, Q\cap K)$ is trivial or some string of some tangle defined by $S$ is unknotted.
\item[(c)] Suppose both strings of the tangle are in $Q$ and have ends in one or two disk components of $Q\cap S$. Then the tangle $(Q, Q\cap K)$ is essential.
\item[(d)] If a ball component of $V-S\cap V$ contains a string with both ends in the same component of $\mathcal{D}^*$, then some string of some tangle is unknotted.
\end{itemize}
\end{lem}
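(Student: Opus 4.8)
The plan is to run all four parts from a common toolkit: the primeness of $K$ furnished by Ozawa's unicity theorem (so that every $2$-sphere meeting $K$ transversally in two points bounds a ball in which $K$ is a boundary-parallel arc), the non-parallelism of the strings of an essential free tangle (Lemma~\ref{no parallel strings}), the boundary-parallelism $C\cap K\simeq\partial C$ of Lemma~\ref{strings parallel to boundary}(b), and the fact that an essential tangle carries no trivial string, with innermost-circle/outermost-arc exchanges against the essential punctured sphere $S-K$ as the recurring mechanism. For part (a), note that since $Q$ meets each string in a single arc, $Q^c\cap K$ is a pair of arcs and $(Q^c,Q^c\cap K)$ is a $2$-string tangle whose boundary $\partial Q-K$ is a four-punctured sphere. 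I would argue by contradiction: if this tangle is inessential then $\partial Q-K$ either compresses or $\partial$-compresses in $Q^c-K$; take the corresponding disk $D$ and isotope it to meet $S$ minimally. Using the incompressibility and $\partial$-incompressibility of $S-K$ inside each of the essential tangles $(B_1,\mathcal{T}_1)$ and $(B_2,\mathcal{T}_2)$, I remove innermost circles and outermost arcs of $D\cap S$, pushing $D$ off $S$ into a single side. A genuine compression then caps off along $\partial Q$ to a $2$-sphere meeting $K$ in two points; primeness produces a boundary-parallel arc, which either reduces $|S\cap V|$ or exhibits a trivial string in a tangle defined by $S$, contradicting minimality or essentiality. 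A $\partial$-compression directly yields a trivial string of $Q^c\cap K$, again contradicting essentiality. Clearing $D\cap S$ is the technical heart here.

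For part (b), let $D$ be a disk realizing the unknottedness of a string $s_1$ of $Q$, so $\partial D=s_1\cup\gamma$ with $\gamma\subset\partial Q$, and choose $D$ with $|D\cap s_2|$ minimal, where $s_2$ is the other string. If $D\cap s_2=\emptyset$ then $s_1$ is trivial in the full tangle and can be isotoped into $\partial Q$; analysing the residual position of $s_2$ via Theorem~1' of \cite{Gordon} shows either that $(Q,Q\cap K)$ is trivial or that $s_2$ is knotted, and in the knotted case the boundary-parallelism of $C\cap K$ promotes the relevant subarc to an unknotted string of a tangle defined by $S$. If $D\cap s_2\neq\emptyset$, an outermost arc of $D\cap s_2$ in $D$ displays a subarc of $s_2$ parallel to $\partial Q$; resolving along it lowers $|D\cap s_2|$ unless this subarc already certifies an unknotted string of $\mathcal{T}_1$ or $\mathcal{T}_2$. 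Upgrading ``unknotted in $Q$'' to ``unknotted as a string of a tangle defined by $S$'' is the step I expect to demand the most care.

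For part (c), both strings lie in $Q$, so $Q\cap K=\mathcal{T}_1$, the frontier of $Q$ in $B_1$ is disjoint from $K$, and $Q\cap S$ is one or two disks. Assuming $(Q,Q\cap K)$ inessential, pick a compressing or $\partial$-compressing disk $D$ for $\partial Q-K$ in $Q-K\subset B_1-\mathcal{T}_1$. Because $Q\cap S$ consists of disks and the frontier misses $K$, I would push $\partial D$ across those disks onto $S$, converting $D$ into a compressing or $\partial$-compressing disk for $S-K$ in $B_1-\mathcal{T}_1$ and contradicting the essentiality of the decomposition defined by $S$; the one-or-two-disk hypothesis is precisely what makes this transfer possible.

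Finally, part (d) is the most direct. Let $C$ be the ball component containing a string $s$ of $K$ with both ends on a single disk $D_i^*$ of $\mathcal{D}^*$. By Lemma~\ref{strings parallel to boundary}(b) there is a disk $\Delta\subset C$ with $\partial\Delta=s\cup\beta$, $\beta\subset\partial C$, disjoint from the remaining strings of $C$. Since the two endpoints of $s$ lie in the disk $D_i^*\subset S$, the loop formed by $\beta$ and an arc of $D_i^*$ joining them bounds a disk in the sphere $\partial C$, so $\beta$ may be isotoped into $D_i^*$. Then $\Delta$ is a disk in the tangle containing $s$ whose boundary is $s$ together with an arc of $S$, disjoint from the other string; hence $s$ is trivial and, in particular, unknotted, as required.
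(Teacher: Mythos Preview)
Your plan for (d) is essentially the paper's. For (c) there is a small misreading: the hypothesis is that the string \emph{ends} lie in one or two disk components of $Q\cap S$, not that $Q\cap S$ itself has at most two components. The paper first isotopes the puncture-free components of $Q\cap S$ off $S$ to obtain a ball $Q'$ with $Q'\cap S$ genuinely one or two disks; after that reduction your frontier-pushing idea can be made to work, though the paper instead argues directly that triviality of $(Q',Q'\cap K)$ would force either both strings of $\mathcal{T}_1$ to be unknotted (one-disk case, via Theorem~1' of \cite{Gordon}) or $\partial Q'$ to be isotopic to $S$ (two-disk case), each a contradiction.

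The substantive divergence is in (a) and (b). In (a) your compression branch is off track: $Q$ is an arbitrary ball in $B_1$ with no relation to $V$, so nothing here can ``reduce $|S\cap V|$'', and capping a compressing disk to a twice-punctured sphere followed by primeness does not by itself produce a contradiction. The paper's route is both shorter and avoids the compression/$\partial$-compression split: since $Q\subset B_1$, each string $s'$ of $Q^c\cap K$ contains exactly one string $s$ of $\mathcal{T}_2$; if $(Q^c,Q^c\cap K)$ is inessential then, being a $2$-string tangle, it has a trivial string $s'$, and an innermost-arc argument cuts the trivializing disk along $S$ down to a sub-disk exhibiting $s$ as trivial in $(B_2,\mathcal{T}_2)$, contradicting essentiality. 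Your $\partial$-compression branch was heading toward this but stopped at ``trivial string of $Q^c\cap K$'' without transferring it to $\mathcal{T}_2$.

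For (b) you miss the paper's one-line argument entirely. Once (a) is in hand, if $(Q,Q\cap K)$ is essential then $\partial Q$ defines an essential $2$-string tangle decomposition of $K$; by Ozawa's unicity theorem this is isotopic to the decomposition defined by $S$, so the unknotted string of $(Q,Q\cap K)$ corresponds to an unknotted string of some $(B_i,\mathcal{T}_i)$. Otherwise $(Q,Q\cap K)$ is trivial and we are done. Your disk-surgery on $D\cap s_2$ is far more laborious, and the step you flag as delicate---promoting ``unknotted in $Q$'' to ``unknotted in some tangle defined by $S$''---is exactly what Ozawa's theorem delivers for free.
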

\begin{proof}\text{}\\
Assume that the tangle containing $Q$ is $(B_1, \mathcal{T}_1)$.

(a) Suppose that $(Q^c, Q^c\cap K)$ is inessential. As this tangle contains only two strings, both strings are trivial in it. Let $s'$ be an arc component of $Q^c\cap K$, and $D'$ be a disk in $Q^c$ with interior disjoint from $K$ and with boundary being the union of $s'$ and an arc in $\partial Q^c$. Let $s$ be the string from the tangle decomposition defined by $S$ that is a subset of $s'$. So, $s$ is a string of the tangle $(B_2, \mathcal{T}_2)$. As $s'$ contains only the string $s$ of $K-S\cap K$, we have that $\partial D'$ intersects $S$ only at two points, which are the end points of $s$. Considering a minimal collection $D'\cap S$ and following an innermost curve or arc type of argument, we can prove that $D'\cap S$ is a single arc $a$ with ends being the ends of $s\subset \partial D'$. Let $D$ be the disk in $D'$ with boundary defined by the arcs $a\subset S$ and $s$. Then $D$ is in the tangle $(B_2, \mathcal{T}_2)$ and the interior of $D$ doesn't intersect $S$. Therefore, the string $s$ is trivial in $(B_2, \mathcal{T}_2)$, which is a contradiction to the tangle decomposition defined by $S$ being essential.

(b) Assume that one of the strings of $Q\cap K$ is unknotted in $B$. If the tangle $(Q, Q\cap K)$ is essential then, from (a), the $2$-sphere $\partial Q$ defines a $2$-string essential tangle decomposition of $K$. By Ozawa's unicity theorem, the tangle decompositions given by $S$ and $\partial Q$ are isotopic. Hence, as one string of $(Q, Q\cap K)$ is unknotted, some string of some tangle defined by $S$ is also unknotted. Otherwise, the tangle $(Q, Q\cap K)$ is trivial.

(c) Suppose the tangle $(Q, Q\cap K)$ is trivial. Let $Q'$ be obtained from $Q$ after we isotope away from $S$ in $B_1$ the components of $Q\cap S$ that don't contain any string ends. If $Q'$ intersects $S$ in a disk with all string ends in it, as the strings are trivial in $Q'$ they are both unknotted in $(B_1, \mathcal{T}_1)$. From Theorem 1' in \cite{Gordon}, this is a contradiction to the tangle $(B_1, \mathcal{T}_1)$ being essential. Otherwise, if $Q'$ intersects $S$ in two disks that also contain the strings ends in them. As the tangle $(B_1, \mathcal{T}_1)$ is free, following an argument as in Lemma \ref{no parallel strings}, the complement of $Q'$ in $B_1$ is a solid torus.  Then $\partial Q'$ is ambient isotopic to $S$ in $S^3-K$, which is also a contradiction to the tangle $(B_1, \mathcal{T}_1)$ being essential. So, the tangle $(Q, Q\cap K)$ is essential.

(d) Suppose there is a ball component $C$ of $V-V\cap S$ containing a string $s$ with both ends in the same component of $\mathcal{D}^*$. From Lemma \ref{strings parallel to boundary}, the tangle $(C, C\cap K)$ is trivial. Consequently, the string $s$ is trivial in $C$. As the ends of $s$ are in the same disk of $C\cap S$, it is also unknotted in the respective tangle defined by $S$.
\end{proof}




\begin{lem}\label{no beta in ball}
If there is an outermost disk over a ball component of $V-(S\cap V)$ then some string of some tangle is unknotted.
\end{lem}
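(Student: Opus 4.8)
The plan is to reduce to Lemma~\ref{inner ball}. Fix an outermost disk $\Delta$ over a ball component $C$ of $V-(S\cap V)$, with $\partial\Delta=\delta\cup\beta$, where $\delta=\Delta\cap P$ and $\beta\subset\partial C\cap\partial V$. Since $\Delta\subset W$ and $\Delta\cap S=\delta\subset\partial\Delta$, the interior of $\Delta$ misses $S$, so $\mathrm{int}\,\Delta$ lies entirely on one side of $S$; as $\beta$ sits on $\partial C$ and $C$ lies in one of the two balls, say $C\subset B_1$, the whole of $\Delta$ lies in $\overline{B_1}$. By Lemma~\ref{property}(a),(g) the arc $\delta$ is of type I and is a st-arc or a sk-arc; in either case both endpoints of $\delta$ (hence of $\beta$) lie on a single disk $D$ of $S\cap V$, which is therefore a disk face of the ball $C$. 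Finally, by Lemma~\ref{strings parallel to boundary}(b), every string of $C\cap K$ is parallel to $\partial C$, a fact I will use to control the arcs inside $C$.

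Next I would push $S$ across $\Delta$ by the type A isotopy determined by $\Delta$. Because $\delta$ is of type I (both ends on the same face $D$), this move does not decrease $|S\cap V|$, consistently with its minimality, but reroutes the sheet of $S$ near $D$ along $\beta$ through the ball $C$. The aim of this step is bookkeeping: to identify, in the component of $V-(S\cap V)$ obtained from $C$ after the move, a string of $K$ whose two endpoints lie on a common disk of $\mathcal{D}^*$. In the sk-arc case $D$ is a disk of $\mathcal{D}^*$, and the reconnection prescribed by $\beta$, together with the boundary-parallelism of the strings in $C$, merges the two relevant $\mathcal{D}^*$-faces so that a single string acquires both ends on one disk of $\mathcal{D}^*$. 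In the st-arc case $D$ is a disk of $\mathcal{D}$, the move is an isotopy rel $K$ of the handle structure of $V$, and the analogous reconnection exhibits a string of the modified ball component as running between the two ends of one disk of $\mathcal{D}^*$.

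With such a string in hand, the conclusion is immediate from Lemma~\ref{inner ball}(d): a ball component of $V-(S\cap V)$ carrying a string with both ends in the same component of $\mathcal{D}^*$ forces some string of some tangle to be unknotted. Alternatively, if the move instead displays a string of a ball $Q=C\cup N(\Delta)\subset B_1$ as a trivial arc meeting a single string of $\mathcal{T}_1$ once, then Lemma~\ref{inner ball}(b) gives that either $(Q,Q\cap K)$ is trivial, whence, tracing the parallelism, a string is unknotted, or some string of some tangle is unknotted. In every case some string of some tangle is unknotted, as claimed.

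The main obstacle I anticipate is precisely the bookkeeping of the middle paragraph: verifying that the type A isotopy across a type I outermost disk over a ball genuinely produces a ball component whose strings meet $\mathcal{D}^*$ in the asserted way, rather than some more complicated reconnection. This requires keeping careful track of how $\beta$, an arc on the sphere $\partial C$ with both endpoints on $\partial D$, separates the faces of $C$, and using Lemma~\ref{strings parallel to boundary}(b) to isotope the strings of $C$ onto $\partial C$ so that their endpoints can be followed through the move. The st-arc and sk-arc cases must be treated separately, and one must also check that the hypotheses of Lemma~\ref{inner ball} (a genuine ball meeting each relevant string in a single arc) are met; here the essentiality of the decomposition and Ozawa's unicity theorem ensure that the configuration produced is the one governed by that lemma.
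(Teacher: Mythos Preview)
Your proposal has a genuine gap at the crucial step, and the approach differs substantially from the paper's argument.

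The core problem is the middle paragraph. A type~A isotopy across $\Delta\subset W$ is an isotopy of $S$ supported in $W$, so it does not change $S\cap V$ at all; hence the ball components of $V-V\cap S$ and the strings they carry are unchanged, and Lemma~\ref{inner ball}(d) gives nothing new. If instead you mean to push the sheet of $S$ further along $\beta$ \emph{into} $C$ (so that $|S\cap V|$ increases by one and $C$ is split into two balls), then your claimed outcome---a string with both ends on a single disk of $\mathcal{D}^*$---is not what happens in general. In the sk-arc case the face $D\in\mathcal{D}^*$ is \emph{split} into two disks, not merged with another $\mathcal{D}^*$-face; and if, say, $C$ carried a single string running between two distinct faces $D_i^*,D_j^*$, the sk-arc condition forces these faces to lie on opposite sides of the cut, so the string is cut into two arcs each joining a $\mathcal{D}^*$-face to the new cutting disk. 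None of these has both ends on one $\mathcal{D}^*$-disk. The reduction to Lemma~\ref{inner ball}(d) therefore does not go through, and the difficulty is conceptual rather than mere bookkeeping. Your alternative route via Lemma~\ref{inner ball}(b) is likewise only asserted, not argued: you have not produced a ball $Q$ meeting each string of the tangle in a single arc with one of them unknotted.

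The paper's proof avoids isotoping $S$ altogether. It observes that $\delta$ together with an arc of $\partial D$ is a simple closed curve on $S$; the outermost disk $\Delta$ together with a disk on $\partial C$ caps this curve to a properly embedded disk $D_{\mathrm{new}}$ in $B_1$ whose interior can be pushed into $C$. Since $C$ carries at most two strings, one may choose the cap so that $|D_{\mathrm{new}}\cap K|\in\{0,1,2\}$, and the proof is a direct case analysis on this number: $|D_{\mathrm{new}}\cap K|=0$ contradicts essentiality (because $\delta$ is a st- or sk-arc, $\partial D_{\mathrm{new}}$ separates the four punctures on $S$ nontrivially); $|D_{\mathrm{new}}\cap K|=1$ contradicts the minimality of $|S\cap V|$ by swapping a subdisk of $S$ for $D_{\mathrm{new}}$; and $|D_{\mathrm{new}}\cap K|=2$ is handled by splitting $(B_1,\mathcal{T}_1)$ along $D_{\mathrm{new}}$ into two $2$-string tangles and invoking Lemma~\ref{inner ball}(a). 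This is the argument you should aim for.
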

\begin{proof}
Suppose there is an outermost disk $\Delta$ over a ball component $C$ of $V-(S\cap V)$, and let $\delta$ be the respective outermost arc. Without loss of generality assume that $C\subset B_1$. Let $A$ be the annulus in the intersection of $C\subset V$ with the $2$-sphere obtained after an isotopy, along a regular neighborhood of $\Delta$, of a regular neighborhood of $\delta$ in $S$ into $V$. The component $C$ is either disjoint from $K$ or contains one or both strings of $ \mathcal{T}_1$. Then, a core of $A$ bounds a disk $D$ in $\partial C$ that is either disjoint or intersects $K$ in one or two points. We isotope $int \,D$ into $C$, slightly, such that $D\cap S=\partial D$.\\

Assume $D$ is disjoint from $K$. The arc $\delta$ union an arc component of $\partial P-\partial \delta$ is a simple closed curve parallel, in $S-K$, to a core of $A$. Also, the arc $\delta$ separates $P$ into two planar surfaces containing boundary components of $\mathcal{D}^*$. Therefore, $D$ separates the strings of the tangle $(B_1, \mathcal{T}_1)$ and intersects $S$ only at $\partial D$, which is a contradiction to the tangle decomposition defined by $S$ being essential.\\ 

Assume that $|D\cap K|=1$. Let $D'$ be the disk in $S$ with $\partial D'=\partial D$ and $|D'\cap K|=1$, and $Q$ be the ball in $B_1$ bounded by $D\cup D'$. Then $Q\cap \mathcal{T}_1$ is a single trivial arc in $Q$. So, considering the $2$-sphere $S'=(S-D')\cup D$, the tangle decompositions defined by $S$ and $S'$ are isotopic with $|S'\cap V|<|S\cap V|$, which is a  contradiction to the minimality of $|S\cap V|$.\\

Assume now that $|D\cap K|=2$. Then $D$ splits the tangle $(B_1, \mathcal{T}_1)$ into two $2$-string tangles: $(B_1', \mathcal{T}_1')$ and $(B_1'', \mathcal{T}_1'')$. If $D$ intersects $K$ in the same string of $\mathcal{T}_1$ then one string of this tangle, say $s_1$, is either in $(B_1', \mathcal{T}_1')$ or in $(B_1'', \mathcal{T}_1'')$. Assume, without loss of generality, that $s_1$ is in $(B_1', \mathcal{T}_1')$. From Lemma \ref{inner ball} (a), if the tangle $(B_1', \mathcal{T}_1')$ is essential then $\partial B_1'$ defines an essential $2$-string tangle decomposition of $K$ with $|\partial B_1'\cap V|<|S\cap V|$, which contradicts the minimality of $|S\cap V|$. Hence, the tangle $(B_1', \mathcal{T}_1')$ is trivial. So, $s_1$ is trivial in $(B_1', \mathcal{T}_1')$ and therefore unknotted in $(B_1, \mathcal{T}_1)$. Otherwise, assume that $D$ intersects $K$ in different strings of $\mathcal{T}_1$. By a similar argument as when $D$ intersects $K$ in the same string we can prove that the tangles $(B_1', \mathcal{T}_1')$ and $(B_1'', \mathcal{T}_1'')$ are trivial. Then the string $s_1\cap B_1'$ is trivial in $B_1'$ and the string $s_1\cap B_1''$ is trivial in $B_1''$, which implies that $s_1$ is unknotted in $(B_1, \mathcal{T}_1)$.
\end{proof}

\begin{rem}\label{remark no beta in ball}
From Lemma \ref{no beta in ball}, if some outermost disk is over a ball component of $V-S\cap V$ then we have Theorem \ref{2-string tangle}. So, we can assume that all outermost disks are over components of $V-S\cap V$ other than balls.
\end{rem}

We say that two arcs of $E\cap P$ are \textit{parallel} in $E$ if the union of these arcs cuts a disk component of $E-E\cap P$ from $E$. An arc outermost in $E$ between the arcs of $E\cap P$ not in a sequence of parallel arcs to a outermost arc is said to be a \textit{second-outermost arc}. A disk of $E-E\cap P$ in the outermost side of a second-outermost arc is called a \textit{second-outermost disk}. The arcs $\alpha$ and $\gamma$ in Figure \ref{typeofarcsinE.pdf} are examples of second-outermost arcs.\\ 

Let $\gamma$ and $\gamma'$ be two type I arcs of $E\cap P$ parallel in $E$ attached to disks $D$ and $D'$ of $S\cap V$, resp., parallel in $V$. (See Figure 4(a).) Denote by $\Gamma$ the disk cut by $\gamma\cup \gamma'$ from $E$, and by $C$ the ball component of $S-S\cap V$ cut by $D\cup D'$ from $V$. Suppose that $C$ and $\Gamma$ are in the same ball component bounded by $S$, say $B_1$. Then $\Gamma$ is a proper surface in the complement of the solid torus $B_2\cup_{D\cup D'} C$ in $S^3$, which is $B_1-int C$. The curve $\partial \Gamma$ is inessential in the boundary of $B_2\cup_{D\cup D'} C$, and as we are in $S^3$, it bounds a disk in $\partial(B_2\cup_{D\cup D'} C)$. Let $L$ be the disk bounded by $\partial \Gamma$ in $\partial (B_1-int C)$. Note that $L$ intersects $S$ in two disks and $C$ in a disk band from $D$ to $D'$.  Let $R$ be the ball bounded by $\Gamma\cup L$ in $B_1-int C$. (See Figure 4(b).) For the next lemma, denote by $q$ a core arc of $C$ in $B_1$, this is a proper arc in $B_1$ with regular neighborhood $C$. This construction and the following lemma will be frequently used throughout this paper.

\begin{figure}[htbp]
\centering
\includegraphics[width=0.83\textwidth]{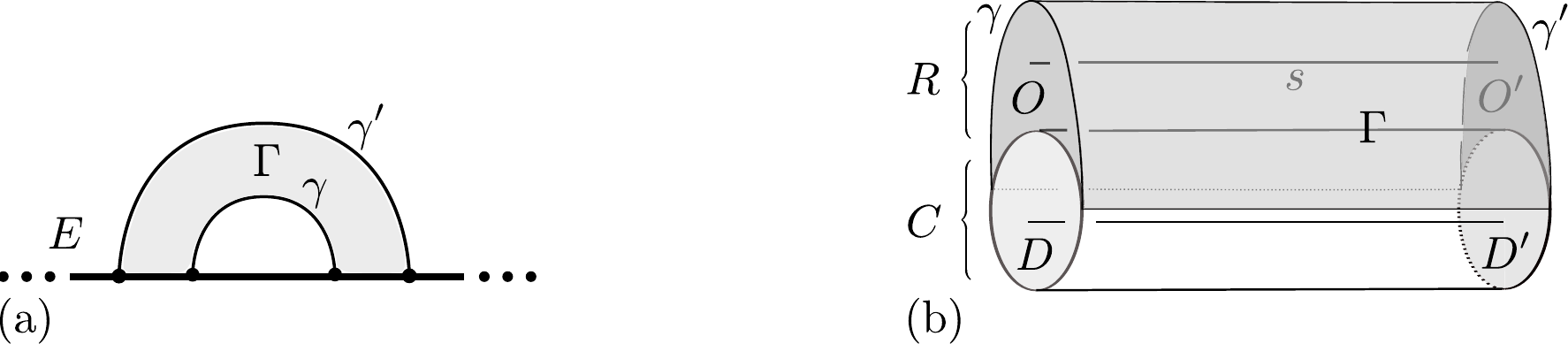}
\caption{}
\label{parallelskarcs.pdf}
\end{figure}

\begin{lem}\label{parallel arcs}
The ball $R$ contains a single string of $\mathcal{T}_1$, and this string is parallel to $q$ in $B_1$.
\end{lem}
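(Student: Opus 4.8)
The plan is to read off $K\cap R$ from the sphere $\partial R=\Gamma\cup L$, to show that each of its components is a string of $\mathcal T_1$ parallel to $q$, and then to force the number of components to be exactly one.

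First I would locate $K\cap\partial R$. Since $\Gamma\subset E\subset W$ while $K\subset V$, the disk $\Gamma$ is disjoint from $K$. Write $L=L_1\cup L_2\cup L_3$, where $L_1,L_2$ are the two disks of $L\cap S$ and $L_3$ is the band of $L$ lying in $\partial C$; as $L_3\subset\partial V$ and $K\subset\mathrm{int}\,V$, the band $L_3$ is also disjoint from $K$. Hence $K\cap\partial R=K\cap(L_1\cup L_2)$, a set of points of $K\cap S$, i.e. endpoints of strings of $\mathcal T_1$. Therefore every component of $K\cap R$ is an arc with both endpoints on $S$; since such an arc can leave $R$ only through $L_1\cup L_2\subset S$, it is in fact a complete component of $K\cap B_1$, that is, a string of $\mathcal T_1$.

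Next I would prove the parallelism. Let $s$ be a component of $K\cap R$. Because $(B_1,\mathcal T_1)$ is free it has no local knots, so $s$ is unknotted in the ball $R$ and is thus parallel, rel $\partial s$, to an arc $a\subset L$ joining its two endpoints. Such an $a$ runs from $L_1$ across the band $L_3$ to $L_2$, and since $L_3$ lies on $\partial C$ alongside the core $q$, the arc $a$---and hence $s$---is parallel to $q$ in $B_1$. So every component of $K\cap R$ is a string parallel to $q$.

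Finally I would show there is exactly one such component. That $R$ meets $K$ at all follows from minimality: were $R$ disjoint from $K$, the disks $D$ and $D'$, already parallel through $C$, would also be parallel through the $K$-free ball $R$, so $S$ could be isotoped so as to cancel $D$ and $D'$, reducing $|S\cap V|$---a contradiction. That $R$ meets $K$ in at most one string follows from Lemma~\ref{no parallel strings}: two components would be two strings of $\mathcal T_1$, each parallel to $q$ and therefore (being parallel, disjoint arcs in the disk $L$) parallel to one another, which is impossible; alternatively, if $R$ contained both strings then $\partial R$ would be an essential $2$-string decomposing sphere by Lemma~\ref{inner ball}(c), isotopic to $S$ by Ozawa's unicity theorem yet of strictly smaller complexity, again contradicting the minimality of $|S\cap V|$. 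Hence $K\cap R$ is a single string parallel to $q$.

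I expect the genuine obstacle to be the counting step---confirming $|K\cap(L_1\cup L_2)|=2$---rather than the parallelism, which drops out cleanly from the absence of local knots together with the band structure of $L$. The care lies in simultaneously excluding the empty and the multi-string cases, for which I would lean on Lemma~\ref{no parallel strings}, Ozawa's unicity theorem, and the minimality of the chosen complexities $|S\cap V|$ and $|P\cap E|$.
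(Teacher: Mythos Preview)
Your proposal follows the same route as the paper --- the same three ingredients (locating $K\cap\partial R$ on $L\cap S$, trivialising via ``no local knots'', and ruling out two strings via Lemma~\ref{inner ball}(a),(c), Ozawa unicity, and minimality of $|S\cap V|$) all appear --- but two points need correcting.

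First, your non-emptiness argument is not right as stated: the disks $D$ and $D'$ lie on $\partial C$, not on $\partial R$, so ``parallel through the $K$-free ball $R$'' does not directly make sense and there is no immediate cancellation of $D$ and $D'$. The paper's argument is cleaner and uses what is already available: by Lemma~\ref{property}(e) the type~I arcs $\gamma$ and $\gamma'$ are st- or sk-arcs, so each of the disks $O=L_1$ and $O'=L_2$ contains a component of~$\mathcal D^*$, hence at least one endpoint of a string of~$\mathcal T_1$. That is the whole non-emptiness step.

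Second, you argue parallelism before uniqueness, and in doing so tacitly assume that any component $s$ of $K\cap R$ has one end in $L_1$ and the other in~$L_2$. A~priori a string could have both ends in~$L_1$, and then your band argument (and your first ``at most one'' argument via Lemma~\ref{no parallel strings}) breaks down. The fix is to swap the order, exactly as the paper does: first use your second argument --- Lemma~\ref{inner ball}(a),(c) plus Ozawa plus $|\partial R\cap V|<|S\cap V|$ --- to get that $R$ contains a \emph{single} string~$s$; then, since each of $O$ and $O'$ carries at least one string endpoint (by the st/sk-arc property above) and there are only two endpoints in all, $s$ runs from $O$ to~$O'$, and your band argument for parallelism to~$q$ now goes through verbatim.
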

\begin{proof}
Denote by $O$ and $O'$ the disks of $L\cap S$, which are the disks cut by $\gamma$ and $\gamma'$, resp., in $S-int (D\cup D')$. As $\gamma$, and $\gamma'$, is a st or sk-arc we have that $O$ and $O'$ contain some component of $\mathcal{D}^*$. This means that $R$ contains some string(s) of $\mathcal{T}_1$.\\
Suppose that $\mathcal{T}_1$ is in $R$. From Lemma \ref{inner ball}(a) and (c), we have that $\partial R$ defines a $2$-string essential tangle decomposition of $K$. As the $2$-string essential tangle decomposition of $K$ is unique, we have that the tangle decompositions defined by $S$ and $\partial R$ are isotopic. But $|\partial R\cap V|<|S\cap V|$, which is a contradiction to the minimality of $|S\cap V|$.\\
Then $R$ contains a single string, $s$, of $\mathcal{T}_1$. As there are no local knots, $s$ is trivial in $R$. The intersection of $R$ with $C$ is the disk $L-O\cup O'$, that intersects each $D$ and $D'$ at an arc. Let $a$ be an arc in $L-O\cup O'$ with one end in $D\cup O$ and other in $D'\cup O'$. Then, $s$ (resp., $q$) is parallel to $a$ in $R$ (resp., $C$) through a disk with boundaries being $s\cup a$ (resp., $a\cup q$) and two arcs in $O\cup O'$ (resp., $D\cup D'$). As $R$ intersects $C$ in $L-O\cup O'$, we have that $s$ and $q$ are parallel through a disk with boundaries being $s\cup q$ and two arcs in $S$. Consequently, $s$ and $q$ are parallel in $B_1$.
\end{proof}

\begin{cor}\label{no parallel sk-arcs}
The disks $D$ and $D'$ cannot be disks of $\mathcal{D}^*$.
\end{cor}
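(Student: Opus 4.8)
The plan is to argue by contradiction: assuming that both $D$ and $D'$ lie in $\mathcal{D}^*$, I would exhibit the two strings of $\mathcal{T}_1$ as parallel arcs in $B_1$, contradicting Lemma \ref{no parallel strings}. Throughout I work under the standing assumption that no string of any tangle defined by $S$ is unknotted, since otherwise Theorem \ref{2-string tangle} already holds (Remark \ref{remark no beta in ball}).

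First I would analyse $C\cap K$. Since $D,D'\in\mathcal{D}^*$, the knot $K$ crosses $S$ at the centres of $D$ and $D'$; as $C$ is the ball component of $V-V\cap S$ in $B_1$ adjacent to both of these disks, $K$ enters $C$ through each of them, so $C\cap K\neq\emptyset$. If $C$ contained a string with both ends on the same disk of $\mathcal{D}^*$, then Lemma \ref{inner ball}(d) would produce an unknotted string, contrary to the standing assumption; hence $C\cap K$ is a single arc running from $D$ to $D'$, and, having both endpoints on $S$, it is a complete string of $\mathcal{T}_1$. By Lemma \ref{strings parallel to boundary}(b) this arc is parallel to $\partial C$, and since $C=N(q)$ is a regular neighbourhood of the proper arc $q$ joining $D$ to $D'$, a boundary-parallel string running from one disk to the other is isotopic to the core. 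Thus $C\cap K$ is parallel to $q$ in $C$ (equivalently, one may take $q=C\cap K$).

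Finally I would invoke Lemma \ref{parallel arcs}: the ball $R$ contains a single string $s$ of $\mathcal{T}_1$ that is parallel to $q$, and hence parallel to $C\cap K$. Because $R$ lies in the complement of the interior of $C$, the strings $s$ and $C\cap K$ are disjoint and therefore distinct; as $\mathcal{T}_1$ has exactly two strings, these are all of $\mathcal{T}_1$. Consequently the two strings of $\mathcal{T}_1$ are parallel in $B_1$, contradicting Lemma \ref{no parallel strings}. This forces $D$ and $D'$ not to be both disks of $\mathcal{D}^*$.

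I expect the main obstacle to be the second step, namely pinning down that $C\cap K$ is a single arc parallel to the core $q$: this is where the boundary-parallelism supplied by Lemma \ref{strings parallel to boundary}(b) must be combined with the exclusion of same-disk strings via Lemma \ref{inner ball}(d), using that $C$ is a $1$-handle neighbourhood of $q$. Once that identification is in place, Lemma \ref{parallel arcs} and Lemma \ref{no parallel strings} close the argument immediately.
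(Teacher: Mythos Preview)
Your approach is the same as the paper's: show that the string in $C$ and the string in $R$ supplied by Lemma \ref{parallel arcs} are both parallel to the core $q$, hence parallel to each other, contradicting Lemma \ref{no parallel strings}.

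There is, however, a small gap in the ordering of your argument. After invoking Lemma \ref{inner ball}(d) to rule out strings in $C$ with both ends on the same disk, you conclude that $C\cap K$ is a \emph{single} arc from $D$ to $D'$. That does not follow: you have only shown that every component of $C\cap K$ runs from $D$ to $D'$, not that there is just one such component (a priori $|D\cap K|$ could exceed $1$). The counting you do later --- $\mathcal{T}_1$ has exactly two strings and $R$ already contains one of them, disjoint from $\mathrm{int}\,C$ --- is precisely what is needed at this step, and this is how the paper argues: since $R$ contains a string of $\mathcal{T}_1$, the ball $C$ contains exactly one. Once uniqueness is in hand, that string must join $D$ to $D'$ (otherwise one of $D,D'\in\mathcal{D}^*$ would miss $K$), so the detour through Lemma \ref{inner ball}(d) and the accompanying standing assumption are unnecessary. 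Reordering your argument in this way closes the gap and matches the paper's proof.
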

\begin{proof}
As no disk of $\mathcal{D}$ is parallel to a disk of $\mathcal{D}^*$ in $V$, suppose both $D$ and $D'$ are disks of $\mathcal{D}^*$. Then $C$ contains some string(s) of $\mathcal{T}_1$. As $R$ contains a string of $\mathcal{T}_1$ we have that $C$ contains a single string of $\mathcal{T}_1$, and from Lemma \ref{strings parallel to boundary}(b) this string is also a core of $C$. Therefore, from Lemma \ref{parallel arcs}, the strings of $\mathcal{T}_1$ in $R$ and in $C$ are parallel in $B_1$, which is a contradiction to Lemma \ref{no parallel strings}.
\end{proof}

\begin{lem}\label{no gamma with equal ends}
Let $D_k$, $D_i^*$ and $D_j^*$ be disks of $S\cap V$ where $D_k\cup D_i^*\cup D_j^*$ cuts a ball component $C$ of $V-V\cap S$ from $V$; assume that $C$ intersects $K$ at a single string, with one end at $D_i^*$ and the other at $D_j^*$. Suppose there is a disk component of $E-E\cap P$, in the same tangle component as $C$, that intersects $S$ in arcs where all but one of these arcs have both ends in $D_k$, and the remaining arc has either at least one end in $D_k$, or one end in $D_i^*$ and the other in $D_j^*$. Then some string of some tangle is unknotted.
\end{lem}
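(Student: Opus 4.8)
The plan is to mirror, in a more elaborate form, the ball construction preceding Lemma \ref{parallel arcs}, using the prescribed disk component $\Delta$ of $E-E\cap P$ to cut a ball $Q$ out of $B_1$ (the tangle component containing $C$) and then to read off an unknotted string from the way $C$ meets $K$. First I would fix notation: write $\partial\Delta=\alpha_0\cup\alpha_1\cup\cdots\cup\alpha_{m-1}\cup\beta_0\cup\cdots\cup\beta_{m-1}$, where the $\alpha$'s are the arcs of $E\cap P$ on $S$ and the $\beta$'s are the complementary sub-arcs of $\partial E$ on $\partial V$. By hypothesis $\alpha_1,\dots,\alpha_{m-1}$ each have both ends on $\partial D_k$, so by Lemma \ref{property}(e) they are st-arcs, while $\alpha_0$ is the remaining (special) arc. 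Since the only vertices met by $\partial\Delta$ are $D_k$, $D_i^*$, $D_j^*$, the $\beta$-arcs all lie in the three-holed sphere $F=\partial C\cap\partial V$; hence $\partial\Delta\subset S\cup\partial C$, and $C$ is exactly the ball cut off by $D_k\cup D_i^*\cup D_j^*$, carrying the single string $s_{ij}$, which by Lemma \ref{strings parallel to boundary}(b) is parallel to $\partial C$.

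Next I would push the $\beta$-arcs slightly into $C$ and cap the $\alpha$-arcs by sub-disks of $S$ to realize $\partial\Delta$ as the boundary of a disk $L$ on the frontier of the relevant solid torus or ball, exactly as $\Gamma$ is capped by $L$ before Lemma \ref{parallel arcs}; the union $\Delta\cup L$ then bounds a ball $Q\subset B_1$. I would track the strings of $\mathcal{T}_1$ inside $Q$ by locating the boundary components of $\mathcal{D}^*$ on the sub-disks of $S$ used to build $\partial Q$: the st-arcs contribute, as in Corollary \ref{no parallel sk-arcs} and Lemma \ref{parallel arcs}, regions carrying at most a single core string, and the special arc $\alpha_0$ determines the remaining incidence. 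This splits the argument into the two cases of the hypothesis.

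In the case where $\alpha_0$ connects $D_i^*$ to $D_j^*$, the ball $Q$ swallows the string $s_{ij}$, and $\Delta$ together with the parallelism of $s_{ij}$ into $\partial C$ exhibits a disk in $Q$ trivializing $s_{ij}$; thus one string of $(Q,Q\cap K)$ is unknotted in $Q$. In the case where $\alpha_0$ has an end on $D_k$, the same incidence count together with Lemma \ref{parallel arcs} shows that the relevant sub-ball $R$ contains a single string of $\mathcal{T}_1$ parallel to a core of $C$; comparing this with the core string $s_{ij}$ of $C$ and invoking Lemma \ref{no parallel strings} either forces an impossible parallelism or again produces an unknotted string. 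In either case I then apply Lemma \ref{inner ball}: part (a) guarantees $(Q^c,Q^c\cap K)$ is essential, so if $(Q,Q\cap K)$ were essential, Ozawa's unicity theorem would make $\partial Q$ isotopic to $S$ while $|\partial Q\cap V|<|S\cap V|$, contradicting minimality; hence $(Q,Q\cap K)$ is trivial, and part (b) converts the unknotted string of $Q$ into an unknotted string of one of the tangles defined by $S$.

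The main obstacle I anticipate is the bookkeeping in the middle step: verifying that the several parallel st-arcs at $D_k$ assemble, together with $F\subset\partial C$, into a genuinely embedded capping disk $L$ (so that $\Delta\cup L$ bounds an honest ball $Q$ with $|\partial Q\cap V|<|S\cap V|$), and that the count of $\mathcal{D}^*$-boundary components placed inside $Q$ is controlled tightly enough that $Q$ carries no more strings than the construction can handle. This is where the separating property of st-arcs (Lemma \ref{property}(e),(f)) must be combined carefully with the non-parallelism of the two strings of an essential free tangle (Lemma \ref{no parallel strings}); both sub-cases of $\alpha_0$ must be checked to close off cleanly without reintroducing a second knotted string into $Q$.
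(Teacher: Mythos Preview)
Your outline has the right flavor but misses several case distinctions that the paper's proof shows are genuinely necessary, and some of your appeals to earlier lemmas do not do the work you need.

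First, your case split on the special arc $\alpha_0$ is too coarse. The paper treats three separate configurations: (i) $\alpha_0$ has both ends in $D_k$; (ii) $\alpha_0$ has exactly one end in $D_k$ and the other in $D_i^*$ (or $D_j^*$); (iii) $\alpha_0$ runs from $D_i^*$ to $D_j^*$. You lump (i) and (ii) together, but they are handled by entirely different mechanisms. In case (ii) the paper does \emph{not} build a ball $Q$ at all: it splits $C$ along a once-punctured disk into two cylinders, isotopes the redundant $\beta$-arcs off, and then slides one cylinder through $S$ to produce a new $2$-string decomposition with smaller $|S\cap V|$; the contradiction to minimality forces a trivial string. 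Your ball construction does not adapt to this situation because when $\alpha_0$ has ends on two different disks there is no natural torus on which $\partial\Delta$ bounds a capping disk $L$.

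Second, even in case (i) your single invocation of Lemma~\ref{parallel arcs} and Lemma~\ref{no parallel strings} does not close the argument. The paper builds the ball $R$ bounded by $\Gamma\cup L$ (with $L$ a disk on the boundary of an auxiliary solid torus $T_i=B_2\cup_{D_i^*\cup D_k}C_i$, not simply on $S\cup\partial C$) and must then branch on what $R\cap K$ is: it may be only the other string $s'$, or $s'$ together with a sub-arc of $s$, or \emph{only} a sub-arc of $s$. In the last subcase neither Lemma~\ref{parallel arcs} nor Lemma~\ref{no parallel strings} applies---there is no second string in $R$ to compare---and the paper needs a second construction $T_i'=T_i\cup R$, cut along $D_i^*$, to reach a ball $Q$ with $|\partial Q\cap V|<|S\cap V|$. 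Your sketch has no analogue of this step.

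Finally, your capping-disk construction (``push the $\beta$-arcs into $C$ and cap the $\alpha$-arcs by sub-disks of $S$'') is not well-defined as stated: the st-arcs attached to $D_k$ need not be mutually parallel in $P$, so the sub-disks of $S$ they bound can overlap, and you have not produced an embedded $L$. The paper avoids this by working on the torus $\partial T_i$ rather than on $S\cup\partial C$, where $\partial\Gamma$ is automatically inessential and the disk $L$ is unambiguous. This is also what guarantees the inequality $|\partial Q\cap V|<|S\cap V|$ that you assert without justification.
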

\begin{proof} 
Denote by $\Gamma$ the disk component of $E-E\cap P$ referred to in the statement, and by $\gamma$ the arc of $\Gamma\cap S$ that doesn't have by assumption both ends in $D_k$, as in Figure \ref{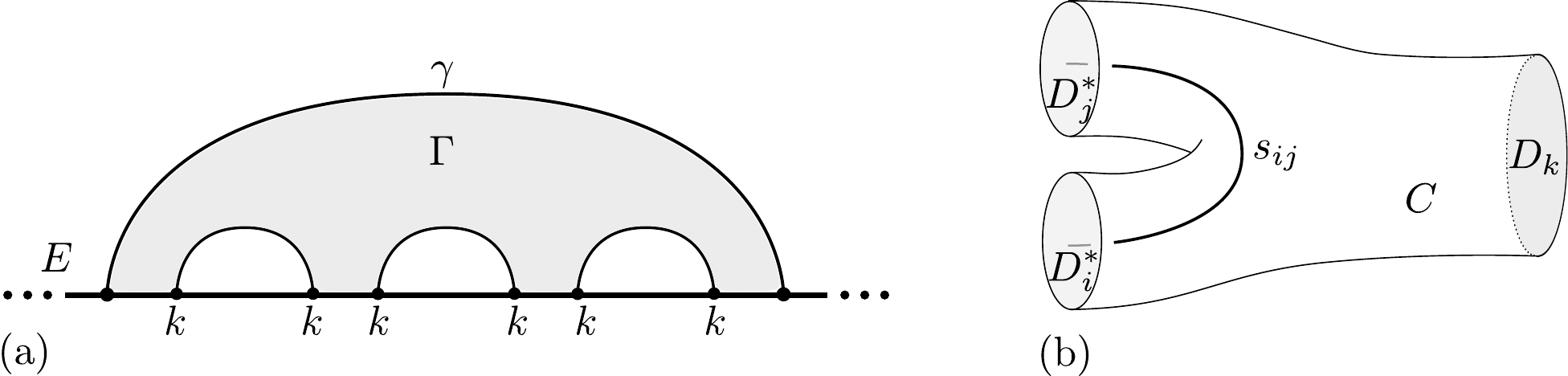}(a). Without loss of generality, suppose $C$ is in $B_1$. Let $s$ and $s'$ be the strings in this tangle with $s$ in $C$, and $C_i$ denote the cylinder obtained from $C$ after an isotopy pushing $D_j^*$ away from $S$ in $B_1$. Consider also the solid torus $T_i$ defined by $B_2\cup_{D_i^*\cup D_k} C_i$.\\
\begin{figure}[htbp]
\centering
\includegraphics[width=0.86\textwidth]{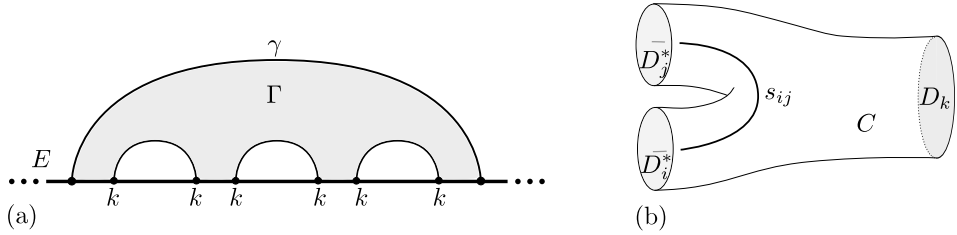}
\caption{: (a) Arc $\gamma$ after the outermost arcs attached to $D_k$. The label $k$ at an end of an arc means the end is at the disk $D_k$. (b) The ball $C$ cut by $D_i^*\cup D_j^*\cup D_k$ from $V$, and the string $s_{ij}$ in it.}
\label{secondoutermostarcs.pdf}
\end{figure}
Assume that $\gamma$ also has both ends in $D_k$.\\
The curve $\partial \Gamma$ is inessential in $T_i$ and it bounds a disk in $\partial T_i$ that we denote by $L$. The disk $L$ intersects $\mathcal{D}^*$. In fact, suppose $L$ is disjoint from $\mathcal{D}^*$, and consider a disk $D$ of $L\cap C_i$ or $L\cap S$ with boundary intersecting $\partial L$ at a single component. Then, if $D\subset C_i$ it is also a disk in $C$ and we get a contradiction to the minimality of $|P\cap E|$, and if $D\subset S$ we obtain a contradiction with Lemma \ref{property}(e).\\ 
Consider the ball $R$ in $B_1$ bounded by $\Gamma\cup L$. As $L$ contains some component of $\mathcal{D}^*$ the ball $R$ intersects $\mathcal{T}_1$; it contains at most two arcs, the string $s'$ or a portion of the string $s$.\\
Suppose $R$ contains the string $s'$ only. As there are no local knots in the tangle, $s'$ is parallel to $L$. By pushing $L$ to $S$ from $\partial C_i$ we have that the string $s'$ is unknotted in $(B_1, \mathcal{T}_1)$.\\
Suppose $R$ contains also a portion of the string $s$. From Lemma \ref{inner ball}(a), we have that the tangle $(R^c, R^c\cap K)$, where $R^c$ is the complement of $R$ in $S^3$, is essential. As $|\partial R\cap V|<|S\cap V|$, if the tangle $(R, R\cap K)$ is essential, we have a contradiction to the minimality of $|S\cap V|$. Therefore, $(R, R\cap K)$ is a trivial tangle. Then, the string $s'$ is unknotted in $R$ and parallel to the disk $L$. By an isotopy of $L$ from $T_i$ to $S$ we have that the string $s'$ is also unknotted in $(B_1, \mathcal{T}_1)$.\\
So, we can assume that $R\cap K$ is only a portion of the string $s$. Consider the solid torus $T_i'=T_i\cup R$, and the complement in $B_1$ of the ball obtained by cutting $T_i'$ along $D_i^*$ that we denote by $Q$. Then, $Q$ is a ball in $B_1$ containing $s'$ and a portion of $s$. The $2$-sphere $\partial Q$ is isotopic to $S$ rel. $Q\cap S$ in $B_1$. Then, if $s$ is unknotted in $Q$ it is also unknotted in $B_1$. As $|\partial Q\cap V|<|S\cap V|$, following a similar reasoning as when $R$ contains two arcs, we also have that some string of some tangle is unknotted.

Assume now that $\gamma$ has only one end at $D_k$.\\
Suppose, without loss of generality, that the other end of $\gamma$ is in $D_i^*$. We isotope $S$ along a regular neighborhood of a disk in $C$ intersecting $K$ once, intersecting the disk $D_k$ along a single arc, and separating $D_i^*$ from $D_j^*$ in $\partial C$. In this way, we split $D_k$ into two disks $D_{k}$ and $D_{k'}$, and $C$ into two cylinders from $D_{k}$ to  $D_i^*$, $C_{k, i^*}$, and from $D_{k'}$ to $D_j^*$, $C_{k', j^*}$. The boundary of $\Gamma$ lies in $S$, and in the boundaries of the balls $C_{k, i^*}$ and $C_{k', j^*}$. The arcs of $\partial \Gamma\cap C_{k', j^*}$ have both ends attached to $D_{k'}$. Hence, we can isotope these arcs to $S$. Also, all but one arc of $\partial \Gamma\cap C_{k, i^*}$ has both ends in $D_{k}$. The other arc has one end in $D_i^*$ and the other in $D_{k}$. We isotope all arcs of $\partial \Gamma\cap (C_{k, i^*}\cup C_{k', j^*})$ with both ends in $D_{k}$ or both ends in $D_{k'}$ from $\partial C_{k, i^*}$ or $\partial C_{k', j^*}$ to $S$, respectively. We are left with the disk $\Gamma$ with boundary defined by one arc in $S$ and other arc in the boundary of $C_{k, i^*}$ from $D_i^*$ to $D_{k}$. Using this disk we can isotope $C_{k, i^*}$ through $S$. After this isotopy of $V$ we obtain a new $2$-string tangle decomposition of $K$, that contains in each tangle a string from the original tangle decomposition defined by $S$. We also reduced $|S\cap V|$. So, the new tangle decomposition cannot be essential, which implies that some string of the original tangle decomposition defined by $S$ is unknotted.

Assume at last that $\gamma$ has one end in $D_i^*$ and the other end in $D_j^*$.\\
Then each arc of $\Gamma\cap S$ co-bounds a disk in $S-int D_{k}$, with $\partial D_k$, containing $D_i^*\cup D_j^*$, and other disk containing none of these disks. Hence, we can isotope $\partial \Gamma$ to lie in $\partial C$ with the exception of $\gamma$. So, after the isotopy $\partial \Gamma$ is defined by $\gamma$ and an arc in $C$ from $D_i^*$ to $D_j^*$. The string $s$ is trivial in $C$ and therefore it is parallel to the arc $\partial \Gamma\cap C$. Therefore, the string $s$ is unknotted in $(B_1, \mathcal{T}_1)$.
\end{proof}

\section{Outermost disks over torus components of $V-V\cap S$}\label{over torus}
In this section, we prove several lemmas related with the existence of outermost disks over tori components of $V-V\cap S$ disjoint or intersecting $K$ at a single arc. These lemmas are fundamental on the proof of Theorem \ref{2-string tangle}. 

\begin{lem}[(c.f. Morimoto \protect{\cite[Lemma 2.4]{Morimoto2}})]\label{no beta in simple torus}
There is no outermost disk over a solid torus component of $V-V\cap S$ disjoint from $K$ and containing a single disk of $V\cap S$.
\end{lem}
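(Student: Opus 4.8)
The plan is to assume, for contradiction, that such an outermost disk $\Delta$ exists over a solid torus component $C$ of $V-(S\cap V)$ that is disjoint from $K$ and meets $S$ in a single disk $D$, and to derive a contradiction either with the incompressibility of $S-\mathrm{int}\,N(K)$ in $S^3-\mathrm{int}\,N(K)$ or with the minimality of $|S\cap V|$. First I would set up notation: write $\partial\Delta=\delta\cup\beta$ with $\delta=\Delta\cap P$ and $\beta\subset\partial V$. By Lemma \ref{property}(a) the arc $\delta$ is of type I, and by Lemma \ref{property}(g) it is a st-arc or a sk-arc; in particular $\delta$ is essential in $P$ and both of its endpoints lie on $\partial D$. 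Since $C$ is disjoint from $K$, the disk $D$ is a d-vertex, and $A'=\partial C-\mathrm{int}\,D$ is a once-punctured torus properly contained in $\partial V$, in which $\beta$ is an arc with both ends on $\partial D$. Let $a_0$ be one of the two arcs into which $\partial\delta$ divides $\partial D$, and set $c=\delta\cup a_0\subset S$ and $\gamma=\beta\cup a_0\subset\partial C$. Because $\delta$ is a st- or sk-arc, each of the two disks bounded by $c$ in $S$ contains a component of $\mathcal{D}^*$ and hence meets $K$, so that $c$ is \emph{essential} in $S-\mathrm{int}\,N(K)$.

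The argument then splits according to the behaviour of $\gamma$ on the torus $\partial C$. The first case is when $\gamma$ bounds a disk $F$ in $C$ whose interior is disjoint from $S$ and from $K$; this occurs both when $\gamma$ is inessential on $\partial C$ (take $F$ pushed slightly off $\partial C$, and choose $a_0$ so that the bounding disk avoids $D$) and when $\gamma$ is a meridian of the solid torus $C$ (take $F$ a meridian disk, whose interior lies in $\mathrm{int}\,C$ away from $S$). Since $\partial F=\gamma$ and $\partial\Delta=\delta\cup\beta$ share the arc $\beta$, the union $\Delta\cup_\beta F$ is a disk with boundary $c$, whose interior is disjoint from $S$ (the interiors of $\Delta\subset W$ and of $F\subset C$ meet $S$ nowhere) and which is disjoint from $K$. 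Thus $\Delta\cup_\beta F$ is a compressing disk for $S-\mathrm{int}\,N(K)$ in $S^3-\mathrm{int}\,N(K)$ with essential boundary $c$, contradicting the incompressibility of $S-\mathrm{int}\,N(K)$.

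It remains to treat the case in which $\gamma$ is essential on $\partial C$ but non-meridional, so that it bounds no disk in $C$. Proceeding as in the proof of Lemma \ref{no beta in ball}, I would push a regular neighborhood of $\delta$ in $S$ across $\Delta$ into $C$, obtaining a $2$-sphere $S'$ isotopic to $S$ in $S^3-K$ and meeting $C$ in an annulus $A$ whose core is parallel, in $S-K$, to $c$, hence whose boundary consists of two copies of $\gamma$ on $\partial C$. If $A$ were compressible in $C$, its compression would exhibit a disk in $C$ bounded by $\gamma$, returning us to the previous case; so $A$ is incompressible in the solid torus $C$ and, having essential boundary, is therefore boundary-parallel. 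Using the product region bounded by $A$ and an annulus in $\partial C$, I would isotope $S'$ so as to remove $A$ from $C$, producing a $2$-sphere that defines a decomposition isotopic to that of $S$ (by Ozawa's unicity theorem) but with fewer than $|S\cap V|$ intersection disks, contradicting the minimality of $|S\cap V|$.

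The main obstacle is precisely this last case. The first two cases are clean consequences of capping the boundary-compressing disk $\Delta$ into a compressing disk for $S-\mathrm{int}\,N(K)$, exploiting that $C$ is disjoint from $K$. In the essential non-meridional case, however, one must verify that the boundary-parallel annulus $A$ genuinely yields an isotopy of $S'$ that strictly reduces $|S\cap V|$: the subtlety is that $A$ is parallel to one of the two annuli of $\partial C-\partial A$, and the correct reduction requires the product region to be chosen so that $S'$ is swept across the face containing $D$, rather than merely sliding $A$ back toward $\partial C$ and undoing the push. Confirming that this sweep lowers the intersection count and leaves an essential (isotopic) tangle decomposition is where the careful work lies.
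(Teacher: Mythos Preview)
Your case~1 arguments are fine, but case~2 is where the proof actually lives, and the obstacle you flag at the end is real and not resolved by your sketch. Saying that the annulus $A$ is boundary-parallel in the solid torus $C$ is not enough: if the slope of $\gamma$ on $\partial C$ met the meridian of $C$ more than once, then the region of $C\setminus A$ on the side \emph{away} from $D\cup N(\beta)$ would not be a product $A\times I$, and the only boundary-parallelism available would push $A$ right back to $D\cup N(\beta)$, undoing your isotopy without reducing $|S\cap V|$. So you must show that $\gamma$ meets a meridian of $C$ exactly once.

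The missing observation is short and is exactly what the paper uses. Let $O$ be one of the two disks that $\delta$ cuts from $S-\mathrm{int}\,D$, and set $L=\Delta\cup_\delta O$. Then $L$ is a properly embedded disk in $S^3-\mathrm{int}\,C$ (its interior misses $C$ because $\mathrm{int}\,\Delta\subset W$ and $O\subset S-\mathrm{int}\,D$), with $\partial L=\beta\cup a_0=\gamma$ on $\partial C$. Since $S^3-\mathrm{int}\,C$ is itself a solid torus, either $\gamma$ is inessential on $\partial C$ (your case~1, which you already handle by producing a compressing disk for $S-\mathrm{int}\,N(K)$), or $L$ is a meridian disk of the complementary solid torus and hence $\gamma$ meets the meridian of $C$ algebraically (and geometrically) once. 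In the latter event the annulus $A=D\cup N(\beta)$ is longitudinal, so $C\cong A\times I$; now the sweep of $A$ across $C$ to the annulus $\partial C-A\subset\partial V$ is legitimate and strictly removes the disk $D$ from $S\cap V$, contradicting minimality. This also shows your ``meridional'' subcase of case~1 is vacuous, so the case split can be collapsed to the paper's direct argument: exhibit $L$, conclude $\beta$ meets a meridian of $C$ once, and push $A$ through $C$.
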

\begin{proof} Denote by $T$ the solid torus as in the statement and by $D$ the disk $T\cap S$. Let $\delta$ be the outermost arc co-bounding an outermost disk $\Delta$ as in the statement. Consider, also, the corresponding arc $\beta$ and a disk $O$ cut by $\delta$ in $S-int D$. Let $L=O\cup \Delta$. The disk $L$ is a meridian for the complement of $T$ and intersects a meridian of $T$ once. Consider a regular neighborhood of $\Delta$ in $W$, $N(\Delta)$. So, $N(\Delta)\cap S$ is a regular neighborhood of $\delta$ in $S$, $N(\delta)$, and $N(\Delta)\cap \partial T$ is a regular neighborhood of $\beta$ in $\partial T$, $N(\beta)$. We isotope the annulus $N(\delta)\cup D$ through $N(\Delta)$ to the annulus $N(\beta)\cup D$. As $\beta$ intersects a meridian of $T$ once, the annulus $A=N(\beta)\cup D$ is such that $T=A\times I$. Therefore, we can isotope $A\subset S$ through $T$ to $\partial T - A$ and out of $V$. Let $S'$ be the $2$-sphere obtained from $S$ after this isotopy. The tangle decomposition of $K$ obtained from $S'$ is the same as the one given by $S$. However, $|S'\cap V|<|S\cap V|$ and we contradict the minimality of $|S\cap V|$. 
\end{proof}

\begin{lem}\label{no beta in torus}
Assume $V-V\cap S$ has a solid torus component $T$ intersecting $K$ in a single string and with $T\cap \mathcal{D}^*$ being a single disk. If there is an outermost disk over $T$ then some string of some tangle is unknotted.
\end{lem}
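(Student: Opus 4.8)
The plan is to run the torus analogue of the argument in Lemma \ref{no beta in ball}, capping the solid torus $T$ off into a ball via the outermost disk $\Delta$ and then reading off an unknotted string from Lemma \ref{inner ball}. First I would record the structure coming from the earlier lemmas. By Lemma \ref{property}(g) the outermost arc $\delta$ bounding $\Delta$ is a st-arc or a sk-arc; in particular it is a type I arc with both ends on a single disk $D$ of $T\cap S$, and it separates $P$ so that the subdisk $O$ of $S-\operatorname{int}D$ cut off by $\delta$ carries a component of $\mathcal D^*$. Since $T$ meets $K$ only in the single string $s=T\cap K$ and $T\cap\mathcal D^*$ is the single disk $D^*$, both ends of $s$ lie on $D^*$, and by Lemma \ref{strings parallel to boundary}(b) the string $s$ is parallel in $T$ to an arc of $\partial T$ joining those two ends.

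Next, following the disk $L=O\cup\Delta$ construction of Lemma \ref{no beta in simple torus}, I would exploit that $L$ is a meridian of the complementary solid torus meeting a meridian of $T$ once. Assuming without loss of generality $T\subset B_1$, attaching the $2$-handle with core $L$ to $T$ then collapses the genus of $T$ and yields a ball $R\subset B_1$, whose boundary sphere satisfies $|\partial R\cap V|<|S\cap V|$. The string $s$ survives inside $R$ with both of its ends still on the disk inherited from $D^*$; because $O$ contains a $\mathcal D^*$-component, $R$ may in addition contain the companion string $s'$ of $\mathcal T_1$ or a portion of $s$, and I would split into the same subcases as in Lemma \ref{no gamma with equal ends} according to what $R\cap K$ is.

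In each subcase I would invoke Lemma \ref{inner ball}. Part (a) gives that the complementary tangle $(R^c,R^c\cap K)$ is essential, so if $(R,R\cap K)$ were essential then $\partial R$ would define an essential $2$-string tangle decomposition of $K$ with $|\partial R\cap V|<|S\cap V|$; by Ozawa's unicity theorem this decomposition is isotopic to the one defined by $S$, contradicting the minimality of $|S\cap V|$. Hence $(R,R\cap K)$ is trivial. Since the two ends of $s$ lie on a single disk of $R\cap S$, the triviality of $s$ in $R$ upgrades to unknottedness of $s$ in $(B_1,\mathcal T_1)$ exactly as in Lemma \ref{inner ball}(d); in the subcases where $R$ instead carries $s'$ parallel to the capping disk $L$, pushing $L$ back to $S$ shows $s'$ is unknotted, as in Lemma \ref{no gamma with equal ends}. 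Either way some string of some tangle is unknotted.

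The step I expect to be the main obstacle is the bookkeeping around the disks of $T\cap\mathcal D$ other than $D^*$ and the verification that the capping is clean: I must check that $L$ really meets a meridian of $T$ exactly once so that $R$ is a ball, that after capping the two ends of $s$ genuinely land on one disk of $R\cap S$ (this interacts with whether $\delta$ is a sk-arc, with ends on $D^*$, or a st-arc, with ends on a $\mathcal D$-disk), and that the strict inequality $|\partial R\cap V|<|S\cap V|$ holds so that the minimality argument through Ozawa's unicity theorem applies. A secondary point is to rule out, via Corollary \ref{no parallel sk-arcs} and Lemma \ref{no parallel strings}, the degenerate configurations in which the string inside $R$ would otherwise be forced parallel to a core of $T$.
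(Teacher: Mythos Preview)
Your overall strategy---cap $T$ to a ball $Q$ via the disk $L=O\cup\Delta$ and then read off an unknotted string from Lemma~\ref{inner ball}---is the same as the paper's, and the setup (both ends of $s$ on $D^*$, the st/sk-arc dichotomy, the reduction $|\partial Q\cap V|<|S\cap V|$) is correct. However, there is a genuine gap in the case analysis.

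You say that after capping, $R$ ``may in addition contain the companion string $s'$ of $\mathcal{T}_1$ or a portion of $s$'' (presumably $s'$), and that Lemma~\ref{inner ball}(a) then forces $(R,R\cap K)$ to be trivial. But Lemma~\ref{inner ball}(a) has the hypothesis that the ball meets \emph{each string of the tangle in a single arc}. When $D\neq D^*$ (the st-arc case) and the chosen side $O$ contains \emph{two} points of $K\cap S$---which happens whenever both remaining $\mathcal{D}^*$-disks lie in $O$---the ball $Q$ contains $s$ together with \emph{two} disjoint subarcs of $s'$. Now $(Q,Q\cap K)$ is a $3$-string tangle, Lemma~\ref{inner ball}(a) does not apply, and $\partial Q$ does not give a $2$-string decomposition, so the minimality-via-Ozawa step you outline collapses. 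The subcases in Lemma~\ref{no gamma with equal ends} that you appeal to also only treat at most two arcs in $R$, so that reference does not cover this situation either.

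The paper handles this $3$-string case by a different route: Ozawa's unicity theorem gives that $K$ is $n$-string prime for $n\neq 2$, so one of $(Q,Q\cap K)$, $(Q^c,Q^c\cap K)$ is inessential as a $3$-string tangle; one then chases a trivial string through several subcases (using primality of $K$ and a further compression of $Q$) to locate an unknotted string of $(B_1,\mathcal{T}_1)$. You would need to supply this argument. A secondary point: to make the capping clean you must choose $O$ on the side \emph{disjoint from $D^*$} (so that the disks of $T\cap O$ you isotope off $S$ are all in $\mathcal{D}$); your description leaves this choice ambiguous.
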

\begin{proof} Suppose $T$ is in the tangle $(B_1, \mathcal{T}_1)$. Let $s$ be the string $T\cap K$, that is a component of $\mathcal{T}_1$, and $D^*$ be the component of $T\cap S$ that intersects $K$. Then, both ends of $s$ are in $D^*\subset \partial T$. Let $\Delta$ be an outermost disk over $T$ and $\delta$ the respective outermost arc in $E$ attached to the disk $D$ of $T\cap S$. Consider also the disk $O$ cut by $\delta$ in $S-int D$ and disjoint from $D^*$. Let $L=O\cap\Delta$. Isotope the disks (if any) of $T\cap O$ away from $S$ in $B_1$, and denote the resulting solid torus by $T'$. By adding the $2$-handle with core $L$ to $T'$ we define a ball $Q$ that intersects $S$ at disk components.

Suppose $D=D^*$. As $\delta$ is a sk-arc, and two ends of strings are in $D$, we have that $O$ intersects $\mathcal{D}^*$ in a a single disk. In this case, the ball $Q$ contains the string $s$, and also an unknotted portion of the other string of $\mathcal{T}_1$. From Lemma \ref{inner ball}(b), some string of some tangle defined by $S$ is unknotted or the tangle $(Q, Q\cap K)$ is trivial. So, we can assume that $(Q, Q\cap K)$ is trivial, and consequently that $s$ is trivial in this tangle. As $s$ has both ends at the same disk component of $Q\cap S$, we have that $s$ is unknotted in $(B_1, \mathcal{T}_1)$.

Suppose $D\neq D^*$. If $O$ intersects $K$ at a single point, then following the argument used in the previous case we have that some string of some tangle is unknotted. So, assume that $O$ intersects $K$ at a collection of two points. In this case, the ball $Q$ contains the string $s$, and also two portions of the other string, $s'$ that are unknotted in $Q$. So, $\partial Q$ defines a $3$-string tangle decomposition of $K$. Let $Q^c$ denote the complement of $Q$ in $S^3$. From  Ozawa's unicity theorem, either the tangle $(Q, Q\cap K)$ or the tangle $(Q^c, Q^c\cap K)$ is inessential. As there are no local knots in the tangles defined by $S$ and the tangles $(Q, Q\cap K)$ and $(Q^c, Q^c\cap K)$  are $3$-string tangles, the tangle that is inessential has a trivial string. If the tangle $(Q^c, Q^c\cap K)$ is inessential then, following an argument as in the proof of Lemma \ref{inner ball}, either some string of the tangle $(B_2, \mathcal{T}_2)$ is trivial, which is a contradiction, or the string $Q^c\cap s'$ is trivial in $Q^c$. In the latter case isotope $Q$ from $S$ in such a way that $Q\cap S$ is only $D\cup O$, and denote by $Q'$ the ball after the isotopy. Then, $(Q', Q'\cap s')$ is the product tangle with respect to $D\cup O$, and $Q'^c\cap s'$ is isotopic to $\partial Q'-\partial Q'\cap S$. Therefore, after the isotopy of $Q'^c\cap s'$ to $Q'$ we have that $s'$ is unknotted in $(Q', Q'\cap K)$. As $s'$ has both ends in the same disk component of $Q'\cap S$ we have that $s'$ is unknotted in the tangle $(B_1, \mathcal{T}_1)$. Suppose now that the $3$-string tangle $(Q, Q\cap K)$ is inessential. Then one of the strings $Q\cap K$ is trivial in this tangle. If such a string is $s$ then the string $s$ is unknotted in $(B_1, \mathcal{T}_1)$. If such a string is one of the arcs obtained from $Q\cap s'$, then consider the compressing disk $C$ for $\partial Q$ in the interior of $Q$ and the ball $Q''$, containing the string $s$, obtained after cutting $Q$ along $C$. From Lemma \ref{inner ball}(b), either some string of $(B_1, \mathcal{T}_1)$ is unknotted or the tangle $(Q'', Q''\cap K)$ is trivial. Then $s$ is trivial in $Q''$ and unknotted in $Q$ (that is obtained from $Q''$ after gluing a ball along a disk). As $s$ has both strings in the disk $T\cap \mathcal{D}^*$ we also have that $s$ is unknotted in the tangle $(B_1, \mathcal{T}_1)$.   
\end{proof}

\begin{lem}\label{no ball containing T}
Let $T$ be a solid torus component of $V-V\cap S$ with more than one component from $V\cap S$ in its boundary. Then there is no ball $Q$, in the tangles defined by $S$, with the following properties,
\begin{itemize}
\item[(1)] $T\subset Q$, $\partial Q\cap \partial T$ is the disks $T\cap S$ union an annulus $A$ (that contains at least two components of $T\cap S$);
\item[(2)] $(\partial Q \cap S)\cup A$ is an annulus $A'$, and $A'-A$ is a collection of disks, attached to some components of $ T\cap S$, containing the disks of $T\cap S$ not in $A$;
\item[(3)] the two strings of a tangle are in $Q$ and the tangle in $Q$ with these two strings is essential.
\end{itemize}
\end{lem}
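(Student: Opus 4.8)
The plan is to argue by contradiction against the minimality of $|S\cap V|$. Suppose such a ball $Q$ exists, and without loss of generality assume $T$ and $Q$ lie in $B_1$. By property (3), both strings of $\mathcal{T}_1$ are contained in $Q$, so $Q\cap K$ consists of two arcs and $Q$ meets each string of $\mathcal{T}_1$ in a single arc; moreover $(Q,Q\cap K)$ is essential. Applying Lemma \ref{inner ball}(a) to $Q$, the complementary tangle $(Q^c,Q^c\cap K)$ is also essential. Hence $\partial Q$ defines a $2$-string essential tangle decomposition of $K$, and by Ozawa's unicity theorem this decomposition is isotopic to the one defined by $S$.

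It then remains to produce, from $\partial Q$, a $2$-sphere $S'$ defining the same (isotopic) decomposition of $K$ but with $|S'\cap V|<|S\cap V|$; this contradicts the minimality of $|S\cap V|$ and shows that $Q$ cannot exist. To build $S'$ I would exploit properties (1) and (2). By (1), $\partial Q$ meets $\partial T$ in the disks $T\cap S$ together with the annulus $A$, and $A$ contains at least two of these disks; by (2), $(\partial Q\cap S)\cup A$ is an annulus $A'$ whose complement $A'-A=\partial Q\cap S$ absorbs the disks of $T\cap S$ not in $A$. The annulus $A$ on the boundary of the solid torus $T$ determines a product region of $T$, exactly as the annulus $A=N(\beta)\cup D$ does in the proof of Lemma \ref{no beta in simple torus}. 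Pushing the sub-annulus $A'$ of $\partial Q$ across this product region of $T$, an isotopy supported in a neighborhood of $T$ and carried out in the exterior of $K$, sweeps $\partial Q$ off the (at least two) disks of $T\cap S$ lying in $A$. The resulting $2$-sphere $S'$ therefore meets $V$ in strictly fewer disks along $T$, so that $|S'\cap V|<|S\cap V|$, while the decomposition of $K$ it defines is isotopic to that of $\partial Q$, and hence to that of $S$.

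The main obstacle is carrying out this last isotopy inside the exterior of $K$: the product region of $T$ through which $A'$ is pushed must be disjoint from $K$. I expect to verify that the disks of $T\cap S$ contained in $A$ are disks of $\mathcal{D}$, so the intervening product balls of $T$ miss $K$, which reduces the verification to the $K$-free sliding move of Lemma \ref{no beta in simple torus}; should $T$ meet $K$, one routes the isotopy through the portion of the product structure cut off by $A$ that avoids $K$, invoking Lemma \ref{no parallel strings} together with the essentiality established above to exclude the obstructing configurations. Once the isotopy is realized in the complement of $K$, the inequality $|S'\cap V|<|S\cap V|$ follows immediately from the fact that $A$ contains at least two components of $T\cap S$, completing the contradiction.
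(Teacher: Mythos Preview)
Your opening matches the paper: apply Lemma~\ref{inner ball}(a) to get $(Q^c,Q^c\cap K)$ essential, invoke Ozawa's unicity, and aim for a sphere $S'$ isotopic to $\partial Q$ in the exterior of $K$ with $|S'\cap V|<|S\cap V|$.

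The gap is in your construction of $S'$. You propose to push $A'$ \emph{through} $T$, mimicking Lemma~\ref{no beta in simple torus}. This has two problems. First, only the sub-annulus $A\subset A'$ lies on $\partial T$; the disks of $A'-A$ lie on $S$, so there is no evident product region of $T$ through which all of $A'$ can be swept. Second, and more seriously, in the applications of this lemma the torus $T$ genuinely meets $K$ (for example in Claim~\ref{2D*1DNS claim 1} one has $T\cap\mathcal{D}^*=D_1^*\cup D_4^*$), so the disks of $T\cap S$ lying in $A$ need not be in $\mathcal{D}$ and the product pieces of $T$ between them need not be $K$-free. Your fallback of ``routing around $K$'' via Lemma~\ref{no parallel strings} is not a concrete isotopy and does not rule out the actual configurations that arise.

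The paper avoids the interior of $T$ altogether by isotoping \emph{outward}. Since $A'-A=\partial Q\cap S$ is a union of disks attached along components of $T\cap S$, one has $A-A\cap S=A'-A'\cap S$, and an arc $a$ in $A-A\cap S$ joining the two boundary circles of $A$ also joins the two boundary circles of $A'$. A regular neighborhood of $a$ on the side of $\partial Q$ away from $Q$ lies in $B_1-\operatorname{int} Q$, which is disjoint from $K$ because hypothesis~(3) places all of $\mathcal{T}_1$ inside $Q$. The band move of $\partial Q$ along this neighborhood merges all disks of $T\cap S$ lying in $A$ into a single disk of $S'\cap T$, giving $|S'\cap V|<|S\cap V|$ immediately from the assumption that $A$ contains at least two such disks. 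That outward band move is the missing idea.
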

\begin{proof}
Suppose there is a ball $Q$ as in the statement. From Lemma \ref{inner ball}(a) the complement of $Q$ in $S^3$ contains an essential tangle. As $(Q, Q\cap K)$ is an essential tangle, from Ozawa's unicity theorem, we have that the tangle decomposition of $K$ defined by $S$ and $\partial Q$ are isotopic. Note that as $A'-A$ is a collection of disks in $S$ attached to $\partial T\cap S$, $A-A\cap S=A'-A'\cap S$. Consider an arc $a$ in $A-A\cap S$ connecting the two components of $\partial A$.  Then $a$ is also an arc in $A'-A'\cap S$ connecting the two different components of $\partial A'$. Consider $S'$ after an isotopy of $\partial Q$ along a regular neighborhood of the arc $a$ in the complement of $Q$. Then, all disks of $S\cap T$ that are in $A$ are now in a single disk component of $S'\cap T$. The sphere $S'$ defines the same tangle decomposition of $K$ than $S$ does. And also, as $A$ contains at least two components of $T\cap S$, we have $|S\cap V|>|S'\cap V|$, which contradicts the minimality of $|S\cap V|$.
\end{proof}

For the next lemmas assume that $n_1\geq 3$ and consider a solid torus component $T$ of $V-V\cap S$ intersecting $K$ at a single arc component. Suppose there is an outermost disk $\Delta$ over $T$, and let $\delta$ be the respective outermost arc attached to the disk $D$ of $V\cap S$. Assume also without loss of generality that $T$ is in $(B_1, \mathcal{T}_1)$. Denote by $s_{11}$ and $s_{12}$ the strings of $\mathcal{T}_1$, and by $s_{21}$ and $s_{22}$ the strings of $\mathcal{T}_2$. Suppose that $s_{11}$ is the string of $\mathcal{T}_1$ that $T$ contains.

\begin{lem}\label{meridional outermost arc}
If one of the disks separated by $\delta$ in $S- int D$ contains just one disk of $V\cap S$ and it intersects $K$ once, some string of some tangle is unknotted.
\end{lem}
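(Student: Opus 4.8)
The plan is to adapt the strategy of Lemma \ref{no beta in torus}: use the outermost disk $\Delta$ over $T$ to convert the solid torus into a ball $Q$, and then extract an unknotted string from the trivial alternative of Lemma \ref{inner ball}(b). First I would let $O$ be the distinguished disk cut by $\delta$ in $S-int\,D$ that contains a single disk $F$ of $V\cap S$ with $|F\cap K|=1$ (so $F$ is a disk of $\mathcal{D}^*$), and set $L=O\cup\Delta$. Since $\beta$ runs once along a longitude of $T$ — the same feature exploited in Lemmas \ref{no beta in simple torus} and \ref{no beta in torus} — attaching the $2$-handle with core $L$ to $T$ produces a ball $Q$ meeting $S$ in disk components. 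Because $F$ meets $K$ exactly once, $Q$ contains the string $s_{11}=T\cap K$ together with a single spanning arc entering through $F$, which is unknotted in the handle $N(L)$. Thus $\partial Q$ defines a $2$-string tangle decomposition of $K$ in which one string of $(Q,Q\cap K)$ is unknotted in $Q$.

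The second step is to invoke Lemma \ref{inner ball}. Part (a) gives that the complement $(Q^{c},Q^{c}\cap K)$ is essential, and part (b) together with Ozawa's unicity theorem yields the dichotomy: either some string of some tangle defined by $S$ is unknotted, in which case the lemma is proved, or $(Q,Q\cap K)$ is a trivial tangle.

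It then remains to handle the trivial case, which is the core of the argument and which I would split according to the position of $F$, as dictated by the st/sk-arc dichotomy for $\delta$ from Lemma \ref{property}(g). If $F$ is one of the two ends of $s_{11}$, then capping $\Delta$ with $O$ drags that end around $T$ so that in $Q$ both ends of $s_{11}$ lie in the same disk of $Q\cap S$; triviality of $(Q,Q\cap K)$ then forces $s_{11}$ to be unknotted in $(B_1,\mathcal{T}_1)$, exactly as in the first case of Lemma \ref{no beta in torus}. If instead $F$ is an end of the other string $s_{12}$ of $\mathcal{T}_1$, then the product structure of the trivial tangle $Q$ allows an isotopy of $S$ across $T$ (as in the proof of Lemma \ref{no beta in simple torus}) producing a $2$-sphere $S'$ with $|S'\cap V|<|S\cap V|$, contradicting minimality; hence $(Q,Q\cap K)$ cannot be trivial in this subcase and Lemma \ref{inner ball}(b) already supplies an unknotted string.

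The main obstacle I anticipate is precisely this trivial alternative. Unlike Lemma \ref{no beta in torus}, where $s_{11}$ has both ends in the single disk $T\cap\mathcal{D}^{*}$, here $n_1\geq 3$ forces $s_{11}=s_{ij}$ to have its ends in two distinct disks $D_i^{*}$ and $D_j^{*}$, so triviality of $(Q,Q\cap K)$ does not immediately yield unknottedness. The delicate point is to track carefully how the outermost-disk isotopy across the solid torus $T$ either merges the two ends of $s_{11}$ into a single disk of $Q\cap S$ or reduces $|S\cap V|$, and to check that the two possible identifications of $F$ (belonging to $s_{11}$ or to $s_{12}$) exhaust the configurations permitted by Lemma \ref{property}(g); verifying that $L$ indeed meets $K$ only once, so that $Q$ carries just the string $s_{11}$ plus one unknotted arc, is the computation on which both subcases rest.
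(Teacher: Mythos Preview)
Your overall architecture—form a ball $Q$ by attaching the $2$-handle with core $L=O\cup\Delta$ to $T$, then invoke Lemma~\ref{inner ball}(b)—is the right starting point and matches the paper's opening move. But two of the branches you sketch do not go through, and the paper's actual proof splits into four subcases (according to whether $D\in\mathcal D$ or $D\in\mathcal D^*$, and whether $D^*\in T$ or $D^*\notin T$), each handled differently.

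\textbf{The case $D^*\in T$.} Here your claim that ``both ends of $s_{11}$ lie in the same disk of $Q\cap S$'' fails when $D\in\mathcal D$. In that situation the other end of $s_{11}$ sits at a disk $D_j^*\in T\cap\mathcal D^*$ distinct from both $D$ and $D^*$, so it remains a separate component of $Q\cap S$. Worse, once one pushes $D^*$ off $T$ so that the $2$-handle attachment is well-defined, $Q\cap K$ consists of \emph{two arcs of $s_{11}$}, not one arc of each string, so Lemma~\ref{inner ball}(b) does not apply at all. The paper's case~(2) instead analyzes the complementary tangle $(Q^c,Q^c\cap K)$: if both tangles are essential, Ozawa's uniqueness gives the unknotted string; if $(Q^c,Q^c\cap K)$ is trivial one must argue string by string (using primality of $K$) to reach a contradiction. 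This is substantially more work than your sketch suggests.

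\textbf{The trivial alternative when $D^*\notin T$.} You propose that triviality of $(Q,Q\cap K)$ yields an isotopy of $S$ reducing $|S\cap V|$, by analogy with Lemma~\ref{no beta in simple torus}. But in that lemma $T$ was disjoint from $K$; here $T\cap K=s_{11}$, so sliding $S$ across $T$ alters the tangle decomposition. The paper does something genuinely different: it attaches a neighborhood of the annulus $A=L-\mathrm{int}\,D^*$ to $V$ to produce a new genus-three handlebody $V'$, checks that $W'=\overline{S^3-V'}$ is still a handlebody, and then verifies component by component that $K$ is parallel to $\partial V'$ with a meridian disk meeting $K$ once. This shows $(V'-N(K))\cup W'$ is a genus-three Heegaard splitting of the knot exterior with $|S\cap V'|<|S\cap V|$, contradicting the minimality taken over \emph{all} unknotting tunnel systems. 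The reduction is of the tunnel system, not of $S$; a mere trivial-tangle (as opposed to product) structure on $Q$ is not enough to isotope $S$ itself. Your last paragraph correctly identifies this as the crux, but the resolution requires this Heegaard-replacement argument, not an isotopy of the sphere.
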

\begin{proof}
Suppose one of the disks cut by $\delta$ in $S-int D$, say $O$, contains a single disk of $V\cap S$. As $\delta$ is a st or sk-arc the disk of $V\cap S$ in $O$ is a disk $D^*$ of $\mathcal{D}^*$, which from the statement intersects $K$ once.\\
Consider the disk $L=\Delta\cup O$. As we are in $S^3$, by attaching the $2$-handle with core $L$ to $T$ we obtain a ball. Consequently, as $D^*$ is a disk of $V\cap S$ (separating or non-separating in $V$), by attaching a regular neighborhood of the annulus $A= L-int D^*$ to $V$ we have a handlebody $V'$ also of genus three. Furthermore, as $A$ is incompressible and non-separating in $W$, by cutting $W$ along $A$ we obtain a handlebody $W'$ of genus three. Altogether, by cutting $W$ along $A$ and simultaneously adding a regular neighborhood of $A$ to $V$, we obtain a Heegaard decomposition of $S^3$, $V'\cup W'$, of the same genus as the one defined by $\partial V$.\\

Let $T'$ be a solid torus obtained from $T$ by an ambient isotopy of $B_1\cap V$ taking $D^*$ away from $S$ in $B_1$. We denote by $Q$ the ball obtained by attaching a regular neighborhood of $L$ to $T'$.
As $T$ intersects $K$ at a single arc and as $L$ intersects $K$ at a single point, we have that $Q$ intersects $K$ at two arcs, with one being unknotted.\\ 
Let $T''$ be a solid torus obtained from $T$ by an ambient isotopy of $(T\cap S)-D$ away from $S$ in $B_1$. We denote by $R$ the ball obtained by attaching a regular neighborhood of $L$ to $T''$.
As $T$ intersects $K$ at a single arc and as $L$ intersects $K$ at a single point, we have that $R$ intersects $K$ at two arcs, with one being unknotted.\\

Suppose $n_1=4$.

(1) Suppose $D$ is in $\mathcal{D}$ and $D^*$ is not in $T$. Then $Q$ intersects each string of $\mathcal{T}_1$ at a single arc. Then by Lemma \ref{inner ball}(b) some string of some tangle defined by $S$ is unknotted or the tangle $(Q, Q\cap K)$ is trivial. So, we can assume the latter. Each disk of $Q\cap (V'-int Q)$ intersects $K$ at most at a single point. Therefore, the arcs $Q\cap K$ can be isotoped to $\partial Q$ intersecting $Q\cap (V'-int Q)$ only at the end points. From Lemma \ref{strings parallel to boundary}, all the other components of $(V'-V'\cap S)-Q$ intersecting $K$ have the same property. Furthermore, if two consecutive arcs are in adjoint components of $V'-V'\cap S$ then, after the isotopy to the boundary of the arcs in the respective components, we can choose that the common ends are at the same point of the disk of intersection between the components. (In this case, this is a consequence from each component of $V'\cap S$ intersecting $K$ at most once and the tangle in each component of $V'-V'\cap S$ being trivial.) So, with $V'$ being the union of components with these properties, $K$ is parallel to $\partial V'$. We also note that there is a meridian disk of $V'$ intersecting $K$ once. Altogether, we have that $(V'-N(K))\cup W'$ is a genus three Heegaard decomposition of the knot $K$ exterior. But $|S\cap V'|< |S\cap V|$, which is a contradiction to the minimality of $|S\cap V|$.

(2) Suppose $D$ is in $\mathcal{D}$ and $D^*$ is in $T$. Note that $Q$ intersects $\mathcal{T}_1$ at $s_{11}$ in two arcs, with one of the arcs being unknotted in $Q$. If the tangle $(Q, Q\cap K)$ is trivial then following a similar argument as in (1), we obtain contradiction with the minimality of $|S\cap V|$. So, we can assume that $(Q, Q\cap K)$ is essential. Consider the complement of $Q$ in $S^3$, $Q^c$. If the tangle $(Q^c, Q^c\cap K)$ is also essential then the tangle decomposition defined by $S$ is isotopic to the one defined by $\partial Q$; as $(Q, Q\cap K)$ contains an unknotted string, this means that some string of some tangle defined by $S$ contains an unknotted string. So, we can assume that the tangle $(Q^c, Q^c\cap K)$ is trivial. Let $s_{1}$ be the intersection of $Q^c$ with $s_{11}$, and $s_{2}$ the other string of $Q^c\cap K$. As  $(Q^c, Q^c\cap K)$ is trivial and $K$ is prime, $s_1$ or $s_{2}$ are trivial in $Q^c$. Suppose that $s_2$ is trivial in $Q^c$. By following a similar argument as in the proof of Lemma \ref{inner ball}(a), we have that either $s_{21}$ and $s_{22}$ are trivial in $(B_2, \mathcal{T}_2)$, or $s_{12}$ is trivial in $(B_1, \mathcal{T}_1)$, which is a contradiction to these tangles being essential. Suppose $s_2$ is knotted in $Q^c$. As $(Q^c, Q^c\cap K)$ is trivial, there is a proper disk in $Q^c$ separating $s_1$ and $s_2$; let $B$ be the ball separated by this disk containing $s_2$. Then $s_2$ is knotted in $B$. As $K$ is prime, the string in the complement of $B$ in $S^3$, $B^c\cap K$, is trivial. We have $B^c\cap K$ being $s_1$ and $Q\cap K$. Then, following a similar argument as in Lemma \ref{inner ball}(a), we have that one of the strings of $Q\cap K$ is trivial in $Q$, which contradicts $(Q, Q\cap K)$ being essential, or the string $s_1$ is trivial in $Q^c$ and the strings of $Q\cap K$ are parallel in $Q$. But one of the strings $Q\cap K$ is unknotted in $Q$, which is a contradiction to the assumption that $(Q, Q\cap K)$ is essential.

(3) Suppose $D$ is in $\mathcal{D}^*$ and $D^*$ is not in $T$. The ball $R$ intersects each string of $\mathcal{T}_1$ at a single arc component, with one of them being unknotted in $R$. From Lemma \ref{inner ball}(a), some string of some tangle defined by $S$ is unknotted or the tangle $(R, R\cap K)$ is trivial. Let $R_1$ be the complement of $R$ in $B_1$, and $R_1^c$ the complement of $R_1$ in $S^3$. Suppose the tangle $(R_1, R_1\cap K)$ is trivial then, as there are no local knots in $(B_1, \mathcal{T}_1)$, $R_1\cap s_{11}$ is unknotted in $R_1$. As $R\cap s_{11}$ is also unknotted in $R$ we have that $s_{11}$ is unknotted in $B_1$. So, we can assume that $(R_1, R_1\cap K)$ is essential. Again from Lemma \ref{inner ball}(a), we have that the tangle $(R_1^c, R_1^c\cap K)$ is essential. Therefore, the tangle decompositions defined by $S$ and $\partial R_1$ are isotopic. This means that the tangle $(R, R\cap K)$ is the following product tangle: it is ambient isotopic to the tangle in the ball $(D\cup O)\times I$, that is $R$, with strings being $((D\cup O)\cap K)\times I$. Let $V'$ be obtained from $V$ by replacing $T''$ by $R$, as in $(1)$, and $W'=S^3-int V'$. Then, the arcs $R\cap K$ can be isotoped to $\partial R$ intersecting $R\cap (V'-int R)$ only at the end points. Also, if two arcs are in adjoint components of $V'-V'\cap S$ then, after the isotopy to the boundary of the respective components, we can assume that the common ends are at the same point of the disk of intersection between the components. (In this case, this is a consequence from $(R, R\cap K)$ being the product tangle described, the tangle in each component of $V'-V'\cap S$ being trivial and each component of $V\cap S$ intersecting $K$ at most once.) So, as in $(1)$, $(V'-intN(K))\cup W'$ is a Heegaard decomposition of the knot exterior with $|S\cap V'|<|S\cap V|$, and we have a contradiction to the minimality of $|S\cap V|$.

(4) Suppose $D$ is in $\mathcal{D}^*$ and $D^*$ is in $T$. So, the ball $R$ intersects $s_{11}$ at two arcs, and $R_1$ intersects $K$ at a portion of $s_{11}$ and the string $s_{12}$. If the tangle $(R_1, R_1\cap K)$ is trivial we have that the string $s_{12}$ is trivial in $R_1$, and as it has ends in the same disk component of $R_1\cap S$ it is unknotted in $(B_1, \mathcal{T}_1)$. So, we can assume that $(R_1, R_1\cap K)$ is essential. From Lemma \ref{inner ball}(a), the tangle $(R_1^c, R_1^c\cap K)$ is essential. Then the tangle decompositions defined by $S$ and $\partial R_1$ are isotopic. This means that the tangle $(R, R\cap K)$ is the product tangle as in (3). Following a similar argument as in (3), we obtain a contradiction to the minimality of $|S\cap V|$.\\

Suppose $n_1=3$.

Assume that the ends of $s_{11}$ are at the same disk of $T\cap S$. Then $Q$ intersects each string of $\mathcal{T}_1$ at a single component. Therefore, from Lemma \ref{inner ball}(b) some string of some tangle is unknotted or the tangle $(Q, Q\cap K)$ is trivial. In the latter case we have that $s_{11}$ is trivial in $Q$ and unknotted in $B_1$. In case the ends of $s_{11}$ are in distinct components of $T\cap S$, we can follow a similar argument as in case (4). (Note that, as $n_1=3$ and the genus of $V$ is three the solid torus $T$ cannot contain two disks of $\mathcal{D}^*$ and components of $\mathcal{D}$; so, in this case we have necessarily $D$ in $\mathcal{D}^*$ and $D^*$ in $T$.) 
\end{proof}

\begin{lem}\label{T with two D*}
Suppose $T$ intersects $\mathcal{D}^*$ at two disks, $D$ and $D'$, and is disjoint from $\mathcal{D}$. Then some string of some tangle is unknotted, or there is a ball $Q$, in $B_1$, where
\begin{itemize}
\item[(1)] $Q\cap S$ is a disk intersecting $\mathcal{D}^*$ in two components;
\item[(2)] $Q\cap K$ is a collection of two arcs each with one end in $Q\cap S$;
\item[(3)] $(Q, Q\cap K)$ is a product tangle with respect to the disk $Q\cap S$ and its intersection with $K$;
\item[(4)] the complement of $Q$ in $B_1$ intersects $T$ either in a cylinder between $D'$ and a disk parallel to it in $V$, or in a cylinder between two disks parallel to $D'$ in $V$.
\end{itemize}
\end{lem}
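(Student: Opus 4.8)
The plan is to reproduce the ball‑construction strategy of Lemmas \ref{no beta in torus} and \ref{meridional outermost arc} — cap the solid torus $T$ off with a $2$‑handle built from the outermost disk $\Delta$, then analyze the resulting tangle with Lemma \ref{inner ball} and Ozawa's unicity theorem — the new feature being that, in the one configuration where no string is forced to be unknotted, the construction is shown to deliver exactly the product ball $Q$ of the statement. First I would pin down the combinatorics of $\delta$. Since $D\in\mathcal{D}^*$ and $\delta$ is an outermost arc, Lemma \ref{property}(g) makes $\delta$ a st‑ or sk‑arc, and being attached to the $\mathrm{d}^*$‑vertex $D$ it is an sk‑arc; hence $\delta$ separates $S-\mathrm{int}\,D$ into two disks $O_1,O_2$, each carrying a boundary component of $\mathcal{D}^*$, which I label so that $D'\subset O_1$. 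By Lemma \ref{strings parallel to boundary}(b) the string $s_{11}=T\cap K$ is boundary‑parallel in $T$ and runs from $D$ to $D'$, so $T$ is a solid torus with meridian disks $D,D'$ and $s_{11}$ lying in one of the two cylinders into which $D\cup D'$ cuts $T$.

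Next, as in Lemma \ref{meridional outermost arc}, I set $L=\Delta\cup O$ for the appropriate choice $O\in\{O_1,O_2\}$. Since we are in $S^3$ and $T$ is a solid torus, $\partial L$ is a longitude of $T$, and attaching the $2$‑handle with core $L$ to $T$ — after first isotoping off the disks of $T\cap S$ not needed and then isotoping $S$ across $\Delta$ to bring everything into $B_1$, exactly as in Lemma \ref{no beta in torus} — produces a ball $Q\subset B_1$. I would then count the intersections with $K$: $Q$ contains $s_{11}$ together with one unknotted vertical arc through each $\mathcal{D}^*$‑point of $O$, and the control I must exert on $O$ is precisely that it contain a single $\mathcal{D}^*$‑disk, so that $Q\cap S$ meets $\mathcal{D}^*$ in two components and $Q\cap K$ is two arcs, each with one end on $Q\cap S$. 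This yields conclusions (1) and (2).

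The heart of the argument is the tangle analysis giving (3) and separating the two alternatives of (4). By Lemma \ref{inner ball}(a) the complement $(Q^c,Q^c\cap K)$ is essential. If $(Q,Q\cap K)$ were also essential then, since $|\partial Q\cap V|<|S\cap V|$, Ozawa's unicity theorem would force $\partial Q$ isotopic to $S$ and contradict the minimality of $|S\cap V|$; the only escape is the configuration in which both strings of a tangle lie in $Q$ with an essential tangle there, and that is exactly what Lemma \ref{no ball containing T} excludes. In the remaining essential subcases Lemma \ref{inner ball}(b),(c) together with the absence of local knots forces some string of some tangle to be unknotted, and we are finished. Hence I may assume $(Q,Q\cap K)$ is inessential; because its two strings each have one end on the disk $Q\cap S$ and the other on $\partial Q-S$, are boundary‑parallel, and bound no local knots, triviality upgrades to the genuine product structure of (3). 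Finally the two cases of (4) are distinguished by whether $D'$ is absorbed into $Q$ or survives on $\partial(Q^c\cap T)$: this is governed by which of $O_1,O_2$ is used to form $L$, giving the residual cylinder of $T$ either between $D'$ and a parallel copy of it, or between two disks parallel to $D'$.

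The main obstacle is this last step: controlling precisely how $Q\cap S$, $Q\cap K$ and the residual cylinder $Q^c\cap T$ fit together so as to land in one of the two prescribed configurations of (4), while simultaneously ruling out — through Lemma \ref{no ball containing T} and Lemma \ref{no parallel strings} — every way in which $Q$ could swallow both strings with an essential tangle. Upgrading ``inessential'' to the \emph{product} tangle of (3), rather than a merely trivial one, is the delicate point, and it rests on the boundary‑parallelism of $s_{11}$ in $T$ and of the vertical string through the $2$‑handle.
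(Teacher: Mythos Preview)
Your construction of $Q$ is essentially the paper's: push $D'$ off $S$ to get $T'$, choose the side $O$ of $\delta$ meeting $K$ once, and cap $T'$ with the $2$--handle on $L=\Delta\cup O$. The paper then splits into the two cases $O\cap\mathcal D^*\not\subset T$ and $O\cap\mathcal D^*=D'$, which is exactly what produces the two alternatives of~(4).

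The gap is in how you obtain conclusion~(3). You argue: $(Q^c,Q^c\cap K)$ is essential, so if $(Q,Q\cap K)$ were essential Ozawa plus minimality gives a contradiction; hence $(Q,Q\cap K)$ is trivial, and ``triviality upgrades to the genuine product structure''. That last step is false. A trivial $2$--string tangle whose strings run from the disk $Q\cap S$ to the complementary disk $\partial Q-S$ is just a $2$--braid; any number of half--twists gives a trivial tangle that is \emph{not} the product tangle with respect to $Q\cap S$ in the sense of Definition~\ref{n-string tangle}. No amount of boundary--parallelism or absence of local knots rules this out. (Your minimality claim $|\partial Q\cap V|<|S\cap V|$ is also not justified---$\partial Q$ meets $V$ along pieces of $\partial T'$ and the $2$--handle in a way you have not controlled---but even granting it you would only get triviality, not the product.)

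The paper gets~(3) by a different mechanism that you are missing entirely: it passes to $Q_1=B_1\setminus\mathrm{int}\,Q$. In each case $Q_1$ meets each string of $\mathcal T_1$ in a single arc, so Lemma~\ref{inner ball}(a) makes $(Q_1^c,Q_1^c\cap K)$ essential. If $(Q_1,Q_1\cap K)$ is also essential, Ozawa's unicity gives $\partial Q_1$ isotopic to $S$ in $S^3-K$; since $Q$ is precisely the region between $S$ and $\partial Q_1$ on the $B_1$ side, this isotopy exhibits $Q$ as a product cobordism $(Q\cap S)\times I$ with $Q\cap K=(K\cap(Q\cap S))\times I$. If instead $(Q_1,Q_1\cap K)$ is inessential, the paper shows some string of $(B_1,\mathcal T_1)$ is unknotted (using, in case~1, the already--established triviality of $(Q,Q\cap K)$ from Lemma~\ref{inner ball}(b), and in case~2, that $s_{12}\subset Q_1$ with both ends in the single disk $Q_1\cap S$). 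Your argument never introduces $Q_1$, and without that step the product conclusion is unreachable.
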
 
\begin{proof} As $T$ contains a single component from the intersection with $K$, we have that $D$ and $D'$ intersects $K$ once. As $D$ intersects $K$ at one point, one of the disks separated by $\delta$ in $S- int D$ intersects $K$ once; denote by $O$ this disk. Let $T'$ be the solid torus obtained by an isotopy of $T$ taking $D'$ away from $S$ in $B_1$. Consider the ball $Q$ defined by adding the $2$-handle with core $L=O\cup \Delta$ to $T'$. Denote by $Q_1$ the complement of $Q$ in $B_1$.

First assume that $O\cap \mathcal{D}^*$ is a disk not in $T$.
Then $Q_1\cap T$ is a cylinder between $D'$ and a disk parallel to it in $V$. The arc $Q\cap s_{12}$ is unknotted in $Q$. From Lemma \ref{inner ball}(b), either some string in the tangle $(B_1, \mathcal{T}_1)$ is unknotted or the tangle $(Q, Q\cap K)$ is trivial. So, we can assume the latter. Also, from Lemma \ref{inner ball}(a), the tangle in the complement of $Q_1$ in $S^3$ is essential. If the tangle defined in $(Q_1, Q_1\cap K)$ is also essential then the tangle decompositions defined by $S$ and $\partial Q_1$ are isotopic. Then the tangle in $Q$ is the product tangle as in the statement. Otherwise, if the tangle $(Q_1, Q_1\cap K)$ is inessential then, as the strings of $Q\cap K$ are trivial in $Q$, both strings of the tangle $(B_1, \mathcal{T}_1)$ are unknotted. So, we either have that one string of $(B_1, \mathcal{T}_1)$ is unknotted or that $(Q, Q\cap K)$ is the product tangle described with $Q_1$ intersecting $T$ in a cylinder between $D'$ and a disk parallel to it in $V$.

Assume now that $O\cap \mathcal{D}^*$ is a disk in $T$.
In this case, $Q_1\cap T$ is a cylinder having intersection with $Q$ in two disks parallel to $D'$ in $V$. From Lemma \ref{inner ball}(a), the tangle in the complement of $Q$, or of $Q_1$, in $S^3$ is essential. If the tangle $(Q_1, Q_1\cap K)$ is essential then the tangle decompositions defined by $\partial Q_1$ and $S$ are isotopic. This implies that the tangle $(Q, Q\cap K)$ is the product tangle as in the statement. Otherwise, the tangle $(Q_1, Q_1\cap K)$ is trivial. As the string $s_{12}$ is in $Q_1$ with ends in the disk $Q_1\cap S$, it is also unknotted in $(B_1, \mathcal{T}_1)$. Hence, we either have that one string of $(B_1, \mathcal{T}_1)$ is unknotted or that $(Q, Q\cap K)$ is the product tangle described with $Q_1$ intersecting $T$ in a cylinder between two disks parallel to $D'$ in $V$.
\end{proof}

\section{Outermost disks over components of $V-V\cap S$ when $n_1=3$}\label{n1=3}
In this section we consider the several cases when $n_1=3$ with respect to the existence of a genus two or a genus one component of $V-V\cap S$. 

So assume $n_1=3$ and let $D_1^*$, $D_2^*$ and $D_3^*$ be the disk components of $S\cap V$ that intersect $K$. As $|S\cap K|=4$, without loss of generality, we assume that $|D_1^*\cap K|=2$ and $|D_i^*\cap K|=1$, for $i=2,3$. As no $2$-sphere is non-separating in $S^3$, we have that $D_1^*$ is not parallel to $D_2^*$ or $D_3^*$ in $V$. So, $D_1^*$ isn't parallel in $V$ to any other disk of $S\cap V$.

\begin{lem}\label{genus 2 component n_1=3}
If $V-V\cap S$ has a genus two component then some string of some tangle is unknotted.
\end{lem}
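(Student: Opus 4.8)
The plan is to combine an Euler-characteristic count, which shows that a genus two component of $V-V\cap S$ severely constrains all the other components, with the outermost-disk lemmas of Sections \ref{over balls} and \ref{over torus}: I reduce the genus two piece, by compressing along a disk built from an outermost disk, to a ball or a solid torus whose configuration has already been resolved.

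\textbf{Genus bookkeeping.} First I would record that, since $V$ is a genus three handlebody cut along the essential disks $S\cap V$, the dual graph (a vertex for each component of $V-V\cap S$, an edge for each disk of $S\cap V$) is connected and
$$\sum_{C}\mathrm{genus}(C)+b_1=3,$$
where $b_1$ is the first Betti number of this graph. A genus two component $G$ therefore leaves total genus at most one for the rest, so exactly one of two situations occurs: either (i) $b_1=1$ and every component other than $G$ is a ball; or (ii) the dual graph is a tree ($b_1=0$), the unique non-ball component besides $G$ is a single solid torus $T$, and all remaining components are balls.

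\textbf{Locating an outermost disk over $G$.} Since $P=S\cap W$ is essential in $W$ and meets the meridian system $E$, outermost arcs exist; by Lemma \ref{property}(g) each is a st- or sk-arc. Either some outermost disk lies over a ball, in which case the conclusion holds by Lemma \ref{no beta in ball}, or (by Remark \ref{remark no beta in ball}) no outermost disk lies over a ball. In the latter case every outermost disk lies over $G$, or, in case (ii), over $T$; and if an outermost disk lies over $T$ then, according to how $T$ meets $K$ and $\mathcal{D}^*$, the conclusion follows from one of Lemma \ref{no beta in simple torus}, Lemma \ref{no beta in torus}, Lemma \ref{meridional outermost arc}, Lemma \ref{no ball containing T} or Lemma \ref{T with two D*} (together with Ozawa's unicity theorem). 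Thus in every remaining case I reduce to an outermost disk $\Delta$, with arc $\delta$ attached to a disk $D\subset\partial G$ of $S\cap V$, lying \emph{over} the genus two component $G$.

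\textbf{Compressing $G$.} Let $O$ be one of the two disks $\delta$ cuts from $S-\operatorname{int}D$ and set $L=O\cup\Delta$, a disk properly embedded in $V$ with $\partial L\subset\partial V$ and meeting $S$ in the arcs of $O\cap(S\cap V)$. I would analyze $L$ as a disk in $G$. If $L$ is inessential or boundary-parallel in $G$, an isotopy of $S$ across $\Delta$ (as in Lemmas \ref{no beta in simple torus} and \ref{no beta in ball}) reduces $|S\cap V|$, contradicting its minimality. Otherwise $L$ compresses $G$, and cutting $G$ along $L$ yields a solid torus or a ball. Tracking the arc $K\cap G$ (boundary-parallel in $G$ by Lemma \ref{strings parallel to boundary}(b)) and the disks of $\mathcal{D}^*$ on $\partial G$ through this compression places the resulting configuration into the hypotheses of Lemma \ref{no beta in torus}, Lemma \ref{meridional outermost arc} or Lemma \ref{T with two D*}, from which some string of some tangle is unknotted; as there, the ball $Q$ or $R$ built from $L$ is tested for essentiality against Ozawa's unicity theorem and the minimality of $|S\cap V|$, using the absence of local knots and Lemma \ref{no parallel strings}.

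The delicate point, which I expect to be the main obstacle, is this last reduction. A genus two piece has two independent handles, and the three disks $D_1^*,D_2^*,D_3^*$ (with $|D_1^*\cap K|=2$ and $|D_2^*\cap K|=|D_3^*\cap K|=1$), together with the boundary-parallel arcs $K\cap G$, can be distributed among those handles in several ways. The bulk of the work is a case analysis verifying that, for each such distribution, the disk $L$ obtained from the outermost disk either reduces $|S\cap V|$ or compresses $G$ into exactly one of the solid-torus situations already settled in Section \ref{over torus}, so that in every branch a string is exhibited as boundary-parallel, hence unknotted.
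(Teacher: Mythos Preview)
Your outline has the right flavor but contains a concrete error and leaves the decisive step undone.  The disk $L=O\cup\Delta$ is \emph{not} properly embedded in $V$: the outermost disk $\Delta$ lies in $W$, and $O$ lies in $S$, so $L$ lives in the tangle ball \emph{outside} $G$, with $\partial L\subset\partial G$.  What one can do with $L$ is attach a $2$-handle to $G$ from the outside (as in Lemmas \ref{no beta in simple torus}--\ref{T with two D*}), not ``cut $G$ along $L$''.  Once this is corrected your reduction to the solid-torus lemmas of Section \ref{over torus} becomes problematic: those lemmas assume $T$ is an actual component of $V-V\cap S$ with $|S\cap V|$ minimal, and after replacing $V$ by $V\cup N(L)$ neither the Heegaard property of $W'=W-N(L)$ nor the minimality of $|S\cap V'|$ is evident.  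Your last two paragraphs acknowledge that the case analysis has not been carried out; as written, the proposal is a plan, not a proof.

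The paper's argument is both shorter and structurally different.  Using that $n_1=3$ with $|D_1^*\cap K|=2$ forces $D_1^*$ to be non-parallel to every other disk of $S\cap V$, one shows directly that $S\cap V_2$ must be the single disk $D_1^*$ and hence $n_2=0$; the remaining pieces are $V_2$, a ball $C$ cut by $D_1^*\cup D_2^*\cup D_3^*$, and a cylinder between $D_2^*$ and $D_3^*$.  There is no compression of $V_2$ at all: instead one looks, among the arcs of $E\cap P$ with an end in $D_2^*$ or $D_3^*$, at one outermost in $E$.  Its associated disk lies in $B_1-\operatorname{int}C$ and meets one of $D_2^*,D_3^*$ exactly once, so that disk is \emph{primitive} in the genus two handlebody $B_1-\operatorname{int}C$; pushing $C$ off $S$ along it leaves the other string (say $s_{13}$) as a core of a cylinder with solid-torus complement in $B_1$, hence unknotted.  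The key idea you are missing is this primitivity step via a suitably chosen (not outermost) arc, which replaces the whole compression-and-reduce programme.
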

\begin{proof}
Assume there is a component of $V-S\cap V$ with genus two that we denote by $V_2$.
As the genus of $V$ is three, $S\cap V_2$ is a collection of at most two disks.\\
If $S\cap V_2$ is a collection of two disks or a single disk disjoint from $K$, then, as the genus of $V$ is three, some disk of $\mathcal{D}$ is parallel to a disk of $\mathcal{D}^*$, or $D_1^*$ is parallel to $D_2^*$ or $D_3^*$ in $V$. This is impossible as observed before. Therefore, $S\cap V_2$ is a single disk intersecting $K$.

As $S\cap V_2$ is also separating, we can only have $S\cap V_2=D_1^*$, as in Figure \ref{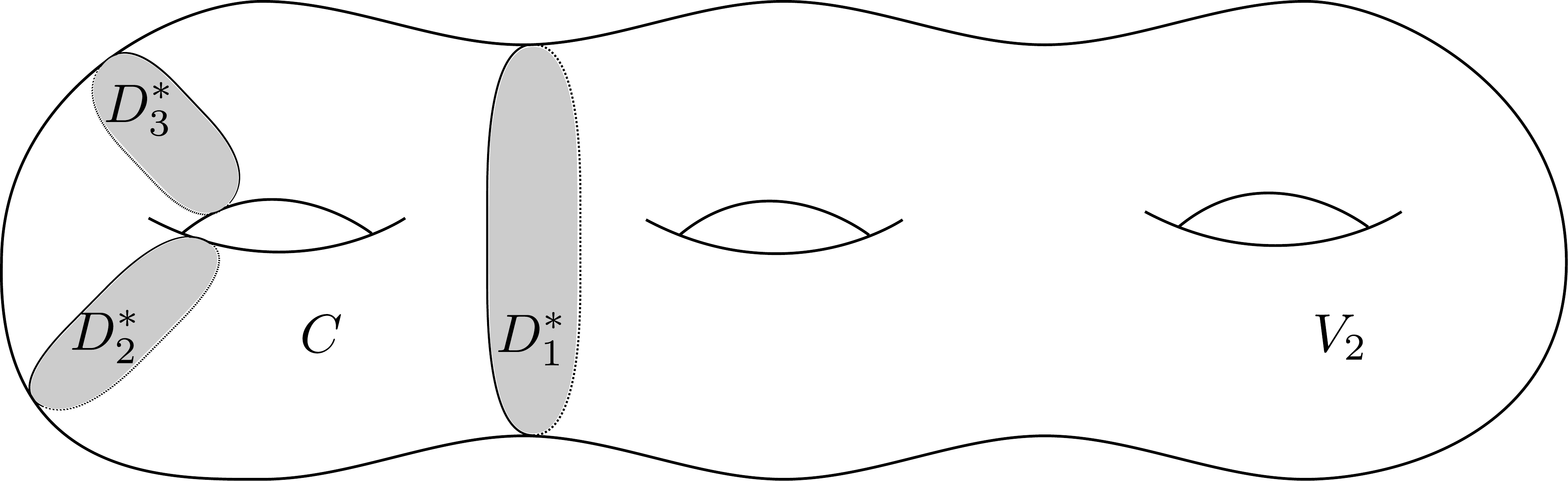}.
\begin{figure}[htbp]
\centering
\includegraphics[width=0.5\textwidth]{threeDA.pdf}
\caption{}
\label{threeDA.pdf}
\end{figure}
So, the disks $D_2^*$ and $D_3^*$ are necessarily parallel in the solid torus separated by $D_1^*$ in $V$, and we have $n_2=0$. Also, as $V_2$ is the only non-ball component of $V-V\cap S$, from Remark \ref{remark no beta in ball}, all outermost disks are over $V_2$ and attached to $D_1^*$.\\ 
Let $C$ be the ball component of $V-V\cap S$ cut from $V$ by $D_1^*\cup D_2^*\cup D_3^*$ and suppose it lies in the tangle $(B_1, \mathcal{T}_1)$. The ball $C$ contains both strings of the tangle $(B_1, \mathcal{T}_1)$: the string $s_{12}$ with one end in $D_1^*$ and the other in $D_2^*$, and the string $s_{13}$ with one end in $D_1^*$ and the other end in $D_3^*$, and from Lemma \ref{strings parallel to boundary} both strings are mutually trivial in $C$.\\
Between the arcs of $E\cap P$ with end in $D_2^*$ or $D_3^*$ we choose one that is outermost in $E$, say $\gamma$, as in Figure \ref{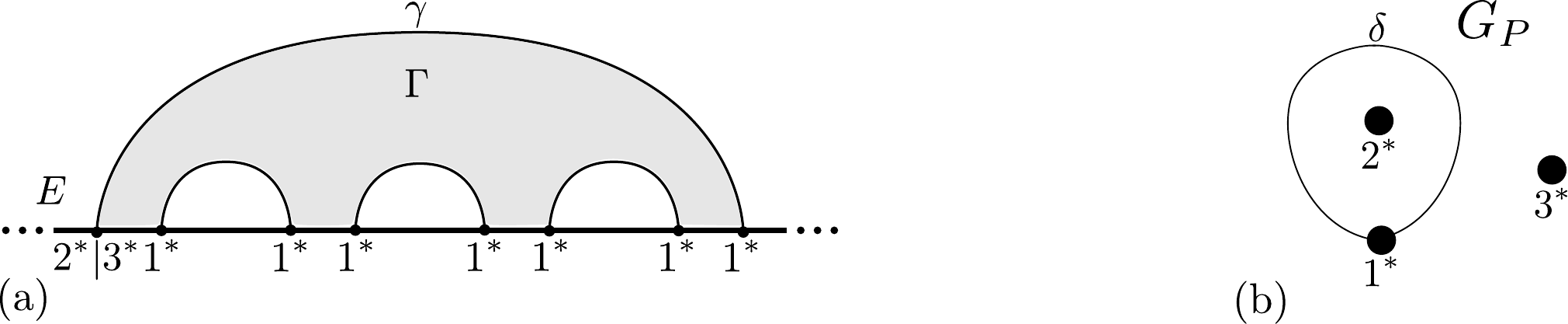}(a). We note that $\gamma$ cannot have one end in $D_2^*$ and the other in $D_3^*$, as, otherwise $\delta$ wouldn't be essential in $P$. (See Figure \ref{threeDoutermostarcsA.pdf}(b).) So, without loss of generality, assume that $\gamma$ has one end in $D_2^*$.
\begin{figure}[htbp]
\centering
\includegraphics[width=0.85\textwidth]{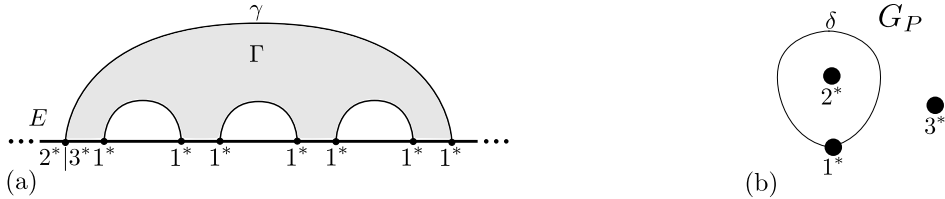}
\caption{: In (a) the arc $\gamma$ represents an arc of $E\cap P$ outermost in $E$ between the ones with at least one end in either $D_2^*$ or $D_3^*$. The label $2^*| 3^*$ at an end of the arc $\gamma$ means that this end is either at the disk $D_2^*$ or at the disk $D_3^*$.}                 
\label{threeDoutermostarcsA.pdf}
\end{figure}
The disk $\Gamma$ is in the complement of $C$ in $B_1$ and its boundary intersects $D_2^*$ only once. So, $D_2^*$ is a primitive disk with respect to the complement of $C$ in $B_1$, which is a genus two handlebody. Then, by an isotopy of $C$ along $D_2^*$ away from $S$ in $B_1$, we are left with the ball $C_{1^*, 3^*}$ that intersects $S$ at $D_1^*$ and $D_3^*$, whose complement in $B_1$ is a solid torus and with the string $s_{13}$ as a core. Hence, the string $s_{13}$ is unknotted in $(B_1, \mathcal{T}_1)$.
\end{proof}

\begin{lem}\label{solid torus component}
If there is a solid torus component of $V-V\cap S$ then both strings of some tangle are $\mu$-primitive.
\end{lem}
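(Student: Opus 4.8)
The plan is to sort the solid torus component $T$ of $V-V\cap S$ into a short list of configurations and reduce each one to a lemma of Section~\ref{over torus}. Since we are in the case $n_1=3$, Lemma~\ref{genus 2 component n_1=3} lets us assume that $V-V\cap S$ has no genus-two component, so that every component is a ball or a solid torus; by Remark~\ref{remark no beta in ball} no outermost disk lies over a ball, whence every outermost disk lies over a solid torus component. I would then organize the argument according to the intersection $T\cap\mathcal{D}^*$ and to whether $T$ meets $K$.

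First I would dispose of the configurations that produce an unknotted string, since by the implication recorded in Remark~\ref{remark no beta in ball} an unknotted string already yields Theorem~\ref{2-string tangle}. If $T$ is disjoint from $K$, then no disk of $\mathcal{D}^*$ can lie in $\partial T$ (the knot pierces each such disk, so it would enter $T$), hence $T\cap S\subseteq\mathcal{D}$; when $T$ carries a single disk this is excluded by Lemma~\ref{no beta in simple torus}, and the remaining $K$-disjoint tori I would rule out using the genus-three count of $V$, which forces either a disk of $\mathcal{D}$ parallel to a disk of $\mathcal{D}^*$ or a genus-two component, both impossible, so that the outermost disk must in fact lie over a torus meeting $K$. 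If $T$ meets $K$ in a single arc whose ends lie on a single disk of $\mathcal{D}^*$, that disk must be $D_1^*$ (the only one meeting $K$ twice), and Lemma~\ref{no beta in torus} gives an unknotted string; the auxiliary situation in which one side of the outermost arc $\delta$ is a single once-punctured disk is eliminated by Lemma~\ref{meridional outermost arc}.

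The substantive case is when $T$ meets $K$ in a single boundary-parallel arc (Lemma~\ref{strings parallel to boundary}(b)) with $T\cap\mathcal{D}^*$ equal to two disks and $T$ disjoint from $\mathcal{D}$. Here Lemma~\ref{T with two D*} applies: either some string is again unknotted, or we obtain a ball $Q\subset B_1$ on which $(Q,Q\cap K)$ is a product tangle with respect to a disk $Q\cap S$ meeting $\mathcal{D}^*$ twice, while $B_1-Q$ meets $T$ in a solid cylinder. In this last situation I would splice the product structure on $Q$ together with the cylinder $(B_1-Q)\cap T$ to exhibit, for each of the two strings of $\mathcal{T}_1$, a solid torus in $B_1$ that contains the string as a trivial arc, meets $\partial B_1$ in a single disk, and has solid-torus complement in $B_1$; by the characterization of Lemma~\ref{mu-primitive characterization} this makes both strings of $(B_1,\mathcal{T}_1)$ $\mu$-primitive, which is exactly the desired conclusion.

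I expect the main obstacle to be precisely this last assembly: turning the product-tangle data of Lemma~\ref{T with two D*} into the explicit pair of nested solid tori demanded by Lemma~\ref{mu-primitive characterization}, and doing so for \emph{both} strings simultaneously while keeping careful track of how the cylinder $(B_1-Q)\cap T$ closes up against the disk $Q\cap S$. A secondary technical point is the exhaustion of the $K$-disjoint solid tori in the first case without appealing to a genus-two component; there I would lean on Lemma~\ref{no ball containing T} and on the observation, recorded after Lemma~\ref{strings parallel to boundary}, that no disk of $\mathcal{D}$ is parallel to a disk of $\mathcal{D}^*$ in $V$.
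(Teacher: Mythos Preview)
Your trichotomy --- $T$ disjoint from $K$; $T\cap K$ a single arc with both ends on one disk of $\mathcal{D}^*$; $T\cap K$ a single arc with $T\cap\mathcal{D}^*$ two disks and $T\cap\mathcal{D}=\emptyset$ --- is not exhaustive, and the case you omit is precisely the one that carries the weight of the lemma.  When $n_1=3$ it can happen that $T\cap S=D_1^*\cup D_2^*\cup D_3^*$; then $T$ contains \emph{both} strings of the tangle, so none of your three headings applies, and none of Lemmas~\ref{no beta in simple torus}, \ref{no beta in torus}, \ref{meridional outermost arc}, \ref{T with two D*} can be invoked (the last two require $T\cap K$ to be a single arc).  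In the paper this configuration is handled by a second\nobreakdash-outermost\nobreakdash-arc argument, splitting according to whether $D_1^*$ is separating in $V$, and it is exactly here that one produces the honest ``both strings $\mu$-primitive'' conclusion (rather than merely an unknotted string) via Lemma~\ref{mu-primitive characterization}.  Your outline has no mechanism for this case.

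Two smaller points.  First, for the case $T\cap\mathcal{D}^*=D_2^*\cup D_3^*$ you plan to go through Lemma~\ref{T with two D*} and then assemble a pair of solid tori; this is harder than necessary.  Since $|D_1^*\cap K|=2$, the disk $O$ on one side of the outermost arc is forced to contain exactly one once\nobreakdash-punctured disk of $\mathcal{D}^*$, so Lemma~\ref{meridional outermost arc} applies directly and gives an unknotted string --- no product\nobreakdash-tangle splicing is needed.  (In fact the ``$O\cap\mathcal{D}^*$ not in $T$'' alternative of Lemma~\ref{T with two D*} cannot occur here, because the only $\mathcal{D}^*$\nobreakdash-disk outside $T$ is $D_1^*$, which meets $K$ twice.)  Second, the elimination of $K$\nobreakdash-disjoint tori with $|T\cap S|=2$ does not follow from the genus count alone: the paper uses that $D_1^*$ is then separating and that a ball component cut off by $T\cap S$ and $D_1^*$ contains a string with both ends on $D_1^*$, contradicting essentiality via Lemma~\ref{strings parallel to boundary}.
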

\begin{proof}
As the genus of $V$ is three, and one component of $V-V\cap S$ is a solid torus, the components of $V-V\cap S$ are balls or solid tori. From Remark \ref{remark no beta in ball}, all outermost disks are over solid torus components of $V-V\cap S$. Let $T$ be a torus component of $V-S\cap V$ with a outermost disk over it, and suppose $T$ is in $B_1$. The collection of disks $T\cap S$ cannot be bigger than four as the genus of $V$ is three. If the number of disks in $T\cap S$ is four then $D_1^*$ is parallel to some other disk of $V\cap S$, which is impossible as previously observed. So, $|T\cap S|$ is at most three.

Suppose $T\cap S$ is a single disk. If $T\cap S$ is disjoint from $K$ we get a contradiction to Lemma \ref{no beta in simple torus}. If $T\cap S$ intersects $K$, from Lemma \ref{no beta in torus} some string of some tangle is unknotted.

In case $T\cap S$ is a collection of two disks then we have several cases to consider. If these two disks don't intersect $K$ then $D_1^*$ is necessarily separating. Furthermore, one string from a tangle lies in a ball of $V-V\cap S$ cut by $T\cap S$ and $D_1^*$ with the two ends in $D_1^*$. Then, from Lemma \ref{strings parallel to boundary} this string is trivial in the respective tangle, which is a contradiction to the tangle being essential. If only one disk of $T\cap S$ intersects $K$ then it is necessarily $D_1^*$, because $K$ intersects $T\cap S$ an even number of times. In this situation, $T$ intersects $K$ at a single arc and from Lemma \ref{no beta in torus} some string of some tangle is unknotted.\\
If the two disks of $T\cap S$ intersect $K$ then $T\cap S=D_2^*\cup D_3^*$.
\begin{figure}[htbp]
\centering
\includegraphics[width=0.5\textwidth]{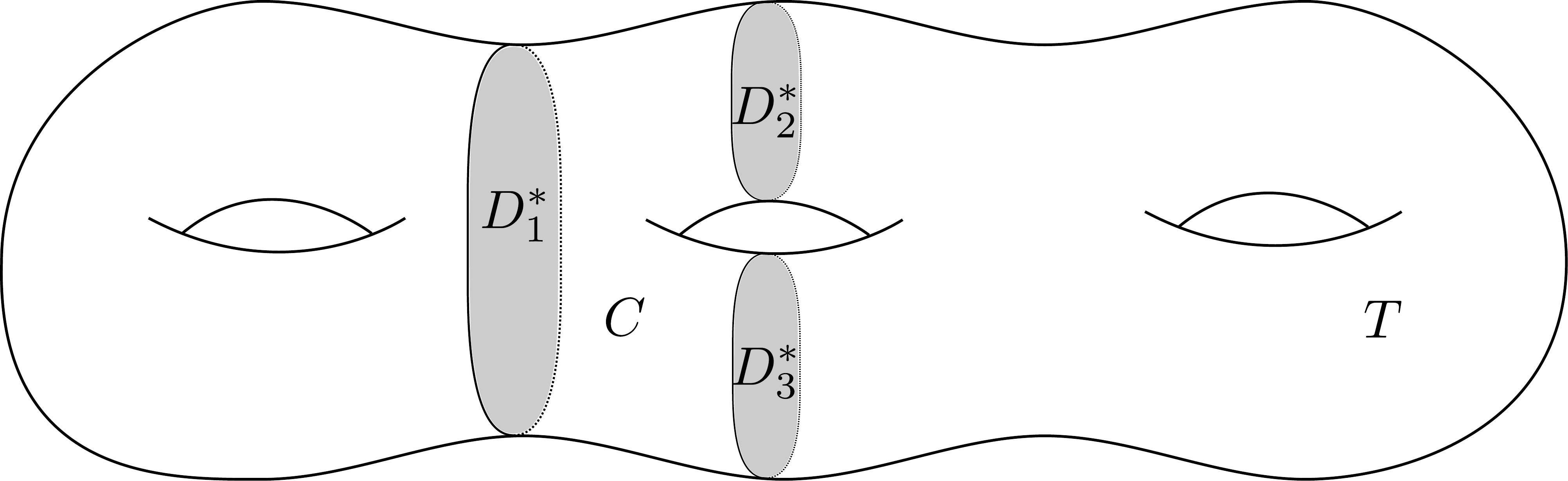}
\caption{}                 
\label{threeD.pdf}
\end{figure}
In this case, $D_2^*\cup D_3^*$ separate $V$ into two solid tori components, $T$ and $V_1$. The disk $D_1^*$ is in $V_1$ and is necessarily separating. (See Figure \ref{threeD.pdf}.) We also have $n_2=0$. Then, for the respective outermost arc of an outermost disk over $T$ we are always under the statement of Lemma \ref{meridional outermost arc}, which means that some string of some tangle defined by $S$ is unknotted.

Assume now that $T\cap S$ is a collection of three disks. 
At least some disk of $T\cap S$ intersects $K$, as otherwise $D_1^*$ would have to be parallel in $V$ to some other disk of $V\cap S$, which is impossible as previously observed.\\ 
If only one disk of $T\cap S$ intersects $K$ then this disk is $D_1^*$, and from Lemma \ref{no beta in torus} some string of some tangle is unknotted.\\
If two disks of $T\cap S$ intersect $K$ then these disks have to be $D_2^*$ and $D_3^*$. As the genus of $V$ is three either $T\cap S$ or $D_2^*\cup D_3^*$ cuts a ball from $V$. In either case, $D_1^*$ would have to be parallel to some other disk, which is impossible as previously observed.\\
The last case is when $T\cap S=D_1^*\cup D_2^*\cup D_3^*$. The disk $D_1^*$ can be separating or non-separating. In the latter case $D_1^*\cup D_2^*\cup D_3^*$ separates a ball from $V$, and in the former case the disks $D_2^*$ and $D_3^*$ are parallel and the disk $D_1^*$ separates a solid torus  $V_1$ in $V$. (See Figure \ref{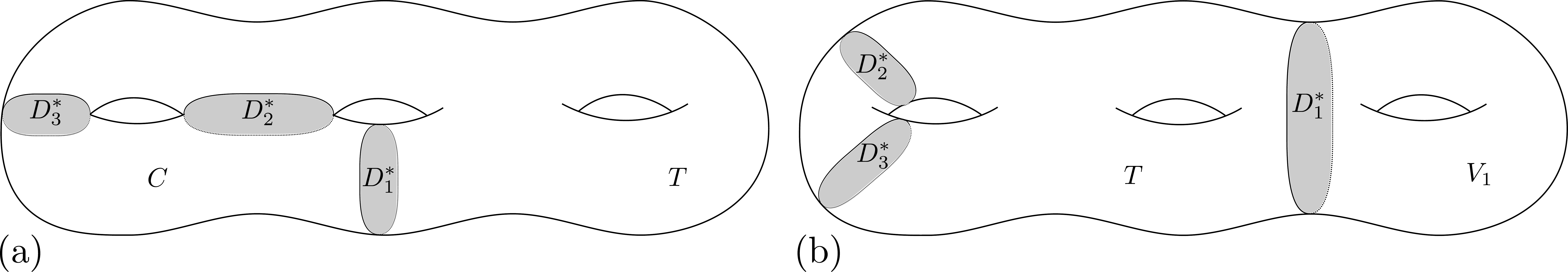}.) So, from Lemma \ref{strings parallel to boundary} and the fact that no disk of $\mathcal{D}$ is parallel to a disk of $\mathcal{D}^*$, we can assume that $n_2=0$. From $|S\cap V|=3$ and Lemma \ref{property}(b), there is only one disk attached to outermost arcs.\\
Assume $D_1^*$ is non-separating, then $D_1^*\cup D_2^*\cup D_3^*$ separates a ball $C$ from $V$, which is in the tangle $(B_2, \mathcal{T}_2)$.
\begin{figure}[htbp]
\centering
\includegraphics[width=\textwidth]{threeDB.pdf}
\caption{}                 
\label{threeDB.pdf}
\end{figure}
 If there is a string in $C$ with both ends in $D_1^*$ then, from Lemma \ref{inner ball}(d), this string is unknotted in $(B_2, \mathcal{T}_2)$. So, we can assume that each string in $C$ has only one end in $D_1^*$. 
Consider a second-outermost disk $\Gamma^*$, and the respective second-outermost arc $\gamma^*$. Then $\Gamma^*$ is in the complement of $C$ in $B_2$. If $\gamma^*$ has equal ends then, following the proof Lemma \ref{no gamma with equal ends}, we have that some string of some tangle is unknotted. If the ends of $\gamma^*$ are distinct, then $D_1^*$, $D_2^*$ or $D_3^*$ is primitive in the complement of $C$ in $B_2$. Suppose $D_2^*$ (or $D_3^*$) is primitive with respect to the complement $C$ in $B_2$. After an isotopy of $C$ along $D_2^*$ (resp., $D_3^*$) away from $S$, we have that the complement of a regular neighborhood of the string $s_{13}$ (resp., $s_{12}$) is a solid torus, which implies that this string is unknotted in $(B_2, \mathcal{T}_2)$. Suppose $D_1^*$ is primitive with respect to the complement of $C$ in $B_2$. As the complement of $C$ in $B_2$ is a handlebody, after an isotopy of $C$ along $D_1^*$ away from $S$, we obtain a cylinder from $D_2^*$ to $D_3^*$, with core $t$, whose complement in $B_2$ is a solid torus. Then $t$ is unknotted in $B_2$. As $s_{12}$ and $s_{13}$ are trivial in $C$, we have that $C$ is the union of the regular neighborhoods of $t\cup s_{12}$, and also of $t\cup s_{13}$. Consequently, both $s_{12}$ and $s_{13}$ are $\mu$-primitive.\\
Assume now that $D_1^*$ is separating. Suppose $D_2^*$ and $D_3^*$ are the only disks attached to outermost arcs. As $D_2^*$ is parallel to $D_3^*$ by the finiteness of outermost arcs, if we consider a second-outermost arc we have that both disks have loops attached in $G_P$, which contradicts Lemma \ref{property}(b). So, $D_1^*$ has outermost arcs attached and all second-outermost arcs are after outermost arcs attached to $D_1^*$. If there is an outermost disk over $V_1$, from Lemma \ref{no beta in torus} some string of some tangle is unknotted. So, we can assume that all outermost disks are over $T$. Let $\Gamma^*$ be a second-outermost disk, then $\Gamma^*$ is in the complement of $V_1$ in $B_2$. Suppose $\partial \Gamma^*$ is essential in $\partial V_1\cup _{D_1^*} S$. Then the complement of $V_1$ in $B_2$ is also a solid torus (intersecting $S$ at a single disk). From Lemma \ref{strings parallel to boundary} the string $s_{11}$ is trivial in $V_1$. Then, from Lemma \ref{mu-primitive characterization}, $s_{11}$ is $\mu$-primitive. We note also that $B_2\cap V$ is $V_1$ together with the cylinder cut by $D_2^*\cup D_3^*$ in $V$, $C_{2^*3^*}$, where the string $s_{23}$ is a core. As the complement of $B_2\cap V$ in $B_2$ is a handlebody we have that $s_{23}$ is trivial in the complement of $V_1$ in $B_2$. Therefore, from Lemma \ref{mu-primitive characterization}, $s_{23}$ is also $\mu$-primitive. Suppose now that $\partial \Gamma^*$ is inessential in $\partial V_1\cup _{D_1^*} S$. Then $\partial \Gamma^*$ bounds a disk $L$ in $\partial V_1\cup _{D_1^*} S$. Let $R$ be the ball in $B_2$ bounded by $\Gamma^*\cup L$. By similar arguments as in the proof of Lemma \ref{no gamma with equal ends}, we have that $s_{23}$ is in $R$ and is parallel to $L$. So, $s_{23}$ is trivial in the complement of $V_1$ in $B_2$. As the complement of $B_2\cap V$ in $B_2$ is a handlebody, this implies that the complement of $V_1$ in $B_2$ is a solid torus. Then, as when $\partial \Gamma^*$ is essential in $\partial V_1\cup _{D_1^*} S$, we have that both $s_{11}$ and $s_{23}$ are $\mu$-primitive. 
\end{proof}

\section{Outermost disks over components of $V-V\cap S$ when $n_1=4$}\label{n1=4}
Along this section we consider the several cases when $n_1=4$ with respect to the components of $V-V\cap S$ topology and their intersection with $S\cap V$.

So assume that $n_1=4$. As $|S\cap K|=4$ we have $|D_i^*\cap K|=1$, for $i=1, 2, 3, 4$. Therefore, $D_i^*$ is a non-separating disk in $V$.\\
Denote by $\gamma_i^*$ the outermost arcs of $E\cap P$, in $E$, among the arcs with at least one end in $D_i^*$, for $i=1, 2, 3, 4$. Also, let $\Gamma_i^*$ denote the disk of $E-E\cap P$ co-bounded by $\gamma_i^*$ in the outermost side of this arc in $E$, for $i=1, 2, 3, 4$.


\begin{lem}\label{genus 2 component} 
If $V-V\cap S$ contains a genus two component then some string in some tangle is unknotted.
\end{lem}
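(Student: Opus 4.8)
The plan is to first pin down the topology of $V-V\cap S$ by a genus count, and then to run the outermost-disk machinery of Sections \ref{over balls} and \ref{over torus} case by case. Write $V_2$ for the genus two component and form the dual graph $\Gamma$ whose vertices are the components of $V-V\cap S$ and whose edges are the disks of $S\cap V$ (a disk with both sides on one component being a loop). Since $V$ is a genus three handlebody, $3=\sum_{v}g(v)+b_1(\Gamma)$, so the genus two vertex forces $\sum_{v\ne V_2}g(v)+b_1(\Gamma)=1$. Because $n_1=4$ there are at least four disks, hence at least four edges; together with $b_1(\Gamma)\le 1$ this gives at least four components and pins $\Gamma$ down to two shapes: either a tree ($b_1=0$) in which exactly one component besides $V_2$ is a solid torus and the rest are balls, or a graph with a single independent cycle ($b_1=1$) in which every component other than $V_2$ is a ball. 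In either case the only non-ball components of $V-V\cap S$ are $V_2$ and, in the tree case, one solid torus $T$.

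By Lemma \ref{no beta in ball} and Remark \ref{remark no beta in ball} I may assume every outermost disk lies over $V_2$ or over $T$. If an outermost disk lies over the solid torus $T$, I would read off $|T\cap S|$ and how $T$ meets $K$ and invoke the torus lemmas directly: Lemma \ref{no beta in simple torus} rules out $T$ disjoint from $K$ with a single disk, Lemma \ref{no beta in torus} handles $T$ meeting $K$ in a single string through one $\mathcal{D}^*$ disk, Lemma \ref{meridional outermost arc} handles the case where the outermost arc cuts off a disk containing a single $\mathcal{D}^*$ disk (which meets $K$ once since $n_1=4$), and Lemma \ref{T with two D*} handles $T$ meeting $\mathcal{D}^*$ in two disks. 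Each either produces an unknotted string outright or leaves a product-tangle ball $Q$ that, via Lemma \ref{inner ball} and Ozawa's unicity theorem, is pushed off $S$ to unknot a neighboring string.

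It remains to treat outermost disks over $V_2$ itself, which I would handle by the primitive-disk-and-isotopy argument of Lemma \ref{genus 2 component n_1=3}. Among the arcs of $E\cap P$ with an end on some $D_i^*\subset S\cap V_2$, choose one, $\gamma$, outermost in $E$, with outermost disk $\Gamma$. Since each $D_i^*$ meets $K$ exactly once, $\partial\Gamma$ meets $D_i^*$ in a single point, exhibiting $D_i^*$ as a primitive disk of the genus two handlebody complementary to $V_2$. Isotoping $V_2$ across $D_i^*$ away from $S$ then removes the intersection $D_i^*\cap K$ and leaves a component whose complement in the ambient ball is a solid torus carrying a neighboring string $s_{ij}$ as its core, so that string is unknotted, exactly as $s_{13}$ is in Lemma \ref{genus 2 component n_1=3}. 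The configurations in which $\gamma$ has both ends on $\mathcal{D}^*$ disks, or both ends on the same disk, I would dispose of using Lemma \ref{no gamma with equal ends} and the essentiality of the arcs recorded in Lemma \ref{property}(e), as in that lemma.

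The hard part will be the breadth of the combinatorial case analysis: unlike the $n_1=3$ situation, the genus two component can carry several of the non-separating disks $D_1^*,\dots,D_4^*$ on its boundary, so many distributions of the $\mathcal{D}^*$ and $\mathcal{D}$ disks among $V_2$, $T$, and the balls must be enumerated, and in each one both the identification of a primitive $D_i^*$ and the verification that the resulting isotopy genuinely unknots a string (rather than merely reducing $|S\cap V|$) require care. Controlling which disk becomes primitive, and ruling out via Corollary \ref{no parallel sk-arcs} and Lemma \ref{parallel arcs} the configurations in which the isotopy would instead contradict the minimality of $|S\cap V|$, is where the real work lies.
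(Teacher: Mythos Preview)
Your plan diverges from the paper's proof in a way that leaves a real gap. The paper's argument is much shorter and rests on two structural facts you do not isolate.

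First, the genus count gives $|S\cap V_2|\le 2$ directly (cut $V$ only along the disks adjacent to $V_2$; the complementary piece has Euler characteristic $|S\cap V_2|-1$, and essentiality of the disks forces it to be a single handlebody of genus $\le 1$). Your dual-graph formula is correct, but your ``tree'' case with an extra solid torus $T$ cannot occur: each $D_i^*$ meets $K$ once and is therefore non-separating in $V$, so all four $D_i^*$ are cycle edges of $\Gamma$, forcing $b_1(\Gamma)\ge 1$. Hence $b_1(\Gamma)=1$ and every component of $V-V\cap S$ other than $V_2$ is a ball. The entire block of your proposal invoking Lemmas~\ref{no beta in simple torus}--\ref{T with two D*} for $T$ is vacuous.

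Second, and this is the essential gap, in the case $|S\cap V_2|=1$ the single disk must lie in $\mathcal{D}$, not in $\mathcal{D}^*$: it is separating, and a separating disk cannot meet $K$ in exactly one point. So there is no $D_i^*\subset S\cap V_2$, and your primitive-disk argument ``among the arcs with an end on some $D_i^*\subset S\cap V_2$'' cannot even begin. The analogy with Lemma~\ref{genus 2 component n_1=3} breaks down precisely because there $n_1=3$ allows $S\cap V_2=D_1^*$ with $|D_1^*\cap K|=2$, while here every $D_i^*$ meets $K$ once. The paper instead observes that all $D_i^*$ are parallel meridians of the complementary solid torus and all $D_j\in\mathcal{D}$ are parallel to $D_1$; the only outermost arcs are attached to $D_1$, and one passes to a \emph{second-outermost} arc $\gamma$ lying over the ball cut by $D_{n_2}\cup D_1^*\cup D_4^*$. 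Lemma~\ref{no gamma with equal ends} handles every position of $\gamma$ except when $\gamma$ has both ends in $D_1^*$ or both in $D_4^*$, and that residual case produces parallel sk-arcs contradicting Corollary~\ref{no parallel sk-arcs}. The $|S\cap V_2|=2$ case is dispatched the same way (all four $D_i^*$ parallel, $n_2=0$, hence parallel sk-arcs). You mention Lemma~\ref{no gamma with equal ends} and Corollary~\ref{no parallel sk-arcs} only in passing; they are in fact the whole engine of the proof.
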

\begin{proof}
\begin{figure}[htbp]
\centering
\includegraphics[width=\textwidth]{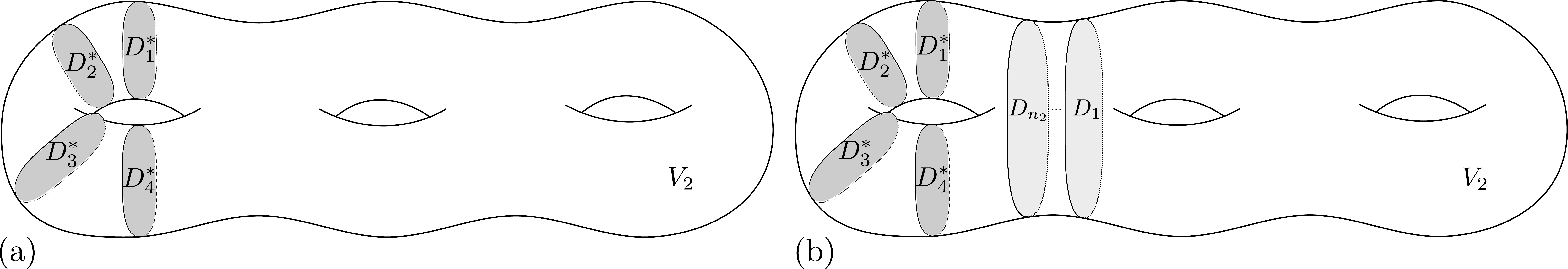}
\caption{}                
\label{T2.pdf}
\end{figure}
Assume that $V-V\cap S$ contains a genus two component, $V_2$. As the genus of $V$ is three $S\cap V_2$ is a collection of at most two disks. If $S\cap V_2$ is a collection of two disks then $S\cap V$ are all parallel disks in $V$ and, from Remark \ref{remark no beta in ball}, all outermost disks are over $V_2$. Therefore, $n_2=0$ and all disks of $\mathcal{D}^*$ are parallel, as in Figure \ref{T2.pdf}(a). Consequently, by the finiteness of outermost arcs, we have parallel type I $\mathcal{\text{d}^*}$-arcs in $E$, as in Figure \ref{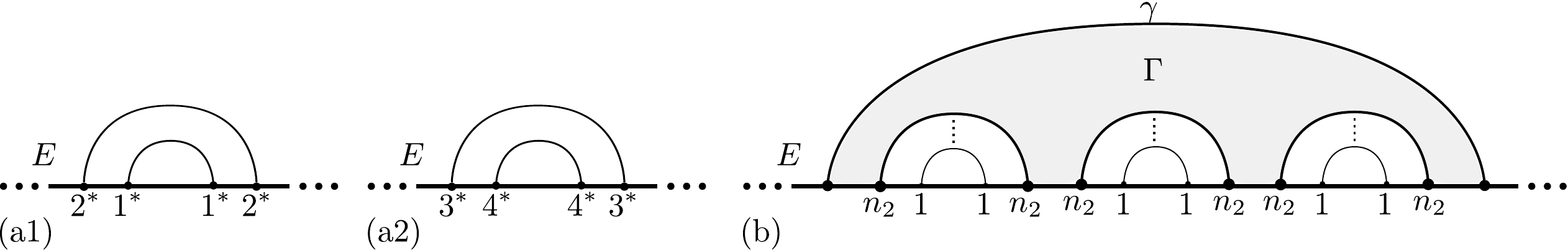}(a1) or (a2), in contradiction to Corollary \ref{no parallel sk-arcs}.
Then, $S\cap V_2$ is a single disk. As each disk of $\mathcal{D}^*$ intersects $K$ once, $S\cap V_2$ is a disk of $\mathcal{D}$.
\begin{figure}[htbp]
\centering
\includegraphics[width=\textwidth]{T2outermostarcs.pdf}
\caption{}                 
\label{T2outermostarcs.pdf}
\end{figure}
Then, all disks of $\mathcal{D}^*$ are parallel in the solid torus cut from $V$ by $S\cap V_2$, and all disks of $\mathcal{D}$ are parallel to $S\cap V_2$ in $V$, as in Figure \ref{T2.pdf}(b). Let $D_1,\ldots, D_{n_2}$ be the disks of $\mathcal{D}$, with $S\cap V_2$ being $D_1$. The outermost disks are all adjacent to $D_1$ and are over $V_2$. Consider a second-outermost arc $\gamma$, as in Figure \ref{T2outermostarcs.pdf}(b). If the arc $\gamma$ has at least one end in $D_{n_2}$, or has one end in $D_1^*$ and the other in $D_4^*$, by Lemma \ref{no gamma with equal ends}, some string in the tangle decomposition defined by $S$ is unknotted. Otherwise, if all second-outermost arcs have both ends in $D_1^*$ or both ends in $D_4^*$, as when $S\cap V_2$ is two disks, by the finiteness of outermost arcs we have a contradiction to Corollary \ref{no parallel sk-arcs}.
\end{proof}

Assume now that $V-V\cap S$ has a solid torus component $T$ with some outermost disk over it. Hence, as the genus of $V$ is three, the components of $V-V\cap S$ are solid tori or balls, and the solid torus $T$ components intersect $S$ at most in four disks. As each disk of $\mathcal{D}^*$ intersects $K$ once, the solid torus $T$ intersects $\mathcal{D}^*$ at an even number of disks.


\begin{lem}\label{4D*}
Suppose $V-V\cap S$ contains a solid torus component intersecting $\mathcal{D}^*$ at the four disks. Then some string in some tangle is unknotted.
\end{lem}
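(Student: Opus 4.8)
The plan is to show that the hypotheses force both strings of the tangle containing $T$ into a ball obtained by capping off $T$, and then to play the essential/inessential dichotomy of Lemma \ref{inner ball} against Ozawa's unicity theorem and the minimality of $|S\cap V|$. I first pin down the local structure. Since $n_1=4$, each $D_i^*$ meets $K$ exactly once, so the four points of $S\cap K$ are precisely the ends of the strings and no string meets $S\cap V$ in its interior; hence each string of $K$ is confined to a single component of $V-V\cap S$. As $T$ meets $\mathcal{D}^*$ in all four disks $D_1^*,\dots,D_4^*$ and $T$ lies in one ball, say $B_1$, the two strings of $\mathcal{T}_1$ have all of their ends on $\partial T$ and therefore lie in $T$. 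By Lemma \ref{strings parallel to boundary}(b) each of these two strings is parallel to $\partial T$.

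Next I use an outermost disk over $T$, which exists by the standing assumption (Remark \ref{remark no beta in ball}) that all outermost disks lie over non-ball components. Let $\Delta$ be such a disk, with outermost arc $\delta$ attached to a disk $D$ of $T\cap S$, and let $O$ be a disk cut off by $\delta$ in $S-\mathrm{int}\,D$; as $\delta$ is an sk-arc, $O$ contains at least one disk of $\mathcal{D}^*$. Isotoping the disks of $O\cap\mathcal{D}^*$ off $S$ into $B_1$ and then attaching the $2$-handle with core $L=O\cup\Delta$ to the resulting solid torus produces a ball $Q$ whose intersection with $K$ is a collection of two arcs, namely the two boundary-parallel strings of $\mathcal{T}_1$ (possibly extended across the disks of $O\cap\mathcal{D}^*$). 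Because capping a solid torus along $L$ untwists the parallelism of Lemma \ref{strings parallel to boundary}(b) that survives on the $2$-handle side, at least one of these arcs is unknotted in $Q$, and the construction arranges $|\partial Q\cap V|<|S\cap V|$.

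With $Q$ in hand the argument closes via Lemma \ref{inner ball}. Since $Q^c\cap K=\mathcal{T}_2$, part (a) gives that $(Q^c,Q^c\cap K)$ is essential. Applying part (b) to the unknotted arc of $Q\cap K$, either $(Q,Q\cap K)$ is a trivial tangle or some string of some tangle is already unknotted; in the latter case we are done. In the former case, when the ends of the two arcs have been collapsed into one or two disks of $Q\cap S$, part (c) makes $(Q,Q\cap K)$ essential, so by Ozawa's unicity theorem $\partial Q$ is isotopic to $S$, contradicting the minimality of $|S\cap V|$ because $|\partial Q\cap V|<|S\cap V|$. This contradiction forces the trivial alternative, and a string trivial in $Q$ with both ends on a single disk of $Q\cap S$ is unknotted in $(B_1,\mathcal{T}_1)$.

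I expect the main obstacle to be the configuration in which $O$ contains only one disk of $\mathcal{D}^*$, so that $Q\cap S$ retains three disks and Lemma \ref{inner ball}(c) does not directly apply. Here I would pass to a second-outermost arc lying beyond the outermost arcs on $D$ and argue as in Lemma \ref{no gamma with equal ends}: according to whether its ends coincide, share a $\mathcal{D}^*$-disk, or join the two ends of a single string, one isotopes $T$ through $S$ (exploiting that the relevant disk of $\mathcal{D}^*$ becomes primitive in the complement of $T$) either to reduce $|S\cap V|$ and obtain a contradiction, or to exhibit a string parallel to $\partial T$ and hence unknotted. The delicate points throughout are the careful bookkeeping of $Q\cap S$ and $Q\cap K$ under the $2$-handle and isotopy moves, and verifying that the trivial string produced in the inessential case genuinely has both ends on the same component of $Q\cap S$; these, together with the second-outermost analysis, are where the real work lies.
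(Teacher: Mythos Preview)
Your approach diverges substantially from the paper's, and the divergence is where the gaps appear. The paper does not build a ball $Q$ in $B_1$ at all. It first extracts structure: since $T$ is a solid torus carrying all four $D_i^*$ and $V$ has genus three, one gets $n_2=0$ and the disks of $\mathcal D^*$ parallel two-by-two (say $D_1^*\parallel D_2^*$, $D_3^*\parallel D_4^*$), so the $B_2$ side is just the two cylinders $C_{12},C_{34}$. Then Lemma~\ref{property}(b) together with Corollary~\ref{no parallel sk-arcs} forces the outermost arcs to be attached to exactly two \emph{parallel} disks, say $D_1^*,D_2^*$; any other pattern manufactures parallel sk-arcs in $E$, which Corollary~\ref{no parallel sk-arcs} forbids. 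The decisive disk is then the \emph{second}-outermost one, and it lives in $B_2-\operatorname{int}C_{12}$: if its boundary is essential in $S\cup_{D_1^*\cup D_2^*}\partial C_{12}$ it is a meridian of that solid torus complement and $s_{12}$ is unknotted in $B_2$; if inessential, an innermost-ball argument shows $s_{34}$ is parallel to the disk it bounds, whence the complement of $C_{12}$ in $B_2$ is a solid torus and again $s_{12}$ is unknotted. You never reach this $B_2$ picture because you skip the parallel two-by-two analysis entirely.

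On your own terms, two steps are not justified. First, the assertion that ``capping a solid torus along $L$ untwists the parallelism'' so that a string of $\mathcal T_1$ is unknotted in $Q$ is not automatic: Lemma~\ref{strings parallel to boundary}(b) gives a disk $D_s\subset T$ with $\partial D_s=s\cup\alpha$ and $\alpha\subset\partial T$, but $\alpha$ may run across the attaching annulus of the $2$-handle, so $\alpha\not\subset\partial Q$ and $D_s$ does not witness unknottedness in $Q$; turning this into a statement about $Q$ needs a real argument you do not supply. Second, and more seriously, your acknowledged ``main obstacle'' (the case $|O\cap\mathcal D^*|=1$, so $Q\cap S$ has three components and Lemma~\ref{inner ball}(c) does not apply) is precisely what the paper's combinatorics with Corollary~\ref{no parallel sk-arcs} is designed to control, and your proposed fallback via Lemma~\ref{no gamma with equal ends} does not match that lemma's hypotheses: there is no ball component $C$ cut off by $D_k\cup D_i^*\cup D_j^*$ in this configuration. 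The inequality $|\partial Q\cap V|<|S\cap V|$ is likewise asserted without argument; in the paper, reductions of this type go through the specific isotopy of Lemma~\ref{no ball containing T}, not by inspecting $\partial Q$ directly.
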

\begin{proof}
Let $T$ be the solid torus component of $V-V\cap S$ as in the statement, and suppose it lies in the tangle $(B_1, \mathcal{T}_1)$. As the genus of $V$ is three and $T$ intersects $\mathcal{D}^*$ at the four disks, we have that the disks of $\mathcal{D}^*$ are parallel two-by-two in $V$, say $D_1^*$ parallel to $D_2^*$ and $D_3^*$ parallel to $D_4^*$. So, $n_2=0$, and $V\cap S$ is as in Figure \ref{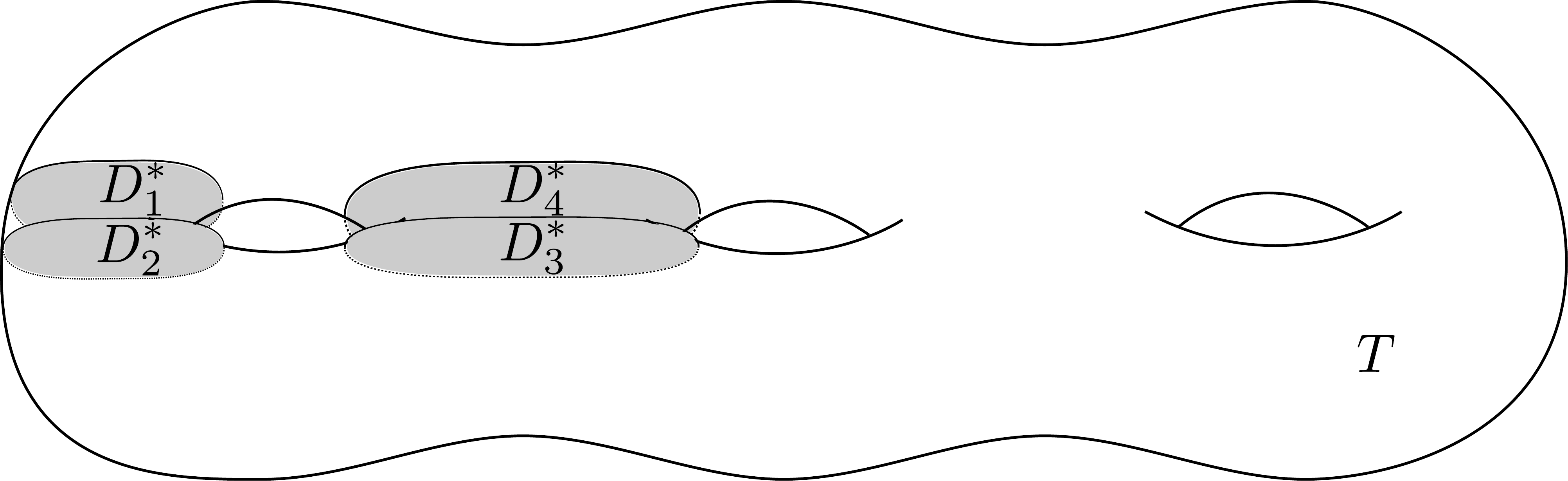}. Also, from Remark \ref{remark no beta in ball}, we can assume all outermost disks are over $T$. From Lemma \ref{property}(b), at most two disks are adjacent to outermost arcs.\\
If the outermost arcs are attached to a single disk or if they are attached to two non-parallel disks, by the finiteness of outermost arcs of $E\cap P$ in $E$ we have a contradiction to Corollary \ref{no parallel sk-arcs}.\\
Then, the outermost arcs are attached to two parallel disks. 
\begin{figure}[htbp]
\centering
\includegraphics[width=0.5\textwidth]{4D.pdf}
\caption{}                 
\label{4D.pdf}
\end{figure}
Without loss of generality, assume that the only disks adjacent to outermost arcs are $D_1^*$ and $D_2^*$.
\begin{figure}[htbp]
\centering
\includegraphics[width=0.9\textwidth]{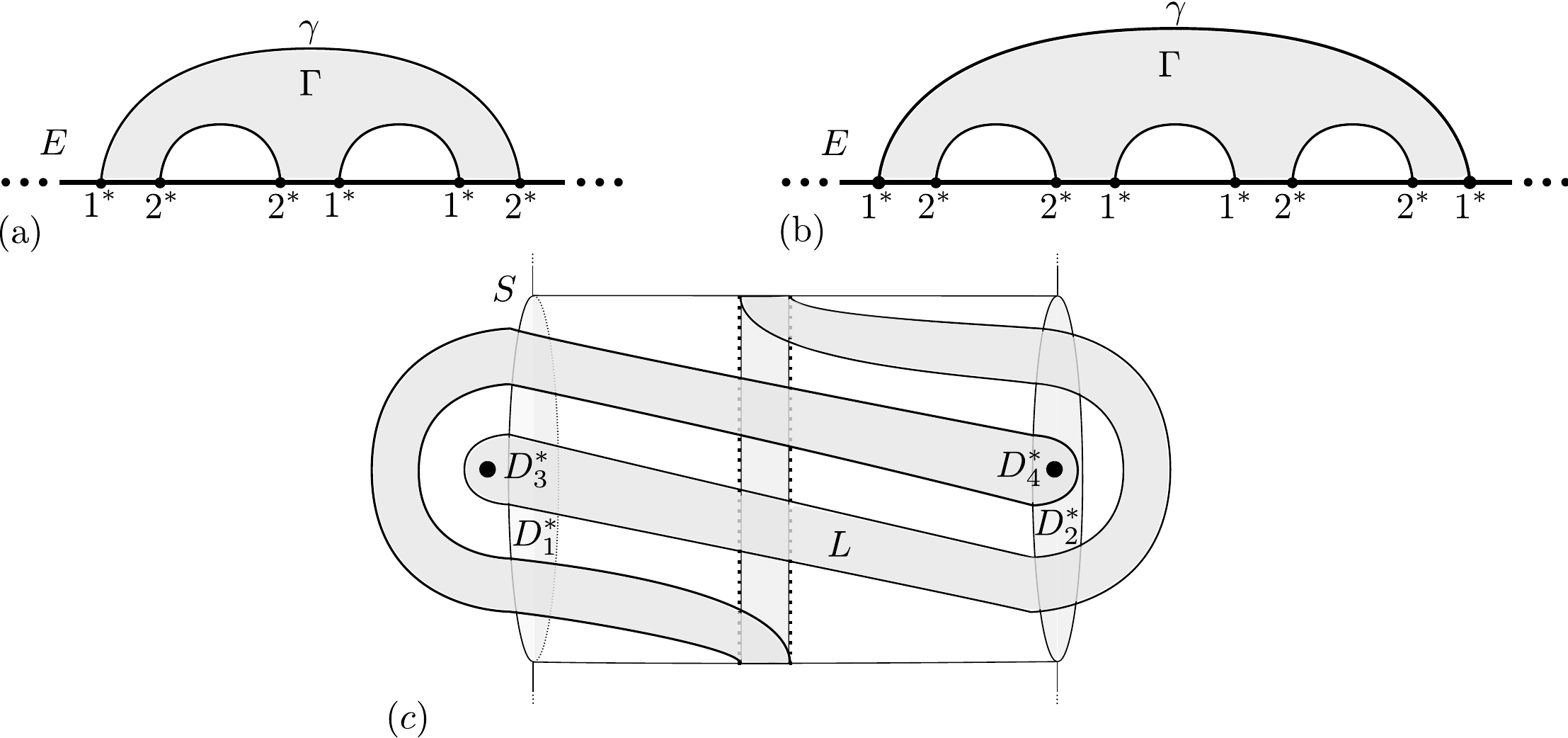}
\caption{}                 
\label{4Doutermostarcs.pdf}
\end{figure}
Consider the second outermost arc $\gamma$ of $E\cap P$ in $E$, after the outermost arcs attached to $D_1^*$ and $D_2^*$, and the disk component of $E-E\cap P$, $\Gamma$, co-bounded by $\gamma$ on the outermost side of $\gamma$ in $E$. Let $C_{1 2}$ and $C_{3 4}$ be the cylinders cut from $V$ by $D_1^*\cup D_2^*$ and $D_3^*\cup D_4^*$, resp.. We have that $\Gamma$ is in $B_2- int C_{1 2}$. If $\Gamma$ is essential in $S\cup \partial C_{1 2}$, as in Figure \ref{4Doutermostarcs.pdf}(a), then $\Gamma$ is a meridian disk to $B_2- int C_{1 2}$, which implies that the string $s_{12}$, a core of $C_{12}$, is unknotted in the tangle $B_2$. Otherwise, if $\Gamma$ is inessential in $S\cup \partial C_{12}$, we have that $\partial \Gamma$ bounds a disk $L$ in the torus $S\cup_{D_1^*\cup D_2^*} \partial C_{1 2}$. (See Figure \ref{4Doutermostarcs.pdf}(b), (c).) Let $R$ be the ball in $B_2$ bounded by $\Gamma\cup L$. The string $s_{34}$, as a core of $C_{34}$, is in $R$ and, as there are no local knots, it is trivial in $R$ and parallel to $L$. Hence, as the complement of $C_{1 2}\cup C_{3 4}$ in $B_2$ is a handlebody, we have that the complement of $C_{1 2}$ in $B_2$ is a solid torus. Therefore, in this case, the string $s_{12}$ is also unknotted.
\end{proof}


\begin{lem}\label{2D*1DS}
Suppose $V-V\cap S$ contains a solid torus that intersects $\mathcal{D}^*$ in a collection of two disks and $\mathcal{D}$ in a single separating disk. Then both strings of some tangle are $\mu$-primitive.
\end{lem}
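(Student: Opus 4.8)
The plan is to combine the separating disk $D_0\in\mathcal{D}$ on $\partial T$ with the outermost-disk surgeries over $T$ so as to exhibit both strings of $(B_2,\mathcal{T}_2)$ as trivial arcs in a pair of complementary solid tori of $B_2$, each meeting $S$ in a single disk, and then conclude with Lemma \ref{mu-primitive characterization}; the separating disk is what will force the two solid tori to be complementary, in the same spirit as the final case of Lemma \ref{solid torus component}. First I would fix notation: $T\subset B_1$ is the solid torus, $s_{11}=T\cap K$ is its single string with ends on two $\mathcal{D}^*$-disks $D_1^*,D_2^*\subset\partial T$, $D_0\in\mathcal{D}$ is the separating disk on $\partial T$, and $D_3^*,D_4^*$ are the remaining $\mathcal{D}^*$-disks, with $s_{21},s_{22}$ the strings of $\mathcal{T}_2$. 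Since $D_0$ is essential and separating, it cuts $V$ into a solid torus and a genus two handlebody; I would record that the solid torus side lies across $D_0$ in $B_2$ and carries one string of $\mathcal{T}_2$, which by Lemma \ref{strings parallel to boundary} is trivial there.

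Next I would take the outermost disk $\Delta$ over $T$, with arc $\delta$ attached to a disk $D$ of $V\cap S$, and dispose of the degenerate configurations. If a disk cut by $\delta$ in $S-int\,D$ contains a single $\mathcal{D}^*$-disk meeting $K$ once, Lemma \ref{meridional outermost arc} yields an unknotted string of some tangle; an unknotted string already suffices, for by freeness and Theorem 1' of \cite{Gordon} the companion string is then trivial in the solid torus got by deleting it, so both strings of that tangle are $\mu$-primitive by Lemma \ref{mu-primitive characterization} --- the reduction implicit in Remark \ref{remark no beta in ball}. The finiteness of outermost arcs together with Corollary \ref{no parallel sk-arcs} excludes the configurations producing parallel type I $\mathrm{d}^*$-arcs, and Lemma \ref{no ball containing T} excludes a ball swallowing $T$ together with both strings of a tangle.

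In the main case I would first use the $2$-handle with core $L=O\cup\Delta$ (where $O$ is the disk cut by $\delta$ containing $D_1^*$ or $D_2^*$) to isotope the $\mathcal{D}^*$-disks of $T$ off $S$ and merge the several components of the relevant solid torus's intersection with $S$ into a single disk, exactly as in the proofs of Lemmas \ref{mu-primitive characterization} and \ref{T with two D*}; the product structure of Lemma \ref{T with two D*} for the pair $D_1^*,D_2^*$ would be invoked here. I would then pass to a second-outermost disk $\Gamma^*$ over $T$, lying in the complement in $B_2$ of the solid torus across $D_0$, and split on whether $\partial\Gamma^*$ is essential in the corresponding torus. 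If essential, $\Gamma^*$ is a meridian disk and the complement is a solid torus, so the remaining $\mathcal{T}_2$-string is its core and trivial; with the other string trivial in the solid torus across $D_0$, Lemma \ref{mu-primitive characterization} makes both strings of $\mathcal{T}_2$ $\mu$-primitive. If inessential, $\partial\Gamma^*$ bounds a disk $L$ in that torus, and arguing as in Lemma \ref{no gamma with equal ends} the remaining string lies in the ball cut off by $\Gamma^*\cup L$ and is parallel to $L$, hence trivial in the complement, forcing it to be a solid torus and returning us to the essential case.

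The step I expect to be the main obstacle is making the relevant solid torus meet $S$ in a single disk --- so that Lemma \ref{mu-primitive characterization} genuinely applies --- in the presence of the extra disks $D_3^*,D_4^*$ and the extra string $s_{22}$, which are absent from the $n_1=3$ analogue and which the second-outermost arc may meet. I must check that $\partial\Gamma^*$ cannot interact with $D_3^*\cup D_4^*$ so as to escape the essential/inessential dichotomy, and, in the inessential subcase, that the ball cut off does not contain \emph{both} strings of $\mathcal{T}_2$ at once --- in which event Lemma \ref{no ball containing T} or Ozawa's unicity theorem must instead be used to contradict the minimality of $|S\cap V|$. Verifying that the complementary-solid-torus structure survives all these extra arcs is where the real work lies.
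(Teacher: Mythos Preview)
Your setup contains a topological error that undermines the whole plan. Because $D_0\in\mathcal{D}$ is disjoint from $K$ and $K$ is connected, the knot lies entirely on one side of $D_0$ in $V$; since the string $T\cap K$ and the two $\mathcal{D}^*$--disks on $\partial T$ are on the genus--two side, \emph{all four} $\mathcal{D}^*$--disks and hence all of $K$ lie there. The solid torus $V_1$ that $D_0$ separates off is therefore disjoint from $K$: it carries \emph{no} string of $\mathcal{T}_2$, and your appeal to Lemma~\ref{strings parallel to boundary} for a string in $V_1$ is vacuous. Consequently the ``one string in each of two complementary solid tori'' picture never materialises. In the paper's argument the relevant solid torus is the complement of $V_1$ in $B_2$ (which meets $S$ in a single disk), and \emph{both} strings of $\mathcal{T}_2$ are shown to be trivial in that one solid torus; Lemma~\ref{mu-primitive characterization} then applies to each. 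Your invocation of Lemma~\ref{T with two D*} is also off: that lemma requires $T\cap\mathcal{D}=\emptyset$, which fails here by hypothesis.

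Beyond this, the proof is not a single second--outermost--disk argument. The paper first observes that since $D_0$ is separating and $K$ is connected, all four $\mathcal{D}^*$--disks are parallel in $V$, and then splits according to whether the remaining disks of $\mathcal{D}$ are all separating (Case~1) or include non--separating ones (Case~2). In Case~1 with $n_2>1$ one chases a maximal ladder of parallel st--arcs to force the hypothesis of Lemma~\ref{meridional outermost arc}; only when $n_2=1$ does the essential/inessential dichotomy for a second--outermost disk in the complement of $V_1$ come into play, and even then the essential subcase needs a further analysis of outermost arcs with ends in $D_2^*,D_3^*$ to exhibit primitivity of those disks. Case~2 requires an entirely separate argument (Claim~\ref{2D*1DS claim 5} and the long combinatorics with the balls $R_i$, $R_k$, $R_k'$). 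Your sketch does not touch this case division, and without the correct picture of where the strings actually sit relative to $V_1$ there is no clear way to repair it.
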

\begin{proof}
Let $T$ be the solid torus component of $V-V\cap S$ as in the statement, and suppose it lies in the tangle $(B_1, \mathcal{T}_1)$. Assume that $T\cap \mathcal{D}^*=D_1^*\cup D_4^*$ and that $\mathcal{D}\cap T=D_1$, and denote by $V_1$ the solid torus separated by $D_1$ in $V$. As $\mathcal{D}\cap T$ is separating, and $K$ is connected, the four disks of $\mathcal{D}^*$ have to be parallel in $V$. If all outermost arcs are attached to $D_1^*$ or to $D_4^*$ then, by the finiteness of outermost arcs, we have a contradiction to Corollary \ref{no parallel sk-arcs}. Hence, there is an outermost arc attached to $D_1$. The set $\mathcal{D}$ contains a collection of separating disks, $D_1, \ldots, D_k$ in $V$, and might also contain a collection of non-separating parallel disks $D_{k+1}, \ldots, D_{n_2}$ in $V_1$, as in Figure \ref{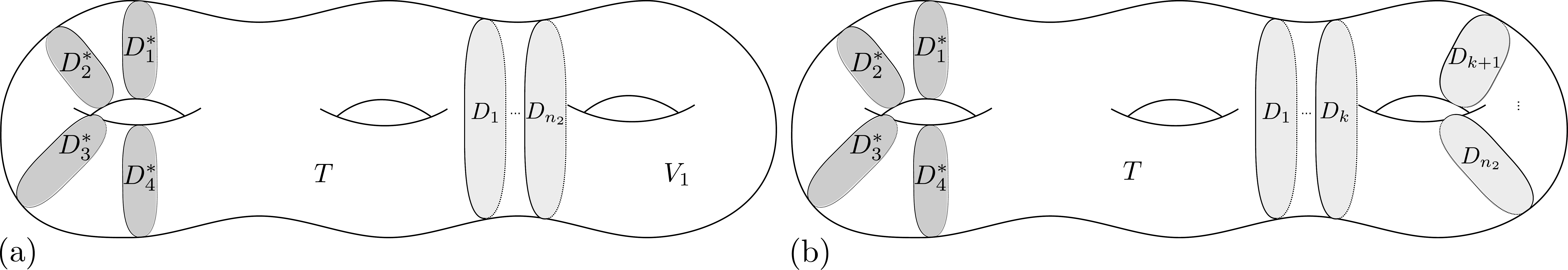}(a), (b). From Remark \ref{remark no beta in ball},  the outermost disks are over $T$ or $V_1$, and from Lemma \ref{no beta in simple torus} there are no outermost disks over $V_1$. So, all outermost disks are over $T$, attached to $D_1^*$, $D_4^*$ or $D_1$, with no sequence of parallel arcs of $E\cap P$ in $E$ after an outermost arc attached to $D_1^*$ or $D_4^*$.

\begin{figure}[htbp]
\centering
\includegraphics[width=\textwidth]{2D1ds.pdf}
\caption{}                 
\label{2D1ds.pdf}
\end{figure}

\textbf{Case 1.} \textit{Assume that all disks of $\mathcal{D}$ are parallel and separating as in Figure \ref{2D1ds.pdf}(a).}\\

If $n_2>1$, by the finiteness of outermost arcs, there is a sequence of parallel arcs of $E\cap P$ in $E$, $\delta_1,\ldots ,\delta_{n_2}$, as in the Figure \ref{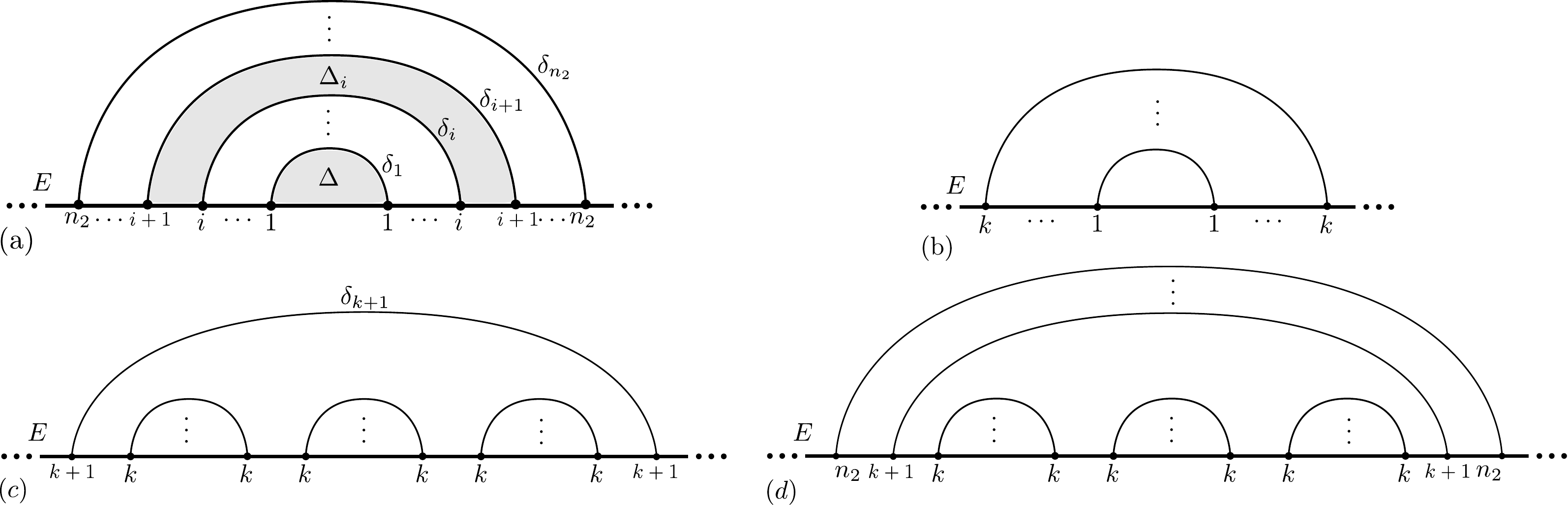}(a), where $\delta_i$ has both ends in $D_i$ and $\delta_1$ is an outermost arc attached to $D_1$. Denote the outermost disk that $\delta_1$ co-bounds by $\Delta$, and the disk between $\delta_i$ and $\delta_{i+1}$ by $\Delta_i$. Considering the disks $\Delta_i$ and the cylinder cut from $V$ by $D_i\cup D_{i+1}$ we define a ball $R_i$ as in Lemma \ref{parallel arcs}. The balls $R_i$ intersect $S$ at disks $O_i$ and $O_{i+1}$, co-bounded by $\delta_i$ and $\delta_{i+1}$ resp., each containing an end of the string in $R_i$. Then, in particular, $O_1$ contains a single disk of $\mathcal{D}^*$. If $n_2=2$, as $R_1$ contains a single string, we have that $O_1$ intersects $S\cap V$ at a single disk, that is of $\mathcal{D}^*$. Assume $n_2\geq 3$. If $D_2^*$, or $D_3^*$, is in $O_1$ then $R_2$ contains $T$ and consequently two strings of the tangle, which is a contradiction as $R_2$ contains a single string. Therefore, without loss of generality, we can assume that $D_1^*$ is in $O_1$. Suppose that some disk of $\mathcal{D}$, say $D_i$, is in $O_1$. Then $O_i\subset O_1$  and $D_1^*\subset O_i$. Consequently, following the strings in the sequence of balls $R_j$, we have $T\subset R_{i-1}$ and $D_1$ in $O_i$, which is a contradiction as $D_i$ is in $O_1$. Therefore, $D_1^*$ is the only disk of $S\cap V$ in $O_1$. Then, by Lemma \ref{meridional outermost arc}, if $n_2>1$ some string of some tangle is unknotted.

Suppose $n_2=1$. As before we denote by $\delta_1$ an outermost arc attached to $D_1$. If a disk cut from $S-int D_1$ by $\delta_1$ intersects $\mathcal{D}^*$ at a single disk from Lemma \ref{meridional outermost arc} some string of some tangle is unknotted. Therefore, we can assume that all outermost arcs $\delta_1$ separate $S-int D_1$ into two disks each intersecting $\mathcal{D}^*$ at two disks. Consequently they are all parallel in $P$. Let $\Gamma$ be a second-outermost disk. The disk $\Gamma$ is in the complement of $V_1$ in $B_2$. If $\partial \Gamma$ is inessential in the solid torus $B_1\cup_{D_1} V_1$ then $\Gamma$ bounds a disk $L$ in $S\cup_{D_1} \partial V_1$. Let $R$ be the ball bounded by $\Gamma\cup L$ in $B_2$. By similar arguments as in the proof of Lemma \ref{no gamma with equal ends}, we have that the strings $s_{12}$ and $s_{34}$ are in $R$ and are parallel to $L$. Hence, the complement of $V_1$ in $B_2$ is a solid torus intersecting $S$ at a single disk. Altogether, from Lemma \ref{mu-primitive characterization} we have that both strings $s_{12}$ and $s_{34}$ are $\mu$-primitive. Suppose now that $\partial \Gamma$ is essential in the solid torus $B_1\cup \partial_{D_1} V_1$. Then the complement of $V_1$ in $B_2$ is also a solid torus. Consider an outermost arc among the arcs with one end in $D_2^*$ or $D_3^*$, and denote these arcs by $\gamma^*$. Suppose there are arcs $\gamma^*$ with both ends in $D_2^*$ and also in $D_3^*$. Then there are arcs $\gamma_1^*$ and $\gamma_4^*$ of Type II outermost among the $\text{d}^*$-arcs, and the disks $\Gamma_1^*$ and $\Gamma_4^*$ are in $B_1$ and intersect $D_1^*$ and $D_4^*$, resp., exactly once. Then, $D_1^*$ and $D_4^*$ are primitive with respect to the complement of $V\cap B_1$ in $B_1$. Let $T'$ be the solid torus obtained by an isotopy of $T$ along $D_1^*\cup D_4^*$ away from $S$. We also have that an outermost disk $\Delta$ intersects a meridian of $T'$ once. Altogether, the complement of the cylinder from $D_2^*$ to $D_3^*$ in $B_1$ is a solid torus; as the core of this cylinder is the string $s_{23}$, this string is unknotted in $(B_1, \mathcal{T}_1)$. Otherwise, without loss of generality, suppose there is an arc $\gamma^*$ with only one end in $D_2^*$. This means $\gamma^*$ is an $\gamma_2^*$ arc, and we can consider the respective disk $\Gamma_2^*$. Let $C_{12}$ (resp., $C_{34}$) be the cylinder from $D_1^*$ to $D_2^*$ (resp., $D_3^*$ to $D_4^*$) in $V$.  As $D_2^*$ is primitive with respect to the complement of $V\cap B_2$ in $B_2$, a core of $C_{34}$, as the string $s_{34}$, is trivial in the complement of $V_1$ in $B_2$. If the other end of $\gamma^*$ is in $D_3^*$ then $D_3^*$ is also primitive with respect to the complement of $V\cap B_2$ in $B_2$, and similarly a core of $C_{12}$, as the string $s_{12}$, is trivial in the complement of $V_1$ in $B_2$. Otherwise, if the other end of $\gamma^*$ is not in $D_3^*$, using the disk $\Gamma_2^*$, we have that a core of $C_{12}$, as the string $s_{12}$, is trivial in the complement of $V_1$ in $B_2$. Then, from Lemma \ref{mu-primitive characterization}, both strings $s_{12}$ and $s_{34}$ are $\mu$-primitive.\\

\textbf{Case 2.} \textit{Assume now that $\mathcal{D}$ also has a collection of non-separating disks in $V$, as in Figure \ref{2D1ds.pdf}(b).}

\begin{claim}\label{2D*1DS claim 5}
If the outermost arcs attached to $D_1$ are not parallel in $P$ then some string of some tangle is unknotted.
\end{claim}
\textit{Proof of Claim \ref{2D*1DS claim 5}.}
In fact, let $\delta_1$ and $\delta_1'$ be outermost arcs attached to $D_1$, non-parallel in $P$. Consider the disjoint disks $O_1$ and $O_1'$ co-bounded, respectively, by $\delta_1$ and $\delta_1'$ in $S-D_1$, and also the respective outermost disk $\Delta_1$, $\Delta_1'$. Consider the disks $L_1=O_1\cup \Delta_1$ and $L_1'=O_1'\cup \Delta_1'$. Let $Q$ be the ball obtained by attaching a regular neighborhood of $L_1$ and $L_1'$ to $T$ and adding a ball to the  respective boundary component disjoint from $S$. If $\mathcal{D}^*\subset O_1\cup O_1'$ then the arcs $\delta_1$ and $\delta_1'$ are parallel. If $(O_1\cup O_1')\cap \mathcal{D}^*$ is only $D_1^*$ and $D_4^*$, then $\partial Q- \partial Q\cap S$ is a compressing disk for $P$. Otherwise, $D_2^*\cup D_3^*$ is in $O_1\cup O_1'$ and the string $s_{23}$ is in $Q$. From Lemma \ref{no ball containing T} the tangle $(Q, Q\cap K)$ is trivial. Therefore, the string $s_{23}$ is trivial in $Q$. As the ends of $s_{23}$ are in the same disk component of $Q\cap S$ we have that $s_{23}$ is unknotted in $(B_1, \mathcal{T}_1)$.
\hspace*{\fill}$\triangle$\\

From the previous claim, we assume that the outermost arcs attached to $D_1$ are parallel in $P$.

If $k>1$, by the finiteness of outermost arcs we have a sequence of arcs, $\delta_i$ for $i=1, \ldots, k$, after an outermost arc, $\delta_1$, as in the Figure \ref{2D1doutermostarcsSI.pdf}(b). Following the construction at the beginning of Case 1, from each sequence of parallel arcs after an outermost arc $\delta_1$ we have a sequence of balls $R_i$, $i=1, \ldots, k$. Also, as there are no t-arcs, the outermost arc, $\delta_{k+1}$, after these arcs is a st-arc. If some arc $\delta_{k+1}$ has both ends in $D_k$, following an argument as in Lemma \ref{no gamma with equal ends}, we have that some string of some tangle is unknotted. Then, the arcs $\delta_{k+1}$ have both ends in $D_{k+1}$, as in Figure \ref{2D1doutermostarcsSI.pdf}(c), or in $D_{n_2}$.\\
For any $k$, suppose we have both situations, that there are arcs $\delta_{k+1}$ and $\delta_{k+1}'$ with both ends in $D_{k+1}$ and $D_{n_2}$, resp.. Consider the component disks $\Delta_k$ and $\Delta_k'$ of $E-E\cap P$ co-bounded by $\delta_{k+1}$ and $\delta_{k+1}'$, resp., in the outermost side of these arcs in $E$. As the outermost arcs $\delta_1$ are parallel in $P$, and the balls $R_i$, $i=1, \ldots, k-1$, contain only one string, the arcs of $\partial \Delta_k$ and $\partial \Delta_k'$ that have both ends in $D_k$ are parallel in $P$. Let $C$ be the ball cut from $V$ by $D_k\cup D_{k+1}\cup D_{n_2}$, and $C_{k, k+1}$ (resp., $C_{k, n_2}$) be the ball obtained from $C$ by an isotopy of $D_{n_2}$ (resp., $D_{k+1}$) away from $S$.
\begin{figure}[htbp]
\centering
\includegraphics[width=\textwidth]{2D1doutermostarcsSI.pdf}
\caption{}                 
\label{2D1doutermostarcsSI.pdf}
\end{figure}
Let $L_k$ and $L_k'$ be the disks bounded by $\partial \Delta_k$ and $\partial \Delta_k'$, resp., in $\partial C_{k, k+1}\cup_{D_k\cup D_{k+1}} S$ and $\partial C_{k, n_2}\cup_{D_k\cup D_{n_2}}S$, resp.. Consider the balls $R_k$ and $R_k'$ bounded by $L_k\cup \Delta_k$ and $L_k'\cup \Delta_k'$, not containing $S$. Similarly, as observed in Case 1, the balls $R_k$ and $R_k'$ contain only one string. Suppose none of these balls contains the other, as in  Figure \ref{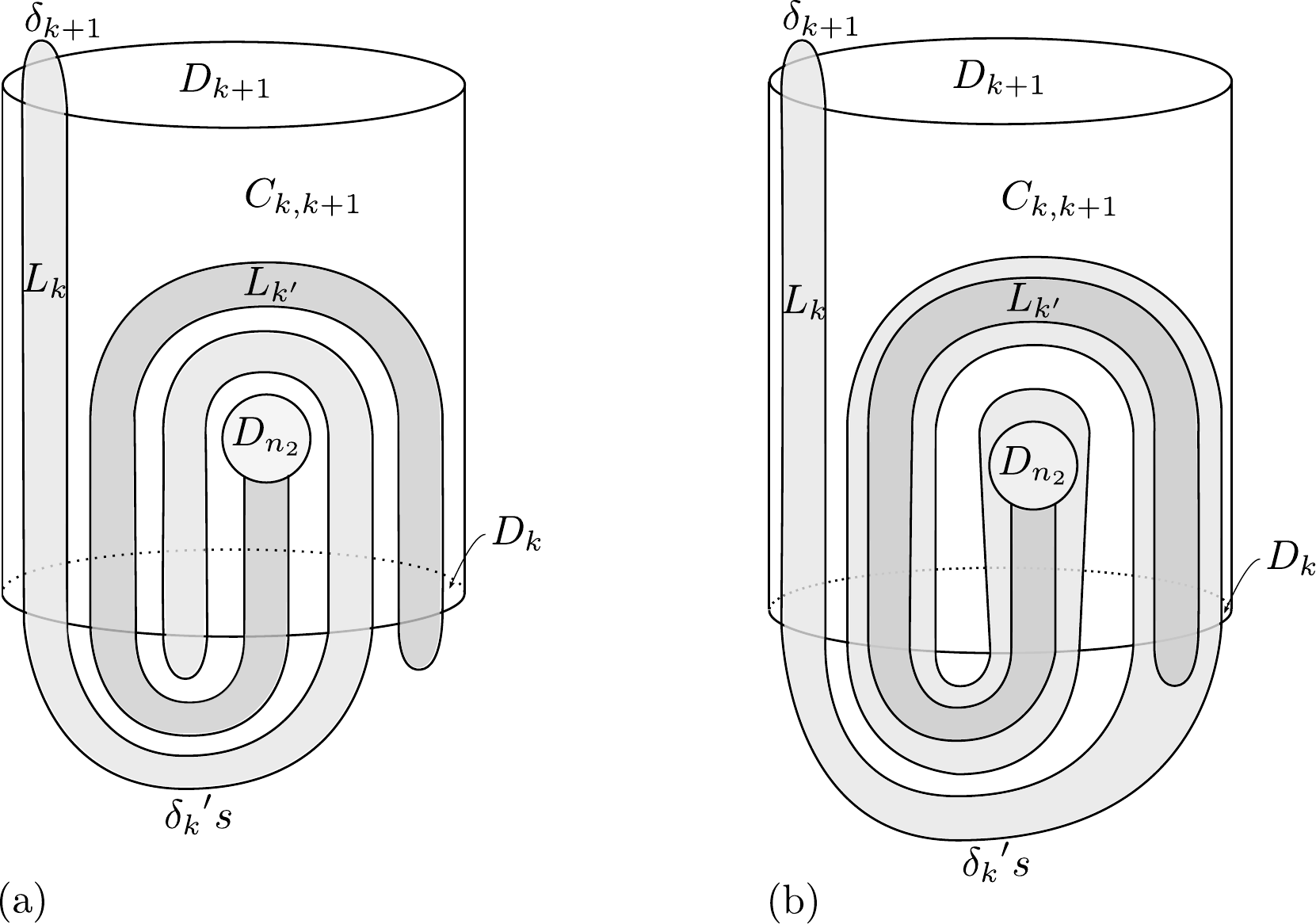}(a). Hence, each of the disks $L_k$ and $L_k'$ intersection with $S$ contains a disk component, $O_k$ and $O_k'$ resp., co-bounded, with $\partial D_k$, by a single arc of $\partial \Delta_k\cap S$, $\delta_k$, and $\partial \Delta_k'\cap S$, $\delta_k'$. Each of the arcs $\delta_k$ and $\delta_k'$ is in a sequence of arcs after an outermost arc, as in Figure \ref{2D1doutermostarcsSI.pdf}(b). As observed before in this Claim, we are assuming that these arcs are parallel in $P$. Then one of the disks $O_k$ or $O_k'$ has to be contained in the other, which is a contradiction with the assumption that $R_k$ and $R_k'$ are disjoint. So, assume that, say, $R_k'$ is contained in $R_k$, as in  Figure \ref{2D1doutermostarcsSI2.pdf}(b). Then $L_k$ contains $D_{n_2}$ and $L_k'$. Therefore, from the minimality of $|E\cap P|$ and from the arcs of $\partial \Delta_k\cap S$ and $\partial \Delta_k'\cap S$ that have both ends in $D_k$ being parallel in $P$, we have that $\partial \Delta_k$ intersects $S$ in two arcs, one with two ends in $D_k$ and the other with two ends in $D_{k+1}$; similarly, $\partial \Delta_k'$ intersects $S$ in two arcs, one with two ends in $D_k$ and the other with two ends in $D_{k+1}$. Let $O_{k+1}$, resp. $O_{k+1}'$, be the disks cut from $S-int D_{k+1}$, resp. $S-int D_{n_2}$, by $\delta_{k+1}$, resp. $\delta_{k+1}'$, disjoint from $D_k$. As there are no local knots, the string in $R_{k}'\subset R_k$ is trivial. Then, from the minimality of $|S\cap V|$, we have $|O_{k+1}'\cap V|$ equal to $|O_k'\cap V|$. Also, $O_k\cap (V\cap S)$ is the same as $O_k'\cap (V\cap S)$. Therefore, $|O_{k+1}\cap V|$ is bigger than $|O_k\cap (V\cap S)|$. So, we can isotope $D_{k+1}\cup O_{k+1}$ along $R_k$ union the ball $C_{k, k+1}$ to reduce $|S\cap V|$, which is a contradiction.\\
So, assume without loss of generality that all arcs $\delta_{k+1}$ have both ends in $D_{k+1}$.
\begin{figure}[htbp]
\centering
\includegraphics[width=0.75\textwidth]{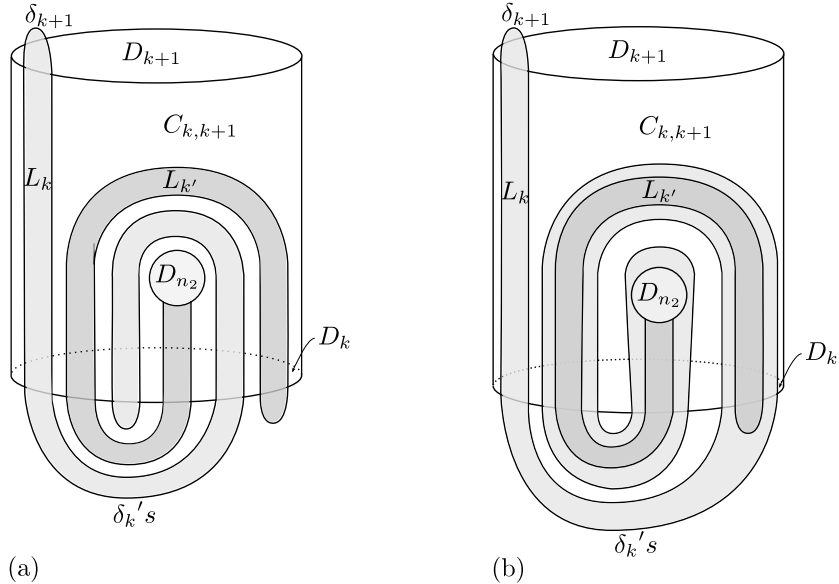}
\caption{: The disks $L$ and $L_k$ when $R_{k}'$ is disjoint from $R_k$ and when $R_k'$ is contained in $R_k$, resp.: the arcs $\delta_k$ won't be parallel in $P$ as previously observed.}                 
\label{2D1doutermostarcsSI2.pdf}
\end{figure}
By the finiteness of outermost arcs we have a sequence of parallel arcs, $\delta_{k+2}, \ldots, \delta_{n_2}$, as in Figure \ref{2D1doutermostarcsSI.pdf}(d), and the respective sequence of balls $R_{k+2}, \ldots, R_{n_2-1}$. Then, we have a sequence of arcs parallel to an outermost arc, $\delta_1, \ldots, \delta_{n_2}$, and the respective balls $R_1, \ldots, R_{n_2-1}$. Following a similar argument as in Case 1, we have that $\delta_1$ is as in Lemma \ref{meridional outermost arc}, which means that some string of some tangle is unknotted.
\end{proof}


\begin{lem}\label{2D*1DNS}
Suppose $V-V\cap S$ contains a solid torus that intersects $\mathcal{D}^*$ at two disks and $\mathcal{D}$ at a single non-separating disk. Then both strings of some tangle are $\mu$-primitive.
\end{lem}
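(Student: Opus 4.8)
The plan is to follow the strategy of Lemma~\ref{2D*1DS}, adapted to the fact that the single disk $\mathcal{D}\cap T=D_1$ is now non-separating, so that cutting $V$ along $D_1$ yields a genus-two handlebody rather than splitting off a solid torus. First I would fix notation: assume without loss of generality that $T\subset B_1$, that $T\cap\mathcal{D}^*=D_1^*\cup D_4^*$, and that $\mathcal{D}\cap T=D_1$. The string of $\mathcal{T}_1$ carried by $T$ is then $s_{14}$, the string $s_{23}$ is the other string of $\mathcal{T}_1$, and $s_{12},s_{34}$ are the strings of $\mathcal{T}_2$ lying in $B_2$, cores of cylinders $C_{12}$ and $C_{34}$ cut from $V$. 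The first task is to pin down the global configuration forced by $D_1$ being non-separating: since $D_1$ does not separate $V$, the piece across $D_1$ must be rejoined to $T$ through a second route, and I would argue, as in Case~2 of Lemma~\ref{2D*1DS}, that $\mathcal{D}$ consists of $D_1$ together with a chain of mutually parallel non-separating disks while the four disks of $\mathcal{D}^*$ are parallel in the appropriate solid torus of $V$.

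Next I would localize the outermost disks. By Remark~\ref{remark no beta in ball} none is over a ball, by Lemma~\ref{no beta in simple torus} none is over a solid torus disjoint from $K$ with a single $S$-disk, and by Lemma~\ref{no beta in torus} an outermost disk over a solid torus meeting $K$ in one string and $\mathcal{D}^*$ in one disk already yields an unknotted string (which is $\mu$-primitive, so the conclusion holds). Hence I may assume every outermost disk is over $T$, attached to $D_1^*$, $D_4^*$ or $D_1$, and Corollary~\ref{no parallel sk-arcs} forbids parallel type~I $\mathcal{D}^*$-arcs, so there must be an outermost arc at $D_1$. If the outermost arcs at $D_1$ are not parallel in $P$, I would assemble a ball $Q\supset T$ from the corresponding outermost disks and invoke Lemma~\ref{no ball containing T}, exactly as in Claim~\ref{2D*1DS claim 5}, to force $s_{23}$ to be trivial, hence unknotted. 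So I may assume these arcs are parallel; using Lemma~\ref{parallel arcs} I would build the sequence of balls $R_i$ running along the chain of $\mathcal{D}$-disks and apply Lemma~\ref{no gamma with equal ends} and Lemma~\ref{meridional outermost arc} to the terminal st-arc of the chain to extract an unknotted string in every configuration but one distinguished final case.

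In that surviving case I would exhibit the two $\mu$-primitive strings directly. Taking a second-outermost disk $\Gamma$, I split on whether $\partial\Gamma$ is essential or inessential in the torus obtained by capping the chain of $\mathcal{D}$-disks. When $\partial\Gamma$ is inessential it bounds a disk $L$, and $R=\Gamma\cup L$ is a ball in $B_2$ containing $s_{12}$ and $s_{34}$ parallel to $L$ (the argument of Lemma~\ref{no gamma with equal ends}); this exhibits the pieces of $V\cap B_2$ carrying $s_{12}$ and $s_{34}$ as solid tori meeting $S$ in a single disk with solid-torus complement, so Lemma~\ref{mu-primitive characterization} gives that both strings are $\mu$-primitive. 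When $\partial\Gamma$ is essential I would instead use the outermost $\mathcal{D}^*$-arcs and their disks $\Gamma_i^*$ to show that $C_{12}$ and $C_{34}$ have solid-torus complements in $B_2$, again yielding $\mu$-primitivity of $s_{12}$ and $s_{34}$ through Lemma~\ref{mu-primitive characterization}; in either subcase both strings of $\mathcal{T}_2$ are $\mu$-primitive.

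The hard part will be the organizing step: because $D_1$ is non-separating there is no solid torus split off by $D_1$ to play the role that $V_1$ had in Lemma~\ref{2D*1DS}, so I cannot simply analyze ``the complement of $V_1$ in $B_2$.'' The delicate point is therefore to show, after resolving the possibly long chain of parallel $\mathcal{D}$-arcs and tracking how cutting along the non-separating $D_1$ reconnects the pieces through the balls $R_i$, that each piece of $V\cap B_2$ carrying one of the two strings is genuinely isotopic to a solid torus meeting $S$ in exactly one disk with solid-torus complement~--- precisely the hypothesis needed to invoke Lemma~\ref{mu-primitive characterization}.
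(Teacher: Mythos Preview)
Your first structural claim is wrong, and the error propagates through the rest of the plan. You assert that ``the four disks of $\mathcal{D}^*$ are parallel in the appropriate solid torus of $V$.'' But this is exactly what fails when $D_1$ is non-separating: if $D_1^*$ and $D_4^*$ were parallel in $V$, then (since $T$ is a solid torus with $D_1^*\cup D_4^*\cup D_1$ in its frontier) the disk $D_1$ would be forced to be separating. In fact $D_1^*\cup D_4^*\cup D_1$ together cut a \emph{ball} from $V$, and the remaining $\mathcal{D}^*$-disks $D_2^*,D_3^*$ lie in that ball; the correct dichotomy is (i) $\mathcal{D}^*$ parallel two-by-two ($D_2^*\parallel D_1^*$, $D_3^*\parallel D_4^*$), or (ii) $D_2^*,D_3^*$ both parallel to $D_4^*$ while $D_1^*$ stands alone. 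The paper first disposes of (i) by a short second-outermost-arc argument, and the bulk of the work is in (ii), where the asymmetry between $D_1^*$ and $D_4^*$ drives everything.

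Because your global picture is off, the ``surviving case'' you describe---the essential/inessential dichotomy for $\partial\Gamma$ modeled on Case~1 of Lemma~\ref{2D*1DS}---does not apply. That argument worked there because the separating $D_1$ split off a solid torus $V_1$ and one could analyze $B_2-V_1$; you correctly note there is no $V_1$ here, but the substitute you sketch (showing each string-carrying piece of $V\cap B_2$ is a solid torus meeting $S$ in one disk with solid-torus complement) is not what the situation delivers in configuration~(ii). The paper instead organizes the proof by \emph{which of $D_1,\,D_1^*,\,D_4^*$ carry outermost arcs}: one first shows $D_1^*$ and $D_4^*$ cannot both carry them, then bounds $n_2\le 3$ when only one of $D_1,D_1^*$ does, and finally treats the four patterns ($D_4^*$ and $D_1$ only; $D_1^*$ and $D_1$ both; $D_1^*$ only; $D_1$ only) separately. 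The genuinely $\mu$-primitive (rather than merely unknotted) outcome occurs only in the $D_1^*$-and-$D_1$ pattern, and its proof uses a primitivity argument via an arc $\gamma'$ with one end in $D_1^*$ and one in $D_1$ to show a certain cylinder core in $Q$ is unknotted---not the $V_1$-complement mechanism you are proposing.
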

\begin{proof}
Let $T$ be the solid torus component of $V-V\cap S$ as in the statement, and suppose it lies in the tangle $(B_1, \mathcal{T}_1)$. Assume that $T\cap \mathcal{D}^*=D_1^*\cup D_4^*$ and that $T\cap \mathcal{D}=D_1$. 
The disks $D_1^*$ and $D_4^*$ are not parallel, otherwise $D_1$ would be separating. Then, $D_1\cup D_1^*\cup D_4^*$ separate a ball from $V$, as in Figure \ref{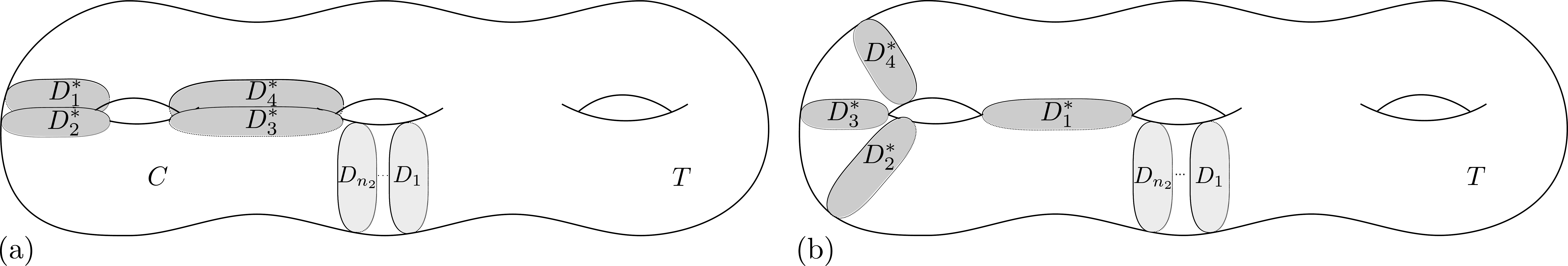}, and all outermost disks are over $T$ with corresponding outermost arcs attached to $D_1^*$, $D_4^*$ or $D_1$. The disks $D_1, D_2, \ldots, D_{n_2}$ are all parallel and non-separating in $V$.

\begin{figure}[htbp]
\centering
\includegraphics[width=\textwidth]{2D1d.pdf}
\caption{}                 
\label{2D1d.pdf}  
\end{figure}

\begin{claim}\label{2D*1DNS claim 0}
If the disks of $\mathcal{D}^*$ are parallel two-by-two then some string of some tangle is unknotted.
\end{claim}
\textit{Proof of Claim \ref{2D*1DNS claim 0}.}
Assume that $D_2^*$ is parallel to $D_1^*$ and that $D_3^*$ is parallel to $D_4^*$ in $V$, as in Figure \ref{2D1d.pdf}(a). If $D_1^*$ or $D_4^*$ are the only disks with outermost disks attached then by the finiteness of outermost arcs we have parallel sk-arcs in $E$,  as in Figures \ref{T2outermostarcs.pdf}(a1), (a2), which is a contradiction to Corollary \ref{no parallel sk-arcs}. So, $D_1$ has an outermost arc attached. Furthermore, from Corollary \ref{no parallel sk-arcs}, even if $D_1^*$, or $D_4^*$, has outermost arcs attached we cannot have a sequence of parallel sk-arcs in $E$ after such outermost arcs. So, by the finiteness of outermost arcs only some outermost arc attached to $D_1$ is before a sequence of parallel arcs of $E\cap P$ in $E$, as in Figure \ref{2D1doutermostarcsSI.pdf}(a). Consider a second-outermost arc $\gamma$, and the disk component of $E-E\cap P$, $\Gamma$, co-bounded by $\gamma$ in the outermost side of this arc in $E$, as in Figure \ref{T2outermostarcs.pdf}(b). The boundary of $\Gamma$ intersects $S$ in $\gamma$ and arcs with both ends in $D_{n_2}$. If $\gamma$ has at least one end in $D_{n_2}$, or one end in $D_2^*$ and the other in $D_3^*$, then from Lemma \ref{no gamma with equal ends} we have that some string in some tangle is unknotted. Otherwise, the ends of all second outermost arcs are both in $D_2^*$ or both in $D_3^*$, and by the finiteness of outermost arcs we have a contradiction to Corollary \ref{no parallel sk-arcs}.\hspace*{\fill}$\triangle$\\

From this claim, we can assume that the disks of $\mathcal{D}^*$ are not parallel two-by-two in $V$. Therefore, as no disk of $\mathcal{D}^*$ can be parallel in $V$ to a disk of $\mathcal{D}$, without loss of generality, we assume that the disks $D_2^*$ and $D_3^*$ are parallel to $D_4^*$, as in Figure \ref{2D1d.pdf}(b). Under this setting, we continue the lemma's proof in several steps with respect to which disks are attached to outermost arcs and to the value of $n_2$.

\begin{claim}\label{2D*1DNS claim 1} 
The disks $D_1$ or $D_1^*$ have outermost arcs attached; and the disks $D_1^*$ and $D_4^*$ cannot have simultaneously outermost arcs attached.
\end{claim}
\textit{Proof of Claim \ref{2D*1DNS claim 1}.}
If all outermost arcs are attached to $D_4^*$ then there is a sequence of parallel sk-arcs, as in Figure \ref{T2outermostarcs.pdf}(a2), which is a contradiction to Corollary \ref{no parallel sk-arcs}. Then $D_1$ or $D_1^*$ have outermost arcs attached.\\
Suppose $D_1^*$ and $D_4^*$ have simultaneously outermost arcs attached. Let $\delta_i^*$ be an outermost arc attached to $D_i^*$, and $\Delta_i^*$ the respective outermost disk, for $i=1, 4$. Consider also the disjoint  disks $O_1^*$ and $O_4^*$, in $S-int\{D_1^*\cup D_4^*\}$, co-bounded by $\delta_1^*$ and $\delta_4^*$, respectively. Let $L_i^*=\Delta_i^*\cup O_i^*$, for $i=1, 4$. As the arcs $\delta_i^*$ are sk-arcs, $D_2^*\cup D_3^*$ is in $O_1^*\cup O_4^*$. Taking a regular neighborhood of the disks $L_i^*$ together with $T$, and by capping off the boundary component of $N(T)\cup_{i=1, 4}N(L_i^*)$ disjoint from $S$ with the ball it bounds, we get a ball $Q$ in the tangle $(B_1, \mathcal{T}_1)$ containing both strings $s_{14}$ and $s_{23}$. Each string of $\mathcal{T}_1$ in $Q$ has ends in two distinct disk components of $\partial Q\cap S$,  $D_1^*\cup O_1^*$ and $D_4^*\cup O_4^*$. Then with the tangle $(Q, \mathcal{T}_1)$ we have a contradiction between Lemma \ref{no ball containing T} and Lemma \ref{inner ball}(c).
\hspace*{\fill}$\triangle$\\

\begin{claim}\label{2D*1DNS claim 2}
If $D_1$ or $D_1^*$ is not attached to outermost arcs then $n_2\leq 3$.
\end{claim}
\textit{Proof of Claim \ref{2D*1DNS claim 2}.}
If $D_1$ or $D_1^*$ is not attached to outermost arcs then all outermost d-arcs have either both ends in $D_{n_2}$ or in $D_1$. Then by the finiteness of outermost arcs there is a sequence of parallel arcs, $\delta_i$, as in Figure \ref{2D1doutermostarcsSI.pdf}(a), parallel to some outermost d-arc attached to $D_{n_2}$ or $D_1$. As in Case 1 of Lemma \ref{2D*1DS}, using the disks $\Delta_i$ between the arcs $\delta_i$ and $\delta_{i+1}$ in $E$, attached to the disks $D_i$ and $D_{i+1}$, resp., and the disk that $\partial \Delta_i$ bounds in the torus $C_{i, i+1}\cup_{D_i\cup D_{i+1}}S$, we define a ball $R_i$. Each of these balls contains a single string of the tangle decomposition and it is regular neighborhood of it. If $n_2\geq 5$ then all components of $V-S\cap V$ are contained in some ball $R_i\cup C_{i, i+1}$. We note that these balls are either disjoint or intersect at a disk, wether the strings they contain are disjoint or intersect at an end. Then, taking the union of the largest balls $R_i\cup C_{i, i+1}$ for each string, we have a solid torus with $K$ as its core, $V$ in its interior and boundary essential in $W$, which is a contradiction as $W$ is a handlebody. So, given that $n_2$ is odd, $n_2\leq 3$.\\
(If both $D_1$ and $D_1^*$ have outermost arcs attached, in Claim \ref{2D*1DNS claim 4} we also prove that $n_2\leq 3$.)
\hspace*{\fill}$\triangle$\\

\begin{claim}\label{2D*1DNS claim 3}
If  $D_4^*$ and $D_1$ are the only disks with outermost arcs attached then some string of some tangle is unknotted.
\end{claim}
\textit{Proof of Claim \ref{2D*1DNS claim 3}.}
Suppose both disks $D_1$ and $D_4^*$ are attached to outermost arcs, $\delta_1$ and $\delta_4$, resp., . If $n_2=1$ then either $\delta_1$ or $\delta_4$ are as in Lemma \ref{meridional outermost arc}, which means that some string of some tangle is unknotted.\\
So, from Claim \ref{2D*1DNS claim 2}, we can assume that $n_2=3$. From Corollary \ref{no parallel sk-arcs} there are no parallel sk-arcs after an outermost arc attached to $D_4^*$, as in Figure \ref{T2outermostarcs.pdf}(a2). Consequently, from the finiteness of outermost arcs, we have such a sequence of parallel arcs after an outermost arc attached to $D_1$, as in Figure  \ref{2D1doutermostarcsSI.pdf}(a), and consider the respective balls $R_i$, for $i=1, 2$. Let $O_i$ and $O_{i+1}$ be the disk components of $R_i\cap S$ that are co-bounded by $\delta_i$ and $\delta_{i+1}$, resp., for $i=1, 2$. As $R_1$ contains a single string, $O_1$ intersects $\mathcal{D}^*$ at a single disk. Then, as $D_4^*$ has a type I arc attached, this disk is not in $O_1$. If $D_2^*$ or $D_3^*$ are in $O_1$ then $R_2$ contains $T$ and, consequently, two strings of the tangle, which we know is impossible. Then $D_1^*$ is in $O_1$, the string $s_{12}$ is in $R_1$ and the string $s_{23}$ is in $R_2$. So, if $D_2$ or $D_3$ is in $O_1$, also $O_2$ or $O_3$ will be, and consequently the same for $D_2^*$ or $D_3^*$, which is impossible as observed before. Then, $O_1\cap (S\cap V)$ is only $D_1^*$. So, $\delta_1$ is an outermost arc as in Lemma \ref{meridional outermost arc}. Then, some string of some tangle is unknotted.
\hspace*{\fill}$\triangle$\\


\begin{claim}\label{2D*1DNS claim 4}
If  $D_1^*$ and $D_1$ are both attached to outermost arcs then both strings of some tangle are $\mu$-primitive.
\end{claim}
\textit{Proof of Claim \ref{2D*1DNS claim 4}.}
Suppose that both $D_1$ and $D_1^*$ have outermost arcs attached, denoted by $\delta_1$ and $\delta_1^*$ resp.. Let $O_1$ and $O_1^*$ be the disjoint disks in $S-int (D_1\cup D_1^*)$ co-bounded by $\delta_1$ and $\delta_1^*$, resp.. Consider also the disks $L_1^*=\Delta_1^*\cup O_1^*$ and $L_1=\Delta_1\cap O_1$. Let $Q$ be the ball obtained by adding a regular neighborhood of $L_1^*$ and $L_1$ to $T$, together with the ball that the boundary component of $N(T)\cup N(L_1)\cup N(L_1^*)$, disjoint from $S$, bounds. As $\delta_1^*$ and $\delta_1$ are an sk-arc and an st-arc, we have that $O_1^*$ and $O_1$ intersect $\mathcal{D}^*$. As $D_1^*$ is not in $O_1^*\cup O_1$, in this particular case $D_2^*\cup D_3^*$ is necessarily in $O_1^*\cup O_1$, and the string $s_{23}$ is also in $Q$. The disk $D_4^*$ may or not be in $O_1^*\cup O_1$.\\
If $D_4^*$ is in $O_1^*\cup O_1$ then $Q$ intersects $S$ in two components: $D_1^*\cup O_1^*$ and $D_1\cup O_1$. From Lemma \ref{no ball containing T}, the tangle $(Q, \mathcal{T}_1)$ is trivial, which is a contradiction to Lemma \ref{inner ball}(c).\\
So, we can assume that $D_4^*$ is not in $O_1^*\cup O_1$ and $Q$ intersects $S$ in three component disks: $D_4^*$, $D_1^*\cup O_1^*$ and $D_1\cup O_1$. Also, $O_1\cap \mathcal{D}^*$, and $O_1^*\cap\mathcal{D}^*$, is either $D_2^*$ or $D_3^*$. Furthermore, from Lemma \ref{no ball containing T}, both strings $s_{14}$ and $s_{23}$ are trivial in $Q$.\\ 
If $n_2=1$ then $\delta_1$ is as in Lemma \ref{meridional outermost arc}, which means that some string of some tangle is unknotted. So we can assume that $n_2\geq3$.\\
Suppose there is a sequence of parallel arcs in $E$ after an outermost arc $\delta_1$, $\delta_{2}, \ldots, \delta_{n_2}$, as in Figure \ref{2D1doutermostarcsSI.pdf}(a), and consider the balls $R_i$ as in Case 1 of Lemma \ref{2D*1DS}. Then, as $O_1\cap \mathcal{D}^*$ is either $D_2^*$ or $D_3^*$ we have that the ball $R_2$ contains two strings, which is a contradiction to the balls $R_i$ containing a single string. Consequently, there is no sequence of parallel arcs in $E$, $\delta_{2}, \ldots, \delta_{n_2}$, after an outermost arc $\delta_1$.\\
Consider an arc parallel to an outermost arc $\delta_1^*$ or otherwise a second-outermost arc, $\gamma$, and denote by $\Gamma$ the disk of $E-E\cap S$, co-bounded by $\gamma$, in the outermost side of this arc in $E$. (See Figure \ref{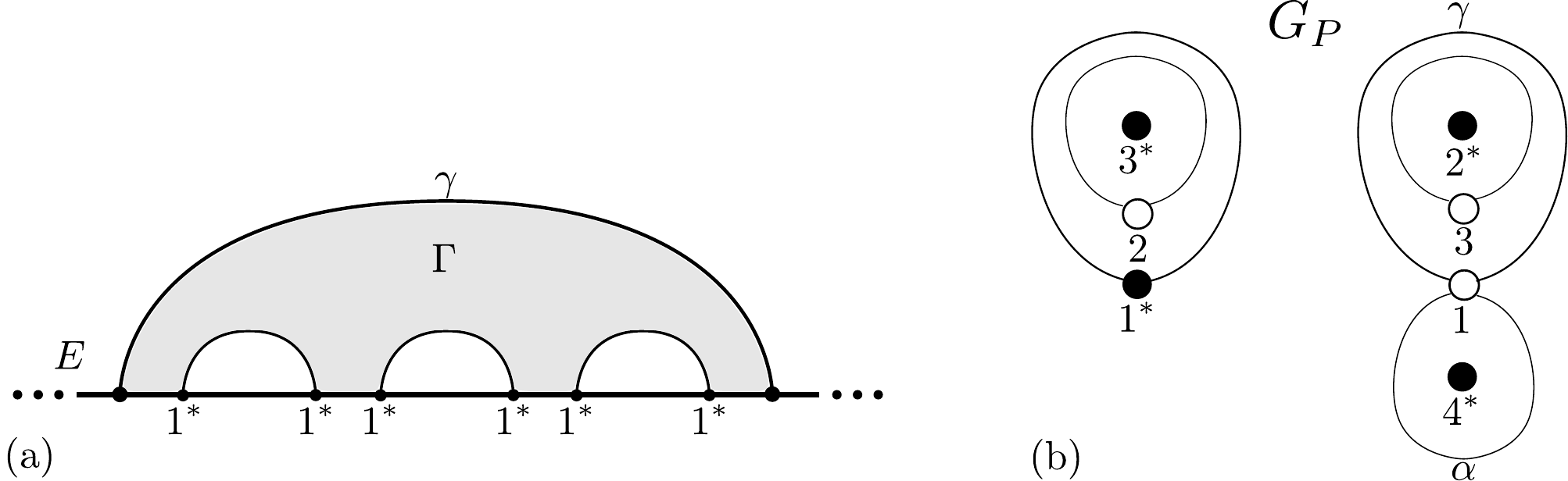}(a).) As there is no sequence $\delta_2, \ldots, \delta_{n_2}$ after the outermost arcs $\delta_1$, as in Figure \ref{2D1doutermostarcsB.pdf}(a), we have that $\Gamma$ intersects $S$ in $\gamma$ and outermost arcs $\delta_1^*$. Note that $\gamma$ cannot have only one end in $D_{n_2}$, otherwise $\gamma$ would be a t-arc, which is a contradiction to Lemma \ref{property}(c). If $\gamma$ has two ends in $D_1^*$ or one end in $D_1^*$ and the other end in $D_2^*$, following reasoning as in the proof of Lemma \ref{no gamma with equal ends}, we have that some string in some tangle is unknotted. If the ends of all arcs $\gamma$ are both in $D_2^*$, by the finiteness of outermost arcs, we have a contradiction to Corollary \ref{no parallel sk-arcs}. Hence, we can assume that some arc $\gamma$ has both ends at $D_{n_2}$.\\
Let $O_{n_2}$ be the disk in $S-int D_{n_2}$ cut by $\gamma$, disjoint from $D_1^*$.
\begin{figure}[htbp]
\centering
\includegraphics[width=0.85\textwidth]{2D1doutermostarcsB.pdf}
\caption{}                 
\label{2D1doutermostarcsB.pdf}
\end{figure}
Denote by $C$ the ball cut from $V$ by $D_1^*\cup D_2^*\cup D_{n_2}$, and by $C_{1^*, n_2}$ the cylinder obtained from $C$ by an isotopy of $C$ along $D_2^*$ away from $S$. Note that $C$ is in $B_2$. Consider the disk $L$ bounded by $\partial \Gamma$ in the torus $\partial C_{1^*, n_2}\cup_{D_1^*\cup D_{n_2}} S$. Let $R$ be the ball bounded by $\Gamma\cup L$ in $B_2$. If $R$ intersects $K$ in two components, then we can prove that $\gamma$ is parallel to $\delta_1^*$ in $E$. By taking $R$ together with $C_{1^*, n_2}$ we define a cylinder containing the two strings of $\mathcal{T}_2$ with ends in the disks $D_1^*\cup O_1^*$ and $D_{n_2}\cup O_{n_2}$. Then from Lemma \ref{inner ball}(a), (c), and because $\partial C_{1^*, n_2}-L\cap \partial C_{1^*, n_2}$ is a single disk containing $D_1^*\cup D_{n_2}$, we obtain a contradiction to the minimality of $|S\cap V|$. So, we have that $R$ intersects $\mathcal{T}_2$ at a single component. Naturally $O_{n_2}\subset L$, and also $O_{n_2}\cap \mathcal{D}^*$ is $D_2^*$. In fact, if $D_3^*$ is in $O_{n_2}$ then, $s_{34}$ is in $R$. As $R$ intersects $\mathcal{T}_2$ at a single component, and $O_1^*$ intersects $\mathcal{D}^*$, we have $D_4^*$ in $O_1^*$, which contradicts our assumption that $O_1^*\cap\mathcal{D}^*$ is only $D_2^*$ or $D_3^*$. If $D_4^*$ is in $O_{n_2}$ then, following a similar reasoning, $D_3^*$ is in $O_1^*$ and $D_2^*$ is in $O_1$. As before, with the existence of parallel arcs to $\gamma$ or $\delta_1$ in $E$ we can define the balls $R_{n_2-1}$ or $R_1$. But then, in this case, $R_1$ or $R_{n_2}$ contain two strings, which is a contradiction. Then, $D_2^*$ is in $O_{n_2}$. As $O_{n_2}$ is disjoint from $O_1^*$, and $O_1^*\cap \mathcal{D}^*$ is either $D_2^*$ or $D_3^*$, we have that $D_3^*$ is in $O_1^*$. Then, $D_2^*$ is in $O_1$ and if $R_1$ exists it has two strings, which is impossible. So, we can assume that there is a sequence of balls $R_{n_2-1},\ldots, R_2$ exists, related to a sequence of parallel arcs of $E\cap P$ to $\gamma$ in $E$, $\delta_{n_2-1}, \ldots, \delta_2$. As $O_{n_2}$ contains $D_2^*$, if $n_2\geq 5$ we have that the ball $R_{n_2-3}$ contains $T$ and consequently two strings, which is a contradiction. Therefore $n_2=3$, and the ball $R_2$ contains the string $s_{23}$. But $R_2$ cannot contain $T$, otherwise it would contain two strings. Hence, $O_2\subset O_1^*$ and $O_3\subset O_1$. (See Figure \ref{2D1doutermostarcsB.pdf}(b).)\\
Consider an arc $\alpha$ outermost after the outermost arcs $\delta_1$ and parallel arcs to $\gamma$.
Then $\alpha$ has ends in $D_1\cup D_2$. If the arcs $\alpha$ have one end in $D_1$ and the other in $D_2$ then we get a contradiction to $D_2\subset O_1^*$ and $O_1$ being disjoint from $O_1^*$. Then $\alpha$ has equal ends. If the ends of $\alpha$ are in $D_2$ then $\alpha$ is in $O_1^*$ (because $D_2$ is in $O_1^*$). All loops attached to $D_2$, as $\alpha$, have to be parallel in $P$ to the arc parallel to $\gamma$ in $P$ attached to $D_2$. Otherwise, $D_4^*$ is contained in $O_1^*$, which contradicts the assumption that it is not. Let $A$ be the disk of $E-E\cap P$ co-bounded by $\alpha$ in the outermost side of the arc in $E$. Suppose $\alpha$ is attached to $D_2$ or is parallel to $\delta_1$ in $P$. The boundary of $A$ bounds a disk in $S \cup_{D_1\cup D_2} \partial C_{1, 2}$ that contains $O_1$, and the union of these two disks bounds a ball, $R_1'$, in $B_2$. The ball $R_1'$ has similar properties to the balls $R_i$; including containing a single string of $\mathcal{T}_2$, which is a consequence of Lemma \ref{inner ball} (a), (c), the arcs $\partial A\cap S-\gamma$ with both ends in $D_1$ and $D_2$ being parallel in $P$ resp., and also from the minimality of $|S\cap V|$. But has $R_1'$ contains $O_1$, it also contains two strings, which a contradiction to the previous observation. Then, $\alpha$ is attached to $D_1$ and is not parallel to $\delta_1$. In this case, $R_1'$ contains the string $s_{34}$ as a core, that is parallel to the core of the cylinder $C_{1, 2}$. Consider the outermost arcs $\gamma'$ among the arcs of $E\cap P$ with distinct ends in $D_1^*\cup \mathcal{D}$. Given the configuration of $G_P$, as in Figure \ref{2D1doutermostarcsB.pdf}(b), the only possible ends for $\gamma'$ are one end in $D_1^*$ and the other in $D_1$, one end in $D_1^*$ and the other in $D_2$ and one end in $D_1$ and the other in $D_3$. The only possible case, because the disks involved belong to the same component of $V-V\cap S$, is having $\gamma'$ with one end in $D_1^*$ and the other in $D_1$. Let $\Gamma'$ be the disk, of $E-E\cap S$, co-bounded by $\gamma'$, in the outermost side of $\gamma'$ in $E$. Then $\Gamma'$ is over $Q$ and $S$, in $B_1$. All the arcs of $\partial \Gamma'\cap S$ that intersect $D_1^*$ are either $\gamma'$ or have both ends in $D_1^*$ and are parallel to $\delta_1^*$ in $P$. By an isotopy of these arcs to $Q$ we get that $D_1^*\cup O_1^*$ is primitive with respect to the complement of $Q$ in $B_1$, that is a handlebody. Then the core of the cylinder from $D_4^*$ to $D_1$ is unknotted. As the string $s_{14}$ is parallel to the core from $D_1^*$ to $D_4^*$ in $Q$ and the string $s_{23}$ is parallel to the core from $D_1^*$ to $D_1$ in $Q$, we have that both strings are $\mu$-primitive.
\hspace*{\fill}$\triangle$\\

\begin{claim}\label{2D*1DNS claim 5}
If only $D_1^*$ is attached to outermost arcs then some string of some tangle is unknotted.
\end{claim}
\textit{Proof of Claim \ref{2D*1DNS claim 5}.}
Denote by $\delta_1^*$ the outermost arcs attached to $D_1^*$. Consider a second outermost arc, $\gamma$, and let $\Gamma$ be the disk of $E-E\cap V$ co-bounded by $\gamma$ in the outermost side of this arc in $E$. (See Figure \ref{2D1doutermostarcsB.pdf}(a).) The curve $\partial \Gamma$ bounds a disk $L$ in the torus $S\cup_{D_1^*\cup D_{n_2}} \partial C_{1^*, n_2} $.  Following a similar argument as in Claim \ref{2D*1DNS claim 4}, we can assume $\gamma$ has both ends in $D_{n_2}$ and we define similarly the ball $R$ in $B_2$ with boundary $\Gamma\cup L$. So, either the string $s_{34}$ or a portion the string $s_{12}$ with end in $D_2^*$ is in $R$, and therefore, this string is parallel to the core of the cylinder $C_{1^*, n_2}$. Let $O_1^*$ and $O$ be the disjoint disks in $S^3-int \{D_1^*\cup D_{n_2}\}$ co-bounded by $\delta_1^*$ and $\gamma$, resp.. Note that $O$ is in $L\subset \partial R$. As $R$ contains a single string, we have that $O$ intersects $\mathcal{D}^*$ at a single disk. From Claim \ref{2D*1DNS claim 2}, we have $n_2\leq 3$; also, when $n_2=3$ we consider the balls $R_1$, $R_2$ and the respective disks of intersection with $S$, $O$, $O_2$ and $O_1$, attached to $D_3$, $D_2$ and $D_1$, resp. .\\

Assume $R$ contains the string $s_{34}$. In this case $O_1^*$ is in $R$, and each $O$ and $O_1^*$ contain a single disk of $\mathcal{D}^*$, $D_3^*$ or $D_4^*$. Then, if $n_2=3$ one of the balls $R_1$ or $R_2$ contains two strings of a tangle, which is impossible. Hence, $n_2=1$. As $O_1^*$ is disjoint from $D_1$ we have that $O_1^*$ intersects $S\cap V$ at a single disk of $\mathcal{D}^*$. Therefore, some arc $\delta_1^*$ is as in Lemma \ref{meridional outermost arc}, which means that some string of some tangle is unknotted.\\

Assume now that $R$ contains a portion of the string $s_{12}$.

Suppose $n_2=3$. We have $O\cap \mathcal{D}^*=D_2^*$ and consequently $s_{23}$ is in $R_2$ and $s_{34}$ is in $R_1$, which means that $O_2\cap \mathcal{D}^*=D_3^*$ and $O_1\cap \mathcal{D}^*=D_4^*$, as in Figure \ref{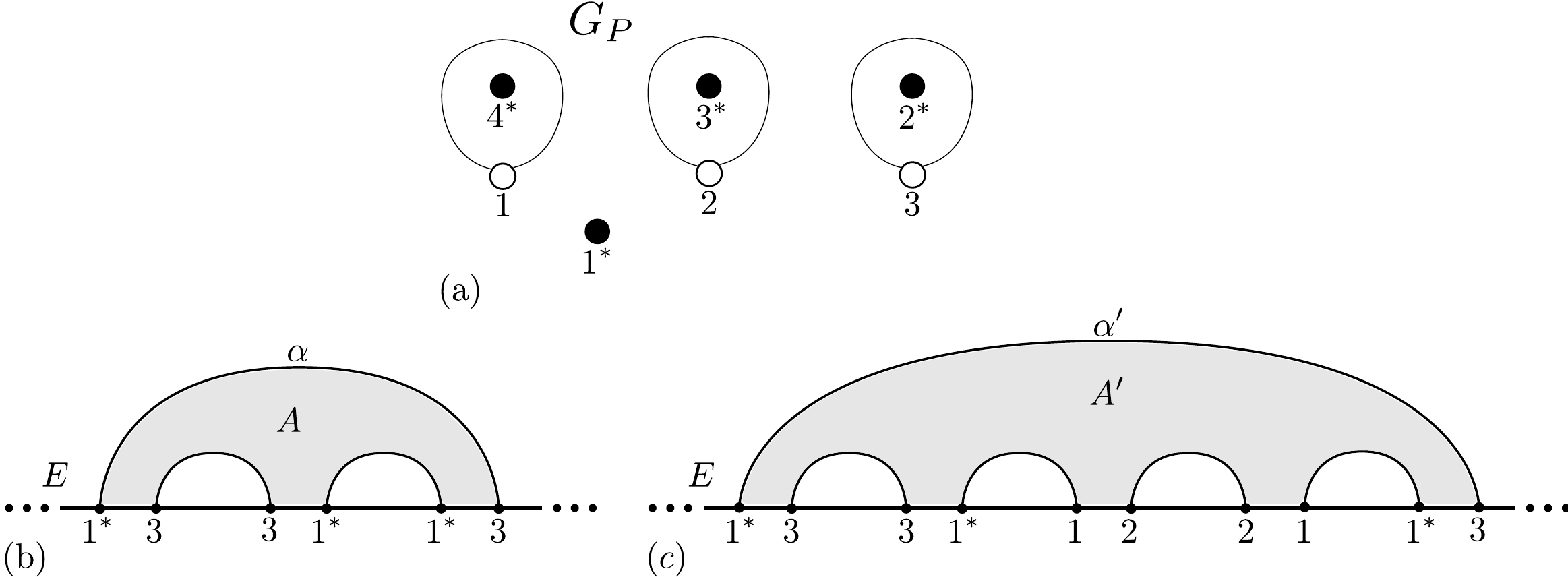}(a). Consider an outermost arc, $\alpha$, between the arcs with ends in distinct disk components of $D_1^*\cup \mathcal{D}$, and $A$ the disk of $E-E\cap P$ co-bounded by $\alpha$ in its outermost side in $E$. Note that $\alpha$ can only have ends in disks in the same component of $V-V\cap S$. So, $\alpha$ can only have ends in $D_1^*$ and $D_1$, $D_3$ and $D_2$, $D_2$ and $D_1$, or also, $D_1^*$ and $D_3$, as in Figure \ref{2D1doutermostarcsF.pdf}(b).\\ 
If the ends of $\alpha$ are in $D_1^*$ and $D_3$, $D_3$ and $D_2$, or $D_2$ and $D_1$, then the strings $s_{12}$, $s_{23}$ or $s_{34}$ are unknotted, respectively.\\
So, assume that all arcs $\alpha$ have ends in $D_1^*$ and $D_1$.  
\begin{figure}[htbp]
\centering
\includegraphics[width=0.9\textwidth]{2D1doutermostarcsF.pdf}
\caption{}                 
\label{2D1doutermostarcsF.pdf}
\end{figure}
Consider now the outermost arc $\alpha'$ between the ones with ends in distinct components of $D_1^*\cup \mathcal{D}-D_1$ or that have ends in distinct components of $\mathcal{D}$. Let $A'$ be the disk of $E-E\cap S$ co-bounded by $\alpha'$ in the outermost side of the arc in $E$. (See Figure \ref{2D1doutermostarcsF.pdf}(c).) The arc $\alpha'$ can only connect components of $V-V\cap S$ with the disks $D_1^*$ and $D_1$ in them . Hence, the disk $A'$ is in the tangle with the strings $s_{12}$, $s_{34}$. Using the disk $A'$ and depending on the ends of $\alpha'$ we can prove that $s_{12}$ or $s_{34}$ is unknotted.

Suppose $n_2=1$. Suppose that $O_1^*\cap \mathcal{D}^*$ is either $D_3^*$ or $D_4^*$. As $O_1^*$ and $D_1$ are disjoint, $\delta_1^*$ is as in Lemma \ref{meridional outermost arc}, which means that some string of some tangle is unknotted.\\
Suppose, now, that $O_1^*$ intersects $\mathcal{D}^*$ in $D_3^*\cup D_4^*$, as in Figure \ref{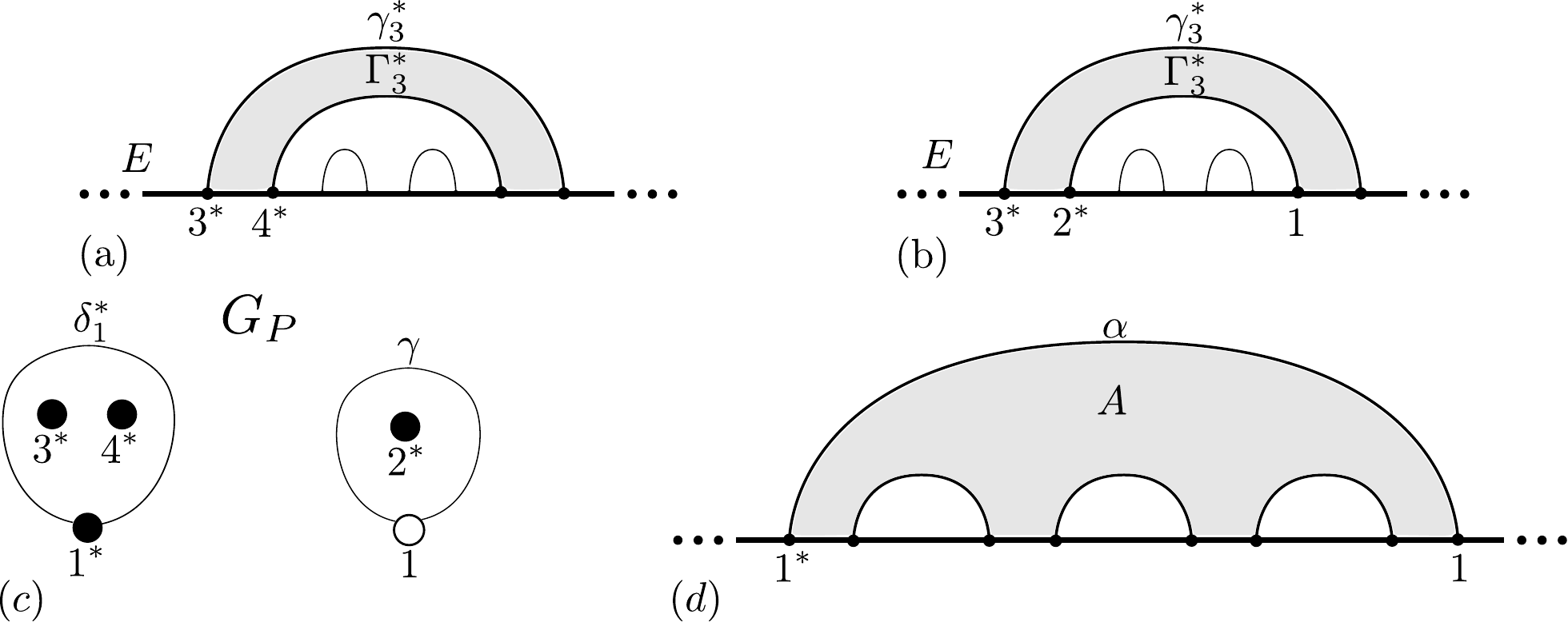}(c).
\begin{figure}[htbp]
\centering
\includegraphics[width=0.85\textwidth]{2D1doutermostarcsG.pdf}
\caption{}                 
\label{2D1doutermostarcsG.pdf}
\end{figure} 
Consider the arcs $\gamma_3^*$ and $\gamma_4^*$, and the respective disks $\Gamma_3^*$ and $\Gamma_4^*$.  From Lemma \ref{property}(b), the two disks $D_3^*$ or $D_4^*$ cannot have simultaneously loops attached in $G_P$. Then, all arcs $\gamma_3^*$ or all arcs $\gamma_4^*$ have distinct ends.\\ 
Assume that all arcs $\gamma_3^*$ have distinct ends. Suppose also that some $\Gamma_3^*$ intersects $D_4^*$ as in Figure \ref{2D1doutermostarcsG.pdf}(a). Then the disks $D_3^*$ and $D_4^*$ are primitive with respect to the complement of $V\cap B_2$ in $B_2$. Consequently, the complement of $C_{1^*, 1}$ in $B_2$ is a solid torus.  As $s_{12}$ is parallel to the core of the ball $C_{1^*, 1}$ we have that $s_{12}$ is unknotted. Otherwise, suppose that  all disks $\Gamma_3^*$ intersect $D_2^*$ as in Figure \ref{2D1doutermostarcsG.pdf}(b). Then, the disks $D_2^*$ and $D_3^*$ are primitive with respect to the complement of $V\cap B_1$ in $B_1$.
Consider an outermost arc $\alpha$ between the arcs with one end in $D_1^*$ and the other end in $D_1$. Let $A$ be the disk, of $E-E\cap V$, co-bounded by $\alpha$ in the outermost side of $\alpha$ in $E$, as in Figure \ref{2D1doutermostarcsG.pdf}(d). Suppose that $A$ is in $B_2$. The components of $\partial A\cap S$ that intersect $D_1$ are $\alpha$ and eventually arcs with both ends in $D_1$ parallel to $\gamma$. The disk $D_2^*$ is primitive in the complement of $V\cap B_2$ in $B_2$. Then after adding the $2$-handle with core $D_2^*$ to the complement of $V\cap B_2$ in $B_2$ we are left with the complement of $C_{1^*, 1}\cup C_{3^*, 4^*}$. We isotope the arcs of $A \cap S$ parallel to $\gamma$, through $O$, to the boundary of the cylinder $C_{1^*, 1}$. 
\begin{figure}[htbp]
\centering
\includegraphics[width=0.85\textwidth]{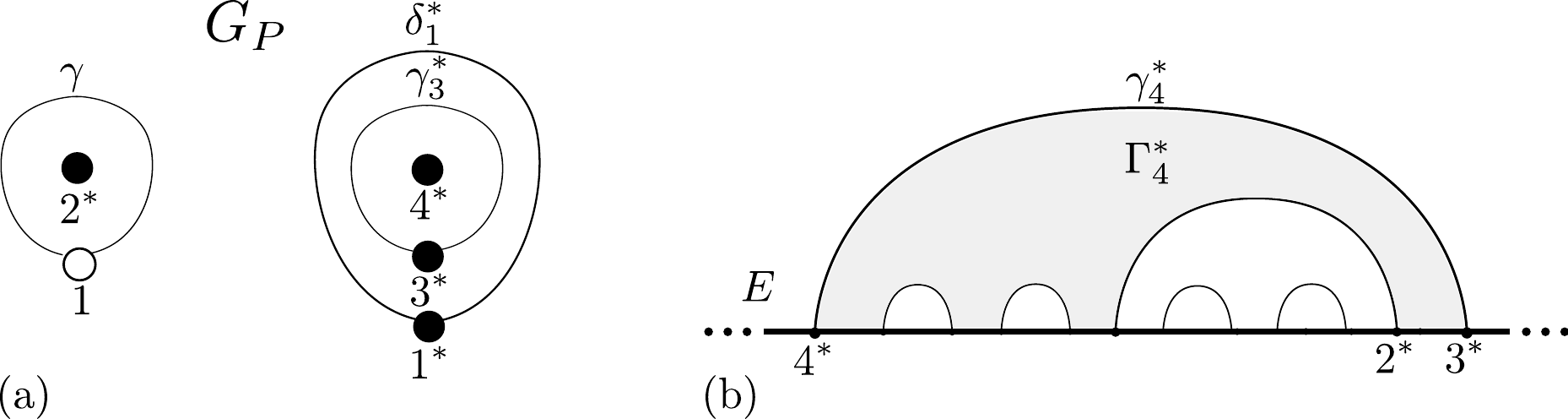}
\caption{}                 
\label{2D1doutermostarcsH.pdf}
\end{figure}
After this isotopy, $A$ intersects $D_1$ geometrically once. Then, the complement of $C_{3^*, 4^*}$ in $B_2$ is a solid torus, which means that the string $s_{34}$ is unknotted. Otherwise, assume that $A$ is in $B_1$. The components of $\partial A\cap S$ that intersect $D_1$ are $\alpha$ and eventually arcs with both ends in $D_1$ parallel to $\gamma$, or arcs with one end in $D_1$ and the other in $D_2^*$. As $\Gamma_3^*$ is in $B_1$, we have that $A$ doesn't intersect any arc $\gamma_3^*$. Then we can proceed as follows. Take $T$ union with a regular neighborhood of $O$. Isotope to $N(D_1\cup O)$ the arcs of $A\cap S$ parallel to $\gamma$. Then, the disk $A$ intersects $D_1\cup O$ geometrically once, and $\Gamma_3^*$ intersects $D_2^*$ geometrically once. As $A$ is disjoint from any $\gamma_3^*$, cut $T\cup N(O)$ along $D_2^*$ and, afterwards, we isotope  $T\cup N(O)$ along $D_1\cup O$ away from $S$. Denote the solid torus after the isotopy as $T'$. Then, the complement of $T'$ in $B_1$ is a handlebody.  Let ${O_1^*}^c$ be the complement of $O_1^*$ in $S-int D_1^*$. Denote by $Q'$ the ball obtained by adding the two handle with core ${O_1^*}^c\cup \Delta_1^*$ to $T'$. The ball $Q'$ intersects $S$ in $D_1^*\cup {O_1^*}^c$ and $D_4^*$, and its complement in $B_1$ is a solid torus. The ball $Q'$ contains $s_{14}$ and intersects the string $s_{23}$ at an unknotted component. Then, by Lemma \ref{inner ball}(b), either one string of the tangle decomposition given by $S$ is unknotted or the tangle in $(Q', Q'\cap \mathcal{T}_1)$ is trivial. Hence, we can assume the latter and that the string $s_{14}$ is trivial in $Q'$. As the string $s_{14}$ has one end in each of the two components of $Q'\cap S$, it is a core of the cylinder $Q'$. Consequently, the string $s_{14}$ is unknotted.\\
Suppose now that some $\gamma_3^*$ has identical ends. Then, all arcs $\gamma_4^*$ have distinct ends, and from Figure \ref{2D1doutermostarcsH.pdf}(a), the other end of $\gamma_4^*$ is in $D_3^*$. As $\gamma_4^*$ is the outermost $\text{d}^*$-arc with one end in $D_4^*$ and $\gamma_4^*$ has one end in $D_3^*$, we have that $\Gamma_4^*$ intersects $D_2^*$ once. (See Figure \ref{2D1doutermostarcsH.pdf}(b).) This means that $\Gamma_4^*$ is in $B_1$ and that $D_2^*$ and $D_3^*$ are primitive with respect to the complement of $V\cap B_2$ in $B_2$. Then, considering the arc $\alpha$ and disk $A$ and proceeding as before, we have that some string of some tangle is unknotted.\hspace*{\fill}$\triangle$\\

\begin{claim}\label{2D*1DNS claim 6}
If only $D_1$ is attached to outermost arcs then some string of some tangle is unknotted.
\end{claim}
\textit{Proof of Claim \ref{2D*1DNS claim 6}.}
Let $\delta_1$ denote the outermost arcs attached to $D_1$. If $n_2=3$ by the finiteness of outermost arcs there is a sequence of parallel arcs to some $\delta_1$, that is $\delta_2$ and $\delta_3$, as in Figure \ref{2D1doutermostarcsSI.pdf}(a), and with this sequence we can consider the balls $R_i$ as in Case 1 of Lemma \ref{2D*1DS}. Let $\gamma$ be a second-outermost arc of $E\cap P$ in $E$, as in Figure \ref{T2outermostarcs.pdf}(b). From Lemma \ref{no gamma with equal ends}, if $\gamma$ has one end in $D_{n_2}$ or one end in $D_1^*$ and the other in $D_2^*$ then some string of some tangle defined by $S$ is unknotted. If all arcs $\gamma$ have both ends in $D_2^*$, then, by the finiteness of outermost arcs, we have a contradiction to Corollary \ref{no parallel sk-arcs}. Then some arc $\gamma$ has both ends in $D_1^*$. Consider this arc $\gamma$ and let $\Gamma$ be the disk component of $E-E\cap P$ co-bounded by $\gamma$ in the outermost side of this arc in $E$. The disk $\Gamma$ bounds a disk $L$ in $\partial C_{1^*, n_2}\cup_{D_1^*\cup D_{n_2}} S$ that together with $\Gamma$ bound a ball $R$ in $B_2$. As in the previous claim, we have that either a portion of the string $s_{12}$ with end in $D_2^*$ is in $R$, or the string $s_{34}$ is in $R$. Let $O_1^*$ be the disk co-bounded by $\gamma$ in $S-int D_1^*$, disjoint from $D_{n_2}$. Then $O_1^*\subset L$.

Assume $R$ contains a portion of the string $s_{12}$. In this case,  $D_2^*$ is the only disk of $\mathcal{D}^*$ in $L$ and $D_2^*\subset O_1^*$.
 Consider the ball $C_{1^*}$ obtained by an isotopy of the ball $C$, cut from $V$ by $D_1^*\cup D_2^*\cup D_{n_2}$, along $D_2^*\cup D_{n_2}$ away from $S$ in $B_2$. From Lemma \ref{strings parallel to boundary}, the arc $C_{1^*}\cap s_{12}$ is trivial in $C_{1^*}$. We also have that the portion of $s_{12}$ in the complement of $C_{1^*}$ in $B_2$ is unknotted. In fact, we can assume that this arc is $R\cap s_{12}$. As there are no local knots, $R\cap s_{12}$ is trivial in $R$, and therefore, it is parallel to $L$. We can isotope the components of $L\cap S-O_1^*$ from $S$ to $\partial C_{1^*, n_2}$. With the isotopy we verify that the arc $R\cap s_{12}$ is parallel to the boundary of $C_{1^*}$.  Altogether, we have that $s_{12}$ is unknotted in $(B_2, \mathcal{T}_2)$.

Assume now that $R$ contains the string $s_{34}$. Following along an argument of the similar situation in Claim \ref{2D*1DNS claim 5}, we have some string of some tangle is unknotted.
\hspace*{\fill}$\triangle$\\
\end{proof}


\begin{lem}\label{2D*2D}
Suppose $V-V\cap S$ contains a solid torus that intersects $\mathcal{D}^*$ and $\mathcal{D}$ at two disks. Then some string of some tangle is unknotted.
\end{lem}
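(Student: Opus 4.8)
The plan is to reduce to Lemma \ref{meridional outermost arc} exactly as in the proofs of Lemmas \ref{2D*1DS} and \ref{2D*1DNS}. First I would place $T$ in the tangle $(B_1,\mathcal T_1)$, write $T\cap K=s_{11}$ with its two ends on the two $\mathcal D^*$-disks of $\partial T$ (so that $s_{11}$ is parallel to $\partial T$ by Lemma \ref{strings parallel to boundary}(b)), and call $D_a,D_b$ the two $\mathcal D$-disks of $\partial T$. Since $V$ has genus three and $T$ has genus one, every other component of $V-V\cap S$ is a ball or a solid torus; by Remark \ref{remark no beta in ball} no outermost disk lies over a ball, and by Lemma \ref{no beta in simple torus} none lies over a solid torus meeting $S$ in a single disk disjoint from $K$, so I may assume every outermost disk lies over a solid-torus component. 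Outermost disks over solid tori other than $T$ fall under the hypotheses of Lemma \ref{no beta in torus}, Lemmas \ref{2D*1DS}, \ref{2D*1DNS}, or of the present configuration, so it suffices to treat the outermost disks over $T$. By Lemma \ref{property}(a),(e),(g) each such outermost arc is a type I st- or sk-arc, and each disk $O\subset S$ it cuts off contains a component of $\mathcal D^*$.

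The core of the argument is to produce one outermost arc over $T$ whose disk $O$ contains a single disk of $V\cap S$ --- necessarily a $\mathcal D^*$-disk meeting $K$ once --- so that Lemma \ref{meridional outermost arc} immediately yields an unknotted string. I would organize the analysis by which disks of $\partial T$ carry outermost arcs. As $D_a,D_b$ are $\mathcal D$-vertices, Lemma \ref{property}(f) forces an st-arc on each, and I would feed these arcs, together with second-outermost arcs, into the ball constructions of Lemma \ref{parallel arcs} and Lemma \ref{no gamma with equal ends}. Two standard exclusions then keep the configurations under control: Corollary \ref{no parallel sk-arcs} forbids a sequence of parallel sk-arcs after an outermost $\mathcal D^*$-arc, and whenever the construction would enclose both strings of $\mathcal T_1$ in a ball $Q$ with each string having its ends in two distinct disks of $Q\cap S$, the tension between Lemma \ref{no ball containing T} (forcing $(Q,Q\cap K)$ trivial) and Lemma \ref{inner ball}(c) (forcing it essential) rules that case out, precisely as in the claims in the proof of Lemma \ref{2D*1DNS}. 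After these exclusions the only surviving possibility should be that some outermost arc isolates a single $\mathcal D^*$-disk, at which point Lemma \ref{meridional outermost arc} finishes; the borderline cases instead exhibit a string parallel to a disk $L$ in a ball, hence trivial there and unknotted in $(B_1,\mathcal T_1)$.

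The main obstacle will be the bookkeeping of where the second string $s_{12}$ and the remaining two $\mathcal D^*$-disks sit relative to $T$: having two $\mathcal D$-disks on $\partial T$ (rather than one, as in Lemmas \ref{2D*1DS} and \ref{2D*1DNS}) multiplies the possible adjacency patterns of the outermost and second-outermost arcs, and each pattern must be driven either into Lemma \ref{meridional outermost arc} or into a direct parallelism showing a string is trivial in an enclosing ball. I expect that tracking which disk each second-outermost arc is attached to, ruling out the ball-contains-both-strings patterns via the Lemma \ref{no ball containing T} versus Lemma \ref{inner ball}(c) contradiction, and verifying that the $O$-disks cannot accumulate extra components of $V\cap S$ without violating the minimality of $|S\cap V|$, will be the delicate part. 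The weaker conclusion sought here --- an unknotted string rather than $\mu$-primitivity of both strings --- should make this case analysis appreciably shorter than in the two preceding lemmas.
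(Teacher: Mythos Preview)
Your strategy---reduce to Lemma~\ref{meridional outermost arc} by showing some outermost arc over $T$ cuts off a disk $O$ meeting $S\cap V$ in a single $\mathcal D^*$-disk, with Corollary~\ref{no parallel sk-arcs} and the tension between Lemma~\ref{no ball containing T} and Lemma~\ref{inner ball}(c) pruning the bad configurations---is exactly the paper's approach. A few points will sharpen your outline into the actual argument.

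First, you are missing the key structural observation that drives the whole case analysis: since $V$ has genus three and $T$ is a solid torus meeting $S$ in four disks, \emph{all} disks of $\mathcal D^*$ are mutually parallel in $V$, and likewise all disks of $\mathcal D$. In particular $T$ is the \emph{only} non-ball component of $V-V\cap S$, so your discussion of ``outermost disks over solid tori other than $T$'' is vacuous. (It is also slightly off: Lemmas~\ref{2D*1DS} and~\ref{2D*1DNS} conclude $\mu$-primitivity, not unknottedness, so they would not give you the present lemma's conclusion even if such tori existed.) Once you have this parallelism, the picture is simply Figure~\ref{2D2d.pdf}: $T\cap\mathcal D^*=D_1^*\cup D_4^*$, $T\cap\mathcal D=D_1\cup D_{n_2}$, and the complement of $T$ in $V$ is a chain of cylinders through $D_2^*,D_3^*$ and through $D_2,\ldots,D_{n_2-1}$.

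Second, the paper's organization is cleaner than the bookkeeping you anticipate. After noting (via Corollary~\ref{no parallel sk-arcs}) that some outermost arc must lie on $D_1$ or $D_{n_2}$, the proof splits into just two claims: (i) only $D_1$ carries outermost arcs, and (ii) both $D_1$ and $D_{n_2}$ do. In (i) the finiteness of outermost arcs forces a full sequence $\delta_1,\ldots,\delta_{n_2}$ of parallel st-arcs, and the balls $R_i$ of Lemma~\ref{parallel arcs} then pin $O_1\cap(S\cap V)$ down to a single $\mathcal D^*$-disk, feeding directly into Lemma~\ref{meridional outermost arc}. In (ii) you build the ball $Q$ from the two outermost disks and run the Lemma~\ref{no ball containing T}/Lemma~\ref{inner ball}(c) contradiction as you describe; the residual subcases depend on how many $\mathcal D^*$-disks lie in $O_1\cup O_{n_2}$ and on $n_2$.

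Finally, there is one contradiction mechanism in claim (ii) that your outline does not anticipate: in the subcase $D_1^*\cup D_4^*\subset O_1\cup O_{n_2}$ with $n_2\ge 6$ and the parallel sequences from each side failing to meet, the balls $R_k\cup C_{k,k+1}$ together with $Q$ assemble into a solid torus neighborhood of $K$ that contains $V$ and has boundary essential in $W$, contradicting that $W$ is a handlebody. This is the same device used in Claim~\ref{2D*1DNS claim 2}, and you should expect to need it here as well. Lemma~\ref{no gamma with equal ends}, on the other hand, is not invoked in this proof.
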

\begin{proof}
Let $T$ be the solid torus component of $V-V\cap S$ as in the statement, and suppose it lies in the tangle $(B_1, \mathcal{T}_1)$. As the genus of $V$ is three, all disks of $\mathcal{D}^*$ are parallel in $V$, and the same is true for the disks of $\mathcal{D}$. Assume that $\partial T\cap \mathcal{D}^*=D_1^*\cup D_4^*$ and $\partial T\cap\mathcal{D}=D_1\cup D_{n_2}$, as in Figure \ref{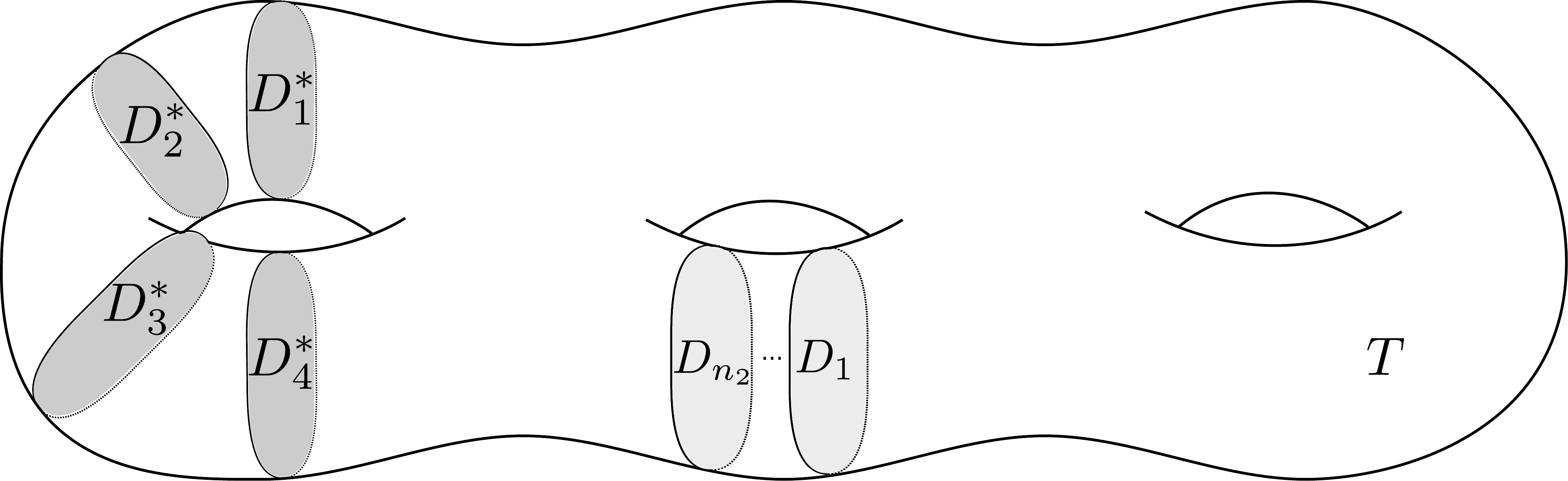}). From Remark \ref{remark no beta in ball}, we can assume that all outermost disks are over $T$ with respective outermost arcs attached to $D_1^*$, $D_4^*$, $D_1$ or $D_{n_2}$. If all outermost arcs are attached to $D_1^*$ or $D_4^*$, then by the finiteness of outermost arcs there are parallel sk-arcs in  contradiction to Corollary \ref{no parallel sk-arcs}. Then, some outermost arc is attached to $D_1$ or $D_{n_2}$. Furthermore, even if $D_1^*$ or $D_4^*$ is attached to outermost arcs the only sequences of arcs parallel to outermost arcs in $E$ are with respect to outermost arcs attached to $D_1$ or $D_{n_2}$. Without loss of generality, we assume that $D_1$ is always attached to some outermost arc, and denote by $\delta_1$ an outermost arc attached to $D_1$.
\begin{figure}[htbp]
\centering
\includegraphics[width=0.5\textwidth]{2D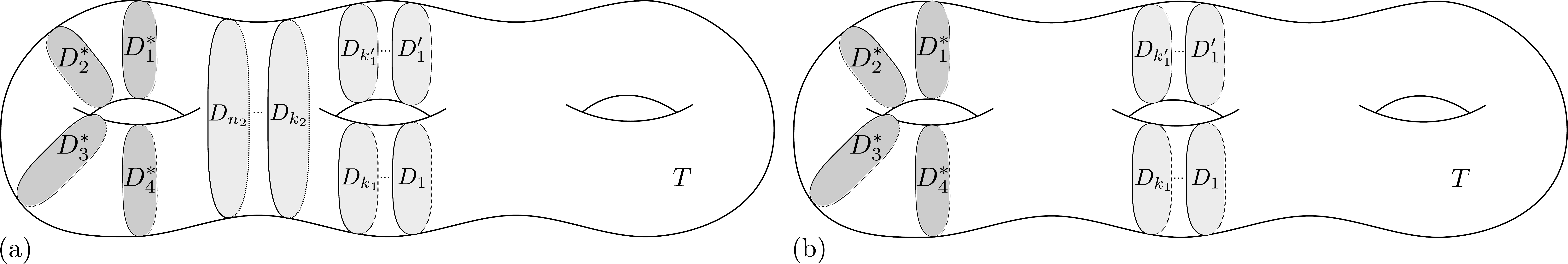}
\caption{}                 
\label{2D2d.pdf}
\end{figure}

\begin{claim}\label{2D*2D claim 1}
If $D_{n_2}$ is not attached to outermost arcs then some string of some tangle is unknotted.
\end{claim}
\textit{Proof of Claim \ref{2D*2D claim 1}.}
Assume that $D_{n_2}$ is not attached to outermost arcs. By the finiteness of outermost arcs and Lemma \ref{no parallel sk-arcs}, there is a sequence of arcs of $E\cap P$ in $E$ parallel to an outermost arc $\delta_1$, that is $\delta_2, \ldots, \delta_{n_2}$, as in Figure \ref{2D1doutermostarcsSI.pdf}(a). As in Case 1 of Lemma \ref{2D*1DS} we define the balls $R_i$; consider also the respective disks $O_i$ and $O_{i+1}$. As $R_1$ contains a single string we have that $O_1$ intersects $\mathcal{D}^*$ at a single disk. If $n_2=2$ then as $O_1$ and $O_2$ are disjoint, we have that $O_1$ intersects $S\cap V$ at a single disk. Hence, $\delta_1$ is as in Lemma \ref{meridional outermost arc}, which means that some string of some tangle is unknotted. Suppose $n_2\geq 4$. (Note that $n_2$ is necessarily even.) If $D_2^*$ or $D_3^*$ are in $O_1$ then $R_2$ contains two strings, which is impossible. Then, without loss of generality, we can assume that $D_1^*$ is in $O_1$. Suppose some disk of $\mathcal{D}$ is in $O_1$, say $D_i$. Then $D_1^*$ is also in $O_i$. This means that $T$ is in $R_{i-1}$, and consequently, $D_1$ is in $O_i$, which is a contradiction as $D_i$ is in $O_1$. Therefore, $\delta_1$ is under the conditions of Lemma \ref{meridional outermost arc}, which means that some string of some tangle is unknotted.
\hspace*{\fill}$\triangle$\\

\begin{claim}\label{2D*2D claim 2}
If  $D_{n_2}$ is attached to outermost arcs then some string of some tangle is unknotted.
\end{claim}
\textit{Proof of Claim \ref{2D*2D claim 2}.}
Assume that both $D_1$ and $D_{n_2}$ have outermost arcs attached, denoted by $\delta_1$ and $\delta_{n_2}$ resp.. Let the outermost disk co-bounded by $\delta_1$ (resp., $\delta_{n_2}$) be denoted by $\Delta_1$ (resp., $\Delta_{n_2}$) and let $O_1$ (resp., $O_{n_2}$) be the disk in $S-int (D_1\cup D_{n_2})$ separated by $\delta_1$ (resp., $\delta_{n_2}$). By adding a regular neighborhood of $O_{n_2}\cup \Delta_{n_2}$ and $O_1\cup \Delta_1$ to $T$, and the ball bounded by the boundary component that is disjoint from $S$, we define a ball $Q$. If $Q$ contains both strings of $\mathcal{T}_1$, from Lemma \ref{no ball containing T}, we have that the tangle $(Q, Q\cap K)$ is trivial. If $\mathcal{D}^*$ is in $O_1\cup O_{n_2}$ then we get a contradiction between Lemma \ref{inner ball}(c)  and Lemma \ref{no ball containing T}. Then, $O_1$ or $O_{n_2}$ intersects $\mathcal{D}^*$ at a single disk.\\ 
If $n_2=2$, $O_1$ and $O_2$ are disjoint, and $\delta_1$ or $\delta_{n_2}$ are as in Lemma \ref{meridional outermost arc}, which means that some string of some tangle is unknotted.\\
Assume $n_2\geq 4$.\\
Suppose that $O_1\cup O_{n_2}$ intersect $\mathcal{D}^*$ in three disks and, without loss of generality, that $O_1$ intersects $\mathcal{D}^*$ at a single disk. If there is any arc of $E\cap P$ parallel to $\delta_{n_2}$ in $E$, the respective ball $R_{n_2-1}$ contains two strings, which is impossible as observed in Lemma \ref{parallel arcs}. Then, there is a sequence of parallel arcs of $E\cap P$ in $E$, $\delta_1,\ldots, \delta_{n_2-1}$ and we can consider the respective balls $R_1, \ldots, R_{n_2-2}$. If $O_1\cap \mathcal{D}^*$ is $D_2^*$ or $D_3^*$, then $R_2$ contains two strings, which is impossible. Then, $O_{n_2}\cap \mathcal{D}^*$ is $D_2^*\cup D_3^*$. The string $s_{23}$ is trivial in $Q$ and has ends in the same disk component of $Q\cap S$, then $s_{23}$ is unknotted in $(B_1, \mathcal{T}_1)$.\\
Suppose that $O_1\cup O_{n_2}$ intersect $\mathcal{D}^*$ in two disks. Assume $D_2^*\cup D_3^*$ is in $O_1\cup O_{n_2}$. If there are two consecutive balls $R_i$ after $O_1$ or $O_{n_2}$, then some ball $R_i$ contains two strings, which is impossible. Then, $n_2=4$ and both $\delta_1$ and $\delta_{4}$ have a parallel arc of $E\cap P$ in $E$, $\delta_2$ and $\delta_3$, resp., from where we define the balls $R_1$ and $R_3$. As $R_1$ and $R_3$ have a single string, we have that $O_1$ is disjoint from $D_2$, $D_3$ and $D_4$. Hence, $\delta_1$ is as in Lemma \ref{meridional outermost arc} and some string of some tangle is unknotted. Assume now that $D_1^*\cup D_4^*$ is in $O_1\cup O_{n_2}$.  If there is a sequence of parallel arcs to $\delta_1$ (or to $\delta_{n_2}$) attached to all disks $D_1, \ldots, D_{n_2}$, as in Figure \ref{2D1doutermostarcsSI.pdf}(a), following the same argument as in Claim \ref{2D*2D claim 1}, we prove that some string of some tangle is unknotted. Otherwise, the sequences of parallel arcs from $\delta_1$ go up to some arc $\delta_i$ and from $\delta_{n_2}$ go up to some arc $\delta_{i+1}$. If $n_2=4$, then as before $\delta_1$ or $\delta_{n_2}$ are as in Lemma \ref{meridional outermost arc}, which means that some string of some tangle is unknotted. Suppose $n_2\geq 6$. Then, again using arguments as in Claim \ref{2D*2D claim 1}, if $O_1$ intersects $\mathcal{D}$, it is in $D_i$ or $D_{i+1}$. From the sequences of parallel arcs we can consider the respective balls $R_1, \ldots, R_{i-1}$ and $R_{i+1},\dots, R_{n_2-1}$. Denote by $C_{k, k+1}$ the cylinder in $V$ between $D_k$ and $D_{k+1}$. If $D_i$ and $D_{i+1}$ are not in $O_1$ then $\delta_1$ resp., is as in Lemma \ref{meridional outermost arc}, which means that some string of some tangle is unknotted. Otherwise, without loss of generality, suppose that $D_i$ is in $O_1$. Then, then as $R_{i-1}$ cannot be in Q, we have that $C_{i, i+1}$ is in $Q$. Each string of the tangles defined by $S$ is in $Q$ or is some ball $R_k$. Following as in the previous claim, consider $Q$ union with $R_k\cup C_{k, k+1}$, for $k=1, \ldots, i-1, i+1, \ldots, n_2$, we define a solid torus that is a neighborhood of $K$, containing $V$, and with boundary essential in $W$, which is a contradiction to $W$ being a handlebody.
\hspace*{\fill}$\triangle$\\
\end{proof}


\begin{lem}\label{2D*0D}
Suppose $V-V\cap S$ contains a solid torus component disjoint from $\mathcal{D}$ and intersecting $\mathcal{D}^*$ at two disks. Then both strings of some tangle are $\mu$-primitive.
\end{lem}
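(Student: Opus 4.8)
The plan is to place $T$ in $B_1$ (without loss of generality), so that $T\cap S=D_1^*\cup D_4^*$, the single string $T\cap K=s_{14}$ runs from $D_1^*$ to $D_4^*$, and $T\cap\mathcal{D}=\emptyset$. Since each $D_i^*$ meets $K$ once, $\mathcal{T}_1=\{s_{14},s_{23}\}$ and $\mathcal{T}_2=\{s_{12},s_{34}\}$. An Euler characteristic count shows that $V-\mathrm{int}\,T$ has vanishing Euler characteristic, so the remaining disks $D_2^*,D_3^*$ and all of $\mathcal{D}$ lie in a union of solid tori glued to $T$ along $D_1^*$ and $D_4^*$. I would first record this and, using that no disk of $\mathcal{D}$ is parallel to a disk of $\mathcal{D}^*$ together with Corollary \ref{no parallel sk-arcs} (which, via finiteness of outermost arcs, forbids sequences of parallel sk-arcs), constrain the configuration, in particular bounding $n_2$ and the way the $D_i^*$ are parallel, exactly as in Lemmas \ref{2D*1DS} and \ref{2D*1DNS}. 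By Remark \ref{remark no beta in ball} and the earlier ball and genus-two lemmas I may assume there is an outermost disk $\Delta$ over $T$, with outermost (hence type I, and therefore sk-) arc $\delta$ attached to $D_1^*$ or $D_4^*$.

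Since $T$ meets $\mathcal{D}^*$ in exactly two disks and is disjoint from $\mathcal{D}$, the hypotheses of Lemma \ref{T with two D*} hold verbatim, so I would apply it first. It yields a dichotomy: either some string of some tangle is unknotted, which, as in the companion Lemmas \ref{2D*1DS} and \ref{2D*1DNS}, is the favourable terminal outcome; or there is a product ball $Q\subset B_1$ with $Q\cap S$ a single disk meeting $\mathcal{D}^*$ in two components, with $(Q,Q\cap K)$ the product tangle on that disk, and with $Q_1=B_1-\mathrm{int}\,Q$ meeting $T$ in a cylinder (either between $D_4^*$ and a disk parallel to it, or between two parallels of $D_4^*$). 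In the product case the two strings crossing $Q\cap S$ are straightened, so near $T$ the whole picture is a product; this is precisely what allows me to convert $T$ into a solid torus meeting $S$ in a single disk.

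With $Q$ in hand I would run the second-outermost-arc argument used in the clean cases of Lemmas \ref{2D*1DS} and \ref{2D*1DNS}. Choosing a second-outermost arc $\gamma$ in $E$ lying beyond the outermost arcs over $T$, and letting $\Gamma$ be the corresponding disk of $E-E\cap P$, I would split on whether $\partial\Gamma$ is essential or inessential in the torus obtained by pushing the relevant disk of $T\cap S$ off $S$ and gluing the appropriate ball. If $\partial\Gamma$ is inessential it bounds a disk $L$; the ball bounded by $\Gamma\cup L$ then contains a single string parallel to $L$, by the mechanism of Lemmas \ref{parallel arcs} and \ref{no gamma with equal ends}, which forces the complement in $B_2$ (respectively $B_1$) of the solid torus carrying $s_{12},s_{34}$ (respectively $s_{14},s_{23}$) to be a solid torus meeting $S$ in a single disk; Lemma \ref{mu-primitive characterization} then makes both strings of that tangle $\mu$-primitive. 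If $\partial\Gamma$ is essential, the complement is already a solid torus, and I would instead use the outermost $\mathrm{d}^*$-disks $\Gamma_i^*$, each meeting the corresponding $D_i^*$ once, to see that the relevant $\mathcal{D}^*$ disks are primitive in the complementary handlebody, again producing the solid-torus complement required by Lemma \ref{mu-primitive characterization}.

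The main obstacle will be verifying, in each branch, that the complement of the constructed solid torus in $B_1$ or $B_2$ is genuinely a solid torus meeting $S$ in a single disk; this amounts to controlling how $D_2^*,D_3^*$ and the $\mathcal{D}$-disks are distributed in $V-\mathrm{int}\,T$ (which may be one solid torus or a chain of two solid tori glued to $T$), and to pinning down the isotopy class of $\partial\Gamma$ through a careful finiteness-of-outermost-arcs analysis. I expect the hardest sub-case to be property (4) of Lemma \ref{T with two D*}, where $Q_1\cap T$ is a cylinder between two parallels of $D_4^*$, together with the bookkeeping needed to merge the two ends $D_1^*,D_4^*$ of $s_{14}$ correctly; this mirrors the most delicate primitivity and parallel-arc arguments already carried out in Lemmas \ref{2D*1DS} and \ref{2D*1DNS}.
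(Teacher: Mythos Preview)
Your proposal diverges from the paper's proof in its central move, and the divergence matters.

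The paper's argument is a \emph{reduction}. After the preliminary observation that no outermost arc can be attached to a disk of $\mathcal{D}$ (any such outermost disk would lie over a solid torus meeting $S$ in a single $\mathcal{D}$-disk, contradicting Lemma~\ref{no beta in simple torus}), the proof splits into two claims. Claim~1 treats the case where the four $\mathcal{D}^*$ disks are parallel two-by-two: finiteness of outermost arcs together with Corollary~\ref{no parallel sk-arcs} forces the outermost arcs onto two parallel $\mathcal{D}^*$ disks, and one then quotes the argument of Lemma~\ref{4D*}. Claim~2 treats the non-parallel case, and here is the key point you miss: once Lemma~\ref{T with two D*} produces the product ball $Q$, the paper \emph{isotopes $S$ through $Q$}. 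Because $(Q,Q\cap K)$ is the product tangle with respect to $Q\cap S$, this isotopy preserves the tangle decomposition but replaces $D_1^*$ by a disk parallel to $D_4^*$. After the isotopy the configuration of $V\cap S$ is no longer the one hypothesised in this lemma; instead it falls under one of Lemmas~\ref{genus 2 component}, \ref{4D*}, \ref{2D*1DS}, \ref{2D*1DNS}, or \ref{2D*2D}, all already proved. That is the entire content of Claim~2.

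Your plan, by contrast, keeps $S$ fixed and proposes to run a fresh second-outermost-arc analysis after obtaining $Q$. This is not wrong in spirit, but it amounts to re-proving the substance of Lemmas~\ref{2D*1DS}--\ref{2D*2D} in a setting where $T\cap\mathcal{D}=\emptyset$, with all the attendant casework on how $D_2^*,D_3^*$ and $\mathcal{D}$ sit in $V-\mathrm{int}\,T$. You also omit the two-by-two parallelism case split, and your claim that there must be an outermost disk over $T$ itself needs the argument that all outermost arcs are sk-arcs (which the paper supplies). The paper's isotopy-and-reduce trick sidesteps all of this; your approach could in principle be completed, but it is substantially longer and the sketch you give does not yet control the many sub-cases it would generate.
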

\begin{proof}
Let $T$ be a solid torus component as in the statement and suppose $\mathcal{D}^*\cap T=D_1^*\cup D_4^*$. Assume that $T$ is in the tangle $(B_1, \mathcal{T}_1)$. From Remark \ref{remark no beta in ball}, all outermost disks are over solid torus components of $V-V\cap S$. Suppose some outermost disk is attached to some disk of $\mathcal{D}$. As the genus of $V$ is three, this outermost disk is over a solid torus disjoint from $K$ intersecting $S\cap V$ at a single disk, which is a contradiction to Lemma \ref{no beta in simple torus}. Then all outermost disks are attached to disks of $\mathcal{D}^*$.

\begin{claim}\label{2D*0D claim 1}
If the disks of $\mathcal{D}^*$ are parallel two-by-two then some string in some tangle is unknotted.
\end{claim}
\textit{Proof of Claim \ref{2D*0D claim 1}.}
Suppose only one disk or two non-parallel disks are adjacent to outermost arcs. By the finiteness of outermost arcs we have parallel sk-arcs, as in Figure \ref{T2outermostarcs.pdf}(a1), (a2), and we get a contradiction to Corollary \ref{no parallel sk-arcs}.\\
Otherwise, we are left with the case when the outermost arcs are only adjacent to two parallel disks of $\mathcal{D}^*$. Following an argument of a similar situation in Lemma \ref{4D*}, we have that some string in some tangle is unknotted.
\hspace*{\fill}$\triangle$\\

\begin{claim}\label{2D*0D claim 2}
If the disks of $\mathcal{D}^*$ are not parallel two-by-two then both strings of some tangle are $\mu$-primitive.
\end{claim}
\textit{Proof of Claim \ref{2D*0D claim 2}.}
Assume, without loss of generality, that no other disk of $S\cap V$ is parallel to $D_1^*$. If there are disks (of $\mathcal{D}^*$) parallel to $D_4^*$, and $D_4^*$ or one disk parallel to it are the only disks with outermost arcs attached, then we get a contradiction to Corollary \ref{no parallel sk-arcs}. So, without loss of generality, assume there is some outermost arc attached to $D_1^*$, and that it is over $T$. Under these conditions, we define a ball $Q$ as in Lemma \ref{T with two D*}, using the outermost disk attached to $D_1^*$ over $T$. From Lemma \ref{T with two D*}, the tangle $(Q, Q\cap K)$ is the product tangle with respect to $Q\cap S$. So, we can isotope $S$ through $Q$, and we replace $D_1^*$ with a disk parallel to $D_4^*$. So, if $n_2=0$ we reduce this case to either the case when there is a genus two component, as in Lemma \ref{genus 2 component} or to the case when $V-V\cap S$ contains a solid torus component intersecting $\mathcal{D}^*$ at the four disks as in Lemma \ref{4D*}. If $n_2>0$ we also reduce to the cases when $V-V\cap S$ contains a solid torus component intersecting $\mathcal{D}^*$ in a collection of two disks and $\mathcal{D}$ in one or two disks, as in Lemmas \ref{2D*1DS}, \ref{2D*1DNS} and \ref{2D*2D}. From these lemmas we get that both strings in some tangle are $\mu$-primitive. 
\hspace*{\fill}$\triangle$\\
\end{proof}


\begin{lem}\label{0D*}
Suppose $V-V\cap S$ contains a solid torus component disjoint from $K$. Then both strings of some tangle are $\mu$-primitive.
\end{lem}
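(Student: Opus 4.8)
The plan is to treat the $K$-disjoint solid torus $T$ as a superfluous handle: either it can be pushed off by an isotopy of $S$, contradicting the minimality of $|S\cap V|$, or it is pinned in place by a $\mathcal{D}^*$-disk, which lands us in one of the configurations already resolved in Lemmas \ref{4D*}--\ref{2D*0D}. First I would record the constraints forced by $T$ being disjoint from $K$: every disk of $T\cap S$ lies in $\mathcal{D}$, and since the genus of $V$ is three, $1\le |T\cap S|\le 4$. By Remark \ref{remark no beta in ball} every outermost disk lies over a solid-torus component, and since $n=n_1+n_2\ge 4$ forces edges in $G_P$ (Lemma \ref{property}(b)), outermost disks do exist. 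Choosing the solid torus $T_0$ carrying them, either $T_0$ meets $\mathcal{D}^*$, in which case Lemmas \ref{4D*}, \ref{2D*1DS}, \ref{2D*1DNS}, \ref{2D*2D}, \ref{2D*0D} apply to $T_0$ and give that both strings of some tangle are $\mu$-primitive, or $T_0$ is disjoint from $K$; note that in the latter situation $|T_0\cap S|\ge 2$, because Lemma \ref{no beta in simple torus} forbids an outermost disk over a $K$-disjoint torus meeting $S$ in a single disk. In particular a single-disk $K$-disjoint $T$ can never be the only solid torus, so it is automatically accompanied by such a $T_0$.

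The core case is thus a $K$-disjoint solid torus $T_0$ with $|T_0\cap S|\ge 2$ carrying an outermost disk $\Delta$, whose outermost arc $\delta$ is a st-arc (Lemma \ref{property}(e),(g)) attached to a disk $D$ of $T_0\cap S$. Here I would follow the pattern of Lemmas \ref{meridional outermost arc}, \ref{T with two D*} and \ref{2D*0D}: form $L=O\cup\Delta$, where $O$ is the disk cut off by $\delta$ in $S-\operatorname{int} D$ (so $O$ carries $\mathcal{D}^*$-components since $\delta$ is a st-arc), attach the $2$-handle with core $L$ to $T_0$ to obtain a ball $Q$, and use Lemma \ref{no ball containing T} together with Lemma \ref{inner ball}(a),(c) to force the relevant tangle in $Q$ to be trivial or a product. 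Exhibiting $T_0$ as a product $A\times I$ along an annulus $A\subset\partial V$, the isotopy of $S$ across this product either strictly reduces $|S\cap V|$, contradicting minimality, or is obstructed precisely by a $\mathcal{D}^*$-disk contained in $O$. In the obstructed case the obstruction merges $T_0$ with the adjacent $\mathcal{D}^*$-carrying component into a solid torus meeting $\mathcal{D}^*$ in two or four disks, returning us to Lemmas \ref{4D*}--\ref{2D*0D}; the resulting triviality statements are then converted into $\mu$-primitivity by Lemma \ref{mu-primitive characterization}, exactly as in those lemmas.

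The hard part will be the bookkeeping of this product isotopy in the multi-disk cases, and in particular certifying that the outermost disk over $T_0$ genuinely realizes $T_0=A\times I$ along the \emph{correct} annulus rather than merely cutting off a ball; this is the same dichotomy that appears in Lemma \ref{T with two D*}, and distinguishing the two outcomes is what decides between a minimality contradiction and a reduction to an earlier lemma. A secondary subtlety is to check that every branch of the case analysis on $|T_0\cap S|$ and on which disk of $T_0\cap S$ carries $\delta$ terminates either in a strict decrease of $|S\cap V|$ or in a configuration \emph{literally} covered by one of Lemmas \ref{4D*}--\ref{2D*0D}, so that no circular appeal back to the present lemma is needed; the observation above, that a single-disk $K$-disjoint torus cannot occur alone, is what breaks the potential circularity.
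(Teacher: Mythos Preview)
Your high-level plan matches the paper's: reduce to the earlier solid-torus lemmas by an isotopy of $S$ through the $K$-disjoint torus $T_0$. The opening paragraph is fine and agrees with the paper. But the ``core case'' paragraph conflates two different mechanisms, and one of them does not work.

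First, the invocation of Lemma~\ref{no ball containing T} on the ball $Q=T_0\cup N(L)$ is not valid. That lemma requires $\partial Q\cap\partial T_0$ to consist of the disks $T_0\cap S$ together with an annulus containing at least two of those disks; a single $2$-handle attachment along a neighborhood of $\beta=\partial\Delta\cap\partial T_0$ leaves $\partial Q\cap\partial T_0$ as a single disk (since $\beta$ is a longitude), so the hypothesis fails. Nor does Lemma~\ref{inner ball}(c) apply, since both strings of $\mathcal{T}_1$ need not sit in $Q$ with ends in one or two components of $Q\cap S$. The paper does \emph{not} build such a $Q$ here. Instead it uses the key fact you only allude to: because $S^3$ has no lens space summand, $\partial\Delta$ meets a meridian of $T_0$ geometrically once, so the annulus $D_1\cup(S\cap N(\Delta))$ in $S$ can be pushed through $N(\Delta)$ and then across $T_0$ to a disk parallel to the other boundary disk $D_1'$ (cf.\ Morimoto \cite[Lemma~3.3]{Morimoto2}). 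Iterating with the sequence $\Delta,\Delta_1,\ldots$ carries \emph{all} the $\mathcal{D}$-disks parallel to $D_1$ over to the $D_1'$-side, and this is what lands you in the configurations of Lemmas~\ref{2D*1DNS} or~\ref{2D*2D} (or in the three-disk case below). Your phrase ``exhibiting $T_0$ as $A\times I$'' is the right idea, but you need to say \emph{why} it is a product (the meridian-intersection-one fact) and \emph{what} the isotopy does to $S\cap V$; ``obstructed by a $\mathcal{D}^*$-disk in $O$'' is not the mechanism---the isotopy is never obstructed, it simply changes which solid-torus component carries the $\mathcal{D}^*$-disks.

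Second, you do not treat the case $|T_0\cap\mathcal{D}|=3$ at all (and your bound $|T_0\cap S|\le 4$ should be $\le 3$: the extra constraint is that no disk of $\mathcal{D}$ is parallel to one of $\mathcal{D}^*$). The paper handles this case by a different argument: take a second-outermost arc $\gamma$ after the outermost arcs attached to $D_{n_2}$, and apply Lemma~\ref{no gamma with equal ends} (or reach a contradiction with Corollary~\ref{no parallel sk-arcs}). Your product-isotopy idea does not cover this configuration, and without it the case analysis is incomplete.
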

\begin{proof}
As the genus of $V$ is three and no disk of $\mathcal{D}^*$ is parallel to a disk of $\mathcal{D}$, $\mathcal{D}\cap T$ is a collection of at most three disks. If there is some solid torus component of $V-V\cap S$ intersecting $\mathcal{D}^*$ we follow as in the previous lemmas to get the conclusion that two strings of some tangle are $\mu$-primitive. Otherwise, the solid torus components of $V-V\cap S$ are disjoint from $K$. From Remark \ref{remark no beta in ball}, without loss of generality, we can assume that some outermost disk is over $T$.\\
If $\mathcal{D}\cap T$ is a single disk we get a contradiction to Lemma \ref{no beta in simple torus}. Then, we have that $T$ intersects $\mathcal{D}$ at more than one disk, in which case $T$ is the only solid torus component of $V-V\cap S$ and all outermost disks are over $T$.\\
Assume that $\mathcal{D}\cap T$ is a collection of two disks, $D_1$ and $D_1'$.
\begin{figure}[htbp]
\centering
\includegraphics[width=\textwidth]{2d.pdf}
\caption{}                 
\label{2d.pdf}
\end{figure}
The outermost arcs are attached to $D_1$ or to $D_1'$, with outermost disks over $T$. Let $D_2, D_3, \ldots, D_k$ be the disks of $\mathcal{D}$ parallel to $D_1$ in $V$, in case there exists such a sequence. Without loss of generality, assume there is an outermost arc $\delta_1$ attached to $D_1$, and that there is a sequence of arcs, $\delta_i$, after an outermost arc, $\delta_1$, as in Figure \ref{2D1doutermostarcsSI.pdf}(a). Let $\Delta$ be the outermost disk bounded by $\delta_1$, in $E$, and also, $\Delta_i$ be the disk of $E-E\cap S$ between $\delta_i$ and $\delta_{i+1}$. As $S^3$ has no lens space summand, we have that $\partial \Delta$ intersects a meridian of $T$ geometrically once. So, we can perform an isotopy of  the annulus in $S$, $A=D_1\cup (S\cap N(\Delta))$ through $N(\Delta)$ to the annulus $A'=D_1\cup (\partial T\cap N(\Delta))$. As $\partial \Delta_1$ intersects a meridian of $T$ geometrically once, we isotope $A'$ through $T$ to a disk in $T$ parallel to $D_1'$, that we also denote by $D_1$. Using the disk $\Delta_1\cup \Delta$ we can perform a similar isotopy, and from the disk $D_2$ of $E\cap S$ we get a disk in $T$ parallel to the new disk $D_1$ (c.f. Morimoto \cite[Lemma 3.3]{Morimoto2}). In this way we can perform a sequence of isotopies of $S$  to get from the disks $D_1, D_2, \ldots, D_{k}$ new disks in $T$ parallel to $D_1'$. With this isotopy we reduce this case to other cases:
If the disks of $\mathcal{D}^*$ are not parallel in $V$ we can reduce this case to the case when $T\cap \mathcal{D}^*$ is a collection of two disks and $T\cap \mathcal{D}$ is one non-separating disk, as in Lemma \ref{2D*1DNS}. So, we are left with the situation when the disk components of $\mathcal{D}^*$ are parallel. The disk components of $\mathcal{D}$ in $V$ can be parallel to $D_1$ or to $D_1'$, or can be separating. Assume there is a disk of $\mathcal{D}$ that is separating in $V$, as in Figure \ref{2d.pdf}(a). By the previous isotopy we reduce this case to the case, considered next, when $\mathcal{D}^*\cap T$ is empty and $\mathcal{D}\cap T$ is a collection of three disks. Otherwise, suppose that no disk of $\mathcal{D}$ is separating, as in Figure \ref{2d.pdf}(b). Similarly, we reduce this case to the case when $\mathcal{D}^*\cap T$ and $\mathcal{D}\cap S$ is a collection of two disks, as in Lemma \ref{2D*2D}.\\
At last, suppose that $\mathcal{D}\cap T$ is a collection of three disks. 
\begin{figure}[htbp]
\centering
\includegraphics[width=0.5\textwidth]{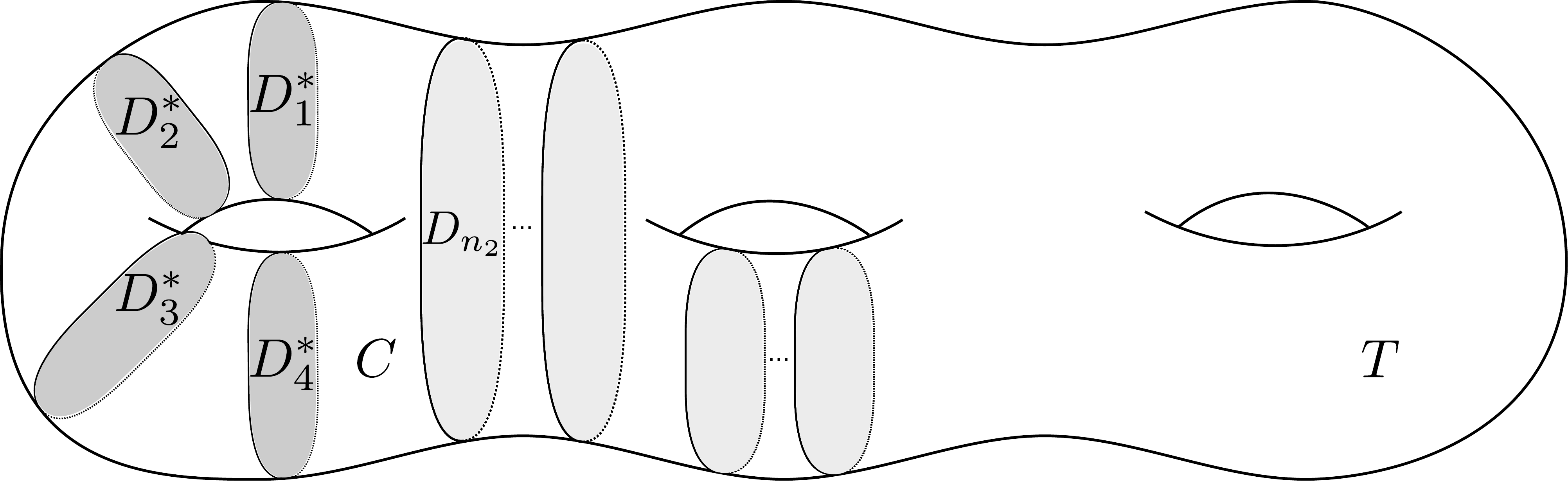}
\caption{}                 
\label{3d.pdf}
\end{figure}
We have a collection of parallel non-separating disks of $\mathcal{D}$, and a collection of separating disks of $\mathcal{D}$ in $V$, as in Figure \ref{3d.pdf}. As in Lemma \ref{no gamma with equal ends}, let $C$ be the ball component of $V-V\cap S$ cut from $V$ by $D_1^*\cup D_4^*\cup D_{n_2}$. Every outermost arc of $E\cap S$ in $E$ attached to $D_{n_2}$ has both ends attached to it (otherwise, it would be a t-arc, which don't exist from Lemma \ref{property}(c)). By the finiteness of outermost arcs, we consider an outermost arc $\gamma$ after outermost arcs with both ends in $D_{n_2}$. From Lemma \ref{no gamma with equal ends}, if $\gamma$ has at least one end in $D_{n_2}$ or one end in $D_1^*$ and the other in $D_4^*$, some string of some tangle is unknotted. In case, all arcs $\gamma$ have both ends in $D_1^*$ or in $D_4^*$, by the finiteness of outermost arcs, we have a contradiction to Corollary \ref{no parallel sk-arcs}.
\end{proof}

\section{Proof of Theorem \ref{2-string tangle}}\label{proof of theorem 1}
For the proof of Theorem \ref{2-string tangle}, we study all cases of $S\cap V$ with respect to the value $n_1$.

\begin{prop}\label{n_1=1}
If $n_1=1$ then both strings of some tangle are $\mu$-primitive.
\end{prop}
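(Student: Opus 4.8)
The plan is to reduce to the machinery of Sections~\ref{over balls}--\ref{over torus} and to exploit the following elementary but decisive reduction, which converts the recurring conclusion of those lemmas into the statement we want. \emph{If some string $s$ of one of the tangles $(B_i,\mathcal{T}_i)$ is unknotted, then both strings of $(B_i,\mathcal{T}_i)$ are $\mu$-primitive.} Indeed, capping off $s$ yields the unknot, so $s$ itself is $\mu$-primitive by Remark~\ref{rem 1}; and if $s'$ denotes the other string of $\mathcal{T}_i$, then $s$ is an unknotted arc disjoint from $s'$ with $(B_i,s'\cup s)$ free (the decomposition is free), so $s$ serves at once as an unknotting tunnel for the knot obtained by capping off $s'$ (whence that knot has tunnel number at most one) and as the companion arc witnessing that $s'$ is $\mu$-primitive. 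Thus it will suffice, in each configuration below, either to produce an unknotted string or to exhibit directly, via Lemma~\ref{mu-primitive characterization}, a solid torus in $B_1$ or $B_2$ meeting $S$ in a single disk, with solid-torus complement, in which each of the two strings of one tangle is trivial.

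For the setup: since $n_1=1$ and $|S\cap K|=4$, the unique disk $D_1^*$ of $\mathcal{D}^*$ meets $K$ in four points, and as it separates the strings of $\mathcal{T}_1$ from those of $\mathcal{T}_2$ it is a separating disk of $V$. Cutting the genus-three handlebody $V$ along $D_1^*$ produces handlebodies $C_1\subset B_1$ and $C_2\subset B_2$ with $\mathcal{T}_i\subset C_i$ and $g(C_1)+g(C_2)=3$, the disks $\mathcal{D}=D_1\cup\cdots\cup D_{n_2}$ subdividing these further. First I would pin down $n_2$: Lemma~\ref{property}(b) gives $n=1+n_2\neq 2$, so $n_2\neq 1$, and a genus-and-connectivity count (each tunnel arc lies in a single ball, yet $C_1$ and $C_2$ must both be connected) restricts attention to those configurations in which $V-V\cap S$ carries the strings inside a solid-torus or genus-two component.

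Next I would run the outermost-disk analysis. By Remark~\ref{remark no beta in ball} we may assume no outermost disk lies over a ball component, so every outermost disk lies over a solid-torus or genus-two component of $V-V\cap S$. The graph $G_P$ now has a single $\text{d}^*$-vertex $\partial D_1^*$, so the st/sk-arc structure of Lemma~\ref{property}(e),(f) is rigid: the type~I arcs incident to $D_1^*$ are sk-arcs splitting $\partial D_1^*$, and Corollary~\ref{no parallel sk-arcs} forbids parallel such arcs. Feeding the resulting components into the Section~\ref{over torus} lemmas---Lemma~\ref{no beta in simple torus} for a torus disjoint from $K$ meeting $S$ once, Lemmas~\ref{no beta in torus} and~\ref{meridional outermost arc} for a torus meeting $K$ in a single arc, and Lemma~\ref{no gamma with equal ends} for the second-outermost arcs---each admissible configuration yields an unknotted string of some tangle. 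By the reduction of the first paragraph, both strings of that tangle are then $\mu$-primitive.

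In the families that escape the ``unknotted string'' conclusion I would instead build the certificate of Lemma~\ref{mu-primitive characterization} by hand: the relevant component is a solid torus meeting $S$ in the single disk $D_1^*$ whose complement in the containing ball is again a solid torus, and, after isotoping away the auxiliary disks of $\mathcal{D}$, each string of the corresponding tangle is trivial in a sub-solid-torus meeting $S$ in one disk with solid-torus complement, so both strings of that tangle are $\mu$-primitive. The main obstacle is precisely the single four-valent vertex $D_1^*$: unlike the cases $n_1=3,4$ of Sections~\ref{n1=3}--\ref{n1=4}, where each $\text{d}^*$-disk meets $K$ once, here all four string-ends crowd onto one disk, so the innermost/outermost-arc bookkeeping around $D_1^*$ must be redone from scratch, and one must verify that \emph{both} strings of the chosen tangle---not merely one---acquire the $\mu$-primitive structure.
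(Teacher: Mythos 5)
Your opening reduction is correct: if a string $s$ of a free tangle $(B_i,s\cup s')$ is unknotted, then $s$ is $\mu$-primitive by Remark~\ref{rem 1}, and $s$ serves as the witness arc (and, after sliding its ends along $\partial B_i$, as an unknotting tunnel for the knot obtained by capping off $s'$), so $s'$ is $\mu$-primitive too; this is in fact the step the paper itself uses implicitly when its lemmas for $n_1=3,4$ conclude only that ``some string is unknotted.'' But for $n_1=1$ this reduction is not what is needed, and the body of your argument has a genuine gap: you never pin down $n_2$. The paper's first and decisive step is $n_2=0$, obtained by playing Lemma~\ref{property}(f) against Lemma~\ref{property}(b). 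The point you miss is that an st-arc or sk-arc requires, by definition, that \emph{each} of the two components of $P$ cut along the arc contain an entire boundary circle of $\mathcal{D}^*$; when $\mathcal{D}^*$ is the single circle $\partial D_1^*$ this is impossible, so by (e) there are no type I arcs at all, by (f) there are then no d-vertices (i.e.\ $n_2=0$), and by (b) the graph $G_P$ has no edges, so $P$ is a disk and $P\cap E=\emptyset$. You extract only $n_2\neq 1$ from (b) and then propose to run the outermost-disk machinery of Sections~\ref{over balls}--\ref{over torus} on the remaining configurations. That machinery is vacuous here (there are no arcs, no outermost disks, and no sk-arcs to feed into Corollary~\ref{no parallel sk-arcs}), and the lemmas you cite do not apply when $n_1=1$: Lemmas~\ref{meridional outermost arc} and \ref{no gamma with equal ends} sit under the standing assumption $n_1\geq 3$, and Lemma~\ref{no beta in torus} requires a solid-torus component meeting $K$ in a single string, whereas for $n_1=1$ the unique disk $D_1^*$ meets $K$ in four points, so the component of $V-V\cap S$ abutting it inside $B_1$ contains \emph{both} strings of $\mathcal{T}_1$.

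What actually closes the proof is the argument you relegate to a fallback in your final paragraph, and it needs the two verifications you omit. Once $n_2=0$ is known, the separating essential disk $D_1^*$ cuts $V$ into a solid torus $V_1$ (in $B_1$, say) and a genus-two handlebody, and $P=S\cap W$ is an essential disk in $W$; cutting $W$ along $P$ shows that $B_1\cap W=B_1-int\,V_1$ is a solid torus, since its boundary is the disk $P$ union the once-punctured torus $\partial V_1-int\, D_1^*$. Both strings of $\mathcal{T}_1$ lie in $V_1$ with all ends on $D_1^*$ and are trivial in $V_1$ by Lemma~\ref{strings parallel to boundary}(b). Lemma~\ref{mu-primitive characterization} then applies verbatim: $V_1$ is a solid torus meeting $S$ in a single disk, with solid-torus complement in $B_1$, in which both strings are trivial, so both strings of $(B_1,\mathcal{T}_1)$ are $\mu$-primitive---no case analysis and no unknotted-string detour is needed.
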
 
\begin{proof}
Suppose $n_1=1$. If $n_2>0$ we have a contradiction between Lemma \ref{property}(b) and (f). So, $n_2=0$, $P$ is a disk and $|P\cap E|=0$.\\
Let $D_1^*=S\cap V$. 
\begin{figure}[htbp]
\centering
\includegraphics[width=0.5\textwidth]{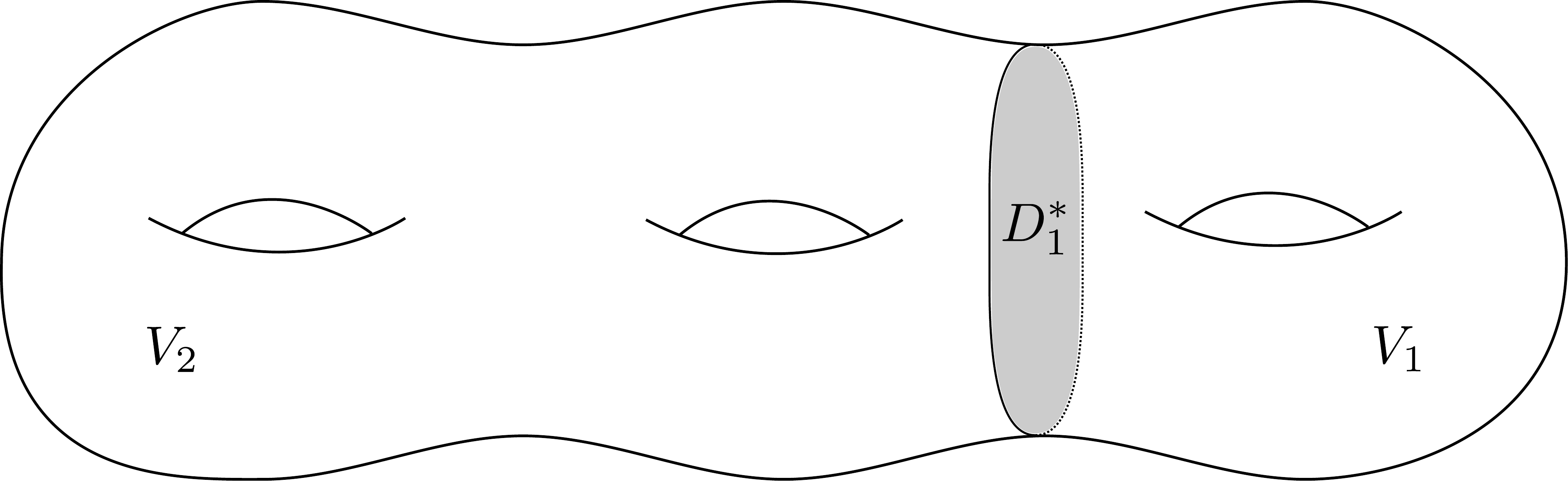}
\caption{}                 
\label{oneD.pdf}
\end{figure}
The $2$-sphere $S=D_1^*\cup P$ is separating, then $D_1^*$ is a separating disk in $V$. As the handlebody $V$ has genus three, the disk $D_1^*$ separates $V$ in a solid torus $V_1$ and a genus two handlebody $V_2$, as in Figure \ref{oneD.pdf}. Let $(B_1, T_1)$ denote the tangle containing $V_1$. The strings of this tangle lie in $V_1$, have end points in $D_1^*$ and, by Lemma \ref{strings parallel to boundary}, are simultaneously parallel to $\partial V_1$. Also, the complement of $V_1$ in $B_1$ is a torus. Hence, from Lemma \ref{mu-primitive characterization}, both strings of the tangle $(B_1, T_1)$ are $\mu$-primitive.
\end{proof}

\begin{prop}\label{n_1=2}
$n_1\neq2$.
\end{prop}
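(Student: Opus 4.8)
The plan is to suppose $n_1=2$ and derive a contradiction, so that this case simply cannot arise under the standing minimality of $|S\cap V|$ and the essentiality and freeness of the decomposition. First I would dispose of the subcase $n_2=0$. Then the graph $G_P$ has exactly $n=n_1+n_2=2$ vertices, which is flatly forbidden by Lemma \ref{property}(b): that lemma permits only $n=1$ with no edges, or $n\geq 3$ with at least two vertices missing type I arcs. Hence $n_2\geq 1$ and $n=n_1+n_2\geq 3$.

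The next observation, which holds throughout the $n_1=2$ analysis, is that there can be no sk-arcs. Indeed, an sk-arc is a type I $\text{d}^*$-arc that separates $P$ into two planar surfaces, each carrying a boundary component of $\mathcal{D}^*$; but a type I arc has both ends on a single $\text{d}^*$-vertex, say $D_1^*$, so it splits $\partial D_1^*$ between the two sides and leaves only the one remaining component $\partial D_2^*$ to be distributed, which cannot lie on both sides. Consequently, by Lemma \ref{property}(e) every type I arc is an st-arc attached to a $d$-vertex, and by Lemma \ref{property}(g) every outermost arc is an st-arc; in particular the two $\text{d}^*$-vertices carry no type I arc, consistently with Lemma \ref{property}(b). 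Since $n\geq 3$ the connected graph $G_P$ has edges, hence outermost arcs exist, and by Remark \ref{remark no beta in ball} each outermost disk lies over a positive-genus (solid torus or genus-two) component $C$ of $V-V\cap S$, its outermost arc being an st-arc attached to a disk $D\in\mathcal{D}$ on $\partial C$ that separates $D_1^*$ from $D_2^*$ in $S$.

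It then remains to rule out every configuration of the non-ball components of $V-V\cap S$, which is the main obstacle and where the bulk of the work lies. As in Sections \ref{n1=3} and \ref{n1=4}, I would stratify by the genus of $C$ and by how $C$ meets $\mathcal{D}^*$ and $\mathcal{D}$. A genus-two component is treated exactly as in Lemmas \ref{genus 2 component n_1=3} and \ref{genus 2 component}: it forces the disks of $S\cap V$ to be parallel, hence produces parallel type I $\text{d}^*$-arcs in $E$ contradicting Corollary \ref{no parallel sk-arcs}, or it admits an isotopy of $S$ lowering $|S\cap V|$. For solid-torus components I would feed the relevant intersection pattern into the $n_1$-independent machinery of Sections \ref{over balls} and \ref{over torus}: Lemma \ref{no beta in simple torus} (torus disjoint from $K$ with a single $S$-disk), Lemma \ref{no beta in torus} (torus meeting $K$ in one string with a single $\mathcal{D}^*$-disk), Lemma \ref{no gamma with equal ends}, Lemma \ref{parallel arcs} together with Corollary \ref{no parallel sk-arcs}, and Lemma \ref{no ball containing T} combined with Lemma \ref{inner ball}. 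The essential point, in contrast with the $n_1=3,4$ cases, is that when $n_1=2$ every admissible pattern yields a genuine contradiction — either an isotopy of $S$ reducing $|S\cap V|$, or a compressing disk for $P$, or an inessential/non-free tangle violating the hypotheses on the decomposition defined by $S$ — rather than a $\mu$-primitive conclusion. Assembling these contradictions across all cases shows that $n_1=2$ is impossible, which is the assertion of the proposition.
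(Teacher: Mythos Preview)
Your opening is correct and matches the paper: $n_2=0$ is excluded by Lemma \ref{property}(b), and with $n_1=2$ there are no sk-arcs so every outermost arc is an st-arc. But from that point on your proposal is only a plan, not a proof. You announce a stratification by the genus of the non-ball component $C$ and by how $C$ meets $\mathcal{D}^*$ and $\mathcal{D}$, and then say you would ``feed the relevant intersection pattern into the $n_1$-independent machinery''. The difficulty is that the machinery of Sections \ref{n1=3} and \ref{n1=4} is \emph{not} $n_1$-independent: Lemma \ref{meridional outermost arc} and Lemma \ref{T with two D*} are stated under $n_1\geq 3$, and the case analyses in Lemmas \ref{genus 2 component n_1=3}--\ref{0D*} repeatedly use that each $D_i^*$ meets $K$ once (when $n_1=4$) or that $D_1^*$ meets $K$ twice (when $n_1=3$). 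You would have to redo those arguments from scratch for $n_1=2$, and you have not indicated how; nor have you justified your assertion that in this case the outputs are always genuine contradictions rather than ``$\mu$-primitive'' conclusions.

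The paper avoids all of this with a single observation that you missed. When $n_1=2$, any ball component $C$ of $V-V\cap S$ that meets $K$ is immediately impossible: if $C$ contains both strings of its tangle then by Lemma \ref{strings parallel to boundary}(b) the tangle $(C,C\cap K)$ is trivial, contradicting Lemma \ref{inner ball}(c); if $C$ contains a single string then, since $D_1^*\cup D_2^*$ carries all four points of $S\cap K$, only one of $D_1^*,D_2^*$ can lie in $\partial C$, so both ends of that string lie in the same disk of $S\cap V$ and the string is trivial in its tangle, contradicting essentiality. Once this Claim is in hand the proposition is a two-line casework on whether $D_1^*$ and $D_2^*$ are parallel in $V$: if they are, the cylinder between them is such a ball $C$; if they are not, $V-(D_1^*\cup D_2^*)$ consists of solid tori, and since $n_2>0$ some disk of $\mathcal{D}$ cuts one of those tori, producing a ball component of $V-V\cap S$ with a $D_i^*$ in its boundary---again such a $C$. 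Either way one contradicts the Claim. This is the missing idea in your proposal; the large stratified case analysis is unnecessary.
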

\begin{proof} Suppose $n_1=2$. We denote by $D_1^*$ and $D_2^*$ the components of $\mathcal{D}^*$. From Lemma \ref{property}(b), (g) $n_2>0$ and every outermost arc is a st-arc.\\
\begin{figure}[htbp]
\centering
\includegraphics[width=\textwidth]{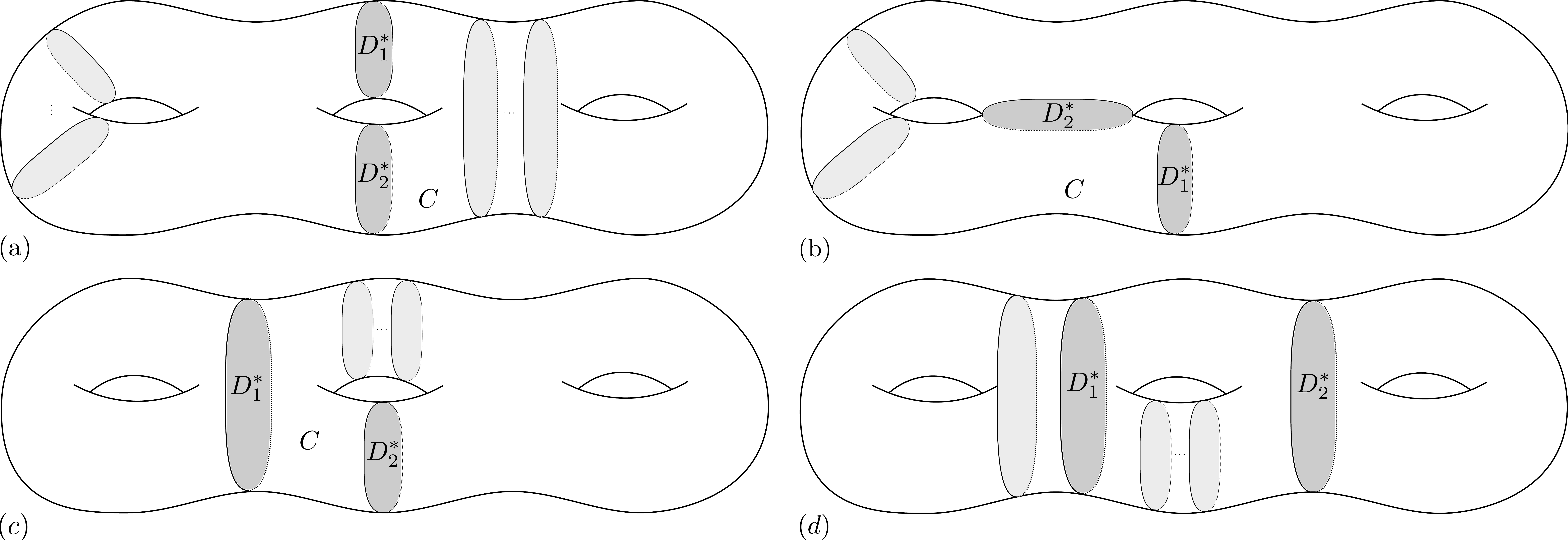}
\caption{}               
\label{twoDE.pdf}
\end{figure}

\textbf{Claim.} \textit{If $n_1=2$ there is no ball $C$ of $V-V\cap S$ containing strings of a tangle.}\\
\textit{Proof of Claim.}
Suppose that there is a ball component of $V-V\cap S$, $C$, containing strings of a tangle.\\
Suppose, the ball $C$ contains two strings. From Lemma \ref{strings parallel to boundary}, the strings are parallel to $\partial C$. Therefore, the tangle $(C, C\cap K)$ is trivial, which is a contradiction to Lemma \ref{inner ball}(c).\\
Otherwise, suppose that $C$ contains a single string. As $D_1^*\cup D_2^*$ intersects $K$ in four points only one of these disks can be in $\partial C$, and both ends of the string in $C$ are in this disk. Then, this string is trivial in $\partial C$. Furthermore, as this is the only string in $C$ it is also trivial in the respective tangle, which is a contradiction to the tangle decomposition defined by $S$ being essential.
\hspace*{\fill}$\triangle$\\

If $D_1^*$ and $D_2^*$ are parallel in $V$ then the ball component of $V-S\cap V$ cut by $D_1^*\cup D_2^*$ is in contradiction to Claim.\\
Suppose now that $D_1^*$ and $D_2^*$ are not parallel, as in the examples of Figure \ref{twoDE.pdf}. Then, the components of $V-D_1^*\cup D_2^*$ are solid tori. As $n_2>0$, the disks of $\mathcal{D}$ are in some of these solid tori. Then, some ball component of $V-V\cap S$ contains $D_1^*$, $D_2^*$, or both, which is a contradiction to Claim.
\end{proof}

\begin{prop}\label{n_1=3}
If $n_1=3$ then both strings of some tangle are $\mu$-primitive.
\end{prop}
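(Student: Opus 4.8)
The plan is to reduce to the two lemmas of this section by a case analysis on the topology of the components of $V-V\cap S$. First I would record that outermost disks exist: since $n_1=3$, the connected graph $G_P$ has $n_1+n_2\ge 3$ vertices, hence at least one edge, so $E\cap P\ne\emptyset$ and $E-E\cap P$ has an outermost disk. By Remark \ref{remark no beta in ball} I may assume that no outermost disk lies over a ball component of $V-V\cap S$; in particular, some component of $V-V\cap S$ has positive genus and carries an outermost disk.

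Next I would bound that genus. Because $V$ has genus three and $S\cap V$ contains the three essential disks $D_1^*, D_2^*, D_3^*$, cutting $V$ along even a single essential disk already yields pieces of genus at most two (a genus three piece would force an adjacent cutting disk to be boundary parallel, contradicting essentiality), and further cuts only decrease genus. Hence every component of $V-V\cap S$ is a handlebody of genus one or two, and this dichotomy organizes the proof.

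In the first branch, suppose $V-V\cap S$ has a genus two component. Then Lemma \ref{genus 2 component n_1=3} applies and some string of some tangle is unknotted; since an unknotted string caps off to the unknot, which is $\mu$-primitive by Remark \ref{rem 1}, the presence of an unknotted string yields Theorem \ref{2-string tangle} exactly as in the reduction recorded in Remark \ref{remark no beta in ball}, so both strings of some tangle are $\mu$-primitive. In the remaining branch every positive genus component is a solid torus, and since an outermost disk lies over one of them, $V-V\cap S$ has a solid torus component. Lemma \ref{solid torus component} then gives directly that both strings of some tangle are $\mu$-primitive, which completes the proof.

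I expect essentially all of the work to sit inside the cited lemmas, above all Lemma \ref{solid torus component}, whose proof must treat the many configurations of $T\cap S$ (one, two, or three disks, and $D_1^*$ separating versus non-separating). Within Proposition \ref{n_1=3} itself the only subtle points are verifying that the genus one / genus two classification is exhaustive once ball components are set aside, and reconciling the two shapes of conclusion produced by the lemmas --- ``some string unknotted'' versus ``both strings $\mu$-primitive'' --- which the reduction behind Remark \ref{remark no beta in ball} lets me treat uniformly.
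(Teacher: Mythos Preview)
Your proposal is correct and follows essentially the same approach as the paper's proof: reduce via Remark \ref{remark no beta in ball} to the case where some component of $V-V\cap S$ has positive genus, then split into the genus two case (Lemma \ref{genus 2 component n_1=3}) and the solid torus case (Lemma \ref{solid torus component}). You add justifications that the paper leaves implicit---that $E\cap P\ne\emptyset$ because $G_P$ is connected with at least three vertices, that the genus of each component is at most two, and that ``some string unknotted'' suffices for the conclusion via the reduction underlying Remark \ref{remark no beta in ball}---but the structure is identical.
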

\begin{proof}
Consider the components of $V-V\cap S$. From Remark \ref{remark no beta in ball} we can assume that some component of $V-S\cap V$ is not a ball.\\
If there is a genus two component of $V-V\cap S$ then, by Lemma \ref{genus 2 component n_1=3} , some string of some tangle is unknotted.
Otherwise, there is some solid torus component of $V-V\cap S$, and from Lemma \ref{solid torus component} two strings of some tangle are $\mu$-primitive.
\end{proof}

\begin{prop}\label{n_1=4}
If $n_1=4$ then both strings of some tangle are $\mu$-primitive.
\end{prop}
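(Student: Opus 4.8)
The plan is to reduce the case $n_1=4$ entirely to the structural lemmas already proved in Section~\ref{n1=4}, by a case analysis on the topology of the non-ball components of $V-V\cap S$. By Remark~\ref{remark no beta in ball} we may assume that at least one component of $V-V\cap S$ is not a ball; since the genus of $V$ is three, every such component is either a genus two handlebody or a solid torus. So the proof splits into two main branches.

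First I would dispose of the genus two case. If $V-V\cap S$ contains a genus two component then Lemma~\ref{genus 2 component} immediately gives that some string in some tangle is unknotted, and hence (as observed throughout the paper via Ozawa's unicity theorem and Lemma~\ref{inner ball}) both strings of some tangle are $\mu$-primitive, which is what we want. So I would open with the sentence handling this branch, then assume for the rest of the proof that no component of $V-V\cap S$ is a genus two handlebody, so that there exists a solid torus component $T$ over which some outermost disk lies (again using Remark~\ref{remark no beta in ball} to guarantee an outermost disk over a non-ball component).

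The heart of the argument is then the classification of the solid torus component $T$ by how it meets $\mathcal{D}^*$ and $\mathcal{D}$. Since $n_1=4$ and each disk of $\mathcal{D}^*$ meets $K$ exactly once, $T$ meets $\mathcal{D}^*$ in an \emph{even} number of disks (as noted just before Lemma~\ref{4D*}), so $|T\cap\mathcal{D}^*|\in\{0,2,4\}$. I would run through these possibilities, in each invoking the corresponding lemma:
\begin{itemize}
\item[] If $|T\cap\mathcal{D}^*|=4$, Lemma~\ref{4D*} gives that some string in some tangle is unknotted.
\item[] If $|T\cap\mathcal{D}^*|=2$, I would sub-divide by $|T\cap\mathcal{D}|$: the case of a single separating $\mathcal{D}$-disk is Lemma~\ref{2D*1DS}; a single non-separating $\mathcal{D}$-disk is Lemma~\ref{2D*1DNS}; two $\mathcal{D}$-disks is Lemma~\ref{2D*2D}; and no $\mathcal{D}$-disk is Lemma~\ref{2D*0D}. (The genus bound on $V$ rules out $|T\cap\mathcal{D}|\geq 3$ here, since $T$ already carries two $\mathcal{D}^*$-disks.)
\item[] If $|T\cap\mathcal{D}^*|=0$, then $T$ is disjoint from $K$ and Lemma~\ref{0D*} applies.
\end{itemize}
In every one of these cases the cited lemma concludes either that both strings of some tangle are $\mu$-primitive directly, or that some string of some tangle is unknotted; in the latter situation an unknotted string caps off to the unknot, which by Remark~\ref{rem 1} is $\mu$-primitive, and one checks (via the free-tangle structure) that the companion string of that tangle is also $\mu$-primitive, giving the conclusion.

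The main obstacle I anticipate is \emph{completeness of the case split}: I must be sure that the list of $(|T\cap\mathcal{D}^*|,|T\cap\mathcal{D}|)$ pairs above genuinely exhausts all configurations of a solid torus component of $V-V\cap S$ under the constraints $n_1=4$, genus $V=3$, and the parity $|T\cap\mathcal{D}^*|$ even, and that no disk of $\mathcal{D}$ is parallel to a disk of $\mathcal{D}^*$ (established just after Lemma~\ref{strings parallel to boundary}). The delicate point is checking that the genus-three constraint forbids the omitted combinations (for instance $|T\cap\mathcal{D}^*|=2$ with $|T\cap\mathcal{D}|\ge 3$, or $|T\cap\mathcal{D}^*|=4$ with any $\mathcal{D}$-disk), and that when $|T\cap\mathcal{D}^*|=0$ the component $T$ really falls under the hypotheses of Lemma~\ref{0D*}. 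Once the bookkeeping confirms the list is exhaustive, the proposition follows by simply citing the appropriate lemma in each case.

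\begin{proof}
Consider the components of $V-V\cap S$. From Remark~\ref{remark no beta in ball} we can assume that some component of $V-S\cap V$ is not a ball. As the genus of $V$ is three, every non-ball component is a genus two handlebody or a solid torus.

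If $V-V\cap S$ has a genus two component then, by Lemma~\ref{genus 2 component}, some string in some tangle is unknotted, and the result follows. So we can assume that no component of $V-V\cap S$ has genus two, and hence, again by Remark~\ref{remark no beta in ball}, that there is a solid torus component $T$ of $V-V\cap S$ with some outermost disk over it.

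As each disk of $\mathcal{D}^*$ intersects $K$ once, $T$ meets $\mathcal{D}^*$ in an even number of disks; since the genus of $V$ is three, this number is $0$, $2$ or $4$, and $|T\cap(\mathcal{D}\cup\mathcal{D}^*)|\leq 4$.

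If $T$ intersects $\mathcal{D}^*$ at four disks then, by Lemma~\ref{4D*}, some string in some tangle is unknotted.

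Suppose $T$ intersects $\mathcal{D}^*$ at two disks. As the genus of $V$ is three and $T$ already meets $\mathcal{D}^*$ in two disks, $T$ meets $\mathcal{D}$ in at most two disks. If $T\cap\mathcal{D}$ is a single separating disk, by Lemma~\ref{2D*1DS} both strings of some tangle are $\mu$-primitive. If $T\cap\mathcal{D}$ is a single non-separating disk, by Lemma~\ref{2D*1DNS} both strings of some tangle are $\mu$-primitive. If $T\cap\mathcal{D}$ is a collection of two disks, by Lemma~\ref{2D*2D} some string in some tangle is unknotted. If $T$ is disjoint from $\mathcal{D}$, by Lemma~\ref{2D*0D} both strings of some tangle are $\mu$-primitive.

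Finally, suppose $T$ intersects $\mathcal{D}^*$ at no disk; then $T$ is disjoint from $K$ and, by Lemma~\ref{0D*}, both strings of some tangle are $\mu$-primitive.

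In all cases, either both strings of some tangle are $\mu$-primitive, or some string of some tangle is unknotted. In the latter case, the string caps off to the unknot, which by Remark~\ref{rem 1} is $\mu$-primitive; as the tangles defined by $S$ are free, the remaining string of that tangle is then also $\mu$-primitive. Therefore both strings of some tangle are $\mu$-primitive.
\end{proof}
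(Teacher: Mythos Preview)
Your proof is correct and follows essentially the same approach as the paper: reduce via Remark~\ref{remark no beta in ball} to a non-ball component, dispatch the genus two case with Lemma~\ref{genus 2 component}, and then case-split a solid torus component by $|T\cap\mathcal{D}^*|\in\{0,2,4\}$, citing Lemmas~\ref{4D*}, \ref{2D*1DS}, \ref{2D*1DNS}, \ref{2D*2D}, \ref{2D*0D}, \ref{0D*}. Your final paragraph making explicit why ``some string unknotted'' yields ``both strings $\mu$-primitive'' (via Remark~\ref{rem 1} and the freeness of the tangle, so the unknotted string serves as the arc $t$ in the definition) is a point the paper leaves implicit, and it is good that you spelled it out.
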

\begin{proof}
As in Proposition \ref{n_1=3}, we consider the components of $V-V\cap S$ and we assume that some component of $V-S\cap V$ is not a ball.\\
If $V-V\cap S$ has a genus two component then, by Lemma \ref{genus 2 component}, some string of some tangle is unknotted.\\
Now, assume that $V-V\cap S$ has no genus two component. This means at least one of its components is a solid torus, $T$. 
The collection of disks $\mathcal{D}^*\cap T$ is always even, because $\partial T$ is a separating torus in $S^3$. We consider several cases with respect to $\mathcal{D}^*\cap T$.\\
If $\mathcal{D}^*\subset T$, from Lemma \ref{4D*}, some string of some tangle is unknotted.\\
Suppose $\mathcal{D}^*\cap T$ is a collection of two disks. As the genus of $V$ is three, there are at most two disks of $\mathcal{D}$ in $\partial T$. Then we are under Lemmas \ref{2D*1DS}, \ref{2D*1DNS}, \ref{2D*2D} and \ref{2D*0D}, and we have that both strings of some tangle are $\mu$-primitive.\\
At last, suppose $\mathcal{D}^*\cap T=\emptyset$. From Lemma \ref{0D*}, we also have that both strings of some tangle are $\mu$-primitive. 
\end{proof}

We can now prove Theorem \ref{2-string tangle} and its Corollary \ref{t(K)=3}.

\begin{proof}[of Theorem \ref{2-string tangle}]
If $K$ has an inessential $2$-string free tangle decomposition then the tunnel number of $K$ is one. This is a contradiction with the assumption that the tunnel number of $K$ is two. Hence, the $2$-string free tangle decomposition of $K$ is essential.\\
We have that $0\leq n_1\leq 4$. If $n_1=0$ then, as $S\cap K\subset S\cap V$ we have $n_2=0$. Hence, $S\subset V$ which is a contradiction to Lemma \ref{strings parallel to boundary}(a). In case $n_1\neq 0$, from Propositions \ref{n_1=1}, \ref{n_1=2}, \ref{n_1=3} and \ref{n_1=4}, we have that two strings of some tangle are $\mu$-primitive.
\end{proof}

\begin{proof}[of Corollary \ref{t(K)=3}]
Let $K$ be a knot with a $2$-string free tangle decomposition where at least a string of each tangle is not $\mu$-primitive.\\
From Corollary 2.4 in \cite{Morimoto3} by Morimoto, if a knot $K$ has a $n$-string free tangle decomposition, then $t(K)\leq 2n-1$. Hence, in this case $t(K)\leq 3$.\\
On the other hand, as no tangle in the decomposition of $K$ has both strings being $\mu$-primitive, from Theorem \ref{2-string tangle} we have $t(K)\geq 3$.\\
Altogether, from the two inequalities, $t(K)=3$.
\end{proof}

\section{On the tunnel number degeneration under the connected sum of prime knots}\label{counter-example}
In this section, we construct an infinite class of knots with a $2$-string free tangle decomposition where no tangle has both strings being $\mu$-primitive. With these collection of knots, Theorem \ref{2-string tangle} and the work of Morimoto \cite{Morimoto3} we prove Theorem \ref{counter-example Moriah conjecture}.\\
A particular, simplified, version of Theorem 3.4 in \cite{Morimoto3} by Morimoto gives us the following proposition, which is relevant to the proof of Theorem \ref{counter-example Moriah conjecture}.

\begin{prop}[\cite{Morimoto3}, Morimoto] \label{Morimoto result}
Let $K_1$ be a knot which has a $2$-string free tangle decomposition and $K_2$ be a knot with a $3$-bridge decomposition. Then $t(K_1\#K_2)\leq 3$.
\end{prop}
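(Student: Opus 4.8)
The plan is to obtain the statement as the advertised specialization of Morimoto's Theorem 3.4, and to that end I would reconstruct the explicit three-tunnel system that that theorem produces in the case at hand, where $K_1$ carries a $2$-string free tangle and $K_2$ is $3$-bridge. First I would fix the data: write $(S^3,K_1)=(B_1,\mathcal{T}_1)\cup_S(B_2,\mathcal{T}_2)$ with both tangles free, so that $H_i=\overline{B_i-N(\mathcal{T}_i)}$ are genus two handlebodies and the exterior decomposes as $E(K_1)=H_1\cup_P H_2$, where $P=S-N(K_1)$ is a four-holed sphere sitting in the boundary of each $H_i$. The mechanism behind $t(K_1)\le 3$ (this is Corollary 2.4 of \cite{Morimoto3} for $n=2$) is that a system of three arcs in $P$ cutting it down to a disk, pushed slightly into the exterior, is dual to three tunnels: cutting $E(K_1)$ along the corresponding disks turns it into the boundary connected sum $H_1\natural H_2$, a genus four handlebody. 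The whole task is then to absorb the summand $K_2$ into this picture \emph{without} raising the tunnel count.

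For the construction itself I would realize $K_1\#K_2$ by splicing $K_2$ into one string of $\mathcal{T}_2$, say $b_1$, with $K_2$ placed in $3$-bridge position and its bridge sphere $\Sigma$ taken parallel to $S$ and meeting $K_1\#K_2$ in the summand region. A $3$-bridge knot has an unknotting tunnel system consisting of two arcs that may be isotoped to lie over its bridge sphere; because $\Sigma$ is aligned with $S$, these two bridge tunnels can be chosen \emph{among} the three arcs that cut $P$ to a disk. Thus the count does not change: two of the three tunnels do double duty, simultaneously reducing the summand $(\,$via the $3$-bridge, i.e.\ trivial-tangle, structure$\,)$ and cutting $P$, while the third tunnel completes the reduction of $P$. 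This is exactly the point where the hypotheses interlock — freeness of the $K_1$-tangle provides two ``spare'' handles in $H_1\natural H_2$ beyond the minimal torus of $K_1$, and the $3$-bridge summand, of tunnel number two, is precisely of a size those two handles can host.

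I expect the main obstacle to be the verification that the complement $S^3-N(K_1\#K_2\cup t_1\cup t_2\cup t_3)$ is genuinely a handlebody; equivalently, writing the new exterior as $E(K_1\#K_2)=H_1\cup_P H_2^{\#}$, one must show that cutting along the three chosen disks yields a handlebody even though $H_2^{\#}$, which now carries the knotted summand, is not one. The decisive input is that the $3$-bridge (trivial upper/lower tangle) presentation of $K_2$ makes the meridian along which the summand is attached \emph{primitive} with respect to the bridge handlebody, so that the two handles carrying the $K_2$-complexity cancel against the spare handles supplied by the free $K_1$-tangle; this handle-cancellation (primitivity) check is the technical heart. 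Since this verification is precisely what Morimoto's Theorem 3.4 establishes in the general $(m\text{-string free},\,b\text{-bridge})$ setting, I would close the argument by specializing that theorem to $m=2$, $b=3$, which yields exactly $t(K_1\#K_2)\le 3$.
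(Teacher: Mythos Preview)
Your proposal is aligned with the paper's treatment: the paper does not supply its own proof of this proposition but simply records it as ``a particular, simplified, version of Theorem 3.4 in \cite{Morimoto3}'', exactly the specialization you invoke at the end. Your sketch of the underlying construction (free tangles giving genus-two handlebodies, three arcs cutting the four-holed sphere $P$, absorbing the $3$-bridge summand via its bridge tunnels) is a reasonable outline of Morimoto's argument, and since you ultimately defer the technical verification to Theorem 3.4 of \cite{Morimoto3}, there is no gap relative to what the paper itself claims.
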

\begin{figure}[htbp]
\centering\hspace{2cm}
\includegraphics[width=0.5\textwidth]{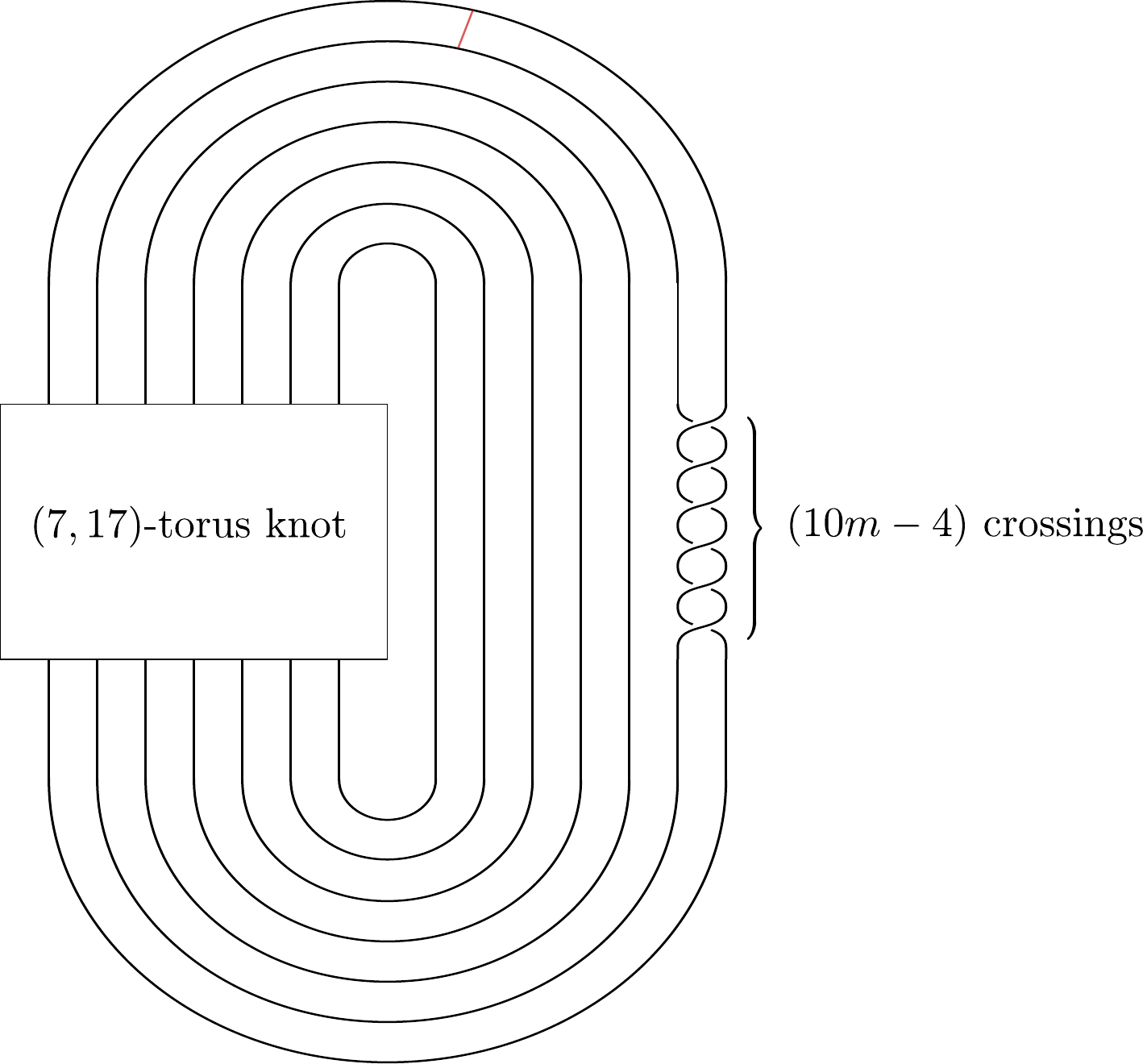}
\caption{: The knot $K(m)$ and one unknotting tunnel, with $m$ a natural number.}                 
\label{knotsK(m).pdf}
\end{figure}
For the construction of knots, $K_1$, as in Theorem \ref{counter-example Moriah conjecture} we consider $2$-string free tangle decompositions. Suppose there are two $2$-string free tangles $(B_1, \mathcal{T}_1)$ and $(B_2, \mathcal{T}_2)$ where one of the strings in each tangle is not $\mu$-primitive. Identify $(\partial B_1, \partial \mathcal{T}_1)$ to $(\partial B_2, \partial \mathcal{T}_2)$, such that no string of $\mathcal{T}_1$ has its end identified to the ends of the same string of $\mathcal{T}_2$. Then $(B_1\cup B_2, \mathcal{T}_1\cup \mathcal{T}_2)$ is a knot $(S^3, K_1)$ under the conditions of Proposition \ref{Morimoto result}. Furthermore, from Corollary \ref{t(K)=3}, $t(K_1)=3$. Hence, this procedure gives us a knot as in the statement of Theorem \ref{counter-example Moriah conjecture}.\\
So, we need $2$-string free tangles with one of the strings not $\mu$-primitive. As observed in Remark \ref{rem 1}, if a string $s$ properly embedded in a ball $B$ is $\mu$-primitive, then by capping $s$ along $\partial B$ we get a $\mu$-primitive knot. Then, for the construction of a $2$-string free tangle where at least one of the strings is not $\mu$-primitive we consider a tunnel number one knot $K$ that is not $\mu$-primitive, and one of its unknotting tunnels. For such a knot $K$, let $s$ be a string in a ball $B$, that when capped off along $\partial B$ we obtain $K$, together with one unknotting tunnel for $K$. If we slide the ends of the unknotting tunnel from $s$ to $\partial B$ we get an essential $2$-string free tangle where one of the strings is not $\mu$-primitive.\\
Then, we want tunnel number one knots that are not $\mu$-primitive. Existence results of such knots are known by work Johnson and Thompson in \cite{Johnson-Thompson} and also Moriah and Rubinstein in \cite{Moriah-Rubinstein}. On the other hand, explicit or constructive examples of knots with tunnel number one that are not $\mu$-primitive is given by work Eudave-Mu\~{n}oz in \cite{Munoz} and \cite{Munoz2}, Ram\'{i}rez-Losada and Valdez-S\'{a}nchez in \cite{Losada-Sanchez}, Minsky, Moriah and Schleimer in \cite{Minsky-Moriah-Schleimer} and also Morimoto, Sakuma and Yokota in \cite{Morimoto-Sakuma-Yokota}. With any of these examples it is possible to construct knots as in the statement of Theorem \ref{counter-example Moriah conjecture}. As an example of such construction we consider the class of knots $K(7, 17; 10m-4)$ from \cite{Morimoto-Sakuma-Yokota}, where $m$ is an integer, together with an unknotting tunnel. We denote these knots by $K(m)$, as in Figure \ref{knotsK(m).pdf}.\\
As previously described, from the knots $K(m)$ and an unknotting tunnel we construct tangles $T(m)$ where at least one of the strings is not $\mu$-primitive, as in Figure \ref{tangle(m)A.pdf}. 
\begin{figure}[htbp]
\centering
\includegraphics[width=0.75\textwidth]{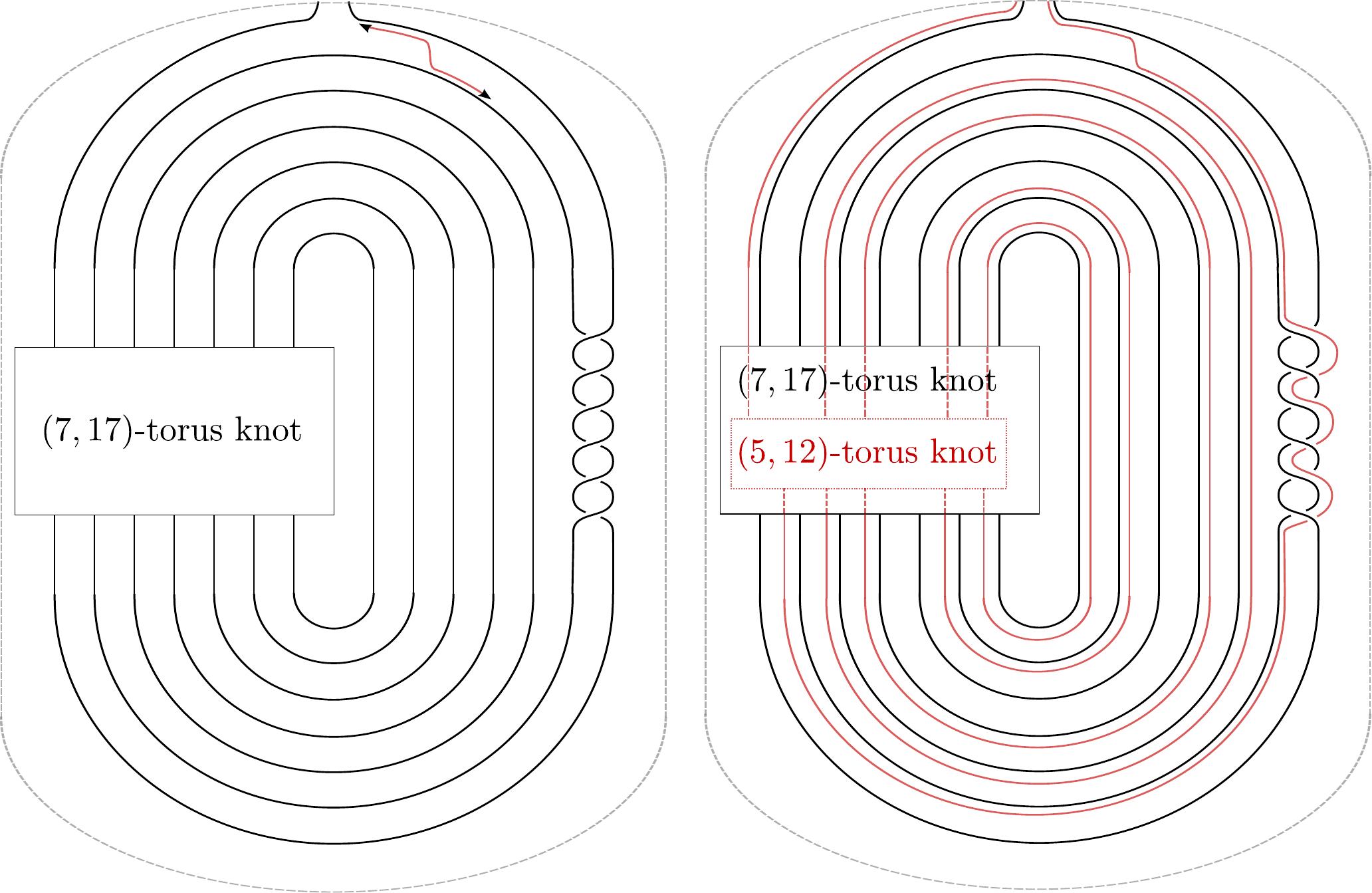}
\caption{: A possible construction of a tangle $T(m)$ from the knot $K(m)$ and one of its unknotting tunnels.}                 
\label{tangle(m)A.pdf}
\end{figure}
From the construction we have that the tangles $T(m)$ are free. With the tangles $T(m)$ and $T(m')$ we construct a knot $K_1(m, m')$, as explained before, that has a $2$-string free tangle decomposition where no tangle has both strings being $\mu$-primitive. With this construction we can now prove Theorem \ref{counter-example Moriah conjecture} and its Corollary \ref{tunnel number degeneration}.

\begin{proof}[of Theorem \ref{counter-example Moriah conjecture}]
Consider the collection of knots $\{K_1(m, m'): m, m'\in \mathbb{N}, m\leq m'\}$.  From Corollary \ref{t(K)=3}, we have that $t(K_1(m, m'))=3$. From Ozawa's unicity theorem, the knot $K_1(m, m')$ is prime. And, from Proposition \ref{Morimoto result}, for any $3$-bridge knot $K_2$, $t(K_1(m, m')\#K_2)\leq 3$.
\end{proof}

\begin{proof}[of Corollary \ref{tunnel number degeneration}]
Consider the collection of knots $\{K_1(m, m'): m, m'\in \mathbb{N}, m\leq m'\}$. Let $K_2$ be any $3$-bridge prime knot with tunnel number two. From Proposition \ref{Morimoto result}, $t(K_1(m, m')\#K_2)\leq 3$. From tunnel number one knots being prime and the main theorem in \cite{Morimoto2}, we also have that $t(K_1(m, m')\#K_2)\geq 3$. Then, $t(K_1(m, m')\#K_2)=3=t(K_1(m, m'))+t(K_2)-2$.
\end{proof}

\end{document}